\newtheorem{theorem}{Theorem}[section]
\newtheorem{lemma}[theorem]{Lemma}
\newtheorem{definition}[theorem]{Definition}
\newtheorem{claim}[theorem]{Claim}
\newtheorem{fact}[theorem]{Fact}
\theoremstyle{definition}
\theoremstyle{remark}
\newcommand\N{\mathbb{N}}
\newcommand\R{\mathbb{R}}
\newcommand\Z{\mathbb{Z}}
\newcommand\cA{\mathcal{A}}
\newcommand\cB{\mathcal{B}}
\newcommand\cN{\mathcal{N}}
\newcommand\cX{\mathcal{X}}
\newcommand\cW{\mathcal{W}}
\newcommand\cQ{\mathcal{Q}}
\newcommand\cU{\mathcal{U}}
\newcommand\cE{\mathcal{E}}
\newcommand\cF{\mathcal{F}}
\newcommand\cM{\mathcal{M}}
\newcommand{\HS}{\mathrm{HS}}
\newcommand{\rk}{\mathrm{rk}}
\newcommand\cT{\mathcal{T}}
\def\Pr{\mathbb{P}}
\def\S{\mathcal{S}}
\newcommand\Ex{\mathbb{E}}
\newcommand\eps{\varepsilon}
\renewcommand{\P}{\mathbb{P}}
\renewcommand{\leq}{\leqslant}
\renewcommand{\geq}{\geqslant}
\renewcommand{\to}{\rightarrow}
\def\eps{\varepsilon}
	\def\E{\mathbb{E}}
	\def\R{\mathbb{R}}
	\def\Z{\mathbb{Z}}
	\def\N{\mathbb{N}}
	\def\PP{\mathbb{P}}
	\def\S{\mathbb{S}}
	\def\1{\mathbbm{1}}
	\def\k{\kappa}
	\def\s{\sigma}
	\def\t{\theta}
	\def\g{\gamma}
	\def\G{\Gamma}	
	\def\la{\langle}
	\def\ra{\rangle}
		\def\Inc{\mathrm{Incomp\,}}
		\def\Comp{\mathrm{Comp\,}}
	\def\EE{\mathbb{E}}
		\def\T{\mathbb{T}}
	\def\cA{\mathcal{A}}
	\def\cQ{\mathcal{Q}}	
	\def\cL{\mathcal{L}}
	\def\cI{\mathcal{I}}
	\def\L{\Lambda}
	\def\Q{\mathcal{Q}}
	\def\vp{\varphi}
\begin{document}

\title{The singularity probability of a random symmetric matrix is exponentially small}

\author{Marcelo Campos, Matthew Jenssen, Marcus Michelen and Julian Sahasrabudhe}

\begin{abstract}
Let $A$ be drawn uniformly at random from the set of all $n\times n$ symmetric matrices with entries in $\{-1,1\}$. We show that 
\[ \PP( \det(A) = 0 ) \leq e^{-cn},\]
where $c>0$ is an absolute constant, thereby resolving a well-known conjecture.
\end{abstract}

\address{Instituto de Matem\'atica Pura e Aplicada (IMPA). }
\email{marcelo.campos@impa.br}
\thanks{The first named author is partially supported by CNPq.}
\address{University of Birmingham, School of Mathematics.}
\email{m.jenssen@bham.ac.uk}
\address{University of Illinois at Chicago. Department of Mathematics, Statistics and Computer Science.}
\email{michelen.math@gmail.com}
\address{University of Cambridge. Department of Pure Mathematics and Mathematics Statistics.} 
\email{jdrs2@cam.ac.uk}\maketitle

\maketitle

\section{Introduction}

Let $B$ be a random $n \times n$ matrix whose entries are chosen independently and uniformly from $\{-1,1\}$. It is an old problem, likely stemming from multiple origins, to determine the probability that $B$ is singular. While a moment's thought reveals the lower bound of $(1+o(1))2n^2 2^{-n}$, the probability that two rows or columns are equal up to sign,
establishing the corresponding \emph{upper bound} remains an extremely challenging open problem. Indeed, it is widely believed that 
\begin{equation} \label{eq:asym-conj} \P(\det(B) = 0) = (1+o(1))2n^2 2^{-n}. \end{equation}
While this precise asymptotic has so far eluded researchers, a huge amount is now known about this fascinating problem. The first advances were made by 
Koml\'os \cite{komlos} in the 1960s, who showed that the singularity probability is $O(n^{-1/2})$ (see also \cite{komlos68} and \cite{BB}).

Nearly 30 years later Kahn, Koml\'os and Szemer\'edi \cite{KKS}, in a remarkable paper, showed that the singularity probability is exponentially small. 
At the heart of their paper is an ingenious argument with the Fourier transform that allows them to give vastly more efficient descriptions of ``structured'' subspaces of $\R^n$ that are spanned by $\{-1,1\}$-vectors. Their method was then developed by Tao and Vu \cite{TV-RSA,TV-JAMS} who showed a bound of $(3/4+o(1))^n$, by proving an interesting link between the ideas of \cite{KKS} and the structure of set addition and, in particular, Freiman's theorem. This trajectory was then developed further by Bourgain, Vu and Wood \cite{BVW}, who proved a bound of $(2^{-1/2} + o(1))^n$, and by Tao and Vu \cite{tao-vu-ILO}, who pioneered the development of ``inverse Littlewood-Offord theory,'' now an integral aspect of random matrix theory (see Section~\ref{subsec:LwO-intro}).

In 2007, Rudelson and Vershynin, in an important and influential paper \cite{RV}, gave a different proof of the exponential upper bound on the singularity probability of $B$. The key idea was to construct efficient $\eps$-nets for points on the sphere that have special anti-concentration properties and are thus more likely to be in the kernel of $B$. This then led them to prove an elegant inverse Littlewood-Offord type result, inspired by \cite{tao-vu-ILO}, in a geometric setting.

This perspective was then developed further in the 2018 breakthrough work of Tikhomirov \cite{Tikhomirov}, who proved 
\[ \P(\det (B) = 0) = (1/2 + o(1))^n,\] thereby essentially proving the conjectured upper bound. One of the key innovations in \cite{Tikhomirov} was to
adopt a probabilistic viewpoint of the (discretized) sphere: instead of directly proving that efficient nets exist by latching onto some sort of structure,
he shows that the probability of randomly selecting a ``structured'' point on the discrete sphere is incredibly unlikely. While this change in perspective may not immediately sound useful, Tikhomirov's ``inversion of randomness'' gives him access to a whole host of probabilistic tools.

Another major advance on the problem was made recently by Jain, Sah and Sawhney \cite{jain2020singularityI, jain2020singularity}, who (building on the recent work of Litvak and Tikhomirov \cite{Lit-Tik}), proved the natural analogue of \eqref{eq:asym-conj} for random matrices with independent entries chosen from a finite set $S$, for any \emph{non-uniform} distribution on $S$. For the case of $\{-1,1\}$-matrices, however, they do not improve on the bound of Tikhomirov.

While the problem for matrices $B$ with all entries independent is now very well understood, the situation for \emph{symmetric} random matrices remains somewhat more mysterious. Indeed all of the previously mentioned works on random matrices depend deeply on the fact that the entries of $B$ are independent, and often treat 
$B$ as $n$ independent copies of a row, thus allowing for an essentially ``one-dimensional'' treatment of the problem. In the symmetric case, no such perspective is available.

Let $A$ be a random $n \times n$ symmetric matrix, uniformly drawn from all symmetric matrices with entries in $\{ -1,1 \}$. 
Again, it is generally believed that $\P(\det A = 0 ) = \Theta(n^2 2^{-n})$ (see, e.g. \cite{costello-tao-vu, vu-randomdiscretematrices, vu-2020-survey}) but progress has come more slowly. The problem of showing that $A$ is almost surely 
non-singular goes back, at least, to Weiss in the early 1990s but was not resolved until 2005 by Costello, Tao and Vu \cite{costello-tao-vu}, who obtained the bound
\begin{equation}\label{eq:CostelloTaoVuBnd} \PP( \det(A) = 0 ) \leq n^{-1/8+o(1)}. \end{equation}

The first super-polynomial bounds were obtained by Nguyen \cite{nguyen-singularity} and, simultaneously, Vershynin \cite{vershynin-invertibility},
the latter obtaining a bound of the form $\exp(-n^c)$. Nguyen \cite{nguyen-singularity} developed the quadratic Littlewood-Offord theory introduced in \cite{costello-tao-vu}, while Vershynin \cite{vershynin-invertibility} worked in the geometric framework pioneered in his work with Rudelson \cite{RV,RV-ICM,RV-rectangle}.

In 2019, a more combinatorial perspective for inversion of random discrete matrices was introduced by Ferber, Jain, Luh and Samotij \cite{ferber2019counting} and applied by Ferber and Jain \cite{ferber-jain} to show 
\[ \P(\det A = 0 )\leq \exp(-c n^{1/4}(\log n)^{1/2} )\,.\] 
 In a similar spirit, Campos, Mattos, Morris and Morrison \cite{CMMM} then improved this bound to 
\begin{equation} \label{eq:cmmm-bnd} \P(\det A = 0) \leq \exp(-c n^{1/2}), 
\end{equation} by proving a``rough'' inverse Littlewood-Offord theorem, inspired by the theory of hypergraph containers (see \cite{containersBMS,containersST}). This bound was then improved by Jain, Sah and Sawhney \cite{JSS-symmetric}, who improved the exponent to $-c n^{1/2} \log^{1/4} n$, and, simultaneously, by the authors of this paper \cite{RSM1} who improved the exponent to $-c (n \log n)^{1/2}$. 

The convergence of these results onto the exponent of $-c (n \log n)^{1/2}$ is no coincidence and in fact represents a natural barrier in the problem.
Indeed, all of the results up to now have treated ``structured'' vectors by only using the top-half of the matrix (i.e.\ the half above the diagonal), which conveniently consists of independent entries. However, as pointed out in \cite{CMMM}, if one is restricted to working in the top-half of $A$ one cannot obtain an exponent better than $-c (n \log n)^{1/2}$. Thus to get beyond this obstruction, somehow the randomness of the matrix must ``reused''.

In this paper we prove an exponential upper-bound on the singularity probability of a symmetric random matrix, thereby breaking though this barrier and
giving the optimal bound, up to the constant in the exponent.

\begin{theorem}\label{thm:main} Let $A$ be uniformly drawn from all $n\times n$ symmetric matrices with entries in $\{-1, 1\}$. Then 
\begin{equation}\label{eq:expbound}  \PP( \det(A) = 0 ) \leq e^{-cn},  \end{equation} 
where $c>0$ is an absolute constant.
\end{theorem}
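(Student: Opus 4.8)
The plan is to adapt the geometric, ``inversion of randomness'' strategy of Rudelson--Vershynin and Tikhomirov to the symmetric model, the new point being to \emph{reuse} the randomness of a large principal block of $A$ rather than discarding the off-diagonal entries that couple the two halves of the matrix.

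\emph{Reductions and compressible vectors.} Since $\PP(\det A=0)\le\PP(\exists\,v\in S^{n-1}\colon Av=0)$, fix small $\rho,\delta>0$ and split $S^{n-1}=\Comp\cup\Inc$, where $\Comp$ consists of the unit vectors within $\ell_2$-distance $\rho$ of some $\delta n$-sparse vector. The set $\Comp$ has a $\rho$-net of size $\binom{n}{\delta n}(C/\rho)^{\delta n}=e^{o(n)}$, and for a fixed unit vector $v$ that is not close to a $2\delta n$-sparse vector one shows $\PP(\|Av\|_2\le c\sqrt n)\le e^{-cn}$ by exposing a fully independent off-diagonal block $A_{I,[n]\setminus I}$ (for a set $I$ with $|I|=n/2$ chosen according to $v$), conditioning on everything else, and combining tensorization with the Erd\H{o}s--Littlewood--Offord inequality; a union bound over the net then rules out $\Comp$. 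The diagonal of $A$ is irrelevant here.

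\emph{Incompressible vectors and the reuse of randomness.} Write $A=\left(\begin{smallmatrix}H&B\\B^{T}&D\end{smallmatrix}\right)$ with $H$ the $(1-\eps)n\times(1-\eps)n$ top-left principal submatrix, $B$ of size $(1-\eps)n\times\eps n$, $D$ of size $\eps n\times\eps n$, and $\eps>0$ a small constant; $H,B,D$ are jointly independent in the obvious sense. If $v=(w,u)$ is an incompressible kernel vector, the top block of $Av=0$ reads $Hw=-Bu$; conditioning on $H$ in a suitable good event (on which $H$ is invertible and $M:=H^{-1}$ is ``robustly structureless'') this \emph{determines} $w=-H^{-1}Bu$, so the kernel direction depends only on the low-dimensional parameter $u$ and on $B$, while the bottom block becomes $(D-B^{T}H^{-1}B)u=0$. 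Thus $B$ is used twice --- once to pin down $w$ (shrinking the relevant family from the $(1-\eps)n$-dimensional $w$ to the $\eps n$-dimensional $u$), and once inside the Schur complement --- yet $B$ is never conditioned on. It then suffices to bound $\PP_{B,D}(\exists\,u\colon (D-B^{T}H^{-1}B)u=0)$, which one does by a net of size $e^{O(\eps n)}$ over $u$ together with a quadratic Littlewood--Offord estimate: writing out one coordinate of $(D-B^{T}H^{-1}B)u$ and conditioning on all but the first column $\beta$ of $B$, one is left to bound $\PP_{\beta}(u_{1}\,\beta^{T}M\beta+\langle\beta,\psi\rangle=t)$ with $\psi=MB^{(1)}u^{(1)}$ fixed; a decoupling inequality for quadratic forms in the spirit of Costello--Tao--Vu reduces this to a \emph{linear} anti-concentration bound for $\langle\beta',(\text{a block of }M)\,\beta''\rangle$ with $\beta',\beta''$ independent sign vectors, which is $\le e^{-cn}$ because $M$ is structureless --- and $e^{-cn}$ beats the net size for $\eps$ small. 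The extra randomness of $B$, being high-dimensional, yields a genuinely exponential anti-concentration bound while the net it must overcome is only $\eps n$-dimensional; this is precisely what pushes past the $\exp(-c\sqrt{n\log n})$ barrier.

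\emph{The main obstacle.} Everything hinges on the structural input ``$M=H^{-1}$ is robustly structureless with failure probability $e^{-cn}$'': concretely, that $H$ is invertible and that the rows of $H^{-1}$ --- and, more delicately, the vectors obtained from a block of $H^{-1}$ by applying a uniform sign vector --- have least common denominator at least $e^{cn}$. This is the symmetric analogue of Tikhomirov's inversion of randomness (the rows of $H^{-1}$ are the normals to spans of rows of $H$), and it must be proved \emph{directly} from the compressible/incompressible machinery, not by induction on the singularity probability: the event ``$H$ singular'' is itself an extreme form of the ``$H^{-1}$ not structureless'' event and has to be absorbed into the same estimate, since isolating it would be circular and, moreover, a naive recursion loses --- the block reduction improves the exponent only by a constant factor and cannot be seeded at small scales. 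A second, intertwined difficulty is quantitative: $\|H^{-1}\|$ is only polynomially bounded, so the net transfer for $u$ costs a $\log n$ factor that must be defeated by a correspondingly strong anti-concentration bound, which is why one needs the essentially optimal structural result for $H^{-1}$ rather than a cruder one. These two points --- a sharp inversion-of-randomness statement for random symmetric matrices, proved non-circularly, together with a decoupling-based quadratic Littlewood--Offord estimate strong enough to overcome the polynomial conditioning --- are where essentially all the work lies.
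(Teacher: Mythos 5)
Your proposal takes a genuinely different route from the paper --- a Schur-complement reduction $w=-H^{-1}Bu$, $(D-B^{T}H^{-1}B)u=0$ followed by quadratic Littlewood--Offord --- whereas the paper never inverts a principal block. Instead it zeroes out both diagonal blocks to get a surrogate matrix $M$ with a single lazy off-diagonal block $H_1$, partitions the structured sphere by the threshold scale $\cT_L(v)$ defined relative to $M$, and bounds the size of the resulting net by a second-moment computation in which the randomness being ``inverted'' is that of a uniform point $X$ of a lattice box, not of a submatrix of $A$; the dependency between the two uses of $H=[H_1,H_2]$ is then handled by conditioning on the robust rank event $\cE_k$ and invoking a new inverse Littlewood--Offord theorem for walks subject to $k$ additional weak constraints. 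That conditioned Littlewood--Offord theorem is the paper's central innovation and has no counterpart in your outline.

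More importantly, your proposal has a genuine gap at exactly the point you flag as ``the main obstacle.'' The structural input you require --- that the $(1-\eps)n\times(1-\eps)n$ symmetric block $H$ is invertible and $H^{-1}$ is robustly structureless, with failure probability $e^{-cn}$ --- is at least as strong as the theorem being proved (applied at dimension $(1-\eps)n$), and strictly stronger once you add the LCD requirements on images of sign vectors under blocks of $H^{-1}$. You assert it ``must be proved directly from the compressible/incompressible machinery,'' but that machinery does not deliver it: for incompressible kernel vectors of a \emph{symmetric} matrix the known arguments that use only the independent top half are capped at $\exp(-c\sqrt{n\log n})$ (this is precisely the barrier discussed in the introduction), and you give no mechanism for reusing the randomness of $H$ against its own kernel vectors. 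So the proposal reduces the problem to an unproved statement that contains the problem.

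There is also a quantitative gap in the incompressible step that your own remarks expose. Since $\|H^{-1}\|$ is only polynomially bounded, transferring from a net point $u'$ to $u$ forces the net granularity to be $n^{-O(1)}$, so the net over the $\eps n$-dimensional parameter $u$ has size $e^{O(\eps n\log n)}$, not $e^{O(\eps n)}$. But the anti-concentration bound you propose to beat it with is obtained from a \emph{single} coordinate of $(D-B^{T}H^{-1}B)u$, i.e.\ from a quadratic form in one column $\beta\in\{\pm1\}^{(1-\eps)n}$ of $B$; such a bound can never be smaller than $2^{-(1-\eps)n}=e^{-O(n)}$, which loses to $e^{O(\eps n\log n)}$ for any fixed $\eps$. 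Exploiting several coordinates simultaneously is blocked by the fact that all coordinates of $B^{T}H^{-1}Bu$ share the same $B$, so tensorization is unavailable without a further decoupling whose cost you have not accounted for. Closing this $\log n$ deficit is not a technicality: it is the same deficit that stalls all prior approaches at $\exp(-c\sqrt{n\log n})$, and the paper's box-covering, second-moment and conditioned-LCD apparatus exists precisely to avoid paying it.
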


The main technical innovations of this paper are a new inverse Littlewood-Offord type theorem for ``conditioned'' random walks
and a new ``inversion of randomness'' technique that allows us to ``reuse'' the randomness of our matrix by pushing some of the randomness 
onto the random selection of a vector from our discretized sphere.
In fact, there is a delicate tradeoff between these two ingredients;
a loss in the second ingredient allows for an improvement in the first, \emph{unless} some specific ``arithmetic'' structure arises (see Section~\ref{sec:sketch}).

We note that there is a natural sister problem regarding the probability that the \emph{least singular value} of $A$ is at most $\eps$. Our methods also apply to this problem and we will detail these results in a forthcoming paper, where more general coefficient distributions are also treated. In this paper we have opted to 
sacrifice generality to reduce clutter and technicality and, hopefully, to make the new ideas as clear as possible.

\subsection{Inverse Littlewood-Offord theory}\label{subsec:LwO-intro}

For $v \in \R^n$, we define the concentration function (one of several to come) as 
\[ \rho(v) := \max_{b \in \R} \PP\left(  \sum_{i=1}^n \eps_i v_i = b \right), \]
where $\eps_1,\ldots,\eps_n \in \{-1,1\}$ are uniform and independent. The study of $\rho(v)$ goes back at least to the classical work of Littlewood and Offord~\cite{LwO-1,LwO-3} on the zeros of random polynomials, but perhaps begins in earnest with the beautiful 1945 result of Erd\H{o}s \cite{erdos-LwO}: if $v \in \R^n$ has all non-zero coordinates then 
\[\rho(v) \leq 2^{-n} \binom{n}{\lfloor n/2 \rfloor} = O(n^{-1/2}).\] 
This was then developed by Szemer\'{e}di and S\'{a}rk\"{o}zy\cite{SzemerediUNDSarkozy}, who showed that if all of the $v_i$ are \emph{distinct} then one can obtain the much stronger bound of $O(n^{-3/2})$, and by Stanley \cite{stanley} who determined the \emph{exact} maximum, using algebraic tools. A higher-dimensional version of this problem also received considerable attention (going under the name of \emph{the} Littlewood-Offord problem) and was studied by several authors \cite{sali,griggs,kleitman,katona} before it was ultimately resolved in the work of Frankl and F\"{u}redi~\cite{frankl1988solution} (see also~\cite{tao2012littlewood}). 

Of these early results, the most important for us here is the work of Hal\'{a}sz \cite{halasz} who made an important connection with the Fourier transform to prove (among other things) the following beautiful result: if there are $N_k$ solutions to $x_1 + \cdots + x_k = x_{k+1} + \cdots + x_{2k}$ among the entries of $v$, then $\rho(v) = O(n^{-2k-1/2}N_k)$.

More recently the question has been turned on its head by Tao and Vu \cite{tao-vu-ILO}, who pioneered the study of ``inverse'' Littlewood-Offord theory. They suggested that if $\rho(v)$ is ``large'' then $v$ must exhibit some particular arithmetic structure. For example, Tao and Vu~\cite{tao-vu-ILO, tao2010sharp}, and Nguyen and Vu \cite{nguyen-vu-1, nguyen-vu-2} proved that if $v$ is such that $\rho(v) > n^{-C}$ then $O(n^{1-\eps})$ of the elements $v_i$ of $v$ can be efficiently covered with a generalized arithmetic progression of rank $r = O_{\eps,C}(1)$. 

While these results provide a very detailed picture in the range $\rho(v) > n^{-C}$, they begin to break down\footnote{Technically these results break down if $\rho(v) < n^{-\log\log n}$.} if $\rho(v) = n^{-\omega(1)}$ and therefore are of limited direct use in showing that the singularity probability is exponentially small. Inverse results which work for smaller $\rho$ bring us to the ``counting'' Littlewood-Offord theorem of Ferber, Jain, Luh and Samotij \cite{ferber2019counting}, and the ``weak''
inverse Littlewood-Offord theorems of Campos, Mattos, Morris and Morrison \cite{CMMM} and of the present authors in \cite{RSM1}, which are useful for $\rho(v)$ as small as $\exp( -c(n\log n)^{1/2} )$, but afford less structure.

Our novel inverse Littlewood-Offord theorem in this paper is most similar to that of Rudelson and Vershynin \cite{RV}, also developed in \cite{vershynin-invertibility} and \cite{JSS-symmetric}, who showed that if $\rho(v) \gg e^{-cn}$ then there exists
$\phi >0 $ with\footnote{In what follows, we will be somewhat vague with our use of $\approx$.} $\phi \approx 1/\rho(v)$ for which the dilated vector $\phi \cdot v$ is exceptionally close to the integer lattice $\Z^n$. 
These Littlewood-Offord theorems, styled after Rudelson-Vershynin, tend to be a little bit subtler; instead of determining the structure of the whole vector, we only show there is some ``correlation'' with the rigid object $\Z^n$. 

To state our inverse Littlewood-Offord theorem, we formulate an important notion introduced by Rudelson and Vershynin. We switch to working in $\R^d$, for $d \approx cn$, hinting at the later context of these results. If $A \subseteq \R^d$ and $x \in \R^d$ then define $d(x,A) := \inf_{a\in A}\{ \|x-a\|_2 \}$. Now, for $\alpha \in (0,1)$, define
the \emph{least common denominator} of a vector $v \in \R^d$ to be the smallest $\phi > 0$ for which $\phi \cdot v$ is within $\sqrt{\alpha d}$ of a non-zero integer point. 
That is,
\[D_\alpha(v)= \inf\left\{\phi>0:~d(\phi \cdot v, \Z^d \setminus \{0\}) \leq \sqrt{\alpha d} \right\}.\]
Note here that we have excluded the origin from $\Z^d$ in the definition since $\phi \cdot v \approx 0$ does not tell us any interesting about $v$.
Indeed, given \emph{any}  $v \in \S^{d-1}$, one could always set $\phi < \sqrt{\alpha d}$ and obtain 
$d(\phi\cdot v, \Z^d) \leq d(\phi\cdot v,0) \leq \sqrt{\alpha d}$, and so this degenerate case needs to be excluded somehow. 
In fact, in the course of the paper, we will work with a slightly different non-degeneracy condition (see \eqref{eq:LCD-def}).  

Our Littlewood-Offord theorem shows that a similar conclusion to that of \cite{RV} can be obtained in the presence of a large number $(k \approx n)$ of additional ``soft'' constraints on the random walk. In particular we prove the following result, which is in fact weaker than what we really need (see Lemma~\ref{lem:CondWalkLCMfinal}), but captures its essence. We say that a random vector with entries in $\{-1,0,1\}$ is $\mu$-lazy if each entry is independent and is equal to $0$ with probability $1-\mu$ and is equal to each of $-1, 1$ with probability $\mu/2$. 

\begin{theorem}\label{thm:invLwO} There exist $R,c_1,c_2 >0$, for which the following holds for every  $d \in \N$, $\alpha \in (0,1)$, $0\leq k \leq c_1 \alpha d$ and $t \geq \exp(-c_1\alpha d)$. Let $v \in \S^{d-1}$, let $w_1,\ldots,w_k \in \S^{d-1}$ be orthogonal and let $W$ be the matrix with rows $w_1,\ldots,w_k$.

If $\tau \in \{-1,0,1 \}^d$ is a $1/4$-lazy random vector and
\begin{equation}\label{eq:thminvLwO} \PP\left( |\la \tau, v \ra| \leq t\, \text{ and }\, \|W \tau \|_2 \leq c_2\sqrt{k} \right) \geq R te^{- c_1 k},\end{equation}
then $D_{\alpha}(v) \leq 16/t$. \end{theorem}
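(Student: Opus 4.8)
The plan is to argue by contraposition. Assume $D_\alpha(v)>16/t$; I will show that, for $R$ a large enough absolute constant and $c_1,c_2$ small enough absolute ones, the probability $p$ in \eqref{eq:thminvLwO} is $<Rte^{-c_1k}$ (using $t\ge e^{-c_1\alpha d}$ and $k\le c_1\alpha d$). Write $V=\mathrm{span}(w_1,\dots,w_k)$, $P$ for the orthogonal projection onto $V$, and $\mathcal E=\{\|W\tau\|_2\le c_2\sqrt k\}=\{\|P\tau\|_2\le c_2\sqrt k\}$. The backbone is a Rudelson--Vershynin--type Fourier bound: picking a fixed smooth majorant $h\ge\1_{[-1,1]}$ with $\widehat h\ge0$ supported in a small neighbourhood of $0$,
\[
p\le \Ex\big[h(\langle\tau,v\rangle/t)\,\1_{\mathcal E}\big]\le C t\int_{|\xi|\le C_0/t}|q(\xi)|\,d\xi,\qquad q(\xi):=\Ex\big[e^{i\xi\langle\tau,v\rangle}\,\1_{\mathcal E}\big],
\]
so everything reduces to estimating the \emph{weighted} characteristic function $q$. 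Since $\tau$ is $1/4$-lazy, $\Ex e^{i\xi\langle\tau,v\rangle}=\prod_j(\tfrac34+\tfrac14\cos\xi v_j)$, and $\tfrac34+\tfrac14\cos y\le e^{-c_0\|y/2\pi\|_{\R/\Z}^2}$; meanwhile $D_\alpha(v)>16/t$ forces $\mathrm{dist}(\tfrac{\xi}{2\pi}v,\Z^d)\ge\min(\tfrac{|\xi|}{2\pi},\sqrt{\alpha d})$ for all $|\xi|\le 32\pi/t$ (the $\min$ absorbs the degenerate range near $0$) --- and the constant $16$ is exactly the slack needed to contain the Esséen cutoff $C_0/t$. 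The second ingredient is a small-ball estimate $\PP(\mathcal E)\le e^{-c_3k}$, which I would prove by a Laplace transform together with the Gaussian identity $e^{-\lambda\|W\tau\|_2^2}=\Ex_{g\sim N(0,I_k)}e^{i\sqrt{2\lambda}\langle W\tau,g\rangle}$ and $WW^{\top}=I_k$: this gives $\Ex_\tau e^{-\lambda\|W\tau\|_2^2}\le(1+4c_0\lambda)^{-k/2}$ and hence $\PP(\mathcal E)\le\inf_\lambda e^{\lambda c_2^2k}(1+4c_0\lambda)^{-k/2}\le e^{-c_3k}$ for $c_2$ small.

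These two inputs must be married so that $|q(\xi)|$ both decays in $\xi$ (from the LCD hypothesis) \emph{and} carries the $e^{-c_3k}$ gain (from $\mathcal E$), and the naive routes fail to do both at once. One cannot dominate $|q|$ by the unweighted $|\Ex e^{i\xi\langle\tau,v\rangle}|$; and replacing $\1_{\mathcal E}$ by any majorant $G\ge\1_{\mathcal E}$ only yields $|q(\xi)|\le|\Ex[e^{i\xi\langle\tau,v\rangle}G(\tau)]|+(\Ex G-\PP(\mathcal E))$, whose second, $\xi$-independent term is a floor of size $\asymp e^{-c_3k}$ that, integrated over the window $|\xi|\le C_0/t$ (of length $\asymp 1/t$ when $t$ is tiny), swamps the target $Rte^{-c_1k}$. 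So one is forced to handle $\1_{\mathcal E}$ exactly.

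The approach I would take is to condition on (a suitable mesh-discretisation of) $W\tau$. Given $W\tau\approx y$ with $\|y\|\le c_2\sqrt k$, the $V$-part of the inner product is essentially pinned, $\langle\tau,Pv\rangle=\langle W\tau,Wv\rangle\approx\langle y,Wv\rangle$, so the event $|\langle\tau,v\rangle|\le t$ reduces to an anti-concentration statement $|\langle\tau,(I-P)v\rangle-b_y|\le t$ for the component of $v$ orthogonal to $V$. For that event the relevant vector lives in $V^{\perp}$, so the Gaussian-identity trick applies cleanly and the characteristic-function decay driven by $D_\alpha(v)>16/t$ is not spoiled by the freedom to move within $V$; summing the resulting Rudelson--Vershynin bound over the conditioning values recovers $p\lesssim\PP(\mathcal E)\big(t+e^{-c\alpha d}\big)$, which is $<Rte^{-c_1k}$ once $c_1<c_3$ and $R$ is large (the $e^{-c\alpha d}$ term being absorbed via $t\ge e^{-c_1\alpha d}$, $k\le c_1\alpha d$).

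The main obstacle is precisely this conditioning step. One must control: (i) the error from discretising the continuous quantity $W\tau$ at a scale compatible with the precision $t$, so that the number of mesh cells does not overwhelm the $e^{-c_3k}$ saving; (ii) the residual dependence between $W\tau$ and $\langle\tau,(I-P)v\rangle$ under the \emph{lazy}, non-Gaussian law of $\tau$, which is where the laziness parameter and the orthonormality of $w_1,\dots,w_k$ get used; and (iii) the degenerate regime $\|(I-P)v\|\approx 0$, i.e.\ $v$ almost inside $V$, where one cannot renormalise $(I-P)v$ to the sphere --- here one instead has to observe either that $|\langle\tau,v\rangle|\le t$ on $\mathcal E$ is already extremely restrictive or that $D_\alpha(v)>16/t$ transfers to the relevant sub-vectors. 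Settling (i)--(iii) cleanly is presumably why the lemma actually used downstream (Lemma~\ref{lem:CondWalkLCMfinal}) is stated in a more flexible form than Theorem~\ref{thm:invLwO}.
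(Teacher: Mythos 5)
Your setup is sound as far as it goes: the contrapositive framing, the Esseen-type reduction to the weighted characteristic function $q(\xi)=\Ex[e^{i\xi\langle\tau,v\rangle}\1_{\mathcal E}]$, the small-ball estimate $\PP(\mathcal E)\le e^{-c_3k}$ (the paper proves this via Talagrand, Lemma~\ref{lem:HansonWright}, rather than your Laplace-transform route, but either works), and the correct diagnosis that majorizing $\1_{\mathcal E}$ leaves a $\xi$-independent floor that destroys the bound. But the proof stops exactly where the theorem becomes nontrivial: the decoupling of the weak constraints from the hard one. You propose to condition on a mesh-discretisation of $W\tau$ and then run a Rudelson--Vershynin anti-concentration bound for $\langle\tau,(I-P)v\rangle$ under the conditional law, but you do not carry this out, and you yourself flag obstacles (i)--(iii) as unresolved. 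Obstacle (ii) is not a technicality: once you condition on $W\tau$ lying in a small cell, the law of $\tau$ is no longer a product measure, so neither the Esseen machinery nor the hypothesis $D_\alpha(v)>16/t$ --- which controls the characteristic function of $\langle\tau,v\rangle$ under the \emph{unconditioned} product law --- gives you the per-cell bound $\lesssim t$ you need. Relatedly, $D_\alpha(v)>16/t$ does not transfer in any obvious way to an LCD lower bound for $(I-P)v$ outside the regime $\|Pv\|\approx 0$, so even granting independence the conditional anti-concentration is unjustified. As written you have established the two inputs but not the coupling between them, which is the entire content of the theorem.

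For contrast, the paper never conditions on $W\tau$: it decouples geometrically on the Fourier side. The level set $S_{W_Y}(m)\subset\R^{k+2}$ (the $k$ dual variables of the weak constraints plus two copies of the dual variable of the hard constraint) is analysed via a cylinder covering (Lemma~\ref{fact:covFact}), a fiber decomposition, and Borell's Gaussian Brunn--Minkowski inequality (Lemma~\ref{lem:GaussBM}), which together reduce $\gamma_{k+2}(S)$ to the Gaussian measure of a \emph{difference set} of horizontal fibers; by the triangle inequality for level sets this difference set lies in $S_W(4m)$, a level set from which $Y$ has disappeared. A reverse Esseen inequality (Lemma~\ref{lem:revEsseen}) converts that back into $\PP(\|W^T\tau'\|_2\le\beta'\sqrt{k})$ for a lazier walk $\tau'$, bounded by Lemma~\ref{lem:HansonWright}; and the resulting existence of two points of $S$ whose last coordinates differ by an amount in $[s_0,16]$ produces the dilate $\phi\le 16$ witnessing the LCD bound. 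To complete your route you would have to solve precisely the conditional-law problem that this machinery is built to avoid, so the gap is genuine and central rather than cosmetic.
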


Here we interpret $\| W\tau \|_2 \leq c_2\sqrt{k}$ as encoding the ``soft'' constraints and $|\la \tau, v \ra| \leq t $ as the ``hard'' constraint. It is useful to think 
of $t \approx \rho(v)$, although we actually set $t$ relative to a related notion tailored specifically to our application.

To understand the quantitative aspect of Theorem~\ref{thm:invLwO}, it is best to consider the contrapositive of Theorem~\ref{thm:invLwO}, which roughly
says that if $v$ is ``unstructured at scale $t$'' (that is, $D_{\alpha}(v) > 16/t$) then the soft and hard constraints are roughly negatively dependent\footnote{Here, we say events $S,T$ are negatively\ dependent if $\P(S\cap T) \leq \P(S)\P(T)$.}.
Indeed, if $v$ is sufficiently ``unstructured at scale $t$'' then we might expect $\la \tau,v \ra$ to approximate a Gaussian and, in particular, to have
\[ \PP( |\la \tau, v \ra| \leq t ) \approx  t. \]
On the other hand, since $w_1,\ldots, w_k \in \S^{d-1}$ are orthogonal, it turns out that (see Lemma~\ref{lem:HansonWright})
\[ \PP(\|W \tau \|_2 \leq c_2\sqrt{k} ) \leq e^{-c_1k}, \]
where $c_1>0$ is a suitably small constant depending on $c_2>0$. If these two events were negatively dependent then we would expect a bound of 
\[  \PP\left( |\la \tau, v \ra| \leq t\, \text{ and }\, \|W \tau \|_2 \leq c_2\sqrt{k} \right)   \leq te^{-c_1k}. \]
Theorem~\ref{thm:invLwO} says something \emph{almost} as strong as this, giving the inequality up to a constant $R$ and the value of $c_1$.

For us, the main difficulty lies in ``decoupling'' the soft and hard constraints, which is ultimately achieved by a somewhat complicated geometric argument on
the Fourier side. However, we should point out that Theorem~\ref{thm:invLwO} is non-trivial even in the case of $k=0$ and in fact reduces, in this case, to the inverse Littlewood-Offord result proved by Rudelson and Vershynin in \cite{RV}.

In fact, the $k=0$ case is useful for understanding the sort of structure that the conclusion $D_{\alpha}(v) < c/t$ provides. It is not hard to show that if one 
chooses $v \in \S^{n-1}$ very close to a point on the lattice $(Ct)\Z^n$, where $C \gg 1 $, then $v$ satisfies 
\begin{equation}\label{eq:anticonentation} \PP( |\la v, \tau \ra| \leq t )  \gg t . \end{equation} 
Thus the inverse theorem of \cite{RV,RV-ICM} says, roughly speaking, that \emph{all} vectors satisfying \eqref{eq:anticonentation} must have this structure. 
Our Theorem~\ref{thm:invLwO} says the same is true even in the presence of a large number of additional constraints.

\subsection{Proof sketch and a new ``inversion of randomness'' technique}\label{sec:sketch}
Here we briefly sketch how our inverse Littlewood-Offord result is used alongside a novel scheme for ``reusing randomness'' to prove Theorem~\ref{thm:main}.
As hinted at before, we will be helped along by treating the discretized sphere as a probability space, which will allow us to ``recover'' some of the randomness lost due to the symmetry of $A$. We keep our discussion here loose and impressionistic and we will take up our careful study in the following section. 

Our first move will be to ``locally replace'' $A$ with a random matrix $M$ that has many of the entries zeroed out. This will allow us to 
untangle some of the more subtle and complicated dependencies and has the advantage that various associated Fourier transforms are non-negative. Indeed 
let\footnote{Here we use the notation $[n] := \{1,\ldots,n\}$; for a vector $v \in \R^n$ and $S \subseteq [n]$, we use the notation $v_S := (v)_{i\in S}$
 and for a $n \times m$ matrix $A$, and $R \subseteq [m]$, we use the notation $A_{S \times R}$ for the $|S| \times |R|$ matrix $(A_{i,j})_{i \in S, j \in R}$. }
\begin{align}\label{eq:Mdef}
 M =  
\begin{bmatrix}
{\bf 0 }_{[d] \times [d]} & H_1^T \\
H_1 & { \bf 0}_{[d+1,n] \times [d+1,n]}  
\end{bmatrix}, \end{align}
where $d = cn$ and $H_1$ is a $(n-d) \times d$ random matrix with i.i.d.\ entries that are $\mu$-lazy, meaning that $(H_1)_{i,j} = 0$ with probability $1-\mu$ and $(H_1)_{i,j} = \pm 1 $ with probability $\mu/2$. We stress here that we cannot ``globally'' replace $A$ with $M$, and we may need to permute coordinates, depending on what part of the sphere
we are working on. 

We now follow the strategy of \cite{RV,Tikhomirov} and partition the sphere $\S^{n-1}$ based on the anti-concentration properties of the various $v \in \S^{n-1}$. Indeed, for each $v \in \S^{n-1}$, we find a corresponding ``scale'' $\eps \in (0,1)$ for which 
\begin{equation}\label{eq:scale} \PP( \| Mv\|_2 < \eps \sqrt{n} ) \approx (L\eps)^n, \end{equation}
where $L$ is a large constant. Notice here that we have defined this ``scale'' relative to the symmetric matrix $M$, rather than $A$ or $\rho(v)$, and so we expect it to capture the anti-concentration properties of $v$, specific to the matrix $M$. This $\eps$ should be interpreted as ``the scale at which the anti-concentration properties of $v$ just start to be felt'', as we imagine gradually decreasing $\eps$ from $1$ to $0$. For example, if $v$ is a random point on the sphere, it is not hard to see that $v$ will typically have $\eps \leq e^{-cn}$, which is in fact \emph{so} small that we can safely ignore $v$ (due to previous work). On the other hand, the constant vector $n^{-1/2}(1,\ldots,1)$ will have $\eps \approx n^{-1/2}$. Interestingly, this latter fact is not easy to establish rigorously, but is heuristically not hard to guess in analogy with the modified setting where $M$ has iid entries. 

We now study all vectors $v \in \S^{n-1}$ at a given scale $\eps \geq e^{-cn}$. While this is an uncountable set, we build an efficient $\eps$-net for these vectors in two steps. 
We first discretize the whole sphere by taking an $\eps$-net for $\S^{n-1}$, which we call $\L_{\eps}$. 
We can then say something like 
\[ \PP( Av = 0 \text{ for some } v \text{ at scale } \eps ) \leq \PP\left( \| Mv \|_2 \leq \eps \sqrt{n} \text{ for some } v \in \L_\eps \right). \]
One's first instinct might be to simply union bound over all $v \in \L_{\eps}$; however it turns out that even the most efficient epsilon nets have $|\L_{\eps}| \approx (C/\eps)^n$, which is too large to say anything.

The key insight here is that most of $\Lambda_{\eps}$ is not used when approximating 
$v \in \S^{n-1}$ \emph{at scale} $\eps$ and so we can refine our net $\Lambda_{\eps}$ by discarding all vectors $w \in \L_{\eps}$ with $\PP(\|Mv\|_2 \leq \eps \sqrt{n}) \ll (L\eps)^n$. So if we let $\cN_{\eps} \subseteq \L_{\eps}$ be the collection of vectors with $\PP( \|Mv\|_2 \leq \eps \sqrt{n}) \geq (L\eps)^n$, our problem reduces to showing that 
\begin{equation}\label{eq:Neps-goal} |\cN_{\eps}| \leq L^{-2n}|\L_{\eps}| \leq \left( \frac{C}{L^2\eps} \right)^n,\end{equation} 
which brings us to the technical heart of the paper (see Theorem~\ref{thm:netThm}). We point out that the factor of $L^{-2n}$, rather than $L^{-n}$, in \eqref{eq:Neps-goal} is important for us as it allows us to drown out the $L^n$ coming from \eqref{eq:scale} \emph{and} the factor $C^n$ in \eqref{eq:Neps-goal}, when we union bound over $\cN_{\eps}$.

To prove~\eqref{eq:Neps-goal} we take a probabilistic perspective inspired by \cite{Tikhomirov}; although we stress that our methods are considerably different. To show
\eqref{eq:Neps-goal} it is enough to show, for $v \in \L_{\eps}$ chosen uniformly at random
\begin{align}\label{eq:secondmomentsketch}
 \PP_{v \in \L_{\eps}}\left( v \in \cN_{\eps} \right) 
\approx \PP_{v \in \L_{\eps}} \bigg( \PP_M \left( \|Mv \|_2 \leq \eps \sqrt{n} \right) \geq (L\eps)^n \bigg) \leq L^{-2n}\, ,\end{align}
(see Lemma~\ref{thm:invertrandom}, for the rigorous statement).
To get a feel for how we tackle this, let us consider the event $\| Mv \|_2 \leq \eps n^{1/2}$. 
Indeed recalling~\eqref{eq:Mdef}, the definition of $M$, we have that
\[
Mv= \begin{bmatrix}
H_1 v_{[d]}  \\
 H_1^T v_{[d+1, n]}

\end{bmatrix}
\] 
and so to control the event $\|Mv\|_2 \leq \eps \sqrt{n}$, it is enough to control the intersection of events  $\|H_1 v_{[d]}\|_2  \leq \eps n^{1/2}$ and $ \| H_1^T v_{[d+1, n]}\|_2 \leq \eps n^{1/2}$. Note that if we simply ignore the second event and bound
\[ \PP( \|Mv\|_2 \leq \eps n^{1/2} ) \leq \PP( \| H_1 v_{[d]}\|_2 \leq \eps n^{1/2}),\] 
we land in a situation very similar to previous works; where half of the matrix is neglected entirely and we are thus limited by the $(n\log n)^{1/2}$ obstruction, described above. So to overcome this barrier, we need to control these two events simultaneously. 

The key idea here is to use the \emph{randomness in} $H_1$ to control the event $ \| H_1 v_{[d]} \|_2 \leq \eps n^{1/2}$ and to use the \emph{randomness in} $v \in \Lambda_{\eps} $ to control the event $ \| H_1^T v_{[d+1, n]} \|_2 \leq \eps n^{1/2}$. To get this to work, we crucially partition the outcomes in $H_1$, 
based on a robust notion of rank. Indeed, let
\[ \cE_k = \left\lbrace H_1 : \s_{d-k}(H_1)\geq c\sqrt{n} \text{ and } \s_{d-k+1}(H_1)<  c\sqrt{n} \right\rbrace\, , \]
where $\s_1(H_1)\geq \cdots \geq \s_d(H_1)$ denote the singular values of $H_1$. We may then bound $\P_{M}( \| Mv\|_2 \leq \eps n^{1/2})$ above by (only using the randomness in $H_1$, for the moment)
\begin{equation}\label{eq:ranksplit}
\sum_{k=0}^d  \P_{H_1}\Big( \|H_1^T v_{[d+1, n]}\|_2 \leq \eps n^{1/2}\, \big\vert\, \|H_1 v_{[d]}\|_2 \leq \eps n^{1/2} , \cE_k \Big) \cdot
\P_{H_1}\Big(\|H_1 v_{[d]}\|_2 \leq \eps n^{1/2}, \cE_k \Big)\, .\end{equation}
It is here that we can see the link with our inverse Littlewood-Offord theorem, Theorem~\ref{thm:invLwO}, which we use (after a good deal of preparation) to bound the probabilities 
\[ \P_{H_1}( \|H_1 v_{[d]}\|_2 \leq \eps \sqrt{n}, \cE_k),\]
that appear in \eqref{eq:ranksplit}. The event $\| H_1 v_{[d]}\|_2 \leq \eps n^{1/2}$ corresponds to the ``hard'' constraint  $|\la \tau, v \ra| \leq t$ in Theorem~\ref{thm:invLwO}, while the event $\cE_k$ corresponds to the ``soft'' constraint $\|W \tau \|_2 \leq c_2\sqrt{k}$, where we think of $\tau$ as a single row of $H_1$. 
And so, after a certain amount of work with Theorem~\ref{thm:invLwO}, we are able to conclude that 
\begin{align}\label{eq:invLwOtensor}
\P_{H_1}( \| H_1 v_{[d]} \|_2 \leq \eps \sqrt{n},\, \cE_k)\leq (C\eps e^{-c k})^{n-d}
\end{align}
\emph{unless}  $v_{[d]}$ is structured, in which case we do something different (and substantially easier). Thus, for all non-structured $v$, we have \eqref{eq:ranksplit} is roughly at most
\begin{equation}\label{eq:ranksplit2} 
(C\eps)^{n-d} \sum_{k=0}^d e^{-c k(n-d)}  \P_{H_1}\Big( \|H_1^T v_{[d+1, n]}\|_2 \leq \eps n^{1/2}\, \big\vert\, \|H_1 v_{[d]}\|_2 \leq \eps n^{1/2}, \cE_k\Big) .\end{equation}
Up to this point,  we have not appealed to the randomness in the choice of $v \in \Lambda_{\eps}$, beyond demanding that $v$ is non-structured. 
To see how we might take advantage of the randomness in $v$, let us consider the first moment of the quantity $\P_{M}( \| Mv\|_2 \leq \eps n^{1/2})$, which we view as a random variable in $v$.
Indeed, for $v\in \Lambda_\eps$ taken uniformly at random, we show that  
\begin{align}\label{eq:expvsketch}
 \E_{ v \in \Lambda_{\eps}}\, \P_{H_1}\left( \| H_1^T v_{[d+1, n]} \|_2 \leq \eps \sqrt{n} \mid \| H_1 v_{[d]}\|_2 \leq \eps n^{1/2}, \cE_k\right) \leq (C\eps)^{d-k}\, .
\end{align}
We establish this bound by swapping expectations, and bounding the probabilities 
\begin{align}\label{eq:projectionsketch}
\P_{v_{[d+1, n]}}( \| H_1^T v_{[d+1, n]}\|_2 \leq \eps n^{1/2} )\, ,
\end{align}
where $H_1$ is a \emph{fixed} matrix satisfying $\cE_k \cap \{H_1 : \| H_1 v_{[d]}\|_2 \leq \eps n^{1/2} \}$.
The idea here is that since $H_1$ has $d-k$  singular values of size $\approx n^{1/2}$, we should expect
\begin{equation}\label{eq:movingv} \PP_{v_{[d+1, n]}}( \| Hv_{[d+1,k]} \|_2 \leq \eps n^{1/2} )  \approx (C\eps)^{d-k}, \end{equation}
which is suggested, for example, by a Gaussian heuristic. This then results in the bound at~\eqref{eq:expvsketch}. See Section~\ref{subsec:AX-anticoncentration} for details on this step.
Putting \eqref{eq:expvsketch} and \eqref{eq:ranksplit2} together, and using that $\eps > e^{-cn}$, we have
\[
\E_v\, \P_{M}(\| Mv\|_2 \leq \eps n^{1/2} ) \leq (C\eps)^n.
\] Observe that the loss from the rank at \eqref{eq:movingv} is compensated by the gain 
afforded by the extra constraint added to our Littlewood-Offord step. 

While this is a good bound on the expectation, this is \emph{not} enough for our purposes, as the first moment only tells us, via Markov, that
\[ \PP_{v \in \L_{\eps}} \bigg( \PP_M \left( \| Mv\|_2 \leq \eps n^{1/2} \right) \geq (L\eps)^n \bigg) \leq L^{-n}\, ,
\] falling short of our desired $L^{-2n}$ bound. 

So to prove our result, we instead study\footnote{Actually, we need a slight variant, where we cut out structured vectors.} the \emph{second moment} of $\PP_M ( \| Mv\|_2 \leq \eps n^{1/2} )$,
\[ \E_v \left( \PP_M ( \| Mv\|_2 \leq \eps n^{1/2} ) \right)^2 , \]
in much the same way, but with a few added technicalities. 

To say a few words about how the second moment is different, we will see (Fact~\ref{fact:2ndMoment})
\[
\left( \PP_M \left( \|Mv\|_2 \leq \eps n^{1/2} \right) \right)^2\leq \P( \|H_1v_{[d]}\|_2 \leq \eps n^{1/2}, \|H_2 v_{[d]}\|_2 \leq \eps n^{1/2}
 \text{ and } \| H^Tv_{[d+1, n]}\|_2 \leq 2 \eps n^{1/2})\, ,
\] where $H_2$ is an independent copy of $H_1$ and $H:=[H_1, H_2]$ is the concatenation of these two matrices. 
We then proceed in much the same way as above, but treating $H$, in place of $H_1$, and carrying forward the two ``hard'' constraints resulting from the two copies of $v_{[d]}$.
This explains the shape of our ``real'' inverse Littlewood-Offord theorem, Lemma~\ref{lem:CondWalkLCMfinal}, where we allow for these two hard constraints. 
Ultimately, we arrive at the bound 
\[
\E_v \left( \PP_M ( \| Mv\|_2 \leq \eps n^{1/2} ) \right)^2 \leq (C\eps)^{2n},
\] 
which implies the desired conclusion at~\eqref{eq:secondmomentsketch}.

\subsection{A few remarks on presentation}

This paper is roughly divided into three parts. The first part consists of Sections~\ref{sec:ILwO-CondWalks}-\ref{sec:ILwO-CondWalksIII} which are dedicated to proving our conditioned inverse Littlewood-Offord result, Lemma~\ref{lem:CondWalkLCMfinal}, which is the ``real'' version of Theorem~\ref{thm:invLwO}. These sections lay the groundwork for Section~\ref{sec:matrixWalks},
where we prove Theorem~\ref{lem:rankH}, which is the only result we carry forward into later sections. 

The second part of the paper consists of Section~\ref{sec:Size-Net} and Section~\ref{sec:approx-w-net}. In Section~\ref{sec:Size-Net}, we obtain our crucial
bound on the size of our net $\cN_{\eps}$ using our novel ``inversion of randomness'' technique, as outline above. On the other hand, Section~\ref{sec:approx-w-net} contains the less exciting proof that $\cN_{\eps}$ is in fact a net for $\Sigma_{\eps}$.

In the final section, Section~\ref{sec:ProofOthm}, we pull together the various elements of this paper, state the reductions we will use from previous work
and prove Theorem~\ref{thm:main}.

In most cases, we have highlighted the main results of each section at the start. So if one does not want to delve into the details of a particular element of the proof, one can simply inspect the top of the section to glean what is needed for going forward.

\section{Central Definitions}\label{sec:Definitions}
We now turn to give a proper treatment of the proof, by laying out the key definitions that will concern us in this paper. 
 We begin by partitioning the sphere $\S^{n-1}$ into ``structured'' and ``unstructured'' vectors. Formally, we set $\g = e^{-cn}$, for sufficiently small $c>0$, and then define the ``structured'' vectors as
\[ \Sigma := \left\lbrace v \in \S^{n-1} : \rho(v) \geq \g \right\rbrace. \]
The invertibility of a random symmetric matrix on the set of ``unstructured'' vectors $ v \in \S^{n-1} \setminus \Sigma$ is already well understood and so we can restrict our attention to this set of structured vectors. We refer the reader to Section~\ref{sec:ProofOthm} for the details here.

Following Rudelson and Vershynin \cite{RV}, we make a further reduction to working with vectors that are reasonably ``flat'' on a large part of their support. For $D \subseteq [n]$, with $|D| = d$, we define 
\begin{equation}\label{eq:defI} \cI(D) := \left\lbrace v \in \S^{n-1} :  (\k_0 + \k_0/2) n^{-1/2} \leq |v_i| \leq  (\k_1 - \k_0/2) n^{-1/2} \text{ for all } i\in D   \right\rbrace,\end{equation}
where $0 < \k_0 < 1 < \k_1$ are absolute constants, fixed throughout the paper and defined in Section~\ref{sec:constants}. We will set $d := c_0^2 n/2$, where $c_0$ is defined below in Section~\ref{sec:constants}.

Now set
\[ \cI := \bigcup_{D \subseteq [n], |D| = d } \cI(D). \]
The case of non-flat $v$ is already taken care of in the work of Vershynin \cite{vershynin-invertibility} (see Section~\ref{sec:ProofOthm}) and so it is enough to work with $\cI \cap \Sigma$. 
Since we will ultimately union bound over $D$, it is enough to work with $\cI(D) \cap \Sigma$, for \emph{some} fixed set $D$, and so, by symmetry it is enough to restrict our attention to vectors $ v\in \cI([d]) \cap \Sigma$. 

Now, with this in mind, we further partition the set $\cI([d]) \cap \Sigma \subseteq \S^{n-1}$, but for this we need to introduce another 
distribution on symmetric matrices. Define the probability space $\cM_n(\mu)$ by defining $M \sim \cM_n(\mu)$ to be the random matrix 
\[ M =  
\begin{bmatrix}
{\bf 0 }_{[d] \times [d]} & H_1^T \\
H_1 & { \bf 0}_{[d+1,n] \times [d+1,n]}  
\end{bmatrix}, \] 
where $H_1$ is a $(n-d) \times d$ random matrix with i.i.d.\ entries that are $\mu$-lazy (that is, $(H_1)_{i,j} = 0$ with probability $1-\mu$ and $(H_1)_{i,j} = \pm 1 $ with probability $\mu/2$). In fact, we will fix $\mu =1/4$ throughout the paper.

Now, given $v\in \cI([d])$ and $L>0$, in the spirit of \cite{Tikhomirov}, we define the \emph{threshold} 
\[ \cT_L(v)= \sup\left\{t\in[0,1]:  \PP(\|Mv\|_2\leq t\sqrt{n}) \geq (4Lt)^n\right\}, \]
or the ``scale'' of $v$, as we called it in Section~\ref{sec:sketch}. Observe carefully here that we are defining $\cT_L$ \emph{relative to the matrix} $M$, rather than our original distribution $A$. 

We may now define our partition of $\cI([d]) \cap \Sigma$. For $\eps\in (0,1)$, let
\[ \Sigma_\eps :=\left\{v\in \cI([d]) : \cT_L(v)\in [\eps,2\eps] \right\}\, .\]
We shall show (as it is not obvious) that indeed 
\[ \Sigma \cap \cI([d]) \subseteq  \bigcup_{\eps > \g^4} \Sigma_{\eps}. \]
With the definition of $\Sigma_{\eps}$ in hand, we are able to define $\cN_{\eps}$ which will be an efficient net for $\Sigma_{\eps}$ at scale $\eps$. It turns out that \emph{defining} this net is not hard, although showing that it satisfies the desired properties will be the main challenge of this paper. For this, we first define the \emph{trivial net at scale} $\eps$ to be\footnote{Here and throughout, $B_n(x,r)$ is the $\ell^2$ ball centered at $x$ with radius $r$.} 
\[ \L_{\eps} := B_n(0,2) \cap \big( 4\eps  n^{-1/2} \cdot \Z^n\big) \cap \cI'([d]), \]
which is a natural net for $\cI([d])$. Where $\cI'(D)$ is similar to $\cI(D)$ but with slightly looser constraints and relative to $\R^n$;
\[ \cI'(D)  := \left\lbrace v \in \R^{n} :  \k_0 n^{-1/2} \leq |v_i| \leq  \k_1 n^{-1/2} \text{ for all } i\in D   \right\rbrace. \]

Since we are only interested in approximating vectors in $\Sigma_{\eps}$, we can get away with a significantly more efficient net.
For this we introduce two more concentration functions. First, we define the \emph{L\'evy concentration function}: if $X$ is a random vector taking values in $\R^n$, define 
\[ \cL(X,t) := \max_{w \in \R^n} \PP\left( \| X - w \|_2 \leq t \right). \]
Second, we define a variant of this concentration function for the uniform distribution on random symmetric matrices with capped operator 
norm\footnote{For a matrix $A$, we use the notation $\|A\| := \max_{x : \|x\|_2 =1} \|A\|_2$ to denote the usual $2 \rightarrow 2$ operator norm.}.

\[ \cL_{A,op}(v,t) := \max_{w \in \R^n} \PP\left(\{ \| Av - w \|_2 \leq t \}  \cap \{ \|A\| \leq 4\sqrt{n} \} \right). \]

Here we are just cutting out the slightly irritating event that $A$ has large operator norm. Intuitively this is an acceptable move as the probability that
$\|A\| \geq 4\sqrt{n}$, is exponentially small (see Lemma~\ref{lem:op-concentration}), however some care is needed as we are mostly concerned with far less likely events. 

We now introduce our nets $\cN_{\eps}$,
\[ \cN_{\eps}  := \left\lbrace  v \in \L_{\eps} :  \PP(\|Mv\|_2\leq 4\eps\sqrt{n}) \geq (L\eps)^n \text{ and }  \cL_{A,op}(v,\eps\sqrt{n}) \leq (2^8L\eps)^n \right\rbrace. \] 
The reader should view the lower bound $\PP(\|Mv\|_2\leq 4\eps\sqrt{n})\geq (L\eps)^n$ as the real core of this definition, while the upper bound for 
$\cL_{A,op}$ is less important. The two main tasks of this paper will be to show that $\cN_{\eps}$ is indeed a net for $\Sigma_\eps$ (an easier task) and secondly that $|N_{\eps}|/|\L_{\eps}|$ is smaller than $\approx L^{-2n}$, where $L$ is a large constant.

\subsection{Discussion of constants and parameters}\label{sec:constants}
We will treat the constants $\k_0,\k_1$ (seen at \eqref{eq:defI}) as absolute throughout the paper, and we allow other absolute constants $C,C',\cdots $ to depend on these exact quantities. In particular, we set $\k_0 = \rho$ and $\k_1 = \delta^{-1/2}/2$, where $\delta,\rho$ are as in Lemma~\ref{lem:compressible} (which is a lemma from \cite{vershynin-invertibility}). While we have not computed these constants, it would not be too much work to do so.

We also note our treatment of $c_0$, which, for most of the paper, will be presented as a parameter and dependencies involving $c_0$ will be explicitly noted.
However, we will ultimately fix $c_0 = \min\{ 2^{-24}, \rho \delta^{1/2} \}$ where, again, $\delta,\rho$ are as in Lemma~\ref{lem:compressible}. Thus it is no harm 
for the reader to view $c_0$ as an absolute constant which is fixed throughout the paper. The reason for the extra care with $c_0$ comes from its delicate relationship to 
$d/n$. Indeed, we will ultimately set $d := \lceil c_0^2 n/2 \rceil$.

Another point to note is our use of $R$, which represents related, but different constants throughout the paper. Roughly speaking, these related values of $R$ increase as we get deeper into the proof.

\section{Inverse Littlewood-Offord for conditioned random walks I: Statement of result and setting up the proof}\label{sec:ILwO-CondWalks}

This section is the first of three sections where we lay out and prove our main inverse Littlewood-Offord type theorem, Lemma~\ref{lem:CondWalkLCMfinal}, which works in the presence of a large number ($k \approx n $) of relatively soft constraints on our random walk. 
As mentioned before, the conclusion of our Littlewood-Offord theorem will be similar to that of Rudelson and Vershynin \cite{RV}, who showed
that vectors $v$, for which the random walk $\la v, \tau \ra$ concentrates, admit non-trivial least common denominators. As we will see,
the proof of Lemma~\ref{lem:CondWalkLCMfinal} is rather involved and consists mainly of a geometric argument on the Fourier side to ``decouple'' the many soft constraints from the few hard constraints.

Given a $2d \times \ell$ matrix $W$ (which encodes these soft constraints on our walk, as in Theorem~\ref{thm:invLwO}) and a vector $Y\in \R^d$, we define the $Y$-augmented matrix $W_Y$ as 
\begin{equation}\label{eq:WYdef} W_Y = \begin{bmatrix}\,  W  ,\, \begin{bmatrix} { \bf 0}_d \\ Y \end{bmatrix} ,\, \begin{bmatrix} Y \\ { \bf 0}_d \end{bmatrix} \end{bmatrix} .  \end{equation}
Here $Y \approx v/t$ will be a re-scaled version of $v$ from Theorem~\ref{thm:invLwO}.
We define, for $\alpha \in (0,1)$, the \emph{least common denominator} of a vector $v\in \R^d$ to be
 \begin{equation} \label{eq:LCD-def} D_\alpha(v):=\inf\left\{\phi>0:~\|\phi\cdot v\|_{\T}\leq \min\left\{ \phi\|v\|_2/2 , \sqrt{\alpha d}\right\}\right\},\end{equation} 
 where $\|x\|_{\T} := \inf\{ \|x - y\|_2 : y \in \Z^d \}$, for $x \in \R^d$, denotes the minimum distance to an integer point. 
 Note the definition at \eqref{eq:LCD-def} is a bit different from the definition presented in the introduction, in that the ``non-degeneracy condition'' is now $\|\phi\cdot v\|_{\T}\leq \phi\|v\|_2/2$.  We will stick with this definition throughout the paper. 
 
We let $\|A\|_{\HS}$ denote the Hilbert-Schmidt norm of a matrix $A$, that is, $\|A\|_{\HS}^2 :=  \sum_{i,j} |A_{i,j}|^2$ and for $\mu \in (0,1)$, $m \in \N$, define the $m$-dimensional $\mu$-lazy random vector $\tau \sim \Q(m,\mu)$ to be the vector with independent entries $(\tau_i)_{i=1}^m$, satisfying
\[ \Pr(\tau_i=-1)=\Pr(\tau_i=+1)=\mu/2 \qquad \text{ and } \qquad \Pr(\tau_i=0)=1-\mu. \]
We now state our main inverse Littlewood-Offord type theorem, which is our ``real'' (and strengthened) version of Theorem~\ref{thm:invLwO}, from Section~\ref{subsec:LwO-intro}.

\begin{lemma}\label{lem:CondWalkLCMfinal}
For $d \in \N$ and $\alpha \in (0,1)$, let $0\leq k\leq 2^{-10}\alpha d$ and $t \geq \exp(-2^{-9}\alpha d)$. For $0< c_0 \leq 2^{-24}$, let $Y \in \R^d$ satisfy $\| Y \|_2 \geq 2^{-10} c_0/t$,
let $W$ be a $2d \times k$ matrix with $\|W\| \leq 2$ and $\|W\|_{\HS}\geq \sqrt{k}/2$. 

If $\tau \sim \cQ(2d,1/4)$ and $D_\alpha(Y)> 16$ then 
\begin{equation}\label{eq:LCM-hypo}  
\cL\left( W^T_Y\tau, c_0^{1/2} \sqrt{k+1} \right) 
\leq \left( R t\right)^2 \exp(-c_0 k), \end{equation} where $R = 2^{32}c_0^{-2}$.
\end{lemma}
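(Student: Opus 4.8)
The strategy is to convert the L\'evy concentration bound into a statement about the Gaussian measure of a level set, using the Esseen-type inequality of Lemma~\ref{lem:esseen}, and then to show that this level set is small by exploiting the non-degeneracy hypothesis $D_\alpha(Y) > 16$ together with the near-isometry properties $\|W\| \le 2$, $\|W\|_{\HS} \ge \sqrt{k}/2$. Concretely, applying Lemma~\ref{lem:esseen} to the $2d \times (k+1)$ (in fact $k+2$, but two columns share the same $Y$ block, so effectively $k+1$ independent directions after accounting for the structure) matrix $W_Y$ with $\nu = 1/4$ and $\beta \approx c_0^{1/2}$ gives
\[ \cL(W_Y^T \tau, c_0^{1/2}\sqrt{k+1}) \le 2\exp\!\left(2\beta^2(k+1) - m/8\right)\gamma_{k+1}(S_{W_Y}(m)) \]
for the optimal $m$. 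So it suffices to show that $\gamma_{k+1}(S_{W_Y}(m)) e^{-m/8}$ is at most roughly $(Rt)^2 e^{-c_0 k}$, i.e.\ that the level set $S_{W_Y}(m)$ has small Gaussian measure \emph{unless} $m$ is large (in which case the $e^{-m/8}$ factor saves us).

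The geometric heart is to understand the level set $S_{W_Y}(m) = \{\theta \in \R^{k+1}: \|W_Y \theta\|_{\T} \le \sqrt{m}\}$. Writing $\theta = (\theta', s)$ with $\theta' \in \R^k$ corresponding to the $W$-columns and $s \in \R$ to the pair of $Y$-blocks, $W_Y\theta = W\theta' + s\cdot(\text{shifted copies of } Y)$. The condition $\|W_Y\theta\|_{\T} \le \sqrt m$ forces both $\|W\theta' + s Y\text{-part}\|_{\T}$ small on the top $d$ coordinates and similarly on the bottom $d$ coordinates. The key point: for fixed $\theta'$, the set of $s$ for which $s\cdot Y$ is within $\sqrt{m} + \|W\theta'\|$ of $\Z^d$ is controlled by $D_\alpha(Y) > 16$ — a non-trivial least common denominator means $s$ cannot range freely over a long interval while keeping $sY$ near the lattice, so the $s$-marginal of $S_{W_Y}(m)$ is confined to a union of short intervals around multiples of $1/D_\alpha(Y)$ (or $|s|$ itself is bounded by $O(\sqrt{\alpha d}/\|Y\|_2) = O(t)$ using $\|Y\|_2 \ge 2^{-10}c_0/t$, which is exactly where the $t^2$ on the right-hand side comes from, one factor of $t$ per $Y$-block). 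For the $\theta'$-directions, the hypotheses $\|W\|\le 2$ and $\|W\|_{\HS} \ge \sqrt k/2$ let us invoke a bound of the form $\gamma_k(S_W(m)) \lesssim e^{-ck}$ for $m$ not too large, in the same spirit as the $k=0$ reduction to Rudelson--Vershynin mentioned after Theorem~\ref{thm:invLwO}; more precisely one shows the $W$ part of the level set lives in a low-dimensional-looking region because few singular values of $W$ are small.

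The decoupling step — separating the $s$-integration from the $\theta'$-integration inside $\gamma_{k+1}(S_{W_Y}(m))$ — is where I expect the real difficulty to lie, since $W_Y\theta$ mixes $\theta'$ and $s$ and the constraint is on the combined vector modulo $\Z^{2d}$; one cannot simply Fubini the Gaussian integral. The plan is to first integrate out the Gaussian mass in the $s$-variable conditionally on $\theta'$, using the least-common-denominator bound on $Y$ to bound $\int \1[(\theta',s) \in S_{W_Y}(m)]\,d\gamma_1(s)$ by $O(t) \cdot (\text{number of near-lattice multiples in range})$, absorbing the count into the $e^{-m/8}$ slack via a standard covering/volumetric estimate on $S_W$-type sets; then integrate the resulting bound over $\theta'$ against $\gamma_k$, picking up the $e^{-c_0 k}$ from the genuine $W$-level-set estimate. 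Tracking constants — making sure $\beta^2 = c_0$ is small enough that $2\beta^2(k+1)$ is dominated, that $k \le 2^{-10}\alpha d$ and $t \ge \exp(-2^{-9}\alpha d)$ keep all the error terms in line, and that the final prefactor is exactly $R^2 = (2^{32}c_0^{-2})^2$ — will be tedious but routine once the geometric decoupling is in place. I would also need the reverse Esseen bound, Lemma~\ref{lem:revEsseen}, only indirectly (it is the tool used in the companion lower-bound direction of such arguments), so the main engine here is Lemma~\ref{lem:esseen} plus the geometry of $S_{W_Y}$.
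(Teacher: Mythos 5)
Your starting point matches the paper's: apply the Esseen-type Lemma~\ref{lem:esseen} to reduce the L\'evy concentration of $W_Y^T\tau$ to the Gaussian measure of the level set $S_{W_Y}(m)$, and then use $D_\alpha(Y)>16$ to show this measure is small. (One correction at the outset: the level set lives in $\R^{k+2}$, not $\R^{k+1}$ --- the two augmenting columns $({\bf 0}_d,Y)$ and $(Y,{\bf 0}_d)$ are orthogonal and carry independent dual variables $\theta_{k+1},\theta_{k+2}$; collapsing them to a single $s$ is inconsistent with your own observation that each $Y$-block contributes a factor of $t$.) However, the step you flag as ``where I expect the real difficulty to lie'' is precisely where your plan has a genuine gap, in two places.

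First, $D_\alpha(Y)>16$ constrains only dilates $\phi\in(0,16]$; it says nothing about the structure of $\phi\cdot Y$ for $\phi>16$, so it does not directly confine the $(\theta_{k+1},\theta_{k+2})$-marginal of $S_{W_Y}(m)$ to short intervals ``around multiples of $1/D_\alpha(Y)$.'' What the hypothesis does control is \emph{differences}: if two points of the level set differ by an amount in $[s_0,16]$ in one of the last two coordinates, that difference is a forbidden dilate. The paper therefore runs a difference-set argument: a tiling of the $(\theta_{k+1},\theta_{k+2})$-plane by $16\times16$ boxes (Lemma~\ref{fact:covFact}, using Gaussian decay to reduce to one box) localizes everything to a window where the LCD hypothesis applies, and Fact~\ref{fact:2dclosepoints}/Lemma~\ref{lem:slice-upperBound} then force the vertical fibers into $s_0\times s_0$ boxes, yielding the $s_0^2\approx t^2$ factor. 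Second, and more seriously, your proposed order of integration --- bound the $s$-fiber for each fixed $\theta'$, then ``integrate the resulting bound over $\theta'$ \ldots picking up the $e^{-c_0k}$ from the genuine $W$-level-set estimate'' --- does not close, because the set of $\theta'$ with a nonempty $s$-fiber is \emph{not} a level set of $W$: the condition is $\|W\theta'+aY_1+bY_2\|_\T\le\sqrt m$ for \emph{some} $(a,b)$, and the $Y$-terms can absorb arbitrary shifts. The paper decouples by Fubini in the opposite order: fix $(a,b)$, take two points $\theta',\theta''$ in the same horizontal fiber so the $Y$-terms cancel, giving $F_y(S;a,b)-F_y(S;a,b)\subseteq S_W(4m)$ (Fact~\ref{fact:LevelSetTriangleInq}); Borell's Gaussian Brunn--Minkowski inequality (Lemma~\ref{lem:GaussBM}, $\gamma_k(A)\le\gamma_k(A-A)^{1/4}$) converts this into a bound on the fiber itself; and the \emph{reverse} Esseen inequality (Lemma~\ref{lem:revEsseen}) plus the Hanson--Wright-type Lemma~\ref{lem:HansonWright} --- which is where $\|W\|_{\HS}\ge\sqrt k/2$ and $\|W\|\le2$ are actually used --- turn $\gamma_k(S_W(4m))$ into the $e^{-c_0k}$ gain. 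So Lemma~\ref{lem:revEsseen} is not ``only indirectly'' needed; it is the essential bridge back from the decoupled level set to a probability that can be bounded. Without the difference-set/Brunn--Minkowski mechanism your sketch has no way to decouple $\theta'$ from $(a,b)$, and without the tiling it has no way to produce a dilate in the range where $D_\alpha(Y)>16$ bites.
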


\vspace{3mm}
Before we start working towards the proof of Lemma~\ref{lem:CondWalkLCMfinal}, we make a few informal remarks on its statement and its connection to Theorem~\ref{thm:invLwO}. The main difference to note is that there are now two ``hard'' constraints encoded in the left-hand side of \eqref{eq:LCM-hypo}; these are, in the notation of Theorem~\ref{thm:invLwO},
\[ |\la (v,0_{[d]}) ,\tau \ra| < t  \text{ and  } |\la (0_{[d]}, v ) ,\tau \ra| < t .\]
The ``soft'' constraints are now encoded as the columns $w_1,\ldots,w_k$ of $W$. 

To combine the ``hard'' and ``soft'' constraints into a single matrix inequality, we rescale $v$, 
 thinking of $|\la (v,0_{[d]}) ,\tau \ra| < t$ as $|\la c_0^{1/2} t^{-1} (v,0_{[d]}) ,\tau \ra| < c_0^{1/2}$. This explains the scaling on $Y$, which is unusually written as $\| Y \|_2 \geq 2^{-10} c_0/t$, where $t$ should be thought of a very small number $\approx e^{-cn}$.  
 
The scaling of $D_{\alpha}(Y)$ in Lemma~\ref{lem:CondWalkLCMfinal}, in contrast with the statement of Theorem~\ref{thm:invLwO}, is explained in a similar way. If
  $\phi \cdot Y  \sim \Z^d$, where $\phi = O(1)$ then $(\phi/t) = O(1/t)$ satisfies $(\phi/t) \cdot v \sim \Z^d$, as we think of $Y \approx v/t$.

This also makes the numerology of Lemma~\ref{lem:CondWalkLCMfinal} a little more transparent. If $Y$ is a random vector with $\|Y\|_2 \approx 1/t$, we
have $|Y_i| \approx t^{-1}n^{-1/2}$ and thus we expect the one dimensional random walk $\la Y, \tau \ra $ to have
\[ \cL\left( \la Y, \tau \ra , c_0^{1/2} \right) \approx t. \]
Thus we expect $Y$ to have some special structure if $\cL\left( \la Y, \tau \ra , c_0^{1/2} \right) \gg t$. On the other hand, for each $w_i$ we expect that $|\la w_i ,\tau \ra| \approx 1 $ and, since the $w_i$ must be ``approximately orthogonal'' (due to the assumption $\|W\| \leq 2$), we should expect
\[ \cL\left( W \tau , c_0^{1/2}\sqrt{k} \right) \approx e^{-ck}, \]
being somewhat vague about this constant $c>0$.
Second, note that Lemma~\ref{lem:CondWalkLCMfinal} is still interesting even in the case $k=0$, where it is not hard to see that it reduces to 
\[ \cL\left( \la Y, \tau \ra ,\, c_0^{1/2}  \right)  \leq  R t ,\]  
whenever $D_{\alpha}(Y) \leq 16$, which is essentially the statement of the main inverse Littlewood-Offord theorem of Rudelson and Vershynin in \cite{RV}.

Finally, we point out that the contrapositive of Lemma~\ref{lem:CondWalkLCMfinal} is more conducive to the ``inverse Littlewood-Offord'' reading:

\vspace{2mm}

\begin{center}
if \ $\cL(W^T_Y\tau, c_0^{1/2}\sqrt{k+1}) \geq \left( R t\right)^2 \exp(-c_0 k)$ \ then \ $D_{\alpha}(Y) \leq 16$. 
\end{center}

\vspace{2mm}

For the remainder of this section, we take some first steps towards the proof of Lemma~\ref{lem:CondWalkLCMfinal}. We first pass to the Fourier side and set up 
our problem there, describing our goal in terms of a certain ``level set''. We then make a first reduction, by getting some basic control on the fibers of this level set. In the following section, Section~\ref{sec:ILwO-CondWalksII}, we make a more significant reduction about the geometry of our level set. In Section~\ref{sec:ILwO-CondWalksIII} we prove the key Lemma~\ref{lem:CondWalkLCM}, the statement of which is very similar to that of Lemma~\ref{lem:CondWalkLCMfinal}, but with a more complicated quantity replacing the right-hand side of \eqref{eq:LCM-hypo}. Finally, with one further step, we conclude Section~\ref{sec:ILwO-CondWalksIII}, with the proof of Lemma~\ref{lem:CondWalkLCMfinal}.

\vspace{3mm}

\subsection{Passing to the Fourier side}\label{sec:go-Fourier}

To prove Lemma~\ref{lem:CondWalkLCMfinal} we will prove the contrapositive; assume \eqref{eq:LCM-hypo} fails and then obtain an upper bound on the least 
common denominator by finding a non-trivial $\phi >0 $  that satisfies $\phi = O(1)$ and $\| \phi \cdot Y \|_{\T} \leq \sqrt{\alpha d}$.
Our first step in proving Lemma~\ref{lem:CondWalkLCMfinal} is to use the lower bound in the negation of \eqref{eq:LCM-hypo} to obtain a lower bound on a level set of an appropriate Fourier transform. This manoeuvre was pioneered by Hal\'{a}sz \cite{halasz} and has been a key step in all of the Fourier approaches to inverse Littlewood-Offord theory.

For a $2d \times \ell$ matrix $W$, we define the $W$-\emph{level set}, for $t \geq 0$, to be
\[ S_W(t) := \left\lbrace \theta \in \R^{\ell} : \|W\theta\|_{\T} \leq \sqrt{t} \right\rbrace \] 
and we define $\g_\ell$ to be the $\ell$ dimensional Gaussian measure defined by $\g_\ell(S) = \PP(g \in S)$, where $g \sim \cN(0, (2\pi)^{-1} I_{\ell})$ and $I_\ell$ denotes the $\ell\times\ell$ identity matrix.

The following Esseen-type lemma, allows us relate the quantity seen at the left-hand side of \eqref{eq:LCM-hypo} with the Gaussian volume of a  level-set.

\begin{lemma} \label{lem:esseen} Let $\beta >0$, $\nu \in (0,1/4]$, let $W$ be a $2d \times \ell$ matrix and let $\tau\sim\cQ(2d, \nu)$. Then there exists $m>0$ so that 
\[\cL(W^T\tau, \beta\sqrt{\ell}) \leq 2\exp\left(2\beta^2 \ell -\nu m/2 \right) \gamma_{\ell}(S_W(m)). \]
\end{lemma}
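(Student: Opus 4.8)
The plan is to treat this as an Esseen-type anti-concentration estimate and assemble it from three ingredients: (i) a Fourier reduction of $\cL(W^T\tau,\beta\sqrt\ell)$ to an integral of the characteristic function of $W^T\tau$ against the Gaussian measure $\gamma_\ell$; (ii) an elementary pointwise bound on that characteristic function in terms of $\|W\theta\|_\T$; and (iii) a layer-cake decomposition that turns the resulting integral into a single level set $S_W(m)$.

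For (i), I would fix $w\in\R^\ell$ and dominate the indicator of the ball $B(w,\beta\sqrt\ell)$ by a Gaussian: since $e^{\pi\beta^2\ell-\pi\|x-w\|_2^2}\geq 1$ on that ball, $\PP(\|W^T\tau-w\|_2\leq\beta\sqrt\ell)\leq e^{\pi\beta^2\ell}\,\E[e^{-\pi\|W^T\tau-w\|_2^2}]$. The expectation is the value at $w$ of the density of $W^T\tau+g$, where $g$ is the Gaussian defining $\gamma_\ell$ (independent of $\tau$, with density $e^{-\pi\|x\|_2^2}$ and characteristic function $\E[e^{2\pi i\langle g,\xi\rangle}]=e^{-\pi\|\xi\|_2^2}$); bounding this density by its sup norm and using Fourier inversion, $\E[e^{-\pi\|W^T\tau-w\|_2^2}]\leq\int_{\R^\ell}|\phi_{W^T\tau}(\theta)|\,e^{-\pi\|\theta\|_2^2}\,d\theta=\int|\phi_{W^T\tau}|\,d\gamma_\ell$, with $\phi_{W^T\tau}(\theta)=\E[e^{2\pi i\langle\theta,W^T\tau\rangle}]$. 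Taking the sup over $w$ gives $\cL(W^T\tau,\beta\sqrt\ell)\leq e^{\pi\beta^2\ell}\int|\phi_{W^T\tau}|\,d\gamma_\ell$. I expect the delicate point of the whole argument to be the sharp form of this step: the self-dual Gaussian majorant yields the prefactor $e^{\pi\beta^2\ell}$, and pushing it down to the claimed $e^{2\beta^2\ell}$ needs a slightly better majorant of the ball indicator (one whose Fourier transform still decays like $e^{-\pi\|\xi\|_2^2}$) or a sharp Esseen inequality with constants carefully tracked; everything else is routine.

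For (ii), since the $\nu$-lazy coordinates of $\tau$ are independent, $\phi_{W^T\tau}(\theta)=\E[e^{2\pi i\langle W\theta,\tau\rangle}]=\prod_{j=1}^{2d}(1-2\nu\sin^2(\pi(W\theta)_j))$, and $\nu\leq 1/4$ forces every factor into $[1/2,1]$, so $1-x\leq e^{-x}$ gives $|\phi_{W^T\tau}(\theta)|\leq\exp(-2\nu\sum_{j=1}^{2d}\sin^2(\pi(W\theta)_j))$. Using $\sin^2(\pi x)\geq 4\,\mathrm{dist}(x,\Z)^2$ (which follows from $|\sin\pi x|\geq 2|x|$ on $[-\tfrac12,\tfrac12]$ by concavity of $\sin$, plus periodicity) one gets $\sum_j\sin^2(\pi(W\theta)_j)\geq 4\|W\theta\|_\T^2$, hence $|\phi_{W^T\tau}(\theta)|\leq\exp(-8\nu\|W\theta\|_\T^2)$.

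For (iii), write $\exp(-8\nu\|W\theta\|_\T^2)=8\nu\int_0^\infty e^{-8\nu s}\1[\theta\in S_W(s)]\,ds$ (since $\theta\in S_W(s)\iff\|W\theta\|_\T^2\leq s$), integrate against $\gamma_\ell$ and use Fubini to get $\int|\phi_{W^T\tau}|\,d\gamma_\ell\leq 8\nu\int_0^\infty e^{-8\nu s}\gamma_\ell(S_W(s))\,ds$. Then I would choose $m>0$ realizing (up to arbitrarily small loss) $\sup_{m>0}e^{-\nu m/2}\gamma_\ell(S_W(m))$, so $\gamma_\ell(S_W(s))\leq e^{\nu s/2}\cdot e^{-\nu m/2}\gamma_\ell(S_W(m))$ for all $s$; the integral is then at most $e^{-\nu m/2}\gamma_\ell(S_W(m))\cdot 8\nu\int_0^\infty e^{-(15/2)\nu s}\,ds=\tfrac{16}{15}e^{-\nu m/2}\gamma_\ell(S_W(m))$. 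Combining (i)–(iii) yields $\cL(W^T\tau,\beta\sqrt\ell)\leq\tfrac{16}{15}e^{\pi\beta^2\ell}e^{-\nu m/2}\gamma_\ell(S_W(m))$, and once the constant in (i) is tightened to $\leq\tfrac{15}{8}e^{2\beta^2\ell}$ the stated bound $2\exp(2\beta^2\ell-\nu m/2)\gamma_\ell(S_W(m))$ follows (the small numerical slack also handles the case where the supremum over $m$ is not attained).
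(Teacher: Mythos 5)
Your route is the same as the paper's: majorize the indicator of the ball $\{\|x-w\|_2\le\beta\sqrt{\ell}\}$ by a Gaussian and apply Markov, pass to the characteristic function $\vp(\theta)=\prod_j\big(1-\nu+\nu\cos(2\pi(W\theta)_j)\big)$, bound it pointwise by $\exp(-c\nu\|W\theta\|_{\T}^2)$, unfold the Gaussian integral of this bound as $c\nu\int_0^\infty e^{-c\nu u}\gamma_{\ell}(S_W(u))\,du$ by the layer-cake formula, and extract a single level $m$ near the maximizer of $e^{-\nu u/2}\gamma_{\ell}(S_W(u))$. Your steps (ii) and (iii) are correct (your pointwise bound $e^{-8\nu\|W\theta\|_{\T}^2}$ is even stronger than the paper's $e^{-\nu\|W\theta\|_{\T}^2}$ from \eqref{eq:phiBnds}, and the $16/15$ bookkeeping checks out).

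The gap is exactly the one you flag in step (i), and you do not close it: you end with $\cL(W^T\tau,\beta\sqrt{\ell})\le\tfrac{16}{15}\,e^{\pi\beta^2\ell}e^{-\nu m/2}\gamma_{\ell}(S_W(m))$, and since $\pi>2$ while $\beta^2\ell$ is unbounded in the statement, this does not imply the claimed $2e^{2\beta^2\ell-\nu m/2}\gamma_{\ell}(S_W(m))$. Moreover the deferred fix is not routine: replacing $e^{-\pi\|x\|_2^2}$ by a wider majorant $e^{-a\|x\|_2^2}$ with $a\le 2$ does reduce the Markov prefactor to $e^{a\beta^2\ell}$, but its Fourier transform is $(\pi/a)^{\ell/2}e^{-\pi^2\|\theta\|_2^2/a}$, so comparison with $\gamma_{\ell}$ costs an exponential volume factor $(\pi/a)^{\ell/2}$; there is no obvious majorant that is $\ge 1$ on the ball, has nonnegative transform dominated by $e^{-\pi\|\theta\|_2^2}$, and yields the prefactor $e^{2\beta^2\ell}$. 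The paper obtains $e^{\pi\beta^2\ell/2}\le e^{2\beta^2\ell}$ by applying Markov against $\exp(-\pi\|x-w\|_2^2/2)$ and then invoking Fact~\ref{fact:inversion} with the weight $e^{-\pi\|\theta\|_2^2}$; as your own computation implicitly shows, that pairing is miscalibrated (the self-dual identity pairs $e^{-\pi\|x-w\|_2^2}$ with the weight $e^{-\pi\|\theta\|_2^2}$, as one checks by taking $X\equiv 0$), so the honest output of this method is $\pi\beta^2\ell$ in the exponent, not $2\beta^2\ell$. This is immaterial downstream, since the lemma is only ever used with $\beta^2=O(c_0)$ and any absolute constant in front of $\beta^2\ell$ would do; but as a proof of the statement as literally written your argument is incomplete, and you should either prove the lemma with $\pi\beta^2\ell$ in the exponent (and check it suffices where the lemma is applied) or exhibit the improved majorant rather than asserting one exists.
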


The proof of this Lemma is a straightforward exercise with the characteristic function of $W^T\tau$ and is postponed to Appendix~\ref{sec:FourierPrep}.

We can now describe how our least common denominator can be spotted in Fourier space. From Lemma~\ref{lem:esseen} along with the negation of \eqref{eq:LCM-hypo}, we obtain $m >0$ and a set $S_{W_Y}(m) \subseteq \R^{k+2}$ with Gaussian volume bounded below by $(Rt)^2\exp(c_1m-c_2k)$. Now, for reasons that we will not explain here (since it is just a consequence of the Fourier transform), the first $k$-coordinates of the space, correspond to the $k$ ``soft'' constraints while the final two coordinates correspond to the two ``hard'' constraints. 

With this in mind, the idea is to find an element $\psi \in S_{W_Y}(m)$ for which $\| \psi_{[k]} \|_2 = O(\sqrt{k})$, and one of $\psi_{k+1},\psi_{k+2} $ is $O(1)$ and ``non-trivial''.
It will turn out that one of $\psi_{k+1},\psi_{k+2}$ is a good candidate for our desired least common denominator. The condition on the $\psi_{[k]}$ should be thought of as 
just getting these coordinates ``out of the way''. 

To find this desired $\psi \in S_{W_Y}(m)$, for $r,s>0$, we define the \emph{cylinder}
\begin{equation} \label{eq:defCy} 
\G_{r,s}:=\left\{\theta \in \R^{k+2} : ~\left\|\theta_{[k]} \right\|_2\leq r, |\theta_{k+1}|\leq s \text{ and } |\theta_{k+2}|\leq s\right\}.\end{equation} 

We now restate our condition on $\psi$ in terms of $\G_{r,s}$: we want to show that there exists an $x \in S_{W_Y}(m)$ for which 
\begin{equation}\label{eq:goal-in-Gamma}
  (\G_{2\sqrt{k},16} \setminus \G_{2\sqrt{k},s} + x) \cap S_{W_Y}(m)  \not= \emptyset , 
\end{equation}
where $s$ is chosen depending on the non-triviality condition we need.  We shall then ultimately see that if $y \in (\G_{2\sqrt{k},16} \setminus \G_{2\sqrt{k},s} + x)$,
where $x \in S_{W_Y}(m)$, then $(x-y)$ is a good candidate for $\psi$ (see Claims~\ref{claim:condWalkLCM1}-\ref{claim:CondRW3}). In what remains in this section, we warm up by making a first easy reduction on the structure of $S_{W_Y}(m)$ under the assumption that \eqref{eq:goal-in-Gamma} fails.

\subsection{A first reduction: controlling the density on fibers}\label{sec:geoOfS-1stReduction}

For our first reduction, we first record the following easy fact.

\begin{fact}\label{fact:2dclosepoints} For $s >0$, let $S \subseteq \R^2$ be such that $\g_2(S) \geq 8s^2$, then there exists $x,y\in S$ so that $s <\|x-y\|_\infty\leq 16$.
\end{fact}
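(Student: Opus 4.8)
\textbf{Proof proposal for Fact~\ref{fact:2dclosepoints}.} The plan is to prove the contrapositive: if every pair of points $x,y \in S$ with $\|x-y\|_\infty > s$ also satisfies $\|x-y\|_\infty > 16$ — equivalently, if $S$ contains no two points at $\ell^\infty$-distance in the half-open interval $(s, 16]$ — then $\g_2(S) < 8s^2$. The key observation is that such an $S$ must be ``sparse at scale $s$'': partition $\R^2$ into the grid of axis-parallel squares $Q_{a} := a + [0,s)^2$ for $a \in s\Z^2$. If two of these squares $Q_a, Q_{a'}$ with $a \neq a'$ both met $S$ and were ``close'' — say within $\ell^\infty$-distance between roughly $s$ and $15s$ of each other — we could pick points $x \in S \cap Q_a$ and $y \in S \cap Q_{a'}$ realizing an $\ell^\infty$-distance that falls in $(s,16]$, a contradiction. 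So the occupied squares must be spread out: any two distinct squares meeting $S$ are at mutual $\ell^\infty$-distance $> 15s$ (say), hence occupy disjoint ``buffered'' regions.

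The cleanest way to package this is via a disjointness/covering argument with a Gaussian tail bound. First I would note that $\g_2$ is a probability measure and decomposes as $\g_2(S) = \sum_{a \in s\Z^2} \g_2(S \cap Q_a)$. Let $\cU \subseteq s\Z^2$ index the squares that actually meet $S$. The sparsity just described means the squares $\{Q_a : a \in \cU\}$ are ``well-separated'': around each we can inscribe a larger concentric square $Q_a^+ := a + (-7s, 8s)^2$ (of side $15s$), and the separation guarantees the enlarged squares $Q_a^+$ are pairwise disjoint. Since $\g_2$ has total mass $1$, the enlarged squares have $\sum_{a \in \cU} \g_2(Q_a^+) \leq 1$. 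I would then compare $\g_2(S \cap Q_a) \leq \g_2(Q_a)$ to $\g_2(Q_a^+)$: because $Q_a^+$ contains $Q_a$ and extends it by a fixed multiplicative factor in each direction, a crude Gaussian density estimate — the density $e^{-\pi\|\theta\|_2^2}$ varies by at most a bounded factor over a region of bounded diameter, or more simply one just uses that a side-$15s$ square has Gaussian measure at least a constant times $s^2$ once $s$ is not too large, and at least a constant times the side-$s$ square's measure in general — gives $\g_2(Q_a) \leq C\, \g_2(Q_a^+)$ for an absolute constant $C$. Summing, $\g_2(S) \leq C \sum_{a \in \cU} \g_2(Q_a^+) \leq C$, which is not yet strong enough; so instead I would directly bound $\g_2(S \cap Q_a)$ by $s^2 \cdot \sup_{\theta \in Q_a} e^{-\pi\|\theta\|_2^2} \leq s^2 \cdot e^{-\pi \|a^-\|_2^2}$ where $a^-$ is the corner of $Q_a$ nearest the origin, and separately bound $\g_2(Q_a^+) \geq c\, s^2\, e^{-\pi\|a\|_2^2}$ for the enlarged square (valid when $s \leq 1$, say, with a direct computation on the side-$15s$ square; for $s$ large the conclusion $\g_2(S) \geq 8s^2 > 8$ is vacuous since $\g_2 \leq 1$, so we may assume $s$ small). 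Combining, $\g_2(S \cap Q_a) \leq C'\, \g_2(Q_a^+)$ with an absolute $C'$, and summing over the disjoint $Q_a^+$ yields $\g_2(S) \leq C' < 8$ once we also absorb the $s^2$ — more precisely we get $\g_2(S) \leq C' s^2$ with $C' < 8$ by tracking the constant from the side-$15s$ vs side-$s$ comparison, completing the contrapositive.

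I expect the main obstacle to be pinning down the separation constant and the resulting multiplicative loss so that the final bound is genuinely below $8s^2$ rather than merely $O(s^2)$. The statement has a specific constant $8$, so the Gaussian comparison between a side-$s$ square and its side-$15s$ (or whatever side the separation dictates) enlargement must be done with enough care that the ratio of measures beats $8$; since a side-$15s$ square has area $225 s^2$ versus $s^2$, there is plenty of room, and the Gaussian density ratio over a region of diameter $O(s)$ (for $s$ small) is close to $1$, so the bound $\g_2(Q_a^+) \geq (225 - o(1)) \cdot \g_2(Q_a) > 8\,\g_2(Q_a)$ should hold comfortably — but the bookkeeping of which half-open intervals and which buffer radius make the disjointness argument airtight is the fiddly part. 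A secondary subtlety is the boundary case: the hypothesis rules out distances in $(s,16]$, a fixed interval not scaling with $s$, so the argument genuinely needs $s$ to be small (the buffer is $15s$, not $15$, and one must check $15s \leq 16$ or handle $s > 1$ trivially via $\g_2(S) \leq 1 < 8s^2$), and I would state that reduction explicitly at the start.
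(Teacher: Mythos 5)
Your overall strategy (prove the contrapositive, tile the plane, confine $S$ locally to squares of side $O(s)$, and sum Gaussian measures) is the right shape, but the argument as written has a fatal scale error. The hypothesis of the contrapositive forbids $\ell^\infty$-distances in $(s,16]$, whose upper endpoint is the \emph{absolute constant} $16$; this forces distinct ``clusters'' of $S$ to be separated by more than $16$, so that, Gaussian-weighted, only $O(1)$ clusters matter and each contributes at most $O(s^2)$. You instead extract only the separation at scale $15s$ (i.e.\ you use only that distances in $(s,15s]$ are forbidden), and that weakened conclusion is genuinely insufficient: the set $S=\bigcup_{a\in 16s\Z^2}\big(a+[0,s)^2\big)$ has no two points at $\ell^\infty$-distance in $(s,15s]$, yet $\g_2(S)=\Theta(1)\gg 8s^2$ as $s\to 0$. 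Accordingly, your packing bound $\sum_a \g_2(Q_a^+)\leq 1$ over disjoint side-$15s$ enlargements can only ever produce $\g_2(S)\leq C$ for an absolute constant $C$, never $C's^2$; the closing claim that one can ``absorb the $s^2$'' has no mechanism behind it. There are two further local problems: (i) adjacent occupied side-$s$ cells are possible (a diameter-$s$ cluster of $S$ can straddle grid lines), so the enlargements $Q_a^+$ are not pairwise disjoint as claimed; (ii) the comparison $\g_2(S\cap Q_a)\leq C'\g_2(Q_a^+)$ does not follow from your sup/inf density bounds, since the ratio of the Gaussian density between the near corner of $Q_a$ and the far corner of $Q_a^+$ is $e^{\Theta(\|a\|_2 s)}$, unbounded in $a$.

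For comparison, the paper tiles at the absolute scale: cover $\R^2$ by the squares $Q_p=p+[-8,8]^2$ with $p\in 16\Z^2$. Any two points of $S$ in the same tile are at $\ell^\infty$-distance at most $16$, hence by hypothesis at most $s$, so $S\cap Q_p\subseteq x(p)+[-s,s]^2$ for some $x(p)\in Q_p$. A direct estimate (after reducing to $s\leq 1$, the case $s>1$ being trivial since $\g_2\leq 1$) gives $\g_2\big(x(p)+[-s,s]^2\big)\leq 4s^2e^{-\pi\|p\|_2^2/16}$, and summing over $p\in 16\Z^2$ yields $\g_2(S)<8s^2$. Your argument becomes correct essentially only by being converted into this one.
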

\begin{proof}
We prove the contrapositive and assume there is no pair $x,y \in S$ with $s < \| x - y \|_{\infty} \leq 16$.  We cover $\R^2 = \bigcup_{p\in 16\cdot \Z^2}Q_p$ where 
$Q_p :=p+[-8,8]^2$. Thus 
$ \g_2(S)\leq\sum_{p\in 16\cdot\Z^2}\g_2(S\cap Q_p)$. Since there is no $x,y\in S$ so that $s < \|x-y\|_\infty \leq 16$, then for each $Q_p$ there is 
$x = x(p)\in Q_p$ so that \[\gamma_2(S\cap Q_p)\leq\g_2(S\cap Q_p \cap (x(p)+[-s,s]^2)) \leq \g_2(x(p)+[-s,s]^2). \] 
Letting $g \sim \cN(0,(2\pi)^{-1})$, we have
 \[\g_2(x+[-s,s]^2)\leq\Pr( x_1-s\leq g\leq x_1+s)\Pr( x_2-s\leq g\leq x_2+s)\leq 4s^2\exp(-\pi\|p\|_2^2/16),\]
where we have used that $(x_i-s)^2\geq p_i^2/8$, which holds since we may assume that $s\leq 1$ (else the statement holds trivially). Now we may bound
  \[\gamma_2(S)\leq\sum_{p\in 16\cdot\Z^2}\gamma_2(S\cap Q_p)\leq 4s^2\sum_{p\in 16\cdot\Z^2}\exp(-\pi\|p\|_2^2/16)< 8s^2,\]\
which completes the proof.
\end{proof}

\vspace{3mm}

Now for $S \subseteq \R^{k+2}$, and $\t_{[k]} \in \R^{k}$, we define the ``vertical fiber'' 
\begin{equation}\label{eq:defS()} S(\t_{[k]}) := \left\lbrace (\t_{k+1},\t_{k+2}) \in \R^2 : (\t_{[k]},\t_{k+1},\t_{k+2}) \in S \right\rbrace .  \end{equation}

The following lemma tells us that if we are unable to find a point in our desired intersection $\left(\Gamma_{r,16} \setminus \Gamma_{r,s} + x\right) \cap S$,
for all $x\in S$, we can obtain good control on the measure of the vertical fibers of $S$.

\begin{lemma}\label{lem:slice-upperBound}
	For $k \in \N$, $r > 0$ and $s > 0$, let $S \subset \R^{k+2}$ be such that for all $x \in S$ we have $$\left(\Gamma_{r,16} \setminus \Gamma_{r,s} + x\right) \cap S = \emptyset\,.$$ 
	Then $$\max_{\theta_{[k]}\in \R^k} \gamma_2 ( S(\theta_{[k]})) \leq 8 s^2\,.$$
\end{lemma}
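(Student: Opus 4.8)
Looking at Lemma~\ref{lem:slice-upperBound}, the hypothesis says that for every $x \in S$, the set $S$ contains no point of the ``slab'' $\Gamma_{r,16}\setminus\Gamma_{r,s} + x$, which is the set of translates $x + \theta$ where $\|\theta_{[k]}\|_2 \le r$ and at least one of $|\theta_{k+1}|, |\theta_{k+2}|$ lies in $(s, 16]$. The conclusion bounds the Gaussian measure of every vertical 2-dimensional fiber $S(\theta_{[k]})$. The plan is to fix an arbitrary $\theta_{[k]} \in \R^k$, look at the fiber $S(\theta_{[k]}) \subseteq \R^2$, and apply Fact~\ref{fact:2dclosepoints} contrapositively: it suffices to show that $S(\theta_{[k]})$ contains no pair $x, y$ with $s < \|x-y\|_\infty \le 16$.

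So the main step is: suppose for contradiction that $(\theta_{k+1}, \theta_{k+2})$ and $(\theta'_{k+1}, \theta'_{k+2})$ both lie in $S(\theta_{[k]})$ with $s < \|(\theta_{k+1}-\theta'_{k+1}, \theta_{k+2}-\theta'_{k+2})\|_\infty \le 16$. Then the two full points $x := (\theta_{[k]}, \theta_{k+1}, \theta_{k+2})$ and $x' := (\theta_{[k]}, \theta'_{k+1}, \theta'_{k+2})$ both lie in $S$, and their difference $x' - x = (0_{[k]}, \theta'_{k+1}-\theta_{k+1}, \theta'_{k+2}-\theta_{k+2})$ has its first $k$ coordinates zero (so trivially $\|(x'-x)_{[k]}\|_2 = 0 \le r$) and has $\|((x'-x)_{k+1}, (x'-x)_{k+2})\|_\infty \in (s, 16]$. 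The latter means at least one of the last two coordinates of $x'-x$ has absolute value in $(s,16]$ and both have absolute value $\le 16$, so $x' - x \in \Gamma_{r,16}\setminus\Gamma_{r,s}$. Hence $x' \in (\Gamma_{r,16}\setminus\Gamma_{r,s} + x)\cap S$, contradicting the hypothesis. Therefore $S(\theta_{[k]})$ has no such pair, and Fact~\ref{fact:2dclosepoints} gives $\gamma_2(S(\theta_{[k]})) < 8s^2 \le 8s^2$ (if the measure were $\ge 8s^2$ such a pair would exist). Taking the max over $\theta_{[k]}$ finishes it.

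I do not anticipate a serious obstacle here — the lemma is essentially a ``reduction to two dimensions'' where the $k$-dimensional part of the cylinder constraint is automatically satisfied because the two fiber points share the same first $k$ coordinates. The only mild care needed is bookkeeping with the definition of $\Gamma_{r,\cdot}$: one must check that a vector with zero first-$k$ coordinates and last two coordinates of sup-norm in $(s,16]$ lies in $\Gamma_{r,16}$ but not $\Gamma_{r,s}$, which is immediate from~\eqref{eq:defCy}. One small subtlety is whether Fact~\ref{fact:2dclosepoints}'s strict/non-strict inequalities line up: Fact~\ref{fact:2dclosepoints} gives a pair with $s < \|x-y\|_\infty \le 16$ whenever $\gamma_2(S) \ge 8s^2$, so its contrapositive is exactly ``no such pair implies $\gamma_2 < 8s^2$'', which yields the (non-strict) bound $\le 8s^2$ claimed. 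That is all that is required.

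\begin{proof}
Fix $\theta_{[k]} \in \R^k$ and consider the fiber $S(\theta_{[k]}) \subseteq \R^2$ defined at \eqref{eq:defS()}. We claim there is no pair $p, p' \in S(\theta_{[k]})$ with $s < \|p - p'\|_\infty \leq 16$. Suppose otherwise, and write $p = (\theta_{k+1}, \theta_{k+2})$, $p' = (\theta'_{k+1}, \theta'_{k+2})$, so that $x := (\theta_{[k]}, \theta_{k+1}, \theta_{k+2})$ and $x' := (\theta_{[k]}, \theta'_{k+1}, \theta'_{k+2})$ both lie in $S$. Let $z := x' - x$. Then $z_{[k]} = 0$, so $\|z_{[k]}\|_2 = 0 \leq r$, while $(z_{k+1}, z_{k+2}) = p' - p$ satisfies $\max\{|z_{k+1}|, |z_{k+2}|\} \leq 16$ and $\max\{|z_{k+1}|, |z_{k+2}|\} > s$. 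By the definition of the cylinders at \eqref{eq:defCy}, the first two conditions give $z \in \Gamma_{r,16}$, while the last condition gives $z \notin \Gamma_{r,s}$. Hence $x' = x + z \in (\Gamma_{r,16} \setminus \Gamma_{r,s} + x) \cap S$, contradicting the hypothesis that this set is empty. This proves the claim.

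Now, if $\gamma_2(S(\theta_{[k]})) \geq 8s^2$, then by Fact~\ref{fact:2dclosepoints} there would exist $p, p' \in S(\theta_{[k]})$ with $s < \|p - p'\|_\infty \leq 16$, contradicting the claim. Therefore $\gamma_2(S(\theta_{[k]})) \leq 8s^2$, and since $\theta_{[k]} \in \R^k$ was arbitrary, we conclude $\max_{\theta_{[k]} \in \R^k} \gamma_2(S(\theta_{[k]})) \leq 8s^2$.
\end{proof}
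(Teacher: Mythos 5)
Your proof is correct and follows essentially the same route as the paper: both argue by contraposition/contradiction, invoke Fact~\ref{fact:2dclosepoints} to produce two points of the fiber at $\ell^\infty$-distance in $(s,16]$, and then lift them to $\R^{k+2}$ where the shared first $k$ coordinates place their difference in $\Gamma_{r,16}\setminus\Gamma_{r,s}$, contradicting the hypothesis. No issues.
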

\begin{proof}
	We prove the contrapositive; let $\psi_{[k]}$ be such that $\g_2\left( S(\psi_{[k]}) \right) > 8s^2$. 
	This implies (Fact~\ref{fact:2dclosepoints}) that there exists $(\t_{k+1},\t_{k+2}), (\t'_{k+1},\t'_{k+2}) \in S(\psi_{[k]})$ with
	\[  s \leq \max\{ |\t_{k+1}-\t'_{k+1}|,  |\t_{k+2} - \t'_{k+2}|\}\leq 16 \,.\]
	Unpacking what this means in the full space $\R^{k+2}$: we have $\t,\t' \in S$ so that 
	$\t_{[k]},\t'_{[k]} = \psi_{[k]}$, and $s \leq \max\{ |\t_{k+1}-\t'_{k+1}|,  |\t_{k+2} - \t'_{k+2}|\} \leq 16$. Thus 
	\[  \theta \in ( \theta' + \G_{r,16}\setminus \G_{r,s}), \]
	as desired.\end{proof}

\vspace{3mm}

In the next section we go on to obtain a more complicated reduction of this form, that will ultimately be key in proving Lemma~\ref{lem:CondWalkLCMfinal}.

\section{Inverse Littlewood Offord II: A geometric inequality}\label{sec:ILwO-CondWalksII} 

We now turn to make a more intricate and subtle reduction from that seen in Section~\ref{sec:geoOfS-1stReduction}, that will be key in finding our least common denominator. The lemma we prove here is purely geometric, but one should always think of it as being applied to an appropriate level set $S = S_{W_Y}(m)$, as seen in 
Lemma~\ref{lem:esseen}.

Given a set $S \subset \R^{k+2}$ and $y \in \R^{k+2}$, define the ``translated horizontal fiber'',
\[ F_y(S;a,b) := \{\theta_{[k]} = (\theta_1,\ldots,\theta_k) \in \R^k : (\theta_1,\ldots,\theta_k,a, b) \in S - y \}\,. \]

Our main goal of this section tells us that under the assumption
\[ (\Gamma_{2\sqrt{k},16} \setminus \Gamma_{2\sqrt{k},s} + x) \cap S = \emptyset,\]
for all $x \in S$,  the total measure of $S$ can be controlled by the measure 
of the $k$-dimensional fibers $F_{y}(S;a,b)$. We state it in the contrapositive form to make the application (in Section~\ref{sec:ILwO-CondWalksIII}) 
a little easier to spot.

\begin{lemma}\label{lem:geo-compare}
	For $k\in \N$ and $s >0$, let $S \subset \R^{k+2}$ be a measurable set which satisfies	
	\begin{equation}\label{eq:geo-comp-cond} 8s^2 e^{-k/8} + 64 s^2 \max_{a,b,y} \big( \g_{k}(F_y(S;a,b) - F_y(S; a,b))\big)^{1/4} < \g_{k+2}(S)\,.\end{equation}
	Then there is an $x \in S$ so that\footnote{Note, in particular, that Lemma~\ref{lem:geo-compare} says that \emph{if} \eqref{eq:geo-comp-cond} is satisfied then we must have $s < 16$.} 
	\begin{equation} \label{eq:geo-comp-conclude} (\Gamma_{2\sqrt{k},16} \setminus \Gamma_{2\sqrt{k},s} + x) \cap S \neq \emptyset\,.\end{equation}
\end{lemma}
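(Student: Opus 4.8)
The plan is to argue by contraposition: suppose the conclusion \eqref{eq:geo-comp-conclude} fails, so that for \emph{every} $x \in S$ we have $(\Gamma_{2\sqrt{k},16} \setminus \Gamma_{2\sqrt{k},s} + x)\cap S = \emptyset$; I then want to deduce that \eqref{eq:geo-comp-cond} must fail, i.e.\ that $\g_{k+2}(S)$ is bounded above by the left-hand side. The negation is exactly the hypothesis of Lemma~\ref{lem:slice-upperBound} (with $r = 2\sqrt{k}$), which immediately gives $\max_{\theta_{[k]}} \g_2(S(\theta_{[k]})) \leq 8s^2$. So after this first step we know every vertical fiber of $S$ has small Gaussian measure; the remaining work is to integrate this fiber bound back up to a bound on $\g_{k+2}(S)$, and to see where the other term $8s^2 e^{-k/8}$ and the $\g_k(F_y - F_y)^{1/4}$ factor enter.

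For the integration step, I would slice $S$ the other way — over the horizontal coordinates. Write $\g_{k+2}(S) = \int_{\R^2} \g_k\big(\{\theta_{[k]} : (\theta_{[k]},a,b)\in S\}\big)\, d\g_2(a,b)$ (up to the product structure of the Gaussian, which factors as $\g_k \otimes \g_2$). The contribution from $(a,b)$ with $\|(a,b)\|$ large is negligible: by Fact~\ref{fact:Gtail} the $\g_2$-mass of the region where, say, $a^2+b^2 \geq$ (something) is exponentially small, and on that region we just bound the inner $\g_k$-measure by $1$; this is what produces a term of the shape $8s^2 e^{-k/8}$ — I expect one actually wants to first restrict attention to $(a,b)$ in a bounded window (using the fiber bound $8s^2$ to control the window size, since a fiber of $\g_2$-mass $\le 8s^2$ forces the relevant $(a,b)$ to cluster, via Fact~\ref{fact:2dclosepoints} or a direct estimate) and then handle the tail by Fact~\ref{fact:Gtail}. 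On the bounded window, I need to bound $\int \g_k(\{\theta_{[k]}:(\theta_{[k]},a,b)\in S\})\, d\g_2(a,b)$ in terms of $s^2$ and the horizontal fibers $F_y(S;a,b)$.

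The key trick for the last bound should be Lemma~\ref{lem:GaussBM}: the horizontal fiber measure $\g_k(\{\theta_{[k]} : (\theta_{[k]},a,b)\in S - y\}) = \g_k(F_y(S;a,b))$ is controlled by $\g_k(F_y(S;a,b) - F_y(S;a,b))^{1/4}$, which is exactly the quantity appearing in \eqref{eq:geo-comp-cond}. Then the outer integration over the bounded $(a,b)$-window contributes the remaining $s^2$-type factor: the window has side $O(s)$-by-$O(s)$ in the relevant directions (this is where the fiber bound $\max_{\theta_{[k]}}\g_2(S(\theta_{[k]}))\le 8s^2$ gets used — it says $S$ is confined, in the $(a,b)$-coordinates, to a set of $\g_2$-measure $\le 8s^2$ over each vertical line, hence overall the $(a,b)$-projection is small), so integrating the (uniformly bounded) inner fiber measure over it gives at most $\approx 64 s^2 \max_{a,b,y}\g_k(F_y(S;a,b)-F_y(S;a,b))^{1/4}$. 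Combining the tail term and the bounded-window term yields precisely $\g_{k+2}(S) \leq 8s^2 e^{-k/8} + 64 s^2 \max_{a,b,y}(\g_k(F_y(S;a,b) - F_y(S;a,b)))^{1/4}$, contradicting \eqref{eq:geo-comp-cond}.

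The main obstacle I anticipate is the bookkeeping in the second step: making precise the claim that the vertical-fiber bound $\max_{\theta_{[k]}}\g_2(S(\theta_{[k]}))\le 8s^2$, combined with the assumed absence of pairs in $S$ at $\ell^\infty$-distance between $s$ and $16$, pins the $(a,b)$-coordinates of $S$ into an $O(s)$-window up to an exponentially small tail — one has to be careful that "every vertical fiber has $\g_2$-mass $\le 8s^2$" is used in concert with Fact~\ref{fact:2dclosepoints}-type reasoning applied to the $(a,b)$-slices of $S$ (possibly after translating by the appropriate $y$ coming from Lemma~\ref{fact:covFact}), rather than naively. The choice of the auxiliary translate $y$, and verifying that $F_y(S;a,b)$ is the right fiber to feed into Lemma~\ref{lem:GaussBM}, is the delicate part; the rest is routine Gaussian tail estimation.
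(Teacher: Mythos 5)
Your high-level plan (argue by contraposition, invoke Lemma~\ref{lem:slice-upperBound} for the vertical fibers, use Lemma~\ref{fact:covFact} and Lemma~\ref{lem:GaussBM} somewhere, and assemble a Gaussian tail term plus an $s^2$-window term) names all the right ingredients, but the two concrete steps you describe do not work as stated. First, the term $8s^2e^{-k/8}$ cannot come from a tail in the $(a,b)$-coordinates: $\g_2$ of a far-out region in $\R^2$ has no $k$-dependence, and bounding the inner $\g_k$-fiber by $1$ there discards the factor $s^2$ entirely. The correct split is in the \emph{other} $k$ coordinates: decompose $S$ according to whether $\|\theta_{[k]}\|_2\leq \sqrt{k}$. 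On the region $\|\theta_{[k]}\|_2\geq\sqrt{k}$ one integrates the \emph{vertical} fiber measure $\g_2(S(\theta_{[k]}))\leq 8s^2$ (Lemma~\ref{lem:slice-upperBound}) against the $k$-dimensional Gaussian tail $\g_k(\{\|\theta_{[k]}\|_2\geq\sqrt{k}\})\leq e^{-k/8}$ (Fact~\ref{fact:Gtail}); the product of these two bounds is what yields $8s^2e^{-k/8}$.

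Second, the claim that the vertical-fiber bound "pins the $(a,b)$-coordinates of $S$ into an $O(s)$-window" is false: a set can have every vertical fiber of $\g_2$-measure at most $8s^2$ while its $(a,b)$-projection is all of $\R^2$, so this cannot be the source of the window. You correctly flag this as the unresolved obstacle, but it is the heart of the argument. The resolution is: on the bounded region $\|\theta_{[k]}\|_2\leq\sqrt{k}$, apply Lemma~\ref{fact:covFact} (a tiling of the $(a,b)$-plane by $16\Z^2$-translates of $\G_{\sqrt{k},8}$ plus pigeonhole) to produce $x\in S$ and $h\in\G_{\sqrt{k},8}$ with $\g_{k+2}(S\cap B)\leq 16\,\g_{k+2}((S-x)\cap\G_{2\sqrt{k},16}+h)$; then the contrapositive assumption that the annulus $\G_{2\sqrt{k},16}\setminus\G_{2\sqrt{k},s}+x$ misses $S$ forces $(S-x)\cap\G_{2\sqrt{k},16}\subseteq\G_{2\sqrt{k},s}$, and it is \emph{this} inclusion that confines the $(a,b)$-coordinates to a square of side $2s$. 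Only then does one integrate the horizontal-fiber measure $\g_k(F_y(S;a,b))\leq\big(\g_k(F_y(S;a,b)-F_y(S;a,b))\big)^{1/4}$ (Lemma~\ref{lem:GaussBM}) over that square of $\g_2$-measure at most $4s^2$, giving $16\cdot 4s^2=64s^2$ times the maximum in \eqref{eq:geo-comp-cond}. As written, your sketch has a genuine gap at both places where the quantitative factors are generated.
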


To prove this lemma, we will need a few facts about Gaussian space, which we collect in Sections~\ref{sec:gauss-space} and \ref{sec:BM}, before moving on 
to prove Lemma~\ref{lem:geo-compare} in Section~\ref{sec:pf-geo-compare}.

\subsection{A few facts about Gaussian space}\label{sec:gauss-space}
Recall that for $\ell\in\N$, $\g_\ell$ is the $\ell$ dimensional Gaussian measure defined by $\g_\ell(S) = \PP(g \in S)$, where $g \sim \cN(0, (2\pi)^{-1} I_{\ell})$.

\begin{lemma}\label{fact:covFact}
Let $k\geq 0$,  $r>0$ and $S \subset \R^{k+2}$ be measurable. Then there exists $x\in S$, and $h\in \G_{r,8}$ so that 
$$\g_{k+2}(S \cap B) \leq 8 \g_{k+2}((S - x)\cap \G_{2r,16}+h)\, ,$$ where $B := \{ \t \in \R^{k+2} : \|\t_{[k]}\|_2 \leq r \}$.
\end{lemma}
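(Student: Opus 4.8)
\textit{The plan is} to peel off the first $k$ coordinates ``for free'', reducing the claim to a two–dimensional translation estimate, and then to prove that estimate by a pigeonhole over a grid of boxes, exploiting that sliding a box towards the origin cannot decrease its Gaussian weight.

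Assume $\g_{k+2}(S\cap B)>0$, as otherwise the claim is trivial. Write points of $\R^{k+2}$ as $(\theta',y)$ with $\theta'\in\R^k$, $y\in\R^2$; recall $\g_{k+2}$ has Lebesgue density $e^{-\pi\|\cdot\|_2^2}$, and set
\[\rho(y):=\int_{\{\theta'\,:\,\|\theta'\|_2\le r\}}\1\big[(\theta',y)\in S\big]\,e^{-\pi\|\theta'\|_2^2}\,d\theta',\qquad \mu(dy):=\rho(y)\,e^{-\pi\|y\|_2^2}\,dy,\]
so that $\mu(\R^2)=\g_{k+2}(S\cap B)=:P$. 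The point is that for any $x=(x',y^\ast)\in S$ with $\|x'\|_2\le r$ and any $h''\in[-8,8]^2$, the vector $h:=(x',h'')$ lies in $\G_{r,8}$, and by substituting in the definition of $\g_{k+2}$, restricting the integral to those $(\theta',y)\in S$ with $\|\theta'\|_2\le r$ (for which $\theta-x\in\G_{2r,16}$ reduces to $|y_i-y^\ast_i|\le16$, since then $\|\theta'-x'\|_2\le2r$ automatically), and integrating out $\theta'$,
\[\g_{k+2}\big((S-x)\cap\G_{2r,16}+h\big)\ \ge\ \int_{\{y\,:\,|y_i-y^\ast_i|\le 16,\ i=1,2\}}\rho(y)\,e^{-\pi\|y-y^\ast+h''\|_2^2}\,dy .\]
So it suffices to find $y^\ast$ with $\rho(y^\ast)>0$ and $h''\in[-8,8]^2$ making the right side at least $P/8$.

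To do this, tile $\R^2$ by the boxes $Q_m:=8m+[-4,4)^2$, $m\in\Z^2$, put $P_m:=\mu(Q_m)$ (so $\sum_mP_m=P$), and to each $m$ assign the translation $v_m\in\R^2$ with $i$-th coordinate $\mathrm{sign}(m_i)(8|m_i|-4)$ if $m_i\neq0$ and $0$ if $m_i=0$. Two checks: (i) for $y\in Q_m$, in each coordinate $y_i$ has the same sign as $v_{m,i}$ and $|y_i|\ge|v_{m,i}|$, so $\|y\|_2^2-\|y-v_m\|_2^2=\sum_i(2y_iv_{m,i}-v_{m,i}^2)\ge\sum_i\beta(m_i)$ on $Q_m$, where $\beta(0):=0$ and $\beta(a):=(8|a|-4)^2$ for $a\neq0$; (ii) if $y^\ast\in Q_m$ then $Q_m\subseteq\{y:|y_i-y^\ast_i|\le16\}$ (the side of $Q_m$ is $8$) and $h'':=y^\ast-v_m$ has $|h''_i|<8$. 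Choosing $y^\ast\in Q_m$ with $\rho(y^\ast)>0$ (possible whenever $P_m>0$) and any $x'$ with $\|x'\|_2\le r$, $(x',y^\ast)\in S$, and restricting the previous display to $y\in Q_m$ (where $y-y^\ast+h''=y-v_m$), gives
\[\g_{k+2}\big((S-x)\cap\G_{2r,16}+h\big)\ \ge\ \int_{Q_m}\rho(y)\,e^{-\pi\|y-v_m\|_2^2}\,dy\ \ge\ e^{\pi(\beta(m_1)+\beta(m_2))}P_m .\]
Since $\sum_{a\in\Z}e^{-\pi\beta(a)}=1+2e^{-16\pi}+\dots<2$, we get $\sum_me^{-\pi(\beta(m_1)+\beta(m_2))}<4<8$, so the bounds $P_m<\tfrac P8e^{-\pi(\beta(m_1)+\beta(m_2))}$ cannot all hold (summing and using $\sum_me^{-\pi(\beta(m_1)+\beta(m_2))}<8$ would give $P<P$). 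Picking an $m$ where one fails, the last display yields $\g_{k+2}((S-x)\cap\G_{2r,16}+h)\ge P/8=\g_{k+2}(S\cap B)/8$, as wanted.

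The genuine work is the bookkeeping behind (i) and (ii): the box side must be $8$ — the largest value compatible with $|h''_i|\le 8$ — and the translations $v_m$ must be chosen so that simultaneously $h\in\G_{r,8}$, the box $Q_m$ stays inside the $\G_{2r,16}$-window around $x$, and the new Gaussian density dominates the old one uniformly over $Q_m$. The conceptual point — recentring onto a distant box can only help — makes the constant $8$ very generous (the same argument yields any constant $>1$); the single subtlety is that one must not translate $Q_m$ by the full lattice vector $8m$, since for boxes adjacent to $Q_0$ that yields no gain, which is exactly why $v_m$ stops a half-box short.
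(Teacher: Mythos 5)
Your proof is correct and follows essentially the same route as the paper's: tile the last two coordinates by bounded cells, observe that recentring a distant cell can only increase its Gaussian mass, pigeonhole to find a good cell, and choose $h$ to realign the result inside $(S-x)\cap\G_{2r,16}+h$. The only substantive difference is bookkeeping — you integrate out the first $k$ coordinates up front and stop the recentring translation half a box short — and your version actually delivers the stated constant $8$ (the paper's own computation, which tiles by translates of $\G_{r,8}$ in $16\Z^2$, lands on $16$ and is applied with $16$ later).
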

\begin{proof}
Consider translates $\G_{r,8} +y $ where $y_{k+1},y_{k+2}\in 16 \Z^2$ to write
\begin{equation}\label{eq:Btile}
	\g_{k+2}(S \cap B) = \sum_{y \in \{0\}^{k} \times 16 \Z^2} \gamma_{k+2}(S \cap (\G_{r,8} + y ))\,.
\end{equation} We express $\g_{k+2}(S \cap (\G_{r,8} + y ))$ as 
\begin{equation}\label{eq:covfact1} \int_{\R^{k+2}}\1\big[ \theta \in S \cap (\G_{r,8} + y )\big] e^{-\pi\|\theta\|_2^2/2}\, d\t
= \int_{\R^{k+2}}\1\big[\phi \in (S-y) \cap  \G_{r,8}\big]e^{-\pi \|\phi +y\|_2^2/2 }\, d\phi. \end{equation}
Rewriting the exponent in the integrand at \eqref{eq:covfact1}
\[ - \|\phi +y\|_2^2= -\| \phi\|_2^2 
 - 2\phi_{k+1}y_{k+1} - 2\phi_{k+2}y_{k+2} - y_{k+1}^2 - y_{k+2}^2, \]
we use that $|\phi_{k+1}|,|\phi_{k+2}|\leq 8$ whenever $\1[\phi \in (S-y) \cap  \G_{r,8}]\neq 0$, to see  
\begin{equation} \label{eq:covfact2} \gamma_{k+2}(S \cap (\G_{8,r} + y)) \leq \exp\left(-\frac{\pi}{2} y_{k+1}^2 -\frac{\pi}{2} y_{k+2}^2 + 8\pi |y_{k+1}| + 8\pi |y_{k+2}|\right) \gamma_{k+2}((S - y) \cap \G_{r,8})\, .\end{equation}
So, apply \eqref{eq:covfact2} to \eqref{eq:Btile} to get
	\begin{align*}
	\gamma_{k+2}(S \cap B) &\leq 
	\sum_{y \in \{0\}^k \times 16 \Z^2}   \g_{k+2}((S - y) \cap \G_{r,8}) e^{-\frac{\pi}{2} y_{k+1}^2 -\frac{\pi}{2} y_{k+2}^2 + 8\pi |y_{k+1}| + 8\pi |y_{k+2}|} \\
	&\leq \max_y \gamma_{k+2}((S - y) \cap \G_{r,8}) \sum_{y_{k+1}, y_{k+2} \in 16 \Z}e^{-\frac{\pi}{2} y_{k+1}^2 -\frac{\pi}{2} y_{k+2}^2 + 8\pi |y_{k+1}| + 8\pi |y_{k+2}|}\\
	&\leq  16 \max_y \gamma_{k+2}((S - y) \cap \G_{r,8})\, .
	\end{align*}
	Let $y$ be a vector at which the above maximum is attained. Now observe that if $S \cap (\G_{r,8}+y) = \emptyset$ then $(S - y) \cap \G_{r,8} = \emptyset$
	and thus $\g_{k+2}(S\cap B) = 0$; so there is nothing to prove. Thus we may assume $S \cap (\G_{r,8}+y) \not= \emptyset$ and let $x \in S \cap (\G_{8,r}+y)$. Define $h:=x-y \in \G_{r,8}$ and notice that 
	\[ (S-y) \cap \G_{r,8} - h = (S-y-h) \cap (\G_{r,8}-h) \subseteq (S-x) \cap \G_{2r,16},\]
	where the inclusion holds since $h \in \G_{r,8}$. Therefore $(S-y) \cap \G_{r,8} \subseteq (S-x) \cap \G_{2r,16} + h$, allowing us to conclude that 
	\[ \g_{k+2}(S \cap B) \leq 16 \g_{k+2}((S-y) \cap \G_{r,8}) \leq 16 \g_{k+2}((S-x) \cap \G_{2r,16} + h),\]
	as desired. 
\end{proof}

\vspace{3mm}

\noindent We also need the following standard tail estimate on a $k$-dimensional Gaussian.

\begin{fact}\label{fact:Gtail}
$\g_{k}\big(\{x\in \R^k:  \|x\|^2_2 \geq k \}\big) \leq \exp(-k/8)$.
\end{fact}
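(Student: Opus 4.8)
The plan is a standard Chernoff (exponential Markov) argument on the squared norm, using that the Gaussian measure $\g_k$ has independent coordinates $g_i \sim \cN(0,(2\pi)^{-1})$. First I would fix a parameter $\lambda \in (0,\pi)$ and write
\[ \g_k\big(\{x : \|x\|_2^2 \geq k\}\big) = \PP\big( e^{\lambda \|g\|_2^2} \geq e^{\lambda k} \big) \leq e^{-\lambda k}\, \E_g\big[ e^{\lambda \|g\|_2^2} \big]. \]
Since $g = (g_1,\dots,g_k)$ has independent coordinates, the moment generating factor splits as $\E_g[e^{\lambda\|g\|_2^2}] = \prod_{i=1}^k \E[e^{\lambda g_i^2}] = \big(\E[e^{\lambda g_1^2}]\big)^k$.

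Next I would evaluate the one-dimensional Gaussian integral: for $g_1 \sim \cN(0,(2\pi)^{-1})$ and $\lambda < \pi$ one has the elementary identity $\E[e^{\lambda g_1^2}] = (1 - \lambda/\pi)^{-1/2}$ (complete the square / rescale the Gaussian density). Plugging this back gives
\[ \g_k\big(\{x : \|x\|_2^2 \geq k\}\big) \leq \exp\!\left( -k\Big( \lambda + \tfrac12 \log\big(1 - \lambda/\pi\big) \Big) \right). \]

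Finally I would pick a convenient admissible value of $\lambda$ — say $\lambda = 1$, which is safely below $\pi$ — so that $\lambda + \tfrac12\log(1-\lambda/\pi) = 1 + \tfrac12\log(1 - 1/\pi) > 1/8$ by a comfortable margin (one could optimize at $\lambda = \pi - 1/2$ to get a much larger constant, but this is unnecessary). This yields the claimed bound $\g_k(\{\|x\|_2^2 \geq k\}) \leq e^{-k/8}$. There is no real obstacle here: the only mild care needed is tracking the variance normalization $(2\pi)^{-1}$ through the moment generating function, and checking that the chosen $\lambda$ lies in the convergence range $(0,\pi)$; the numerical inequality at the end is crude and leaves substantial slack.
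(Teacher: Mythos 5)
Your proposal is correct: the MGF identity $\E[e^{\lambda g_1^2}]=(1-\lambda/\pi)^{-1/2}$ is right for variance $(2\pi)^{-1}$, and with $\lambda=1$ the exponent $1+\tfrac12\log(1-1/\pi)\approx 0.81$ comfortably exceeds $1/8$. This is essentially the same argument as the paper's, which simply cites the standard chi-square tail bound for the standard Gaussian and rescales, whereas you carry out the Chernoff computation directly.
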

\begin{proof}
For any $\eps\in(0,1)$ the \emph{standard} Gaussian measure of the set  $\{x\in \R^k:  \|x\|^2_2 \geq k/(1-\eps) \}$ is at most $\exp(-\eps^2k/4)$.  Recalling that 
$\g_k$ has standard deviation $(2\pi)^{-1/2}$ and taking $\eps = 1 - (2\pi)^{-1}$, gives the desired bound.
\end{proof}

\vspace{3mm}

\subsection{A Gaussian Brunn-Minkowski type theorem}\label{sec:BM}
We now lay out a useful tool which gives us some control of the Gaussian measure of the sum set $A+B$, relative to the Gaussian measures of $A$ and $B$.
Indeed, the following theorem due to Borell \cite{borell2008inequalities}, can be viewed as a Brunn-Minkowski-type theorem for Gaussian space. 

For this, let $\Phi(x)$ be the cumulative probability function 
$\Phi(x) := \Pr(Z \leq x )$, for the \emph{standard} one dimensional Gaussian $Z \sim \cN(0,1)$, while $\g_k$ is (still) the $k$-dimensional Gaussian with covariance matrix $(2\pi)^{-1} I_{k}$.

\begin{theorem}[Borell] \label{eq:BorellThm} Let $A,B \subseteq \R^k$ be Borel. Then 
\[ \g_k(A + B) \geq \Phi\bigg( \Phi^{-1}(\g_k(A)) + \Phi^{-1}(\g_k(B)) \bigg) . \]
\end{theorem}
\begin{proof}
In \cite{borell2008inequalities} Theorem~\ref{eq:BorellThm} is proved for the standard Gaussian measure rather than $\g_k$. However we can change the standard deviation of the measure by taking dilates of the sets $A$ and $B$.\end{proof}

\vspace{3mm}

\noindent We will use the following simple consequence of Theorem~\ref{eq:BorellThm}.

\begin{lemma}\label{lem:GaussBM}
	Let $A \subseteq \R^k$ be Borel. Then 
\[ \g_k(A - A) \geq \g_k(A)^4\,. \]
\end{lemma}
\begin{proof}
	By Theorem~\ref{eq:BorellThm}, we have \begin{equation}\label{eq:borell}
\g_k(A - A) \geq \Phi(2 \Phi^{-1}(\g_k(A))  ) = \Phi(2 x ), 
	\end{equation}  where we have set $x = \Phi^{-1}(\g_k(A))$. Note that 
\begin{equation}\label{eq:Phi-ineq}
	\Phi(2x) = \P(Z \leq 2x) = \P\left(Z_1 + Z_2 + Z_3 + Z_4 \leq 4x\right) \geq \P(Z \leq x)^4 = \Phi(x)^4
	\end{equation} where $Z_j$ are i.i.d.\ copies of $Z \sim \cN(0,1)$.  Combining~\eqref{eq:borell} and \eqref{eq:Phi-ineq} completes the proof.\end{proof}

\vspace{3mm}

\subsection{Proof of Lemma~\ref{lem:geo-compare}}\label{sec:pf-geo-compare}

With these pieces now in place, we can move on to prove Lemma~\ref{lem:geo-compare}, our key geometric lemma on the Fourier side.

\begin{proof}[Proof of Lemma \ref{lem:geo-compare}]
Write $r = \sqrt{k}$ for simplicity.  We prove the contrapositive and assume for every $x \in S$ we have  \begin{equation}\label{eq:geo-lemma-contrapositive}
(\Gamma_{2r,16} \setminus \Gamma_{2r,s} + x) \cap S = \emptyset.
\end{equation}  
We define
\[ B := \{ \t \in \R^{k+2} : \|\t_{[k]}\|_2 \leq r \}. \]
and proceed to bound $\g_{k+2}(S)$ from above by first bounding $\gamma_{k+2}(S \setminus B)$ and then bounding $\g_{k+2}(S \cap B )$.

\vspace{3mm}

\noindent \emph{Step 1: Upper bound for $\gamma_{k+2}(S \setminus B )$.} For $\t_{[k]} \in \R^{k}$, let $S(\theta_{[k]})$ be as defined at \eqref{eq:defS()}
\[ S(\t_{[k]}) = \left\lbrace (\t_{k+1},\t_{k+2}) \in \R^2 : (\t_{[k]},\t_{k+1},\t_{k+2}) \in S \right\rbrace .  \] 
We may write 
\begin{equation}\label{eq:step1up1} 
\g_{k+2}(S \setminus B ) = \int_{\|\t_{[k]}\|_2 \geq r } \g_2\left( S(\t_{[k]}) \right)\, d\g_{k}  \end{equation}
and thus
\begin{equation} \label{eq:step1up1.5} \g_{k+2}(S \setminus B ) \leq \bigg( \max_{\t_{[k]} \in \R^{k}}  \g_2\left( S(\t_{[k]}) \right) \bigg) \g_{k}\big( \{ \|\t_{[k]}\|_2 \geq r \} \big).  \end{equation}
Lemma \ref{lem:slice-upperBound} and \eqref{eq:geo-lemma-contrapositive} shows
\begin{equation}\label{eq:step1up2}  \max_{\t_{[k]} \in \R^{k}}  \g_2\left( S(\t_{[k]}) \right) \leq 8 s^2.\end{equation}
Fact~\ref{fact:Gtail} bounds
\begin{equation} \label{eq:largedev} \g_{k}\big( \{ \|\t_{[k]}\|_2 \geq r \} \big) \leq \exp(-k/8) \end{equation}
	and so from \eqref{eq:step1up1.5}, \eqref{eq:step1up2} and \eqref{eq:largedev} we learn 
	\begin{equation} \label{eq:step1Bound} \g_{k+2}(S\setminus B) \leq  8 s^2 e^{-k/8}. \end{equation}
	
	\vspace{4mm}

	\noindent \emph{Step 2: Upper bound for $\g_{k+2}(S \cap B)$.}  By Lemma~\ref{fact:covFact}, there exists $x\in S$ and $h\in\G_{r,8}$ such that
	\begin{equation}\label{eq:CylinderRedx} \g_{k+2}( S  \cap B) \leq 16 \g_{k+2}( (S -x) \cap \G_{2r,16}+ h ). \end{equation}
	Now since we are assuming the claim is false, and $x\in S$, we use \eqref{eq:geo-lemma-contrapositive} to deduce that
	\begin{equation} \label{eq:emptyMantle} (S -x) \cap \G_{2r,16} \subseteq  (S -x) \cap \G_{2r,s} \end{equation}
	and so letting $y= x- h$, we see  
	\begin{equation}\label{eq:Setshift}  (S -x) \cap \G_{2r,s} +h  = (S - x + h ) \cap (\G_{2r,s}+h)  = (S - y) \cap (\G_{2r,s} + h) .\end{equation}
	Thus by~\eqref{eq:CylinderRedx}, \eqref{eq:emptyMantle} and \eqref{eq:Setshift}, we have
	\begin{align}\label{eq:claimHypo}
	\g_{k+2}( S  \cap B) \leq 16 \g_{k+2}( (S -y) \cap ( \G_{2r,s}+ h) )\, .
	\end{align}
	Bound 
	\begin{equation}\label{eq:intExp2} \g_{k+2}( (S-y) \cap (\G_{2r,s}+h) ) \leq \int_{|a - h_{k+1}|,|b-h_{k+2}| \leq s} \g_{k}\big( F_y(S; a,b) \big) \, d\g_2  \end{equation}
	and apply Lemma \ref{lem:GaussBM} to obtain
	\begin{equation}\label{eq:GaussBMapp}
	\g_{k+2}( (S-y) \cap (\G_{2r,s}+h) ) \leq   4s^2 \max_{a,b,y} (\gamma_k (F_y(S;a,b) - F_y(S;a,b)))^{1/4}\,.\end{equation}
	Combining \eqref{eq:claimHypo} and \eqref{eq:GaussBMapp} gives \begin{equation}\label{eq:step2bound}
	\gamma_{k+2}(S \cap B) \leq 64 s^2 \max_{a,b,y} \big( \gamma_k\left(F_y(S;a,b)  - F_y(S;a,b) \right)    \big)^{1/4}
	\end{equation}
	\emph{Putting Step 1 and Step 2 together :} \eqref{eq:step2bound} together with \eqref{eq:step1Bound} implies
	$$\g_{k+2}(S) \leq 8 s^2 e^{-k/8} + 64 s^2 \max_{a,b,y} (\gamma_{k}(F_y(S;a,b) - F_y(S;a,b)))^{1/4},$$
	completing the proof of the contrapositive.
\end{proof}

\section{Inverse Littlewood-Offord III: Comparison to a lazier walk and Proof of Lemma~\ref{lem:CondWalkLCMfinal}}\label{sec:ILwO-CondWalksIII}

In Section~\ref{sec:ILwO-CondWalksII} we proved our key geometric ingredient, Lemma~\ref{lem:geo-compare}, to deal with the geometry of our level set (as seen in Section~\ref{sec:go-Fourier}). We now use this lemma to take the following big step towards Lemma~\ref{lem:CondWalkLCMfinal}.

\begin{lemma}\label{lem:CondWalkLCM}
For $d \in \N$ and $\alpha \in (0,1)$, let $0\leq k\leq 2^{-10}\alpha d$ and $t\geq \exp(-2^{-10}\alpha d)$. 
For $0 < c_0\leq 2^{-24}$, let $Y \in \R^d$ satisfy $\|Y \| \geq  2^{-10} c_0 /t$ and let $W$ be a $2d \times k$ matrix with $\|W\| \leq 2$. Also let  $\tau \sim \cQ(2d,1/4)$ and $\tau' \sim \cQ(2d,2^{-9})$ and $\beta \in [c_0/2^{10},\sqrt{c_0}]$, $\beta' \in (0,1/2) $.

If 
\begin{equation}\label{eq:LCM-hypo-lazy}  
\cL(W^T_Y\tau, \beta\sqrt{k+1}) 
\geq \left( R t\right)^2 \exp(4\beta^2 k)\left(\Pr(\|W^T \tau'\|_2\leq \beta'\sqrt{k}) + \exp(-\beta'^2 k) \right)^{1/4} \end{equation}
then $D_\alpha(Y)\leq 16$. Here we have set $R = 2^{31} /c_0^2$.
\end{lemma}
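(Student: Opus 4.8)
The plan is to push \eqref{eq:LCM-hypo-lazy} to the Fourier side via the two Esseen-type bounds of Section~\ref{sec:FourierPrep}, apply the geometric inequality Lemma~\ref{lem:geo-compare} to the level set $S_{W_Y}(m)$, and read off from the resulting ``mantle point'' a dilate $\phi\le 16$ of $Y$ lying close to $\Z^d$; the key structural feature is that the two $Y$-columns of $W_Y$ (which carry the hard constraints) occupy exactly the last two coordinates of $\R^{k+2}$, which is the block Lemma~\ref{lem:geo-compare} is built to single out. First I would invoke Lemma~\ref{lem:esseen} for the $2d\times(k+2)$ matrix $W_Y$ with $\nu=\tfrac14$ and $\ell=k+2$, producing $m>0$ with $\cL(W_Y^T\tau,\beta\sqrt{k+2})\le 2\exp(2\beta^2(k+2)-m/8)\,\g_{k+2}(S_{W_Y}(m))$. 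Since $\cL(\cdot,\beta\sqrt{k+1})\le\cL(\cdot,\beta\sqrt{k+2})$, feeding in the hypothesis and using $\beta\le\sqrt{c_0}$ to absorb $e^{2\beta^2(k-2)}\ge\tfrac12$ gives $\g_{k+2}(S_{W_Y}(m))\ge\tfrac14(Rt)^2e^{m/8}P^{1/4}$, where $P:=\Pr(\|W^T\tau'\|_2\le\beta'\sqrt{k})+e^{-\beta'^2k}$. Because Gaussian measures are at most $1$, this simultaneously yields $m\le 8\log\!\big(4/((Rt)^2P^{1/4})\big)$, and then $t\ge e^{-2^{-10}\alpha d}$, $P\ge e^{-k/4}$, $R\ge1$ and $k\le 2^{-10}\alpha d$ force $m\le 16+2^{-6}\alpha d+k/2<2^{-5}\alpha d$ (when $\alpha d$ is large; for $\alpha d$ bounded the hypothesis is vacuous, as $R$ is a large constant).

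Next I would control the fibre-difference term of Lemma~\ref{lem:geo-compare} for $S:=S_{W_Y}(m)$: the second half of Fact~\ref{fact:LevelSetTriangleInq} gives $F_y(S;a,b)-F_y(S;a,b)\subseteq F_0(S_{W_Y}(4m);0,0)=S_W(4m)$ for all $y,a,b$, the equality because $W_Y(\t_{[k]},0,0)=W\t_{[k]}$, so $\max_{a,b,y}\g_k(F_y(S;a,b)-F_y(S;a,b))\le\g_k(S_W(4m))$; and Lemma~\ref{lem:revEsseen} applied to the $2d\times k$ matrix $W$ with $\mu=2^{-9}$, $\tau'\sim\cQ(2d,2^{-9})$ and parameter $4m$ (so $32\mu\cdot 4m=m/4$) gives $\g_k(S_W(4m))\le e^{m/4}P$, hence the fourth root above is at most $e^{m/16}P^{1/4}$. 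Setting $s:=\tfrac{Rt}{32}e^{m/32}$ and combining these bounds with $P^{1/4}\ge e^{-k/16}$ and $e^{-k/8}\le e^{-k/16}$, one checks $8s^2e^{-k/8}+64s^2e^{m/16}P^{1/4}<\tfrac14(Rt)^2e^{m/8}P^{1/4}\le\g_{k+2}(S_{W_Y}(m))$, so condition \eqref{eq:geo-comp-cond} holds (in particular $s<16$) and Lemma~\ref{lem:geo-compare} returns $x,z\in S_{W_Y}(m)$ with $w:=z-x\in\Gamma_{2\sqrt k,16}\setminus\Gamma_{2\sqrt k,s}$.

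From $\|w_{[k]}\|_2\le 2\sqrt k$ we get $|w_{k+1}|>s$ or $|w_{k+2}|>s$; replacing $w$ by $-w$ if needed (this swaps $x,z$) and using that the two $Y$-columns of $W_Y$ play symmetric roles, assume $\phi:=w_{k+1}\in(s,16]$. By the first half of Fact~\ref{fact:LevelSetTriangleInq}, $w\in S_{W_Y}(m)-S_{W_Y}(m)\subseteq S_{W_Y}(4m)$, so $\|W_Yw\|_{\T}\le 2\sqrt m$; reading off the bottom $d$ coordinates of $W_Yw$, namely $W_{[d+1,2d]\times[k]}w_{[k]}+\phi Y$, and using $\|W_{[d+1,2d]\times[k]}w_{[k]}\|_2\le\|W\|\,\|w_{[k]}\|_2\le 4\sqrt k$, the triangle inequality for $\|\cdot\|_{\T}$ yields $\|\phi Y\|_{\T}\le 2\sqrt m+4\sqrt k$. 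To finish, one verifies $\|\phi Y\|_{\T}\le\min\{\phi\|Y\|_2/2,\sqrt{\alpha d}\}$, which gives $D_\alpha(Y)\le\phi\le 16$: the bound $\le\sqrt{\alpha d}$ is immediate from $m<2^{-5}\alpha d$ and $k\le 2^{-10}\alpha d$; and $\phi>s$ together with $\|Y\|_2\ge 2^{-10}c_0/t$ gives $\phi\|Y\|_2/2>\tfrac{2^{15}}{c_0}e^{m/32}$ (this is where $R=2^{31}c_0^{-2}$ is pinned down), which dominates $2\sqrt m+4\sqrt k$ using $\sqrt m\le 3e^{m/32}$, $c_0\le 2^{-24}$, $k\le 2^{-10}\alpha d$, and the fact that the Step-one lower bound on $\g_{k+2}(S_{W_Y}(m))$ forces $e^{m/32}$ to be large whenever $k$ is.

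The conceptual crux — separating the $k$ weak constraints (the columns of $W$) from the two hard ones (the columns built from $Y$) — has already been isolated into the purely geometric Lemma~\ref{lem:geo-compare}, established by the Gaussian Brunn--Minkowski slicing argument above; what remains here, and is the fiddly part, is the constant-chasing in the last two paragraphs, in particular making the contamination term $4\sqrt k$ (an artefact of only knowing $\|W\|\le 2$) fit under the thresholds $\phi\|Y\|_2/2$ and $\sqrt{\alpha d}$, which works out thanks to $k\le 2^{-10}\alpha d$ and the tight link between $m$ and the remaining parameters.
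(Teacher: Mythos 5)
Your overall route is the paper's: Lemma~\ref{lem:esseen} to pass to a level set $S_{W_Y}(m)$ of large Gaussian measure, Fact~\ref{fact:LevelSetTriangleInq} plus Lemma~\ref{lem:revEsseen} to bound the fibre differences by $e^{m/4}\big(\Pr(\|W^T\tau'\|_2\le\beta'\sqrt k)+e^{-\beta'^2k}\big)$, Lemma~\ref{lem:geo-compare} to produce a mantle point, and then the last two coordinates of the difference supply the dilate. The one place you depart from the paper is the choice of the cylinder half-width, and that is where there is a genuine gap. You take $s=\tfrac{Rt}{32}e^{m/32}$, whereas the paper takes $s_0=2^{16}c_0^{-1}(\sqrt m+\sqrt k)\,t$. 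Your choice makes hypothesis \eqref{eq:geo-comp-cond} of Lemma~\ref{lem:geo-compare} essentially automatic, but the price is paid at the non-degeneracy check: you need $\phi\|Y\|_2/2>2\sqrt m+4\sqrt k$ with $\phi>s$, and $s\|Y\|_2/2\ge 2^{15}c_0^{-1}e^{m/32}$ dominates the contamination term $4\sqrt k$ (the artefact of bounding $\|W\psi_{[k]}\|_2\le\|W\|\,\|\psi_{[k]}\|_2$) only if $e^{m/32}\gtrsim c_0 \sqrt k$. You justify this by asserting that the Step-one lower bound on $\gamma_{k+2}(S_{W_Y}(m))$ forces $e^{m/32}$ to be large when $k$ is; but that lower bound, combined with $\gamma_{k+2}\le 1$, yields only an \emph{upper} bound $e^{m/8}\le 4(Rt)^{-2}e^{\beta'^2k/4}$ — nothing forces $m$ to grow with $k$. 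Indeed Lemma~\ref{lem:CondWalkLCM} imposes no lower bound on $\|W\|_{\HS}$, and for $W$ of very small Hilbert--Schmidt norm (in the extreme, $W=0$) the maximizer $m$ produced by Lemma~\ref{lem:esseen} stays bounded (one is essentially maximizing $\gamma_2(\cdot)\,e^{-u/8}$, which peaks at $u=O(1)$) while $k$ can be arbitrarily large; your final inequality then fails once $\sqrt k\gg c_0^{-1}e^{m/32}$.

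The repair is exactly what the paper does: build the factor $\sqrt m+\sqrt k$ into $s$ itself, so that $s\|Y\|_2/2\ge 2^{5}(\sqrt m+\sqrt k)>2\sqrt m+4\sqrt k$ with no lower bound on $m$ required, and then re-verify \eqref{eq:geo-comp-cond} for this larger $s$ (the ``simple check'' after \eqref{eq:claimcondwalklcm1}, which uses $s_0^2\lesssim (k+m)(t/c_0)^2$ and $x\le e^x$ to absorb the polynomial factor $k+m$ into $e^{m/8+\beta^2k}$, at the cost of the definition of $R$). Everything else in your write-up — the Esseen step, the bound $m<2^{-5}\alpha d$ needed for $\|\phi Y\|_{\T}\le\sqrt{\alpha d}$, the decoupling of the fibres through $S_W(4m)$, and the observation that the hypothesis is vacuous for bounded $\alpha d$ — matches the paper's argument and is correct.
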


Of course, Lemma~\ref{lem:CondWalkLCM} looks quite a bit like Lemma~\ref{lem:CondWalkLCMfinal} save for quantity
\begin{equation}\label{eq:decoupled} \Pr(\|W^T \tau'\|_2\leq \beta'\sqrt{k}) + \exp(-\beta'^2 k), \end{equation}
on the right-hand side of \eqref{eq:LCM-hypo-lazy}. One should view this quantity as an approximation of the contribution that the ``soft'' constraints make. Indeed, if one reads this 
lemma in the contrapositive, it says that we can successfully ``decouple'' the ``soft'' constraints from the ``hard'' constraints, provided $Y$ is sufficiently ``unstructured'', meaning $D_{\alpha}(Y) > 16$. Of course, this story
is not quite an honest one; we have to use the lazier vector $\tau'$, rather than $\tau$, to get things to work out, and we also take a loss in the exponent of $1/4$.
The key here is that we obtain the correct power of $t$ in our bound, which is deeply important for our application. We also note that our use of ``decoupling'' 
should not be confused with the ``decoupling'' step in Costello, Tao and Vu \cite{costello-tao-vu}, which is used to deal with very unstructured vectors.

We prove this lemma in Section~\ref{sec:pf-CondWalkLCM} after laying out a few facts on level sets in Section~\ref{sec:levelsets}. We will then conclude this section in Section~\ref{sec:pf-CondWalkLCMfinal} with a proof of Lemma~\ref{lem:CondWalkLCMfinal}, by combining Lemma~\ref{lem:CondWalkLCM} with one further ingredient
to bound \eqref{eq:decoupled}.

\subsection{Working with level sets}\label{sec:levelsets}

To prepare for the proof of Lemma~\ref{lem:CondWalkLCM}, we record two basic facts about level sets. First off, we note a sort of converse to the
Esseen-type inequality that we saw in Section~\ref{sec:ILwO-CondWalks}, Lemma~\ref{lem:esseen}. Again, we will postpone the straightforward proof of this lemma to Appendix~\ref{sec:FourierPrep}. 
Recall that we defined, for a $2d \times \ell$ matrix $W$, the $W$-\emph{level set}, for $t \geq 0$, to be
\[ S_W(t) := \left\lbrace \theta \in \R^{\ell} : \|W\theta\|_{\T} \leq \sqrt{t} \right\rbrace. \] 

\begin{lemma}\label{lem:revEsseen} Let $\beta >0, \mu \in (0,1/4]$, let $W$ be a $2d \times \ell$ matrix, and let $\tau\sim\cQ(2d, \mu)$.
Then for all $t\geq 0 $, we have 
\[ \g_{\ell}(S_W(t))e^{-32\mu t} \leq \Pr_{\tau}\big( \|W^T\cdot \tau\|_2\leq \beta\sqrt{\ell} \big)+ \exp\left(-\beta^2\ell\right). \]
\end{lemma}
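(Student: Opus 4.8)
The plan is to prove Lemma~\ref{lem:revEsseen} by relating both sides to the Fourier transform of $W^T\tau$. First I would write down the characteristic function: for $\xi \in \R^\ell$,
\[
\EE_\tau e^{2\pi i \la \xi, W^T\tau\ra} = \EE_\tau e^{2\pi i \la W\xi, \tau\ra} = \prod_{j=1}^{2d}\Big(1 - \mu + \mu\cos(2\pi (W\xi)_j)\Big),
\]
since each coordinate $\tau_j$ is $\mu$-lazy and independent. Using $1 - \mu + \mu\cos(2\pi x) = 1 - 2\mu\sin^2(\pi x) \geq \exp(-c\mu \|x\|_{\T}^2)$ for a suitable absolute constant (and the elementary bound $\sin^2(\pi x) \leq \pi^2 \|x\|_{\T}^2$ together with $1 - u \geq e^{-u - u^2}$ for small $u$, or more simply $1-2\mu\sin^2(\pi x)\ge e^{-C\mu\sin^2\pi x}$), one gets a clean lower bound
\[
\Big|\EE_\tau e^{2\pi i \la \xi, W^T\tau\ra}\Big| \geq \exp\Big(-C\mu\, \|W\xi\|_{\T}^2\Big).
\]
Hence on the level set $\xi \in S_W(t)$, where $\|W\xi\|_{\T}^2 \leq t$, the characteristic function is at least $e^{-C\mu t}$ in absolute value; since the product of cosines is real and, one should check, stays nonnegative on the relevant region (or one works with the real part throughout), this gives $\EE_\tau \cos(2\pi\la \xi, W^T\tau\ra) \geq e^{-C\mu t}$ on $S_W(t)$. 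I would track the constant to land on the stated $32$.

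Next I would introduce a Gaussian test function to convert this pointwise lower bound on the characteristic function into a statement about $\Pr_\tau(\|W^T\tau\|_2 \le \beta\sqrt\ell)$. The standard device: integrate the characteristic function against the Gaussian density $\g_\ell$ (whose normalization $\cN(0,(2\pi)^{-1}I_\ell)$ is chosen precisely so that its Fourier transform is $e^{-\pi\|x\|_2^2}$, self-dual up to the $2\pi$ convention). Concretely,
\[
\EE_{g\sim \g_\ell}\, \EE_\tau\, e^{2\pi i \la g, W^T\tau\ra} = \EE_\tau\, e^{-\pi \|W^T\tau\|_2^2},
\]
by Fubini and the Gaussian Fourier identity. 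The left-hand side is $\int \big(\EE_\tau e^{2\pi i\la \xi,W^T\tau\ra}\big)\,d\g_\ell(\xi) \geq \int_{S_W(t)} e^{-C\mu t}\, d\g_\ell(\xi) = e^{-C\mu t}\g_\ell(S_W(t))$, using the lower bound from the first step on $S_W(t)$ and discarding the rest (here one does need nonnegativity of the integrand off the level set, or a symmetrization/absolute-value argument — this is a point to handle carefully). The right-hand side is $\EE_\tau e^{-\pi\|W^T\tau\|_2^2}$, which I split according to whether $\|W^T\tau\|_2 \le \beta\sqrt\ell$ or not: the small-norm part contributes at most $\Pr_\tau(\|W^T\tau\|_2 \le \beta\sqrt\ell)$ (bounding $e^{-\pi\|\cdot\|^2}\le 1$), and the large-norm part contributes at most $e^{-\pi\beta^2\ell} \le e^{-\beta^2\ell}$. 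Combining yields exactly
\[
\g_\ell(S_W(t))\, e^{-C\mu t} \leq \Pr_\tau\big(\|W^T\tau\|_2 \le \beta\sqrt\ell\big) + e^{-\beta^2\ell},
\]
which is the claim once $C \le 32$.

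The main obstacle I anticipate is the sign/nonnegativity issue: the inequality $\int_{S_W(t)}(\cdots) \le \int_{\R^\ell}(\cdots)$ requires that the characteristic function of $W^T\tau$ be pointwise nonnegative, which is true here because $\EE_\tau e^{2\pi i\la\xi,W^T\tau\ra} = \prod_j(1-2\mu\sin^2(\pi(W\xi)_j))$ is a product of reals — but these factors can individually be negative when $\|(W\xi)_j\|_{\T}$ is close to $1/2$, so the product need not be nonnegative globally, only its value matters and I must either (i) restrict attention to a sub-level-set where each factor is controlled, (ii) use that $W^T\tau$ is symmetric so its characteristic function is real and bound $\EE_\tau e^{-\pi\|W^T\tau\|^2} \ge \int(\text{Re part})\,d\g$ directly, isolating the contribution of $S_W(t)$ and absorbing the rest into an error, or (iii) note $\mu\le 1/4$ forces $1-2\mu\sin^2(\pi x)\ge 1/2>0$ always, so in fact \emph{every} factor is positive and the product is genuinely nonnegative — this last observation is the cleanest and is presumably why the hypothesis $\mu\in(0,1/4]$ appears. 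With that in hand the argument is routine; the only remaining care is chasing the constant $C\mu t$ down to $32\mu t$, using $1-2\mu\sin^2(\pi x) \ge \exp(-8\mu\sin^2(\pi x)) \ge \exp(-8\pi^2\mu\|x\|_{\T}^2/\,?)$ and summing $\sum_j\sin^2(\pi(W\xi)_j)$ against $\|W\xi\|_{\T}^2$ — one should double-check whether an intermediate passage from $\|W\xi\|_{\T}$ (distance of the vector $W\xi$ to $\Z^{2d}$) to $\sum_j \sin^2$ costs a factor that must be absorbed.
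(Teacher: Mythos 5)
Your proposal is correct and follows essentially the same route as the paper: lower-bound the characteristic function $\prod_j(1-\mu+\mu\cos(2\pi(W\xi)_j))$ by $\exp(-32\mu\|W\xi\|_{\T}^2)$, pass to $\EE_\tau \exp(-c\|W^T\tau\|_2^2)$ via the Gaussian Fourier identity, and split that expectation according to whether $\|W^T\tau\|_2\leq\beta\sqrt{\ell}$. The only cosmetic difference is that the paper sidesteps your nonnegativity worry by first replacing $\varphi$ with the manifestly nonnegative bound $\exp(-32\mu\|Wg\|_{\T}^2)$ and then using a layer-cake integral over the level sets $S_W(u)$, whereas you restrict the integral to $S_W(t)$ directly (your observation (iii), that $\mu\leq 1/4$ keeps every factor positive, correctly closes that gap).
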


\vspace{3mm}

We need also need the following basic fact about level sets. Recall that, for a set $S \subset \R^{k+2}$ and $y \in \R^{k+2}$, we defined
the ``translated horizontal fiber'',
\[ F_y(S;a,b) := \{\theta_{[k]} = (\theta_1,\ldots,\theta_k) \in \R^k : (\theta_1,\ldots,\theta_k,a, b) \in S - y \}\,. \]

\begin{fact}\label{fact:LevelSetTriangleInq} For any $2d \times (k+2)$ matrix $W$. If $m >0$ we have  
\[ S_W(m) - S_W(m) \subseteq S_W(4m). \]
Similarly, for any $y \in \R^{k+2}$ and $a,b \in \R$ we have 
\begin{equation}\label{eq:Fiberdiff} F_y(S_W(m);a,b) - F_y(S_W(m) ;a,b) \subseteq F_0(S_{W}(4m);0,0). \end{equation}
\end{fact}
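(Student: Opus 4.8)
The plan is to unwind both inclusions directly from the definition of the level set $S_W(m)=\{\theta\in\R^{k+2}:\|W\theta\|_\T\le\sqrt{m}\}$ together with the triangle inequality for the $\T$-seminorm $\|\cdot\|_\T$ (distance to $\Z^{2d}$), which is indeed a seminorm: $\|x+y\|_\T\le\|x\|_\T+\|y\|_\T$ and $\|\lambda x\|_\T\le |\lambda|\|x\|_\T$ for integer $\lambda$, together with symmetry $\|-x\|_\T=\|x\|_\T$. For the first inclusion, if $\theta,\theta'\in S_W(m)$ then $W(\theta-\theta')=W\theta-W\theta'$ and hence $\|W(\theta-\theta')\|_\T\le\|W\theta\|_\T+\|W\theta'\|_\T\le 2\sqrt m=\sqrt{4m}$, so $\theta-\theta'\in S_W(4m)$; this gives $S_W(m)-S_W(m)\subseteq S_W(4m)$.

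For the second inclusion I would simply translate the statement about $F_y$ back into the ambient space $\R^{k+2}$ using the definition $F_y(S;a,b)=\{\theta_{[k]}\in\R^k:(\theta_{[k]},a,b)\in S-y\}$. If $u,u'\in F_y(S_W(m);a,b)$, then $(u,a,b)+y\in S_W(m)$ and $(u',a,b)+y\in S_W(m)$. Subtracting, the point $((u,a,b)+y)-((u',a,b)+y)=(u-u',0,0)$ is the difference of two elements of $S_W(m)$, hence lies in $S_W(m)-S_W(m)\subseteq S_W(4m)$ by the first part. Therefore $(u-u',0,0)\in S_W(4m)$, which by definition of $F_0(\cdot;0,0)$ (with $y=0$, $a=b=0$) says precisely $u-u'\in F_0(S_W(4m);0,0)$. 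This establishes $F_y(S_W(m);a,b)-F_y(S_W(m);a,b)\subseteq F_0(S_W(4m);0,0)$, as claimed in \eqref{eq:Fiberdiff}.

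There is essentially no obstacle here: the only thing to be careful about is that the $\T$-seminorm behaves well under subtraction, which holds because $\Z^{2d}-\Z^{2d}=\Z^{2d}$. One might also spell out that the shift by $y$ and the fixed last two coordinates $a,b$ cancel exactly in the difference, which is why the $(a,b)$ coordinates of the fiber difference collapse to $(0,0)$ and the $y$-translation disappears; this is the one place where a reader could get briefly confused, so I would state it explicitly rather than leave it implicit. No Brunn--Minkowski or Gaussian input is needed for this fact; it is purely the triangle inequality applied twice.
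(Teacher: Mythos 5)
Your proof is correct and follows essentially the same route as the paper: the first inclusion is the triangle inequality for $\|\cdot\|_{\T}$ applied to $W\theta-W\theta'$, and the second is obtained by lifting the fiber elements to $\R^{k+2}$, observing that the $y$-shift and the fixed $(a,b)$ coordinates cancel in the difference, and invoking the first inclusion. No gaps.
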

\begin{proof}
Notice that if $x,y\in S_W(m)$ then by definition $\| W x\|_{\T},\|W y\|_{\T}\leq \sqrt{m}\, $. Thus, by the triangle inequality, 
\[ \| W (x-y) \|_{\T} \leq \| W x \|_{\T} + \|W y\|_{\T}\leq 2\sqrt{m}\,.\]
For \eqref{eq:Fiberdiff}, let $\t_{[k]},\t'_{[k]} \in F_y(S;a,b)$. We have that
\[(\t_1,\ldots,\t_{k},a,b), (\t'_1,\ldots,\t'_{k},a,b) \in S_{W}(m)-y \]
and so $\t'' := (\t_1-\t_1',\ldots,\t_{k}-\t'_{k},0,0) \in S_{W}(4m)$. Thus $\t_{[k]} -\t'_{[k]} \in F_0(S_{W}(4m);0,0)$, implying \eqref{eq:Fiberdiff}.
\end{proof}

\subsection{Proof of \ref{lem:CondWalkLCM}}\label{sec:pf-CondWalkLCM}

We may now turn to prove Lemma~\ref{lem:CondWalkLCM}, our big step towards Lemma~\ref{lem:CondWalkLCMfinal}.

\begin{proof}[Proof of Lemma~\ref{lem:CondWalkLCM}]
Apply  Lemma~\ref{lem:esseen} to find $m>0$ such that the level set 
\[ S := S_{W_Y}(m) = \{\theta\in \R^{k+2}: \|W_{Y} \theta \|_{\T }\leq \sqrt{m} \},\]  
satisfies 
\begin{equation} \label{eq:EssenApp} 
e^{-\frac{1}{8}m + 2\beta^2k}\g_{k+2}(S) \geq \cL(W^T_Y\tau, \beta\sqrt{k+1}) .  
\end{equation}
Thus \eqref{eq:EssenApp} together with our hypothesis \eqref{eq:LCM-hypo-lazy} gives a lower bound 
\begin{equation} \label{eq:gS-LB} \g_{k+2}(S) \geq  \frac{1}{4} e^{\frac{1}{8}m - 2 \beta^2 k} \left( R t\right)^2  T^{1/4}, \end{equation}
where we have set 
\[ T := \Pr(\|W^T \tau'\|_2\leq \beta'\sqrt{k}) + \exp(-\beta'^2 k) . \]
We now make the following important designations,
\begin{equation} \label{eq:defs} r_0 :=  \sqrt{k} \qquad \text{ and } \qquad s_0 := 2^{16} c_0^{-1}(\sqrt{m}+\sqrt{k})t  .\end{equation}
Recall from~\eqref{eq:defCy} that for $r,s >0$ we defined the \emph{cylinder}
\[ \G_{r,s}:=\left\{\theta \in \R^{k+2} : ~\left\|\theta_{[k]} \right\|_2\leq r\, \text{ and }  |\theta_{k+1}|\leq s, ~|\theta_{k+2}|\leq s, \right\}.\]

\begin{claim}\label{claim:condWalkLCM1}
There exists $x \in S \subseteq \R^{k+2}$ so that\footnote{Note that this claim shows, in particular, that $s_0 < 16$.} 
\begin{equation}\label{eq:non-empty}  \big( \G_{2r_0,16} \setminus \G_{2r_0,s_0} + x \big) \cap S \not= \emptyset.\end{equation}
\end{claim}
\begin{proof}[Proof of Claim \ref{claim:condWalkLCM1}]
We look to apply Lemma \ref{lem:geo-compare} with $s = s_0$. For this, we bound
\[ M := \max_{a,b,y}\bigg\lbrace \g_k\Big(F_y(S;a,b) - F_y(S;a,b)\Big)\bigg\rbrace,  \]
above by $e^{m/4}T$, thus giving a lower bound on $\g_{k+2}(S)$ and allowing us to apply Lemma~\ref{lem:geo-compare}. 
Use Fact~\ref{fact:LevelSetTriangleInq} to see that for any $y,a,b$, we have 
\begin{equation} \label{eq:claim-diff2}
 F_y(S;a,b) - F_y(S;a,b) \subseteq F_0(S_{W_Y}(4m);0,0)\,.
\end{equation} Now carefully observe that 
\[  F_0(S_{W_Y}(4m);0,0) = \left\lbrace \theta_{[k]} \in \R^{k} : \|W\theta_{[k]} \|_{\T} \leq \sqrt{4m}   \right\rbrace =S_W(4m), \]
which is a  level-set corresponding to the (``decoupled'') event  $\PP_{\tau'}( \| W^T \tau'\|_2 \leq \beta' \sqrt{k} )$, where $\tau' \sim \cQ(2d,2^{-9})$
and $\beta' \in (0,1/2)$ is as in the hypothesis. 
Thus we may apply Lemma~\ref{lem:revEsseen} along with  \eqref{eq:claim-diff2} to obtain 
\begin{equation} \label{eq:S(m)upBound2} 
M  \leq  \g_{k}(F_0(S_{W_Y}(4m),0,0)) = \g_{k}(S_W(4m))\leq e^{m/4}T \,. \end{equation} 
So combining \eqref{eq:S(m)upBound2} with \eqref{eq:gS-LB}, gives
\begin{equation}\label{eq:claimcondwalklcm1} \g_{k+2}(S) \geq (1/4)e^{m/16 + 2\beta^2k} (Rt)^2 M^{1/4} \geq 8s_0^2e^{-k/8} + 64s_0^2 M^{1/4}, \end{equation}
allowing us to apply Lemma~\ref{lem:geo-compare} and complete the proof of the claim. 
The last inequality at \eqref{eq:claimcondwalklcm1} follows from a simple check: each term on the right-hand side of \eqref{eq:claimcondwalklcm1} is at most half of the left-hand side.  First note that \begin{equation}\label{eq:s0-bound}
s_0^2 = 2^{32}c_0^{-2}(\sqrt{m} + \sqrt{k})^2t^2 < 2^{33}(k+m)(t/c_0)^2\end{equation}
and so 
$$8 s_0^2 e^{-k/8} \leq \frac{1}{8} e^{m/8-2\beta^2 k}(Rt)^2 e^{-\beta'^2 k/4}$$ follows from $\beta' \leq 1/2$ and the definition of $R$.  On the other hand, use \eqref{eq:s0-bound} to bound $$64 s_0^2 e^{m/16} \leq 2^{39} t^2 c_0^{-2}(2^{20} c_0^{-2} \beta^2 k + 8 (m/8) ) \leq \frac{1}{8}(Rt)^2 e^{m/8 + \beta^2 k}$$
thus showing the second inequality at \eqref{eq:claimcondwalklcm1} and finishing the proof of the claim.\end{proof}

\vspace{4mm}

\noindent We now observe the simple consequence of Claim~\ref{claim:condWalkLCM1}.

\begin{claim} \label{claim:non-empty} We have that $S_{W_Y}(4m) \cap (\G_{2r_0,16} \setminus \G_{2r_0,s_0}) \not= \emptyset $.
\end{claim}
\begin{proof}[Proof of Claim \ref{claim:non-empty}]
 By Claim~\ref{claim:condWalkLCM1}, there exists $x,y \in S = S_{W_Y}(m)$ so that $y \in ( \G_{2r_0,16} \setminus \G_{2r_0,s_0} + x \big) \cap S $. 
Set $\phi := y-x$ and observe that $\phi \in S_{W_Y}(4m) \cap (\G_{2r_0,16} \setminus \G_{2r_0,s_0})$, by Fact~\ref{fact:LevelSetTriangleInq}.
\end{proof}

\vspace{3mm}

\noindent We now conclude the proof of Lemma~\ref{lem:CondWalkLCM} with the following claim.

\begin{claim}\label{claim:CondRW3}
If $\psi \in S_{W_Y}(4m) \cap (\G_{2r_0,16} \setminus \G_{2r_0,s_0})$ then there exists $i \in \{k+1,k+2\}$ so that
 \[  \|\psi_i Y\|_{\T} < \min\{\psi_i\| Y\|_{2}/2, \sqrt{\alpha d}\}\,. \]
\end{claim}
\begin{proof}[Proof of Claim \ref{claim:CondRW3}]
	Note that since $\psi \in S_{W_Y}(4m)$ there is a $p \in \Z^{2d}$ so that $W_Y \psi \in B_{2d}(p,2\sqrt{m})$. So if we express 
\[ W_Y\psi = W\psi_{[k]} + \psi_{k+1}  \begin{bmatrix} Y \\  {\bf{0}}_d \end{bmatrix}  + \psi_{k+2} \begin{bmatrix} {\bf{0}}_d \\ Y \end{bmatrix}\,,\]
we have that 
\begin{equation} \label{eq:almostLCM}  
\psi_{k+1} \begin{bmatrix} Y \\ {\bf{0}}_d \end{bmatrix}  + \psi_{k+2} \begin{bmatrix} {\bf{0}}_d \\ Y \end{bmatrix} \in   
B_{2d}(p,2\sqrt{m}) -  W \psi_{[k]} \subseteq B_{2d}(p, 2\sqrt{m} + 4\sqrt{k}),\end{equation}
where the last inclusion holds because $\psi \in \G_{2r_0,16}$ and so $\|\psi_{[k]}\|_2 \leq 2 r_0 \leq 2\sqrt{k}$ and $\|W\| \leq 2$.

Since $\psi \not\in \G_{2r_0,s_0}$ we have that at least one of $|\psi_{k+1}|,|\psi_{k+2}|$ are $> s_0$. So, assume without loss that $|\psi_{k+1}|>s_0$
and that $\psi_{k+1} > 0$ (otherwise replace $\psi$ with $-\psi$). Now project \eqref{eq:almostLCM} onto the first $d$ coordinates, to obtain
\begin{equation}\label{eq:phiY-close} \psi_{k+1} Y \in B_{d}(  p_{[d]} , 2\sqrt{m} + 4\sqrt{k}) . \end{equation}

We now observe that $\|\psi_{k+1} Y\|_{\T} < \frac{\psi_{k+1}\| Y\|_2}{2}$. Indeed,
\begin{equation} \label{eq:non-triv} \frac{\psi_{k+1}\| Y\|_2}{2} > \frac{s_0 \|Y\|_2}{2} \geq \bigg(\frac{2^{15}(\sqrt{m} + \sqrt{k})t}{c_0}\bigg)\bigg(2^{-10}\frac{c_0}{t}\bigg) 
> (2\sqrt{m} + 4\sqrt{k}), \end{equation}
where we have used the definition of $s_0$ and that $\|Y\|_2 > 2^{-10}c_0/t$.

Finally, we note that $m \leq 2^{-4}\alpha d$. To see this, we use \eqref{eq:gS-LB}, $\g_{k+2}(S) \leq 1$ and our lower bound $t \geq \exp(-2^{-9}\alpha d)$ to see
\[ e^{-m/8} \geq \g_{k+2}(S) e^{-m/8} \geq (Rt)^2e^{-2\beta'^2 k} \geq \exp(-2^{-7}\alpha d), \]
where we have used $k \leq 2^{-9}\alpha d$ and $\beta'<1$ for the last inequality, thus $m \leq 2^{-4}\alpha d$. Therefore from \eqref{eq:phiY-close} and \eqref{eq:non-triv} we have 
\[ \|\psi_{k+1} Y \|_{\T} \leq 2\sqrt{m} + 4\sqrt{k}\leq \sqrt{\alpha d }, \]
as desired. This completes the proof of the Claim~\ref{claim:CondRW3}. \end{proof}

\vspace{3mm}

Let $\psi$ and $i \in \{k+1,k+2\}$ be as guaranteed by Claim \ref{claim:CondRW3}. Then $\psi_i \leq 16 $, and 
\[ \|\psi_i  Y\|_{\T} < \min\{\|\psi_i Y\|_{2}/2, \sqrt{\alpha d}\},\] and so $D_\alpha(Y)\leq16$ thus completing the proof of Lemma~\ref{lem:CondWalkLCM}.
 \end{proof}

\subsection{Proof of Lemma \ref{lem:CondWalkLCMfinal}}\label{sec:pf-CondWalkLCMfinal}

Before turning to prove Lemma~\ref{lem:CondWalkLCMfinal}, we require one further result which tells us that $\| W\s \|_2$ is anti-concentrated when $\s$ is a random vector and $W$ is a fixed matrix. While there are several interesting results of this type in the literature \cite{RV-HW,halasz,ferberHadamard} (and we will encounter another in Subsection~\ref{subsec:AX-anticoncentration}), we state here a variant of the Hanson-Wright inequality with an explicit constant. We derive this from Talagrand's classical inequality in Appendix~\ref{sec:details}.
 
\begin{lemma}\label{lem:HansonWright}
For $d \in \N$, $\nu \in (0,1)$, let $\delta \in (0,\sqrt{\nu}/16)$, let
$\sigma \sim \cQ(2d, \nu)$, and let $W$ be a $2d \times k$ matrix satisfying $\|W\|_{\HS}\geq \sqrt{k}/2$ and $\|W\|\leq 2$.  Then
\begin{align}\label{eq:HW}
\Pr(\|W^T\sigma\|_2\leq \delta \sqrt{k})\leq 4\exp(-2^{-12}\nu k)
\end{align}
\end{lemma}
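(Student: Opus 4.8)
The plan is to prove the anti-concentration bound $\Pr(\|W^T\sigma\|_2\leq \delta\sqrt{k})\leq 4\exp(-2^{-12}\nu k)$ by combining a Gaussian-style comparison with a concentration-of-measure argument, exactly as the remark preceding the lemma (``We derive this from Talagrand's classical inequality in Appendix~\ref{sec:details}'') anticipates. First I would observe that $\|W^T\sigma\|_2$ is a $1$-Lipschitz function of $\sigma \in \R^{2d}$, since $\|W\|\leq 2$ gives $\bigl|\,\|W^T\sigma\|_2 - \|W^T\sigma'\|_2\,\bigr| \leq \|W\|\,\|\sigma-\sigma'\|_2 \leq 2\|\sigma-\sigma'\|_2$ (the factor $2$ is harmless and can be absorbed). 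The coordinates of $\sigma\sim\cQ(2d,\nu)$ are independent and bounded, so Talagrand's inequality (or the bounded-differences/Azuma inequality) yields that $\|W^T\sigma\|_2$ concentrates around its median $\mathrm{Med}$ with sub-Gaussian tails on scale $O(1)$: $\Pr(\,|\|W^T\sigma\|_2 - \mathrm{Med}| \geq u\,) \leq 4\exp(-c u^2)$ for an absolute $c>0$.

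The key point is then to show that the median (equivalently, a lower bound for $\|W^T\sigma\|_2$ that holds with probability bounded away from $0$) is of order $\sqrt{\nu k}$, comfortably larger than $\delta\sqrt{k}$ given the hypothesis $\delta < \sqrt{\nu}/16$. For this I would compute the second moment: $\EE\|W^T\sigma\|_2^2 = \EE\,\sigma^T W W^T \sigma = \nu\,\tr(WW^T) = \nu\|W\|_{\HS}^2 \geq \nu k/4$, using $\EE\sigma_i\sigma_j = \nu\delta_{ij}$ and the hypothesis $\|W\|_{\HS}\geq\sqrt{k}/2$. To convert this first-moment information into a statement about the median I also need an upper tail / variance control on $\|W^T\sigma\|_2^2$, which again follows from Hanson–Wright-type reasoning or directly from the Lipschitz concentration already established (so that $\|W^T\sigma\|_2$ cannot be much smaller than $\sqrt{\EE\|W^T\sigma\|_2^2}$ with more than constant probability); concretely, if the median were below, say, $\sqrt{\nu k}/4$, then combining with the sub-Gaussian concentration on scale $O(1)$ would force $\EE\|W^T\sigma\|_2^2$ to be much smaller than $\nu k/4$ once $\nu k$ is large, a contradiction (and the small-$\nu k$ regime is trivial since the right-hand side $4\exp(-2^{-12}\nu k)$ exceeds $1$).

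Putting these together: since $\delta\sqrt{k} < \frac{1}{16}\sqrt{\nu k} \leq \frac12 \mathrm{Med}$, the event $\{\|W^T\sigma\|_2 \leq \delta\sqrt{k}\}$ forces a deviation of at least $\mathrm{Med} - \delta\sqrt{k} \gtrsim \sqrt{\nu k}$ below the median, and Talagrand's inequality bounds this by $4\exp(-c\nu k)$; tracking the constants through Talagrand's inequality (which is the content deferred to Appendix~\ref{sec:details}) gives the stated exponent $2^{-12}\nu k$. The main obstacle is purely the bookkeeping of explicit constants — making sure the Lipschitz constant, the second-moment lower bound $\nu k/4$, the gap between $\delta\sqrt k$ and the median, and the constant in Talagrand's inequality all compose to give exactly $2^{-12}$ rather than merely some absolute constant; none of the individual steps is conceptually hard, but the explicit-constant requirement is what makes this slightly delicate, which is presumably why it is relegated to an appendix.
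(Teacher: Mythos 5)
Your proposal follows essentially the same route as the paper's Appendix~\ref{sec:details}: apply Talagrand's inequality to the ($\|W\|^{-1}$-normalized) Lipschitz convex function $\sigma\mapsto\|W^T\sigma\|_2$, lower-bound the median by order $\sqrt{\nu}\,\|W\|_{\HS}\geq\sqrt{\nu k}/2$ using $\EE\|W^T\sigma\|_2^2=\nu\|W\|_{\HS}^2$ (the paper makes this rigorous via a Chebyshev bound using the variance of $\|W^T\sigma\|_2^2$, one of the two options you sketch), dispatch the regime $\nu k\leq 2^{12}$ trivially, and conclude since $\delta\sqrt{k}$ is well below the median. The argument and all the key quantities match; only the constant bookkeeping remains, as you note.
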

 
\vspace{3mm} 

\noindent We now turn to prove Lemma~\ref{lem:CondWalkLCMfinal}.

\vspace{2mm}

\begin{proof}[Proof of Lemma \ref{lem:CondWalkLCMfinal}]
Setting $\beta':=4\sqrt{c_0}$, we look to apply Lemma~\ref{lem:CondWalkLCM}. For this, note that the hypotheses in Lemma~\ref{lem:CondWalkLCMfinal}  imply the hypotheses in Lemma~\ref{lem:CondWalkLCM} with respect to $c_0, d,\alpha,k, Y, W$ and $\tau$ (and we have the extra condition on $\|W\|_{\HS}$). 
So if we additionally assume $D_{\alpha}(Y) > 16$, we may apply Lemma~\ref{lem:CondWalkLCM} (in the contrapositive) to obtain 
\begin{equation}\label{eq:lemCOndWalkLCMapp} \cL\left( W_Y^T \tau,  \beta \sqrt{k+1} \right)  \leq(2^{31} c_0^{-2} t/2)^{2} e^{4 \beta^2 k} \left(\P(\| W^T \tau' \|_2 \leq \beta' \sqrt{k}) + e^{- \beta'^2 k} \right)^{1/4}. \end{equation}

To deal with the right-hand side, we apply Lemma~\ref{lem:HansonWright} to take care of the quantity involving $\tau' \in \{-1, 0, 1 \}^{2d}$, our $\nu = 2^{-9}$ lazy random vector.
Note that $4\sqrt{c_0}\leq 2^{-10} \leq \sqrt{\nu}/16 $, and that our given $W$ satisfies $\|W\|_{\HS}\geq \sqrt{k}/2$ and $\|W\|\leq 2$. Thus we may apply Lemma~\ref{lem:HansonWright}, with $\delta=\beta'$ and $\sigma=\tau'$, to see
\begin{equation} \label{eq:HansonWrightapp} \Pr(\|W^T\tau'\|_2\leq \beta' \sqrt{k})\leq 4\exp(-2^{-12}\nu k).  \end{equation}
Plugging this into the right-hand side of \eqref{eq:lemCOndWalkLCMapp} yields
 \begin{align*}
\exp(4\beta^2 k)\left(\Pr(\|W^T\tau'\|_2 
\leq \beta' \sqrt{k})+\exp(-\beta'^2 k)\right)^{1/4}
&\leq 2\exp(4 c_0 k-2^{-21}k)+2\exp(2 c_0 k-4c_0 k)\\ 
&\leq 4\exp(-c_0 k) .
\end{align*}
Putting this together with \eqref{eq:lemCOndWalkLCMapp}, yields 
\[ \cL\left( W_Y^T \tau,  \beta \sqrt{k+1} \right)   \leq  (Rt)^2\exp(-c_0 k),\]
as desired. \end{proof}

\section{Inverse Littlewood-Offord for conditioned random matrices}\label{sec:matrixWalks}

In this section we lift the main result of the previous sections (Lemma~\ref{lem:CondWalkLCMfinal}) to study the concentration of the vector $H_1X$, where $H_1$
is a random $(n-d)\times d$ matrix, conditioned on having $k$ singular values which are much smaller than ``typical'' and $X$ is a fixed vector for which $|X_i| \approx N$
for each $i$.

Here $N$ should be thought of as $\approx 1/\eps$, in the context of the proof (see Section~\ref{sec:sketch}) and $H_1$ comes from its appearance in our matrix $M$,
\[ M =  
\begin{bmatrix}
{\bf 0 }_{[d] \times [d]} & H_1^T \\
H_1 & { \bf 0}_{[d+1,n] \times [d+1,n]}  
\end{bmatrix}. \] 

The main result of this section is the following theorem\footnote{For convenience, we define $\s_j(H)=0$ for $j>\rk(H)$.}.
\begin{theorem}\label{lem:rankH} For $n \in \N$ and $0 < c_0 \leq 2^{-24}$, let $d \leq c_0^2 n$, and for $\alpha \in (0,1)$, let $0\leq k\leq 2^{-10}\alpha d$ and $N\leq \exp(2^{-10}\alpha d)$. 
Let $X \in \R^d$ satisfy $\|X\|_2 \geq c_02^{-10} n^{1/2} N$, and let $H$ be a random $(n-d)\times 2d$ matrix with i.i.d.\ $(1/4)$-lazy entries in $\{-1,0,1\}$.

If $D_\alpha(r_n \cdot X)> 16$ then
\begin{equation} \label{eq:RankofH}
\Pr_H\left(\s_{2d-k+1}(H)\leq c_02^{-4}\sqrt{n} \text{ and } \|H_1X\|_2,\|H_2 X\|_2\leq n\right)\leq e^{-c_0nk/4}\left(\frac{R}{N}\right)^{2n-2d}\, ,\end{equation}
where we have set $H_1 := H_{[n-d]\times [d]}$, $H_2 := H_{[n-d] \times [d+1,2d]}$, $r_n := \frac{c_0}{16\sqrt{n}}$ and $R := 2^{39}c_0^{-3}$. 
\end{theorem}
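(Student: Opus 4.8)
The plan is to bound the probability in \eqref{eq:RankofH} by decomposing over the (random) direction that witnesses the small singular values of $H$, and then applying Lemma~\ref{lem:CondWalkLCMfinal} row-by-row. First I would set up the following reduction: the event $\{\s_{2d-k+1}(H)\le c_02^{-4}\sqrt n\}$ says that there is a $k$-dimensional subspace $V\subseteq \R^{2d}$ on which $H$ acts with small norm, i.e.\ $\|H w\|_2 \le c_02^{-4}\sqrt{n}$ for all unit $w\in V$; equivalently there is a $2d\times k$ matrix $W$ with orthonormal columns (so $\|W\|\le 1\le 2$ and $\|W\|_{\HS}=\sqrt k \ge \sqrt k/2$) such that $\|HW\|$ is small. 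Taking a net over the (low-dimensional, hence cheap) Grassmannian of such $W$'s, it suffices to fix one such $W$ and bound the probability that $\|HW\|_{\HS}\le 2c_02^{-4}\sqrt{nk}$ together with $\|H_1X\|_2,\|H_2X\|_2\le n$. This is exactly the shape needed: writing $Y := r_n\cdot X$, the constraints $\|H_1 X\|_2\le n$ and $\|H_2X\|_2\le n$ become, after rescaling each of the $n-d$ rows of $H$ by $r_n$, constraints of the form $|\la (\eta_i, \eta_i'), (Y, 0)\ra|$ and $|\la(\eta_i,\eta_i'),(0,Y)\ra|$ being $O(\sqrt n\, r_n) = O(c_0)$, where $(\eta_i,\eta_i')\in\{-1,0,1\}^{2d}$ is the $i$-th row of $H$; and $\|HW\|_{\HS}^2 = \sum_i \|W^T(\eta_i,\eta_i')\|_2^2$ small forces most rows to satisfy $\|W^T(\eta_i,\eta_i')\|_2 \le c_0^{1/2}\sqrt{k}$-ish.

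The core step is then the row-by-row independence argument. The rows of $H$ are i.i.d., each distributed as $\tau\sim\cQ(2d,1/4)$ (a $(1/4)$-lazy vector in $\{-1,0,1\}^{2d}$). For a single row $\tau$, the joint event ``$|\la\tau,(Y,0)\ra|, |\la\tau,(0,Y)\ra| \le O(c_0)$ and $\|W^T\tau\|_2 \le O(c_0^{1/2})\sqrt{k}$'' is precisely the event whose probability is controlled by $\cL(W_Y^T\tau, c_0^{1/2}\sqrt{k+1})$ with $W_Y$ the $Y$-augmented matrix of \eqref{eq:WYdef}. Since $D_\alpha(Y) = D_\alpha(r_n X) > 16$ by hypothesis, and the scaling $\|Y\|_2 = r_n\|X\|_2 \ge \frac{c_0}{16\sqrt n}\cdot c_02^{-10}\sqrt n N = 2^{-14}c_0^2 N \ge 2^{-10}c_0/t$ works out with $t := R/N$ (needing $R$ large enough, and $t\ge \exp(-2^{-9}\alpha d)$ following from $N\le\exp(2^{-10}\alpha d)$), Lemma~\ref{lem:CondWalkLCMfinal} applies and gives that each row independently lands in the relevant event with probability at most $(Rt)^2 e^{-c_0 k} = (R^2/N)^2 e^{-c_0 k}$ — wait, one must be careful: the bound is $(Rt)^2 e^{-c_0k}$ with $t=R/N$, so per row the probability is $\le (R\cdot R/N)^2 e^{-c_0 k}$; the precise bookkeeping of constants ($R=2^{39}c_0^{-3}$ in the theorem vs.\ $2^{32}c_0^{-2}$ in the lemma) is where one absorbs the net over Grassmannians and the slack from ``most rows'' vs.\ ``all rows''. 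Raising to the power $n-d$ (number of rows) gives roughly $\big((R/N) e^{-c_0 k/2}\big)^{2(n-d)}$-type bound, and since there are two independent halves $H_1,H_2$ each contributing $(n-d)$ rows, or rather the $2d$ columns split into the two copies of $X$, one collects a factor $N^{-2(n-d)}$ and an exponential factor $e^{-c_0 k (n-d)/\text{const}}$, matching the claimed $e^{-c_0 nk/4}(R/N)^{2n-2d}$ after adjusting constants (using $d\le c_0^2 n \le n/2$ so $n-d \ge n/2$).

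The main obstacle I expect is the \emph{decoupling of the singular-value event from the hard constraints across rows}: the event $\s_{2d-k+1}(H)$ small is a global event about $H$, not a product event over rows, so one cannot immediately multiply per-row probabilities. The standard fix — and what I'd carry out — is to \emph{first} expose the witnessing subspace $V$ (or matrix $W$) by a union bound over a fine net of the relevant Grassmannian $\mathrm{Gr}(k,2d)$, whose size is at most $(C/\delta)^{2dk}\le e^{O(c_0^2 n k)}$, small enough to be absorbed since $d\le c_0^2 n$ is tiny; \emph{then}, with $W$ frozen, the event $\|HW\|_{\HS}$ small together with $\|H_1X\|,\|H_2X\|\le n$ genuinely is a product over the i.i.d.\ rows of $H$, and each factor is handled by Lemma~\ref{lem:CondWalkLCMfinal} as above. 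A secondary technical point is converting ``$\|HW\|_{\HS}^2 = \sum_i\|W^T\tau^{(i)}\|_2^2 \le c_0^2 2^{-8} nk$'' into a per-row statement: this costs a factor like $2^{n-d}$ from choosing which $\le (n-d)/2$ rows are ``bad'' (have $\|W^T\tau^{(i)}\|_2$ not small), which is again absorbed into the large constant $R$. Modulo these bookkeeping maneuvers, the argument is a clean lift of the one-dimensional conditioned Littlewood–Offord bound to the matrix setting via independence of rows.
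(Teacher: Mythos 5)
Your overall architecture matches the paper's: pass from the singular-value event to a witness matrix $W\in\cU_{2d,k}$ with orthonormal columns via Fact~\ref{fact:singvalues}, union bound over a net of such $W$ (the paper uses the randomized-rounding net of Lemma~\ref{lem:basis-net}, whose property (1) is what lets one approximate $\|HU_Y\|_{\HS}$ by $\|HW_Y\|_{\HS}$ without paying a spurious factor of $\sqrt{d}$), augment $W$ by $Y=r_n\cdot X$ to fold the two hard constraints into a single Hilbert--Schmidt bound on $HW_Y$, and finish with Lemma~\ref{lem:CondWalkLCMfinal} applied to a single row $\tau\sim\cQ(2d,1/4)$. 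Your choice $t=R/N$ in place of the paper's $t=16/(c_0N)$ is harmless bookkeeping.

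The genuine gap is in the step converting the aggregate constraint $\|HW_Y\|_{\HS}^2=\sum_i\|W_Y^T\tau^{(i)}\|_2^2\le O(c_0^2\, n(k+1))$ into a product of per-row events. Your proposed mechanism --- union bound over which $\le(n-d)/2$ rows are ``bad'' and multiply per-row probabilities over the remaining good rows --- only yields the per-row bound raised to a power $\ge(n-d)/2$ rather than $n-d$. Concretely you obtain at best $(R'/N)^{n-d}$ in place of the required $(R/N)^{2(n-d)}$, and since $N$ may be as large as $e^{2^{-10}\alpha d}$ this deficit of $N^{n-d}$ cannot be absorbed into the constant $R$: the $2^{n-d}$ from choosing the bad set is the harmless part, but the halving of the exponent of $1/N$ is fatal. (The same issue already afflicts the hard constraints alone, since $\|H_1X\|_2\le n$ is itself an aggregate $\ell^2$ constraint over rows, and Markov only makes it a per-row constraint for a constant fraction of rows.) The paper's fix is the tensorization Lemma~\ref{lem:tensor}: bound the indicator of $\{\|HW_Y\|_{\HS}\le\beta^2\sqrt{(k+1)(n-d)}\}$ by $e^{2\beta^2(k+1)(n-d)}\exp(-2\|HW_Y\|_{\HS}^2/\beta^2)$ and use that the exponential factors exactly over the i.i.d.\ rows, so that every one of the $n-d$ rows contributes a full factor of order $\cL(W_Y^T\tau,\beta\sqrt{k+1})$. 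This Laplace-transform step is the missing ingredient; without it the claimed exponent $2n-2d$ on $R/N$ is not reachable by your route.
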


To understand the numerology in Theorem~\ref{lem:rankH}, notice that if we only consider the ``soft'' constraints on the singular values (without the constraints imposed by $X$) we would expect something like
\begin{equation}\label{eq:leastSingValueHuristic}
 \Pr_H\left(\s_{2d-k+1}(H)\leq c_02^{-4}\sqrt{n} \right) \approx c^{nk},
\end{equation}
for some absolute $c \in (0,1)$, which depends on the value of $c_0$. Here we are using, crucially, that $H$ is a \emph{rectangular} matrix with aspect ratio bounded away from $1$. Indeed,
if $H$ were a square matrix then $\s_{\min}(H) \approx n^{-1/2}$, with high probability\footnote{While we can refer the reader to \cite{RV-rectangle, RV-ICM} for more on the singular values of rectangular random matrices, we were not able to find any result such as \eqref{eq:leastSingValueHuristic} in the literature. However, it is not so hard to deduce \eqref{eq:leastSingValueHuristic} from the Hanson-Wright inequality \cite{RV-HW} along with a ``random rounding'' step similar to that in Appendix~\ref{sec:randomrounding}.}.

On the other hand, the inverse Littlewood-Offord theorem of Rudelson and Vershynin \cite{RV} 
(with a bit of extra work) tells us that if $X$ is such that  $|X_i| \approx N$ for all $i \in [d]$, and 
\[ \PP( \|H_1X\|_2,\|H_2 X\|_2\leq n ) \geq \left(\frac{R}{N}\right)^{2n-2d}, \]
then $D_{\alpha}(n^{-1/2}X) =O(1)$. Thus Theorem~\ref{lem:rankH} is telling us that we maintain an inverse Littlewood-Offord type theorem even in the presence of many additional constraints imposed by the condition on the least singular values.

\subsection{A tensorization step}\label{sec:tensor}
We need the following basic fact.

\begin{fact}\label{fact:regularityofL} If $r\geq t >0$ and $X$ is a random variable taking values in $\R^{k+2}$, then
\[ \cL(X,t)\leq \cL(X,r)  \leq (1+2r/t)^{k+2} \cL(X, t).   \]
\end{fact}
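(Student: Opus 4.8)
The plan is to prove the two inequalities separately; the left-hand one is immediate and the right-hand one is a standard covering argument. For the first inequality, note that $\cL(X,\cdot)$ is monotone non-decreasing in its second argument: if $\|X-w\|_2 \leq t$ then certainly $\|X-w\|_2 \leq r$ whenever $r \geq t$, so for every fixed $w$ we have $\PP(\|X-w\|_2 \leq t) \leq \PP(\|X-w\|_2 \leq r)$, and taking the maximum over $w \in \R^{k+2}$ on both sides gives $\cL(X,t) \leq \cL(X,r)$.

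For the second inequality, the idea is to cover a ball of radius $r$ by translates of balls of radius $t$. Fix $w \in \R^{k+2}$. Let $\cP \subseteq B_{k+2}(w,r)$ be a maximal $t$-separated subset (a set of points pairwise at distance more than $t$); by maximality the balls $B_{k+2}(p, t)$ with $p \in \cP$ cover $B_{k+2}(w,r)$, so
\[ \PP\left( \|X - w\|_2 \leq r \right) \leq \sum_{p \in \cP} \PP\left( \|X-p\|_2 \leq t \right) \leq |\cP| \cdot \cL(X,t). \]
It remains to bound $|\cP|$. The balls $B_{k+2}(p,t/2)$, $p \in \cP$, are pairwise disjoint and all contained in $B_{k+2}(w, r + t/2)$, so a volume comparison gives $|\cP| (t/2)^{k+2} \leq (r+t/2)^{k+2}$, i.e. $|\cP| \leq (1 + 2r/t)^{k+2}$. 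Combining, $\PP(\|X-w\|_2 \leq r) \leq (1+2r/t)^{k+2}\cL(X,t)$, and taking the maximum over $w$ yields $\cL(X,r) \leq (1+2r/t)^{k+2}\cL(X,t)$.

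There is no real obstacle here: the only mild point of care is the volume-packing bound for $|\cP|$, where one should make sure to use the separation-to-get-disjoint-half-balls trick rather than trying to directly count a covering, and to confirm the resulting constant is exactly $(1+2r/t)^{k+2}$ as claimed (using $t/2$ as the packing radius, $r+t/2$ as the enclosing radius, so the ratio of radii is $(r+t/2)/(t/2) = 1 + 2r/t$). This is a completely routine estimate.
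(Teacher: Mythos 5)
Your proof is correct and follows the same route as the paper, which simply invokes the standard fact that a ball of radius $r$ in $\R^{k+2}$ can be covered by $(1+2r/t)^{k+2}$ balls of radius $t$; you have merely supplied the routine packing argument behind that covering bound. Nothing further is needed.
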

\begin{proof}
The lower bound is trivial. The upper bound follows from the fact that a ball of radius $r$ in $\R^{k+2}$ can be covered by $(1+2r/t)^{k+2}$ balls of radius $t$.
\end{proof}

\vspace{3mm}
We now prove a ``tensorization'' lemma which shows that anti-concentration of a single row in a random matrix $H$ (with iid rows) implies the anti-concentration of matrix products involving $H$.

\begin{lemma}\label{lem:tensor} For $d < n$ and $k \geq 0$, let $W$ be a $2d \times (k+2)$ matrix and let $H$ be a $(n-d)\times 2d$ random matrix with i.i.d.\ rows. Let $\tau \in \R^{2d}$ be a random vector with the same distribution as the rows of $H$.
If $\beta \in (0,1/8)$ then
\[  \PP_H\big( \|HW\|_{\HS} \leq \beta^2 \sqrt{(k+1)(n-d)} \big)  \leq \left(2^{5}e^{2\beta^2 k}\cL\big( W^T \tau, \beta \sqrt{k+1} \big)\right)^{n-d}. \] 
\end{lemma}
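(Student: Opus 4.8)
The plan is to use Markov's inequality on a Gaussian-weighted exponential moment, exactly as in the proof of Lemma~\ref{lem:esseen}, but now applied to the $(n-d)$ independent rows of $H$ simultaneously, so that the single-row estimate tensorizes to a power $n-d$. Write $H$ as having i.i.d.\ rows $\tau^{(1)},\ldots,\tau^{(n-d)}$, each distributed as $\tau$. Then $HW$ is the $(n-d)\times(k+2)$ matrix whose $j$-th row is $(W^T\tau^{(j)})^T$, and hence
\[
\|HW\|_{\HS}^2 = \sum_{j=1}^{n-d} \|W^T\tau^{(j)}\|_2^2.
\]
So the event $\{\|HW\|_{\HS}\leq \beta^2\sqrt{(k+1)(n-d)}\}$ is the event $\sum_j \|W^T\tau^{(j)}\|_2^2 \leq \beta^4(k+1)(n-d)$, and by Markov applied to $\exp\!\big(-\tfrac{\pi}{2}\sum_j\|W^T\tau^{(j)}\|_2^2\big)$ (or a similar exponential with a convenient constant), independence of the rows factors the expectation into a product:
\[
\PP_H\big(\|HW\|_{\HS}\leq \beta^2\sqrt{(k+1)(n-d)}\big)
\leq e^{c\beta^4(k+1)(n-d)}\,\Big(\Ex_\tau \exp\big(-\tfrac{\pi}{2}\|W^T\tau\|_2^2\big)\Big)^{n-d}.
\]

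The second step is to bound the single-row quantity $\Ex_\tau \exp(-\tfrac{\pi}{2}\|W^T\tau\|_2^2)$ by the L\'evy concentration function $\cL(W^T\tau,\beta\sqrt{k+1})$. This is essentially the reverse of what is done inside the proof of Lemma~\ref{lem:esseen}: split the expectation according to whether $\|W^T\tau\|_2 \leq \beta\sqrt{k+1}$ or not. On the first event the integrand is at most $1$ and the probability is at most $\cL(W^T\tau,\beta\sqrt{k+1})$; on the complement the integrand is at most $e^{-\pi\beta^2(k+1)/2}$. To absorb this Gaussian tail term into the concentration function one uses Fact~\ref{fact:regularityofL} (regularity of $\cL$): a ball of radius $\beta\sqrt{k+1}$ is not much smaller than a ball of radius $\sqrt{k+1}$ up to a factor $(1+2/\beta)^{k+2}$, and a standard covering shows $e^{-\pi\beta^2(k+1)/2}$ is dominated by $(C/\beta)^{k+2}\cL(W^T\tau,\beta\sqrt{k+1})$ — or, more simply, one notes directly that any probability is at most $1$ and that $\cL(W^T\tau,\beta\sqrt{k+1})\geq \PP(W^T\tau = w_0)$ is not too small in the regime that matters, but the clean route is the covering bound. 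The upshot is
\[
\Ex_\tau \exp\big(-\tfrac{\pi}{2}\|W^T\tau\|_2^2\big) \leq C\,e^{c'\beta^2 k}\,\cL\big(W^T\tau,\beta\sqrt{k+1}\big)
\]
for absolute constants, and raising to the $(n-d)$-th power and folding the $e^{c\beta^4(k+1)(n-d)}$ prefactor (note $\beta^4 k \leq \beta^2 k$ since $\beta<1$, and the stray $+1$ costs a bounded factor per coordinate since $\beta < 1/8$) into the per-row constants gives the claimed bound with $2^5$ and $e^{2\beta^2 k}$ after tracking the numerology.

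The main obstacle, and the only place requiring real care, is the bookkeeping of the constants: one must check that the Markov prefactor $e^{c\beta^4(k+1)(n-d)}$, the covering factor $(C/\beta)^{(k+2)(n-d)}$ coming from relating $\beta\sqrt{k+1}$-balls to $\sqrt{k+1}$-balls, and the Gaussian-tail term all combine to something bounded by $\big(2^5 e^{2\beta^2 k}\big)^{n-d}$ — the hypothesis $\beta<1/8$ is exactly what makes $2^5$ (rather than a larger constant) suffice, since it controls $(1+2/\beta)$-type factors against $e^{2\beta^2 k}$ only after one is careful to route the $(k+2)$ covering exponent into the $e^{2\beta^2 k}$ slack and the remaining $O(1)^{n-d}$ into the $2^5$. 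Everything else (independence of rows, the identity $\|HW\|_{\HS}^2 = \sum_j \|W^T\tau^{(j)}\|_2^2$, Fact~\ref{fact:regularityofL}, and the Gaussian-weight Markov trick from Lemma~\ref{lem:esseen}) is routine.
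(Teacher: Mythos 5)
Your overall architecture matches the paper's: Markov's inequality applied to an exponential weight, factorization over the i.i.d.\ rows via $\|HW\|_{\HS}^2=\sum_j\|W^T\tau^{(j)}\|_2^2$, and a single-row bound in terms of $\cL(W^T\tau,\beta\sqrt{k+1})$ using Fact~\ref{fact:regularityofL}. However, there is a genuine gap in the single-row step, caused by your choice of exponential weight. You use the unscaled weight $\exp(-\tfrac{\pi}{2}\|W^T\tau\|_2^2)$, and then need to absorb the tail contribution $e^{-\pi\beta^2(k+1)/2}$ (coming from the event $\|W^T\tau\|_2>\beta\sqrt{k+1}$) into the concentration function. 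The claimed domination $e^{-\pi\beta^2(k+1)/2}\leq (C/\beta)^{k+2}\,\cL(W^T\tau,\beta\sqrt{k+1})$ is false in general: there is no lower bound on $\cL(W^T\tau,\beta\sqrt{k+1})$ better than roughly $3^{-2d}$ (the reciprocal of the number of values $W^T\tau$ can take), whereas $e^{-\pi\beta^2(k+1)/2}$ is only exponentially small in $k$, and $k$ can be far smaller than $d$ (the lemma must hold even for $k=0$, where your tail term is a constant but $\cL$ can be exponentially small in $d$). With the unscaled weight the best you can get is the additive bound $\Ex_\tau e^{-\pi\|W^T\tau\|_2^2/2}\leq \cL(W^T\tau,\beta\sqrt{k+1})+e^{-\pi\beta^2(k+1)/2}$ --- exactly the shape of Lemma~\ref{lem:revEsseen}, where the error is \emph{not} absorbed --- and raising this to the power $n-d$ gives something far weaker than $(\mathrm{const}\cdot\cL)^{n-d}$ whenever $\cL$ is small. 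A dyadic-shell refinement does not rescue it either: the first shell alone contributes a factor on the order of $3^{k+2}$ per row, which cannot be hidden inside $e^{2\beta^2k}$ when $\beta<1/8$.

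The fix, which is what the paper does, is to tune the weight to the scale $\beta$: take $\exp(-2\|W^T\tau\|_2^2/\beta^2)$. Markov then costs only $\exp(2\beta^2(k+1)(n-d))$, and in the single-row integral $\int_0^\infty 4ue^{-2u^2}\,\PP(\|W^T\tau\|_2\leq \beta u)\,du$ \emph{every} probability, including those in the tail $u\geq\sqrt{k+1}$, is bounded via Fact~\ref{fact:regularityofL} by $(1+2u/\sqrt{k+1})^{k+2}\cL(W^T\tau,\beta\sqrt{k+1})$; the Gaussian factor $e^{-2u^2}$ then beats the polynomial covering factor uniformly (this is Fact~\ref{fact:infamous-int}), yielding $\Ex_\tau e^{-2\|W^T\tau\|_2^2/\beta^2}\leq 9\,\cL(W^T\tau,\beta\sqrt{k+1})$ with no additive error. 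The key point you are missing is that the decay rate of the weight must match the scale at which $\cL$ is measured, so that the tail is controlled multiplicatively by $\cL$ rather than additively by a free-standing constant.
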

\begin{proof}
Apply Markov's inequality to see that
\begin{equation} \label{eq:tensor-markov}
\PP\big( \|HW\|_{\HS} \leq \beta^2 \sqrt{(k+1)(n-d)} \big) \leq \exp\left(2\beta^2 (k+1)(n-d)\right) \EE_{H} e^{-2\|HW\|_{\HS}^2/\beta^2 }.  \end{equation}
Letting $\tau_1, \ldots, \tau_{n-d}$ denote the i.i.d.\ rows of $H$, we may rewrite
\begin{equation} \label{eq:tensor-independent}
\EE_{H}\, e^{-2\|HW\|_{\HS}^2/\beta^2 } = \prod_{i=1}^{n-d} \EE_{\tau_i}\, e^{-2\|W^T\tau_i\|^2/\beta^2} = \left( \EE_{\tau}\, e^{-2\|W^T\tau\|^2/\beta^2} \right)^{n-d}\, .
\end{equation}
Observe now that 
\[ \EE_{\tau}\, e^{-2\|W^T\tau\|^2/\beta^2} = \int_{0}^\infty \PP\left( e^{-2\|W^T\tau\|^2/\beta^2}>u \right)\, du 
=\int_{0}^\infty 4ue^{-2u^2}\PP\big( \| W^T\tau \|_2/\beta \leq u \big)\, du. \]
Splitting the integral on the right-hand side gives 
\[ \EE_{\tau}\, e^{-2\|W^T\tau\|^2/\beta^2} = \int_0^{\sqrt{k+1}} 4ue^{-2u^2} \PP\big( \| W^T\tau \|_2  \leq \beta u \big) + \int_{\sqrt{k+1}}^{\infty} 4ue^{-2u^2} \PP\big( \| W^T\tau \|_2  \leq \beta u \big).\]
We then appeal to Fact \ref{fact:regularityofL} to write 
\[ \EE_{\tau}\, e^{-2\|W^T\tau\|^2/\beta^2} \leq \cL\big( W^T \tau, \beta \sqrt{k+1} \big)\left( \int_0^{\sqrt{k+1}} 4ue^{-2u^2} \, du + \int_{\sqrt{k+1}}^{\infty} \left(1+\frac{ 2 u }{\sqrt{k+1} }\right)^{k+2}4u e^{-2u^2} \, du  \right).\]
Here the first integral is $\leq 1$, while the second integral is $\leq 8$ (see Fact~\ref{fact:infamous-int} in Appendix~\ref{sec:details}) and thus
\begin{equation}\label{eq:tensor-exp2} \EE_{\tau}\, e^{-2\|W^T\tau\|^2/\beta^2} \leq 9 \cL\big( W^T \tau, \beta \sqrt{k+1} \big)\, . 
\end{equation}
Combining lines \eqref{eq:tensor-exp2} with \eqref{eq:tensor-independent} and \eqref{eq:tensor-markov} gives 
$$\P_H(\|HW \|_{\HS} \leq \beta^2 \sqrt{(k+1)(n-d)}) \leq \bigg(9 \exp(2\beta^2 (k+1)) \cL\big( W^T \tau, \beta \sqrt{k+1} \big)\bigg)^{n-d}\, ,$$
and the result follows.\end{proof}

\subsection{Approximating matrices $W$ with nets}
Note that in Theorem~\ref{lem:rankH}, the least singular values of the matrix $H$ could, a priori, correspond to any of a huge number of possible directions.
To limit the number of directions we need to consider, we build nets for $k$-tuples of these directions. Luckily, the construction of these nets is rendered relatively simple (unlike the nets $\cN_{\eps}$) by appealing to a randomized-rounding technique pioneered in the context of random matrices by Livshyts~\cite{livshyts2018smallest}
(also see Section 3 of \cite{inhomogenous}). 

With this in mind, let $\cU_{2d,k}$ be the set of all $2d \times k$ matrices with orthonormal columns. The following theorem provides a net for $\cU_{2d,k}$,
when viewed as a subset of $\R^{[2d] \times [k]}$. The proof is deferred to Appendix~\ref{sec:randomrounding}. 

\begin{lemma}\label{lem:basis-net}
For $k \leq d$ and $\delta \in (0,1/2)$, there exists $\cN = \cN_{2d,k} \subset \R^{[2d]\times [k]}$ with $|\cN| \leq (2^6/\delta)^{2dk}$ 
so that for any $U\in \cU_{2d,k}$, any $r \in \N$ and $r \times 2d$ matrix $A$ there exists $W\in \cN$ so that 
\begin{enumerate}
	\item \label{it:rr-mat-1} $\|A(W-U)\|_{\HS}\leq \delta(k/2d)^{1/2}  \|A\|_{\HS} $, 
	\item \label{it:rr-mat-2} $\|W-U\|_{\HS}\leq \delta \sqrt{k}$ and
	\item \label{it:rr-mat-3} $\|W-U\|\leq 8\delta .$
	\end{enumerate}\end{lemma}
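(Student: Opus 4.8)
The plan is to build $\cN_{2d,k}$ by a randomized-rounding argument in the spirit of Livshyts. Fix $U \in \cU_{2d,k}$ and think of it as a $2d \times k$ real matrix with columns $u_1, \dots, u_k$. The first step is to define a random rounding of $U$: pick a small mesh parameter $\eta$ (comparable to $\delta/\sqrt{2d}$ up to constants) and round each entry $U_{i,j}$ independently to one of the two nearest multiples of $\eta$, with probabilities chosen so that the rounded matrix $\widetilde W$ satisfies $\EE \widetilde W_{i,j} = U_{i,j}$ and $|\widetilde W_{i,j} - U_{i,j}| \le \eta$ always. Then $Z := \widetilde W - U$ is a random matrix with independent, mean-zero entries bounded by $\eta$ in absolute value, and with $\EE Z_{i,j}^2 \le \eta^2$.

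The second step is to show that with positive probability $\widetilde W$ simultaneously satisfies the three bounds in (1)--(3), with the matrix $A$ fixed in advance. For (1): $\|AZ\|_{\HS}^2 = \sum_{i,j} (\text{row}_i(A) \cdot Z_{\cdot,j})^2$, and taking expectations, independence of the entries of $Z$ gives $\EE \|AZ\|_{\HS}^2 \le \eta^2 k \|A\|_{\HS}^2$ (each column contributes $\eta^2 \|A\|_{\HS}^2$ since $\EE Z_{i,j}^2 \le \eta^2$ and cross terms vanish). By Markov, $\|AZ\|_{\HS} \le 2\eta\sqrt{k}\|A\|_{\HS}$ except with probability $\le 1/4$; choosing $\eta$ a suitable constant multiple of $\delta/\sqrt{2d}$ makes $2\eta\sqrt{k} \le \delta (k/2d)^{1/2}$ provided... actually one wants $2\eta\sqrt{k} \|A\|_{\HS}$ versus $\delta\sqrt{k/2d}\|A\|_{\HS}$, i.e.\ $\eta \le \tfrac{\delta}{2\sqrt{2d}}$, which is exactly the mesh scale. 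For (2): $\EE\|Z\|_{\HS}^2 \le 2dk\,\eta^2 \le \delta^2 k/2$ with this $\eta$, so $\|Z\|_{\HS} \le \delta\sqrt{k}$ except with probability $\le 1/2$ by Markov. For (3): this is the operator-norm bound, where one uses a matrix concentration / Hanson-Wright type estimate (or a symmetrization plus Talagrand argument, as the paper uses elsewhere) to show $\|Z\| \le 8\delta$ with probability close to $1$ — the key point being that $\|Z\|_{\HS} \le \delta\sqrt{k}$ alone is too weak, so one needs that a random matrix with small independent bounded entries has operator norm of order $\eta\sqrt{d} \asymp \delta$ rather than $\eta\sqrt{dk}$. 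A union bound over the three failure events (each with probability bounded away from $1$, e.g.\ $1/4 + 1/2 + \text{small}$) leaves positive probability that all three hold, so \emph{some} rounding $W$ works for the given $U$ and $A$.

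The third step is to bound the size of the net. Every $W$ produced this way is a $2d \times k$ matrix whose entries lie in $\eta\Z$ and (since $|W_{i,j} - U_{i,j}| \le \eta$ and $\|U\|_{HS} = \sqrt{k}$, so $|U_{i,j}| \le 1$) lie in $[-1-\eta, 1+\eta]$; hence each entry takes at most $O(1/\eta)$ values, giving $|\cN| \le (C/\eta)^{2dk} \le (2^6/\delta)^{2dk}$ after plugging in $\eta \asymp \delta/\sqrt{2d}$ and checking the constant (here $\sqrt{2d}$ is absorbed since $\sqrt{2d} \le 2^{?}$... actually one must be a little careful: $1/\eta \asymp \sqrt{2d}/\delta$, so $(1/\eta)^{2dk}$ has an extra $d^{dk}$ factor — this is handled in the paper's proof by a more careful choice, likely rounding only to a mesh that depends on $\delta$ and not on $d$, or by noting the columns are \emph{unit} vectors so a single entry cannot be large; I would follow the deferred Appendix~\ref{sec:randomrounding} computation here, which presumably rounds each entry to the nearest multiple of $\delta/2^6$ directly and controls the resulting error using orthonormality of the columns of $U$). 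Crucially the net $\cN$ does not depend on $A$: we constructed, for each $U$, a whole distribution over roundings, and showed that for \emph{any} $A$ a positive-probability fraction of them satisfy (1); since all roundings live in the same fixed finite set $\cN$, that set simultaneously serves all $A$.

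The main obstacle I expect is item (3), the operator-norm bound $\|W - U\| \le 8\delta$: the naive Frobenius bound only gives $\|Z\| \le \|Z\|_{\HS} \le \delta\sqrt{k}$, which is far too large, so one genuinely needs a random-matrix operator-norm estimate for the rounding error $Z$ (a matrix with independent, mean-zero, $\eta$-bounded entries of size $2d \times k$), showing $\|Z\| = O(\eta\sqrt{d}) = O(\delta)$ with high probability. This is where the randomized rounding really earns its keep — a deterministic net would have no control over $\|W-U\|$ beyond the Frobenius norm. The secondary technical nuisance is bookkeeping the constants so that the failure probabilities of (1),(2),(3) sum to strictly less than $1$ and so that $1/\eta \le 2^6/\delta$ (absorbing the $\sqrt{2d}$ factor), both of which are routine once the mesh is chosen correctly.
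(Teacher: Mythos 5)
Your construction is the same as the paper's: round each entry of $U$ independently to the mesh $\eta=\delta/(8\sqrt{d})$ with $\EE W=U$, get properties (1) and (2) from Markov applied to $\EE\|A(W-U)\|_{\HS}^2$ and $\EE\|W-U\|_{\HS}^2$, get property (3) from an operator-norm concentration bound for a random matrix with independent, mean-zero, $\eta$-bounded entries (the paper invokes a standard norm bound of the form $\|B\|\leq 8K(\sqrt{2d}+\sqrt{k}+t)$ with high probability, which with $K=\eta$ gives $O(\eta\sqrt{d})=O(\delta)$ exactly as you say), and union bound the three failure probabilities. You are also right that the net is $A$-independent. So the probabilistic part of your argument is essentially the paper's proof.

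The one genuine gap is the cardinality bound, which you flag but do not close, and both of your suggested repairs fail. Rounding to a mesh depending only on $\delta$ (not on $d$) destroys properties (1) and (3): you need $\eta\lesssim\delta/\sqrt{d}$ for the operator norm of the error to be $O(\delta)$ and for the $\sqrt{2d}$ in (1) to appear. And the observation that each entry of $U$ lies in $[-1,1]$ does not help, since the problem is not the range of a single entry but the product over all $2dk$ entries: counting $O(\sqrt{d}/\delta)$ values per entry independently gives an unavoidable extra factor $d^{dk}$. The correct fix is to define the net as the set of mesh points $W$ with $W_{i,j}\in\frac{\delta}{8\sqrt{d}}\Z$ \emph{and} $\|W\|_{\HS}\leq 2\sqrt{k}$ (every rounding of every $U\in\cU_{2d,k}$ lands in this set automatically, since $\|U\|_{\HS}=\sqrt{k}$ and the rounding moves $W$ by at most $\delta\sqrt{k}$ in Frobenius norm), and then to count lattice points in a Euclidean ball rather than entrywise: after rescaling by $8\sqrt{d}/\delta$ the net sits inside $\Z^{2dk}\cap B(0,16\sqrt{kd}/\delta)$, and a ball of radius $r\sqrt{n}$ in $\R^{n}$ contains at most $(4r)^{n}$ integer points (by a volume argument). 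Here $n=2dk$ and $r=16\sqrt{kd}/(\delta\sqrt{2dk})=16/(\delta\sqrt{2})$, so the count is at most $(2^{6}/\delta)^{2dk}$; the $\sqrt{d}$ in the mesh is cancelled by the $\sqrt{2dk}$ normalization of the ball's radius, which is precisely the point your entrywise count misses.
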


\vspace{2mm}

\noindent Recall, for a $2d \times k$ matrix $W$ and $Y \in \R^d$, we defined (at \eqref{eq:WYdef}) the augmented matrix 
\[  W_Y = \begin{bmatrix}\,  W  ,\, \begin{bmatrix} { \bf 0}_d \\ Y \end{bmatrix} ,\, \begin{bmatrix} Y \\ { \bf 0}_d \end{bmatrix} \end{bmatrix} . \]

\subsection{Proof of Theorem~\ref{lem:rankH}}

We recall a standard fact from linear algebra, reworded to suit our context.
\begin{fact}\label{fact:singvalues}
For $3d < n$, let $H$ be a $(n-d) \times 2d$ matrix. If $\s_{2d-k+1}(H) \leq x$
then there exist $k$ orthogonal unit vectors $w_1,\ldots,w_k \in \R^{2d}$ so that $\|Hw_i\|_2 \leq x$. In particular, there exists $W \in \cU_{2d,k}$ so that 
$\|HW\|_{\HS} \leq x\sqrt{k}$.\end{fact}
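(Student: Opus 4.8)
The plan is to read the conclusion directly off the singular value decomposition of $H$. Since $3d < n$ we have $2d \leq n-d$, so $H$ admits a decomposition $H = \sum_{i=1}^{2d} \sigma_i(H)\, u_i v_i^T$, where $v_1,\dots,v_{2d}$ is an orthonormal basis of $\R^{2d}$ (the right singular vectors), $u_1,\dots,u_{2d}$ are orthonormal in $\R^{n-d}$, and $\sigma_1(H) \geq \sigma_2(H) \geq \dots \geq \sigma_{2d}(H) \geq 0$, with the convention $\sigma_j(H) = 0$ for $j > \rk(H)$ (for those indices the corresponding $v_j$ are simply chosen to complete the orthonormal basis, and $u_j$ are irrelevant). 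The key identity is that for each $j$, $H v_j = \sigma_j(H)\, u_j$, hence $\|H v_j\|_2 = \sigma_j(H)$.

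Now set $w_i := v_{2d-k+i}$ for $i = 1,\dots,k$. These are $k$ orthonormal vectors in $\R^{2d}$, and by monotonicity of the singular values,
\[ \|H w_i\|_2 = \sigma_{2d-k+i}(H) \leq \sigma_{2d-k+1}(H) \leq x \]
for every $i \in [k]$, which is the first assertion. For the second, let $W$ be the $2d \times k$ matrix with columns $w_1,\dots,w_k$; since these columns are orthonormal, $W \in \cU_{2d,k}$ by definition. Then
\[ \|HW\|_{\HS}^2 = \sum_{i=1}^k \|H w_i\|_2^2 \leq \sum_{i=1}^k x^2 = k x^2, \]
so $\|HW\|_{\HS} \leq x\sqrt{k}$, as claimed.

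I do not expect any genuine obstacle here: this is the standard min-max/SVD characterization of singular values, and the only mild point is bookkeeping when $k$ exceeds $\rk(H)$, which is handled cleanly by the convention $\sigma_j(H)=0$ for $j>\rk(H)$ together with the fact that the right singular vectors always form a full orthonormal basis of $\R^{2d}$ (using $2d \leq n-d$). One could alternatively phrase the argument purely via the Courant–Fischer theorem applied to $H^T H$, choosing $w_1,\dots,w_k$ to span the sum of eigenspaces of $H^TH$ for its $k$ smallest eigenvalues, but the SVD route above is the most transparent.
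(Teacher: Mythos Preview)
Your argument is correct; the paper does not actually prove this statement, recording it only as a ``standard fact from linear algebra'' without proof. Your SVD derivation is exactly the intended justification, and your handling of the case $k > \rk(H)$ via the convention $\sigma_j(H)=0$ is appropriate.
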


\vspace{3mm}

We also note that if $H$ is a $(n-d)\times 2d$ matrix with entries in $\{-1, 0, 1\}$ then we immediately have $\|H\|_{\HS}\leq \sqrt{2d(n-d)}$.

\vspace{4mm}

\begin{proof}[Proof of Theorem~\ref{lem:rankH}]
Write $Y := \frac{c_0}{16\sqrt{n}}\cdot X$. We use Fact~\ref{fact:singvalues}
to upper bound the left-hand-side of \eqref{eq:RankofH} as 
\begin{align*}\PP(\s_{2d-k+1}(H) &\leq c_02^{-4}\sqrt{n} \text{ and } \|H_1 X\|_2,\|H_2 X\|_2\leq n)\\
 &\leq \PP(\exists U\in \cU_{2d,k}:\|H U_Y\|_{\HS} \leq 3c_0\sqrt{n (k+1)}/16). \end{align*}
Set $\delta := c_0/16$, and let $\cW$ be the $\delta$-net for $\cU_{2d,k}$, given by Lemma~\ref{lem:basis-net}.

We fix a matrix $H$ for a moment. If there exists a matrix $U \in \cU_{2d,k}$ so that $\|HU_Y\|_{\HS} \leq 3c_0\sqrt{n (k+1)}/16$, apply Lemma~\ref{lem:basis-net} to find $W \in \cW$ so that 
\[ \|HW_Y\|_{\HS} \leq \|H(W_Y-U_Y)\|_{\HS} + \|HU_Y\|_{\HS} \leq \delta(k/2d)^{1/2} \|H\|_{\HS}+ 3c_0\sqrt{n(k+1)}/16 \]
which is at most $c_0\sqrt{n(k+1)}/4$, since $\|H\|_{\HS} \leq \sqrt{2nd}$. Thus
\[ \PP\left( \exists U\in \cU_{2d,k}:\|H U_Y\|_{\HS} \leq \frac{c_0}{16}\sqrt{n (k+1)}\right)
\leq \PP\left( \exists W \in \cW :\|H W_Y\|_{\HS} \leq \frac{c_0}{4}\sqrt{n (k+1)}\right) .   \]
So by the union bound, we have 
\[ \PP\left( \exists W \in \cW :~\|H W_Y\|_{\HS} \leq (c_0/4)\sqrt{n (k+1)}\right) \leq \sum_{W\in \cW}\Pr\left( \|H W_Y\|_{\HS}\leq (c_0/4)\sqrt{n (k+1)} \right). \]
Now 
\[ |\cW| \leq (2^6/\delta)^{2dk} \leq  \exp( 32 dk\log c_0^{-1} )  \leq  \exp( c_0 k(n-d)/4), \] 
where the last inequality holds since $d\leq c_0^2 n$, and so
\begin{equation}\label{eq:postTensor}  \sum_{W\in \cW}\Pr\left(\|H W_Y\|_{\HS}\leq \frac{c_0}{4}\sqrt{n (k+1)}\right) 
\leq e^{c_0 k(n-d)/4}\max_{W\in \cW}\Pr\left(\|H W_Y\|_{\HS}\leq \frac{c_0}{4}\sqrt{n (k+1)}\right)\,.\end{equation}
Let $W \in \cW$ be such that the maximum in \eqref{eq:postTensor} is attained, apply Lemma~\ref{lem:tensor} with $\beta :=\sqrt{c_0}/2$ to obtain
\begin{equation}\label{eq:tensorapp}
\Pr(\|H W_Y\|_{\HS}\leq (c_0/4)\sqrt{n (k+1)})\leq \left(2^{5}e^{c_0 k/2}\cL\big( W_Y^T \tau, c_0^{1/2} \sqrt{k+1} \big)\right)^{n-d}.
\end{equation}

We now look to apply Lemma~\ref{lem:CondWalkLCMfinal}. We define $t := 16/(c_0 N) \geq \exp(- 2^{-9}\alpha d)$ and\\ 
$R_0 := 2^{-7}c_0 R = 2^{-7}c_0(2^{39}c_0^{-3}) =  2^{32}c_0^{-2}$ so that we have \[ \|Y\|_2=c_0\|X\|_2/(16n^{1/2}) \geq 2^{-14}c_0^2 N = 2^{-10}c_0/t.\] By the construction of $\cW$ in Lemma~\ref{lem:basis-net} we have $\|W\|\leq 2$ and $\|W\|_{\HS} \geq \sqrt{k}/2$. We also have $k \leq 2^{-10}\alpha d$ and $ D_\alpha(\frac{c_0}{16\sqrt{n}} X) = D(Y) > 16$, therefore we may apply Lemma~\ref{lem:CondWalkLCMfinal} to see that
\[  \cL\big( W_Y^T \tau, c_0^{1/2} \sqrt{k+1} \big)\leq (R_0t)^2\exp(-c_0k)\leq \left(\frac{R}{8N}\right)^2\exp(-c_0k). \] 
Substituting this bound in \eqref{eq:tensorapp} we get 
\[ \max_{W \in \cW }\, \Pr_H(\|H W_Y\|_2\leq (c_0/4) \sqrt{n (k+1)} )\leq \left(\frac{R}{N}\right)^{2n-2d}\exp(-c_0 k(n-d)/2)\] and finally combining it with the previous bounds gives \[\PP(\s_{2d-k+1}(H)\leq c_0\sqrt{n}/16 \text{ and } \|H_1 X\|_2,\|H_2 X\|_2\leq n)\leq \left(\frac{R}{ N}\right)^{2n-2d}\exp(-c_0 k(n-d)/4).\]
This completes the proof of Theorem~\ref{lem:rankH}.
\end{proof}

\section{Nets for structured vectors: Size of the Net }\label{sec:Size-Net}
In this section we take a important step towards Theorem~\ref{thm:main} by bounding the size of our net
\[ \cN_{\eps}  := \left\lbrace  v \in \L_{\eps} : (L\eps)^n \leq    \PP(\|Mv\|_2\leq 4\eps\sqrt{n}) \text{ and }  \cL_{A,op}(v,\eps\sqrt{n}) \leq (2^8L\eps)^n \right\rbrace\, ,\] 
where we recall that 
\[ \L_{\eps} := B_n(0,2) \cap \big( 4\eps  n^{-1/2} \cdot \Z^n\big) \cap \cI'([d]). \]
In particular, our main goal of this section will be to prove the following theorem on the size of $\cN_{\eps}$.

\begin{theorem}\label{thm:netThm} For $L\geq 2$ and $0 < c_0 \leq 2^{-24}$, let $n \geq L^{64/c_0^2}$, let $d \in [c_0^2n/4, c_0^2 n] $ and let 
$\eps >0$ be such that $\log \eps^{-1} \leq  n L^{-32/c_0^2} $. Then
\[|\cN_{\eps}|\leq \left(\frac{C}{c_0^6L^2\eps}\right)^{n}, \]
where $C>0$ is an absolute constant.
\end{theorem}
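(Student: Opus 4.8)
The plan is to bound $|\cN_\eps|$ by a second-moment / Markov argument over a uniformly random point $v \in \L_\eps$, exactly as advertised in the proof sketch of Section~\ref{sec:sketch}. Write $f(v) := \PP_M(\|Mv\|_2 \leq 4\eps\sqrt{n})$, viewed as a random variable in the randomness of $v \in \L_\eps$. Since every $v \in \cN_\eps$ satisfies $f(v) \geq (L\eps)^n$, Markov's inequality gives $|\cN_\eps|/|\L_\eps| \leq (L\eps)^{-2n}\, \EE_{v}\, f(v)^2$, so it suffices to show $\EE_v f(v)^2 \leq (C' c_0^{-6} \eps)^{2n}$ for a suitable absolute constant, together with the easy lower bound $|\L_\eps| \geq (c/\eps)^n$ (a volume/counting estimate for the lattice points of $4\eps n^{-1/2}\Z^n$ in $B_n(0,2)\cap \cI'([d])$, using $d \leq c_0^2 n$ so the box constraints on the $d$ flat coordinates cost only a factor $(C/c_0)^{O(d)} = (C/c_0)^{O(c_0^2 n)}$, which is absorbed).

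The core is the bound on $\EE_v f(v)^2$. Following the sketch, expand $f(v)^2$ using an independent copy: writing $H = [H_1, H_2]$ with $H_1, H_2$ i.i.d.\ $(1/4)$-lazy $(n-d)\times d$ matrices, one has $f(v)^2 \leq \PP_H(\cA_1 \cap \cA_2)$ where $\cA_1 = \cA_1(v_{[d]})$ is $\{\|H_1 v_{[d]}\|_2, \|H_2 v_{[d]}\|_2 \leq 4\eps\sqrt{n}\}$ and $\cA_2 = \cA_2(v_{[d+1,n]})$ is $\{\|H^T v_{[d+1,n]}\|_2 \leq 8\eps\sqrt{n}\}$ (the appropriate version of Fact~\ref{fact:2ndMoment}, using that $M$ has block-anti-diagonal form so $\|Mv\|_2^2 = \|H_1 v_{[d]}\|_2^2 + \|H_1^T v_{[d+1,n]}\|_2^2$). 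Taking $\EE_v$ and conditioning first on $H$, then on $v_{[d]}$, we get
\[ \EE_v f(v)^2 \leq \EE_{v_{[d]}} \Big[ \EE_H\big[ \1_{\cA_1}\, \PP_{v_{[d+1,n]}}(\cA_2 \mid H) \big] \Big]. \]
Now stratify according to the robust rank of $H$: let $k$ be the largest integer with $\sigma_{2d-k+1}(H) \leq c_0 2^{-4}\sqrt{n}$, i.e.\ the number of singular values of $H$ below that threshold. For the inner probability $\PP_{v_{[d+1,n]}}(\|H^T v_{[d+1,n]}\|_2 \leq 8\eps\sqrt{n})$, the $2d-k$ large singular values of $H^T$ (each $\geq c_0 2^{-4}\sqrt{n}$) each contribute a factor $O(\eps/c_0)$ via a standard anti-concentration/Esseen estimate for the projection of the near-uniform discrete random vector $v_{[d+1,n]}$ onto the corresponding singular directions, giving a bound $(C\eps/c_0)^{2d-k}$; the remaining $k$ small directions contribute nothing better than the trivial bound $1$. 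Meanwhile the event $\{\sigma_{2d-k+1}(H)\leq c_0 2^{-4}\sqrt{n}\} \cap \cA_1$, after multiplying $v_{[d]}$ through by $r_n = c_0/(16\sqrt{n})$ to put $X := v_{[d]}$ in the normalization of Theorem~\ref{lem:rankH}, is controlled by that theorem: provided $D_\alpha(r_n \cdot v_{[d]}) > 16$, its probability is at most $e^{-c_0 n k/4}(R/N)^{2n-2d}$ with $N \approx 1/\eps$. Summing the product over $k = 0, 1, \dots, 2^{-10}\alpha d$ (the geometric series in $k$ converges since $e^{-c_0 n k/4}$ crushes any $(C/c_0)^k$), and noting $2n - 2d \geq 2n(1 - c_0^2) \geq n$, one obtains $\EE_v[\,\cdots\,] \leq (C c_0^{-6}\eps)^{2n}$ on the portion of $v$-space where the LCD is large.

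**The main obstacle**, and the part requiring the most care, is the vectors $v_{[d]}$ for which $D_\alpha(r_n \cdot v_{[d]}) \leq 16$, where Theorem~\ref{lem:rankH} gives nothing: here $v_{[d]}$ is itself arithmetically structured. For these I would argue separately, using that the structured coordinate-block $v_{[d]}$ lies in a small, explicitly countable set (a union of $O(1)$-rank generalized arithmetic progressions / lattices of bounded covolume, of total size $(C/\eps)^{o(n)}$ or better — this is the content of the standard "small LCD means few vectors" counting lemma that accompanies the Rudelson–Vershynin framework), so that even the trivial bound $f(v) \leq 1$ combined with the rarity $\PP_{v_{[d]}}(D_\alpha(r_n v_{[d]}) \leq 16) \leq (C\eps)^{2d}\cdot(\text{poly})$ — because there are few such lattice points in the box $\cI'([d])$ at spacing $4\eps n^{-1/2}$ — keeps this contribution below $(C c_0^{-6}\eps)^{2n}$ as well; the key accounting point is that $d = \Theta(c_0^2 n)$ is small enough that a loss of $(C/c_0)^{O(d)}$ is affordable, while the gain $(C\eps)^{2d}$ from structure in the $d$ coordinates alone does not suffice and must be supplemented by the contribution of the free coordinates $v_{[d+1,n]}$ via $\cA_2$ with $H$ now of full robust rank. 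Threading the constants so that the final base is $C c_0^{-6}\eps$ (in particular getting the needed $L^{-2}$ rather than $L^{-1}$ slack, which here is automatic since we squared and $L$ does not even appear until we divide by $(L\eps)^{2n}$) is the bookkeeping heart of the argument; the hypotheses $n \geq L^{64/c_0^2}$ and $\log\eps^{-1} \leq n L^{-32/c_0^2}$ are exactly what is needed to ensure the sub-exponential error terms ($e^{-\beta'^2 k}$-type tails, net cardinalities $(C/c_0)^{dk}$, the $e^{-c_0 nk/4}$ decay) are genuinely negligible against $(L\eps)^n$.
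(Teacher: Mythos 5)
Your high-level strategy — Markov over a random point of the net, the second-moment decoupling $f(v)^2\leq\PP_H(\cA_1\cap\cA_2)$, stratification by the robust rank $k$ of $H$, Theorem~\ref{lem:rankH} on the large-LCD event, and an anti-concentration bound for $H^Tv_{[d+1,n]}$ using the $2d-k$ large singular values — is exactly the engine of the paper's Lemma~\ref{thm:invertrandom}. But there are two genuine gaps.

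First, you run the argument with $v$ uniform on $\L_{\eps}$ itself. The paper deliberately does not do this: it first covers $\L_{\eps}$ by $(N,\kappa,d)$-boxes (Lemma~\ref{lem:covZBall}, a dyadic decomposition of the coordinates $i>d$) and proves the probabilistic statement for $X$ uniform on a \emph{box}, i.e.\ a product measure with independent coordinates each of L\'evy concentration $\leq N^{-1}$. That independence is not cosmetic: Lemma~\ref{lem:lcd-rare} (rarity of small LCD) union-bounds over subsets of $[d]$ and multiplies per-coordinate probabilities, and Lemma~\ref{lem:LwO-for-AX} rests on Theorem~\ref{thm:RV-randv}, which requires $X_{[d+1,n]}$ to have independent entries with $\cL(X_i,1/2)\leq N^{-1}$. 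For $v$ uniform on $\L_{\eps}$ the ball constraint couples the coordinates and the coordinates $v_i$, $i>d$, have no uniform lower bound on their anti-concentration, so your ``standard Esseen estimate for the near-uniform vector $v_{[d+1,n]}$'' is not available. You also need, and only assert, the lower bound $|\L_{\eps}|\geq(c/\eps)^n$; the box route avoids needing it, since the conclusion $|\cN_\eps\cap\cB|\leq (R/L)^{2n}|\cB|$ is summed directly over the $\leq\kappa^n$ boxes of size $\leq(\kappa N)^n$.

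Second, and more seriously, your treatment of the small-LCD vectors does not close quantitatively. Even granting your claimed rarity $(C\eps)^{2d}$ for $\{D_\alpha(r_nv_{[d]})\leq 16\}$ and the full $\cA_2$ gain $(C\eps/c_0)^{2d}$, the contribution to $\EE_v f(v)^2$ is $(C\eps/c_0)^{4d}$ with $d=\Theta(c_0^2n)$, which is enormously larger than the target $(C\eps/c_0^3)^{2n}$ (think of $\eps=e^{-cn}$): the factor $(C\eps)^{2(n-d)}$ from $\cA_1$ is exactly what is unavailable without the LCD hypothesis, and no GAP/lattice counting of the $d$ structured coordinates can recover it. The paper's resolution is structurally different: it takes $\alpha=2^{13}L^{-8n/d}$ so small that $\PP_X(D_\alpha(r_nX_{[d]})\leq 16)\leq(2^{33}L^{-8n/d})^{d/4}\leq(2/L)^{2n}$, splits this event off \emph{before} applying Markov (bounding $\PP_X(\cE)\leq\PP_X(\cE\cap T)+\PP_X(X\notin T)$ and applying the second moment only to $f\cdot\1_T$), and compares the atypical event against the $\eps$-free target $(R/L)^{2n}$ rather than against $\EE f^2$. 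This choice of $\alpha$ is also where the hypotheses $n\geq L^{64/c_0^2}$ and $\log N\leq c_0L^{-8n/d}d$ are actually consumed.
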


As the geometry of the set $\L_{\eps}$ is a bit complicated, we follow an idea of Tikhomirov \cite{Tikhomirov}, by working with the intersection of 
$\cN_{\eps}$ with a selection of ``boxes'' which cover (an appropriately re-scaled) $\L_{\eps}$. 
\begin{definition}
Define a $(N,\kappa,d)$-\emph{box} to be a set of the form $\mathcal{B}=B_1 \times \ldots \times B_n\subset \mathbb Z^n$ where
$|B_i|\geq N$ for all $i\geq 1$;  $B_i = [-\kappa N,-N]\cup[N, \kappa N]$, for $i \in [d]$; and  $|\cB|\leq (\kappa N)^n$.
\end{definition}

The advantage of working with these boxes is that they lend themselves naturally to a probabilistic interpretation, which we now adopt. We ask ``what is the 
probability that 
\[ \Pr_M(\|MX\|_2\leq n)\geq \left(\frac{L}{N}\right)^n, \]
where $X$ is chosen uniformly at random from $\cB$?''. This interpretation was used to ingenious effect in the work of Tikhomirov, who called this the ``inversion of randomness''. While we do take this vantage point, our path forward is considerably different from that of Tikhomirov.

We now state our key ``box'' version of Theorem~\ref{thm:netThm}, in this probabilistic framework. Indeed, almost all of the work in proving Theorem~\ref{thm:netThm} goes into proving the following variant for boxes.

\begin{lemma}\label{thm:invertrandom} For $L \geq 2$ and $0 < c_0 \leq 2^{-24}$, let $n > L^{64/c_0^2}$ and let $\frac{1}{4}c_0^2n\leq d\leq c_0^2 n$. 
For $N \geq 2$, satisfying $\log N\leq c_0 L^{-8n/d} d$, and $\k \geq 2$, let $\cB$ be a $(N,\kappa ,d)$-box and let $X$ be chosen uniformly at random from $\cB$. 
Then 
\[\Pr_X\left(\Pr_M(\|MX\|_2\leq n)\geq \left(\frac{L}{N}\right)^n\right)\leq \left(\frac{R}{L}\right)^{2n},\]
where $R := C c_0^{-3}$ and $C>0$ is an absolute constant.
\end{lemma}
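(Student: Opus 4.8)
We may first dispose of a trivial case: if $L\ge N$ then $(L/N)^n\ge1$, so $f(X):=\PP_M(\|MX\|_2\le n)\ge(L/N)^n$ forces $f(X)=1$, i.e.\ $\|MX\|_2\le n$ for a.e.\ $M$; but choosing every row of $H_1$ equal to $\mathrm{sign}(X_{[d]})$ (a positive-probability event) gives $\|MX\|_2\ge\sqrt{n-d}\,\|X_{[d]}\|_1\ge\sqrt{n-d}\,dN>n$ (using $n>L^{64/c_0^2}$), a contradiction. So assume $L<N$. The heart of the argument is the second-moment method in the spirit of Tikhomirov's ``inversion of randomness,'' but with the second moment controlled by Theorem~\ref{lem:rankH} rather than by a net count. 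Applying Markov's inequality to $f(X)^2$ gives
\[\PP_X\!\big(f(X)\ge (L/N)^n\big)\le (N/L)^{2n}\,\E_X f(X)^2,\]
so it suffices to prove $\E_X f(X)^2\le (R/N)^{2n}$ with $R=Cc_0^{-3}$. The first move is a \emph{doubling}: from the block form of $M$ we have $\|MX\|_2^2=\|H_1X_{[d]}\|_2^2+\|H_1^TX_{[d+1,n]}\|_2^2$, hence $f(X)\le\PP_{H_1}(\|H_1X_{[d]}\|_2\le n,\ \|H_1^TX_{[d+1,n]}\|_2\le n)$; squaring with an independent copy $H_2$ and writing $H:=[H_1,H_2]$, an $(n-d)\times 2d$ lazy matrix, and combining the two ``vertical'' inequalities yields
\[f(X)^2\le \PP_H(\cA_1\cap\cA_2),\qquad \cA_1:=\{\|H_1X_{[d]}\|_2,\|H_2X_{[d]}\|_2\le n\},\quad \cA_2:=\{\|H^TX_{[d+1,n]}\|_2\le 2n\}.\]
Since $\cA_1$ depends only on $(X_{[d]},H)$ and $\cA_2$ only on $(X_{[d+1,n]},H)$ while the coordinate blocks of $X$ are independent,
\[\E_X f(X)^2\le \E_{X_{[d]}}\E_H\Big[\1[\cA_1]\cdot\PP_{X_{[d+1,n]}}(\cA_2\mid H)\Big].\]

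\textbf{The unstructured case.} Fix $\alpha:=2^{11}c_0\in(0,1)$; our hypotheses on $N,d$ then imply those of Theorem~\ref{lem:rankH}. Split according to whether $X_{[d]}$ is \emph{unstructured}, i.e.\ $D_\alpha(r_nX_{[d]})>16$ with $r_n=c_0/(16\sqrt n)$. For such $X_{[d]}$, decompose the $H$-expectation by the number $k=k(H)$ of singular values of $H$ below $c_02^{-4}\sqrt n$; on $\{k(H)=k\}$ we have $\sigma_{2d-k}(H)\ge c_02^{-4}\sqrt n$, and a level-set/Esseen-type anti-concentration bound for $\|H^T\sigma\|_2$ with $\sigma$ uniform on a box — established separately in this section, using only $\max_x\PP(\sigma_i=x)\le1/N$ — gives $\PP_{X_{[d+1,n]}}(\cA_2\mid H)\le (C_\star/N)^{2d-k}$, where $C_\star$ is polynomial in $c_0^{-1}$ (so $C_\star^{2d}=e^{o(n)}$) and, crucially, free of any $\sqrt n$ or $\log n$ factor. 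Meanwhile, since $\{k(H)=k\}\subseteq\{\sigma_{2d-k+1}(H)\le c_02^{-4}\sqrt n\}$ and $\|X_{[d]}\|_2\ge N\sqrt d\ge c_02^{-10}\sqrt n\,N$, Theorem~\ref{lem:rankH} (with $X=X_{[d]}$) bounds $\PP_H(\cA_1\cap\{k(H)=k\})\le e^{-c_0nk/4}(2^{39}c_0^{-3}/N)^{2n-2d}$ for $0\le k\le 2^{-10}\alpha d$; ranges $k>2^{-10}\alpha d$ are absorbed by taking $k=\lfloor2^{-10}\alpha d\rfloor$ there and using $\PP_{X_{[d+1,n]}}(\cA_2\mid H)\le1$, which suffices since $e^{-c_0n\lfloor2^{-10}\alpha d\rfloor/4}$ is super-exponentially small. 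Summing the product over $k$,
\[\E_{X_{[d]}\ \mathrm{unstr.}}[\,\cdots\,]\le (2^{39}c_0^{-3}/N)^{2n-2d}(C_\star/N)^{2d}\sum_{k\ge0}(N/C_\star)^k e^{-c_0nk/4}\le 2\,(C_\star 2^{39}c_0^{-3}/N)^{2n},\]
the geometric series being $\le2$ because $\log N\le c_0L^{-8n/d}d\le c_0^3 n$ forces $Ne^{-c_0n/4}<1$. Tracking the powers of $c_0$ (recall $d=\Theta(c_0^2n)$, so $C_\star^{2d}=e^{o(n)}$) yields a bound of $(R/N)^{2n}$ with $R=Cc_0^{-3}$ for an absolute $C$.

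\textbf{The structured case.} If $D_\alpha(r_nX_{[d]})\le16$, Theorem~\ref{lem:rankH} is unavailable. Here one first observes that structure forces $N\gtrsim\sqrt n/(c_0\kappa)$ (otherwise every coordinate of $\phi r_nX_{[d]}$ has size $\ll1$, the only nearby lattice point is $0$, and the non-degeneracy clause of \eqref{eq:LCD-def} fails), so in this case $N$ is already fairly large. One then counts structured $X_{[d]}\in\prod_{i\le d}B_i$ volumetrically: $D_\alpha(r_nX_{[d]})\le16$ places $X_{[d]}$ within $\ell^2$-distance $O(\sqrt\alpha\,n)$ of a nonzero point of a coarse lattice $\tfrac1{\phi r_n}\Z^d$ with $\phi\le16$, and counting integer points in such a neighbourhood (using $d=\Theta(c_0^2n)$ and $n$ large) gives $\PP_{X_{[d]}}(\mathrm{structured})\le (C\sqrt\alpha)^d=(C'\sqrt{c_0})^d$, which is exponentially small for $c_0\le 2^{-24}$. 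For such $X_{[d]}$ one bounds $\PP_H(\cA_1)$ by a tensorized forward Littlewood--Offord estimate (Erd\H os' bound $\rho(X_{[d]})\le C/\sqrt d$ suffices, all coordinates being nonzero), which in this regime of $N$ is genuinely small, and bounds $\PP_{X_{[d+1,n]}}(\cA_2\mid H)\le (C_\star/N)^{2d-k}$ exactly as before; multiplying the three factors and summing over $k$, the prefactor $(C'\sqrt{c_0})^d$ more than compensates for the loss of the rank gain $e^{-c_0nk/4}$, so this case contributes far less than $(R/N)^{2n}$. Combining the two cases completes the proof.

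\textbf{Main obstacle.} The genuinely hard analytic input beyond what is already in the excerpt is the box anti-concentration bound used for $\cA_2$: one must show $\PP_{X_{[d+1,n]}}(\|H^TX_{[d+1,n]}\|_2\le 2n)\le (C_\star/N)^{2d-k}$ with $C_\star$ only polynomial in $c_0^{-1}$ and \emph{no} $\sqrt n$ or $\log n$ loss, uniformly over all $H$ with at least $2d-k$ singular values above $c_02^{-4}\sqrt n$ and over all admissible boxes (whose coordinates for $i>d$ may be arbitrary subsets of $\Z$ of size $\ge N$, so only $\max_x\PP(X_i=x)\le1/N$ is available); a crude Cauchy--Schwarz or naive Littlewood--Offord estimate here would introduce an $n^{\Theta(c_0^2n)}$ factor and destroy the bound. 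The remaining difficulty is purely organizational: assembling the doubling identity, feeding it into Theorem~\ref{lem:rankH}, fitting the edge cases ($L\ge N$; $k>2^{-10}\alpha d$; structured $X_{[d]}$) together, and keeping the numerous powers of $c_0$ under control so that the final constant is exactly $R=Cc_0^{-3}$.
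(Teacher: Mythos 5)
Your treatment of the typical (unstructured) case coincides with the paper's: the doubling $f(X)^2\le\PP_H(\cA_1\cap\cA_2)$, the decomposition over the robust rank $k$, Theorem~\ref{lem:rankH} for $\PP_H(\cA_1\cap\cE_k)$, the Rudelson--Vershynin projection bound (Lemma~\ref{lem:LwO-for-AX}) giving $(C_\star/N)^{2d-k}$ for $\cA_2$, and the geometric series. The gap is in your treatment of structured $X_{[d]}$, and it is fatal as written. First, with your constant choice $\alpha=2^{11}c_0$ the bound of Lemma~\ref{lem:lcd-rare} is $(2^{20}\alpha)^{d/4}=(2^{31}c_0)^{d/4}$, which is \emph{vacuous} for $c_0$ near $2^{-24}$; and even granting a bound of the form $(C'\sqrt{c_0})^{d}$ with $d=\Theta(c_0^2n)$, this is only $e^{-O(c_0^2\log(1/c_0)\,n)}$, which cannot beat the target $(R/L)^{2n}$ for large $L$ (the lemma must hold for \emph{all} $L\ge2$, and $L$ is later taken large). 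Second, your forward Littlewood--Offord bound for $\PP_H(\cA_1)$ at scale $\sqrt n$ gives at best $(C(\sqrt n/N+1)/\sqrt d)^{2(n-d)}$, which exceeds the needed $(R/N)^{2(n-d)}$ by a factor of order $(N/\sqrt d)^{2(n-d)}$; since $N$ may be as large as $n^{100}$ (the hypothesis only requires $\log N\le c_0L^{-8n/d}d$), this loss is $e^{\Theta(n\log n)}$ and cannot be compensated by the structured-probability prefactor $e^{-O(c_0^2\log(1/c_0)n)}$.

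The paper avoids the structured case entirely by choosing $\alpha$ to depend on $L$: it sets $\alpha:=2^{13}L^{-8n/d}$, so that Lemma~\ref{lem:lcd-rare} gives $\PP_X(D_\alpha(r_nX_{[d]})\le16)\le(2^{33}L^{-8n/d})^{d/4}=2^{33d/4}L^{-2n}\le(2/L)^{2n}$. The structured vectors are then discarded \emph{additively at the level of the probability}, before any second moment is taken (the indicator $\1(X\in T)$ is placed inside $f$), and no anti-concentration estimate is needed for them. This is also why the hypothesis $\log N\le c_0L^{-8n/d}d$ appears in the statement: Theorem~\ref{lem:rankH} requires $N\le\exp(2^{-10}\alpha d)$ and $k\le2^{-10}\alpha d$ with this very small $\alpha$. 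Your proposal, taking $\alpha$ constant, leaves that hypothesis unexplained -- a sign that the architecture needs the $L$-dependent $\alpha$.
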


\subsection{Counting with the least common denominator}
In this subsection, we prove the following simple lemma, which says that the probability of choosing $X \in \cB$ with ``large'' least common denominator is super-exponentially small. This will ultimately allow us to apply Theorem~\ref{lem:rankH}, which requires an upper-bound on the $D_{\alpha}( X)$ for application.

We point out that in Lemma~\ref{lem:lcd-rare}, we rescale by a factor of $r_n = c_02^{-4}n^{-1/2}$, despite the fact we are working in $d < n$ dimensions. This is just a trace of the fact that $\R^n$ is our true point of reference. Additionally we will only need Lemma \ref{lem:lcd-rare} when $K=16$.

\begin{lemma}\label{lem:lcd-rare}
For $\alpha \in (0,1), K \geq 1$ and $\k \geq 2$, let $n \geq d\geq K^2/\alpha$ and let $N \geq 2$ be so that $ K N < 2^d $. 
Let $\cB=\left([-\k N,-N]\cup [N,\k N]\right)^d$ and let $X$ be chosen uniformly at random from $\cB$. Then
\begin{equation} \label{eq:lcd-rare} \Pr_X\left( D_\alpha\big( r_n \cdot X \big) \leq K \right) \leq (2^{20} \alpha)^{d/4}\, ,\end{equation}
where we have set $r_n := c_02^{-4} n^{-1/2}$.
\end{lemma}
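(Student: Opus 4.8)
The plan is to bound the number of ``bad'' vectors $X \in \cB$ directly by a union bound over all possible values of the least common denominator $\phi = D_\alpha(r_n \cdot X)$ (up to dyadic rounding), and for each fixed scale $\phi \leq K$, count the number of $X$ for which $\phi \cdot r_n \cdot X$ lands within $\sqrt{\alpha d}$ of a nonzero integer point. First I would recall the definition \eqref{eq:LCD-def}: $D_\alpha(r_n X) \leq K$ means there is some $\phi \in (0, K]$ with $\|\phi r_n X\|_{\T} \leq \min\{ \phi \|r_n X\|_2/2, \sqrt{\alpha d} \}$. Since $X \in \cB$ has $\|X\|_2 \geq N \sqrt{d}$, the non-degeneracy term $\phi \|r_n X\|_2/2$ is bounded below by $\phi \cdot c_0 2^{-4} N \sqrt{d}/(2\sqrt n) \geq \phi \cdot (\text{something} \gg \sqrt{\alpha d})$ when $N$ is large, so effectively the binding constraint is $\|\phi r_n X\|_{\T} \leq \sqrt{\alpha d}$, i.e. $\phi r_n X$ is within $\sqrt{\alpha d}$ of some $p \in \Z^d \setminus \{0\}$.

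The counting step is the heart of it. Fix a dyadic scale: it suffices to consider $\phi$ ranging over a net of $O(\log(K N))$ values in $(0,K]$ (coarseness $1/N$ suffices since $X$ has integer coordinates of size $\geq N$, so changing $\phi$ by $o(1/N)$ moves $\phi r_n X$ by $o(r_n \sqrt d)$, negligible against $\sqrt{\alpha d}$ after adjusting constants). For each fixed $\phi$, the set of $X \in \R^d$ with $\|\phi r_n X - p\|_2 \leq \sqrt{\alpha d}$ for a given $p$ is a ball of radius $\sqrt{\alpha d}/(\phi r_n)$; the number of integer points $X$ in it is at most $(1 + C\sqrt{\alpha d}/(\phi r_n))^d$, and summing over the relevant $p$ (which range over a ball of radius $\leq \phi r_n \cdot \k N \sqrt d + \sqrt{\alpha d}$ in $\Z^d$) multiplies this by at most $(C \phi r_n \k N \sqrt d / 1 + 1)^d$-ish — but the cleaner bookkeeping is to directly count lattice points $X$: the image $\phi r_n X$ as $X$ ranges over $\cB$ lies in a box of side $\approx \phi r_n \k N$, and is $\sqrt{\alpha d}$-close to $\Z^d$, so the number of such $X$ is at most (volume of $\sqrt{\alpha d}$-neighborhood of $\Z^d$ within that box)$/$(covolume of $\phi r_n \Z^d$), giving roughly $(\k N)^d \cdot (C\sqrt{\alpha})^d = (C\sqrt\alpha \cdot \k N)^d$. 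Dividing by $|\cB| \geq (N)^d$ and being slightly more careful with the $\k$ factors (using that $\cB$'s definition forces each coordinate range to have measure $\geq N$, so $|\cB| \geq N^d$, while the count has matching $\k^d$ from the box dimensions which cancels), one gets a per-scale bound of $(C\sqrt\alpha)^d = (C^2 \alpha)^{d/2}$.

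Finally I would sum over the $O(\log(KN)) \leq O(d)$ dyadic scales (using $KN < 2^d$), which costs only a factor polynomial in $d$, easily absorbed: $\mathrm{poly}(d) \cdot (C^2\alpha)^{d/2} \leq (2^{20}\alpha)^{d/4}$ after adjusting constants, using $d \geq K^2/\alpha \geq 1/\alpha$ so that $\alpha^{d/4}$ has room to spare and the polynomial factor is dominated. The main obstacle I anticipate is getting the constants in the lattice-point count clean enough — in particular handling the interaction between the dyadic discretization of $\phi$, the non-degeneracy condition (which I need to verify is automatically satisfied so it can be dropped), and the $\k$-dependence, making sure the final exponent is genuinely $(2^{20}\alpha)^{d/4}$ and not something with an unwanted $\k$ or $K$ inside the base. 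A secondary subtlety is the rescaling by $r_n = c_0 2^{-4} n^{-1/2}$ (an $n$-dimensional normalization) while working in $d$ dimensions; I'd keep $n \geq d$ in hand so that $r_n$ is only \emph{smaller} than $d^{-1/2}$, which makes the balls of radius $\sqrt{\alpha d}/(\phi r_n)$ larger and the count more generous — but crucially the covolume factor $(\phi r_n)^{-d}$ and the box-size factor $(\phi r_n \k N)^d$ still cancel the $r_n$'s exactly, so it drops out of the final bound.
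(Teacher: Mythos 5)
Your overall strategy (discretize the witness $\phi$ and union-bound, then count the bad $X$ for each fixed $\phi$) is the same as the paper's, but three steps would fail as written. First, and most importantly, the non-degeneracy condition cannot be dropped: it is what supplies the \emph{lower} bound on the witness $\phi$. If $\phi$ is small enough that every coordinate of $\phi r_n X$ lies in $(-1/2,1/2)$, then $\|\phi r_n X\|_{\T}=\phi\|r_nX\|_2$, and since $\sqrt{\alpha d}\ge K\ge 1$ the constraint $\|\phi r_nX\|_{\T}\le\sqrt{\alpha d}$ is then satisfied by \emph{every} $X\in\cB$ once $\phi\le\sqrt{\alpha}/(\kappa N r_n)$; your per-$\phi$ count would be $|\cB|$ and the bound collapses. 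The paper uses the condition $\|\phi r_nX\|_{\T}\le\phi\|r_nX\|_2/2$ precisely to show that such small $\phi$ cannot be witnesses (for them $\|\phi r_nX\|_{\T}=\phi\|r_nX\|_2>\phi\|r_nX\|_2/2$), yielding $\phi\ge(2\kappa N)^{-1}$ after rescaling. This is the opposite of ``automatically satisfied, so it can be dropped.''

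Second, your count of $O(\log(KN))$ values of $\phi$ is inconsistent with your own spacing: a net of coarseness roughly $\sqrt{\alpha}/(r_n\kappa N)$ on an interval of length $K$ has on the order of $K N r_n/\sqrt{\alpha}$ points, which (since $KN$ may be as large as $2^d$) is exponential in $d$, not polynomial; the paper's union bound pays a factor $|I|\le 3^d$ for exactly this and must absorb it into the base $2^{20}$, which is not automatic when $\alpha$ is close to $1$ and your constant $C$ is unquantified. Third, the volumetric claim that the $\sqrt{\alpha d}$-neighborhood of $\Z^d$ occupies a $(C\sqrt{\alpha})^d$ fraction of each unit cell is true but does not follow from ball volumes: since $\sqrt{\alpha d}\ge K\ge 1$ typically exceeds the packing radius $1/2$ of $\Z^d$, the balls around distinct lattice points overlap massively, and the naive union over centers gives $(C\alpha\sqrt{d})^d$, which is useless for $\alpha\sqrt{d}\gtrsim 1$. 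One needs a tensorized large-deviation argument over coordinates; the paper's version is the averaging step producing $d/2$ coordinates with $\|\phi_i X_j\|_{\T}\le 4\sqrt{\alpha}$, followed by independence of the $X_j$ and a one-dimensional count per coordinate, which is where the exponent $d/4$ (rather than $d/2$) in the final bound comes from.
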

\begin{proof}
If $D_\alpha\big( r_n \cdot X \big) \leq K$ then let $\psi \in (0,K]$ be the minimum\footnote{Technically the least common denominator is defined in terms of 
an infimum, however the minimum is always attained for non-zero vectors.} in the definition of least common denominator. 
Set $\phi := r_n \psi$ and observe that $\phi$ satisfies  
\begin{equation}\label{eq:lcd-conclusion} \| \phi  X \|_{\T} \leq  \sqrt{\alpha d} \quad \text{ and } \quad  \phi \in [(2\k N)^{-1}, r_n K]\,. \end{equation}
To see the bound  $\phi \geq (2\k N)^{-1}$, note that if $\phi < (2\k N)^{-1}$ then each coordinate of $\phi \cdot X$ lies in $(-1/2,1/2)$ which would imply $\|\phi X  \|_{\T} = \|\phi X \|_2 = \phi \|X\|_2$.  Using the non-triviality condition in the definition of least common denominator~\eqref{eq:LCD-def}, this would imply
\[  \phi \|X\|_2 = \|\phi \cdot X \|_{\T} = \|\psi (r_n \cdot X) \|_{\T} \leq \psi \|r_n \cdot X\|_2/2 = \phi \|X\|_2/2, \]
which is a contradiction.  Thus the bounds in \eqref{eq:lcd-conclusion} hold.

Now to calculate the probability in \eqref{eq:lcd-rare}, we discretize the range of possible $\phi$. For each integer 
$i \in [1/\alpha ,2KN/\alpha] =: I$ we define $\phi_i := i\alpha /(2\k N)$ and note that if $X,\phi$ satisfy \eqref{eq:lcd-conclusion} then there exists 
$\phi_i$ for which 
\[ \| \phi_i X \|_\T \leq 2\sqrt{\alpha d} \quad \text{ and } \phi_i \in [(2\k N)^{-1}, r_n K], \] 
by simply choosing $\phi_i$ for which $|\phi_i-\phi|\leq \alpha/(\k N)$ and using triangle inequality
\begin{equation} \label{eq:phi-i-properties} \|\phi_i X \|_{\T} \leq \|\phi X\|_{\T} + \|(\phi_i - \phi) X \|_2 \leq \sqrt{\alpha d}+ |\phi_i-\phi|\cdot  \sqrt{d} (\k N) \leq 2\sqrt{\alpha d}.\end{equation}
Thus we have that 
\begin{equation}\label{eq:lcd-union-bd}	
	\Pr_X(D_\alpha( r_n \cdot X ) \leq K ) \leq \sum_{i \in I}  \Pr_X \left( \|\phi_i X\|_{\T} \leq 2\sqrt{\alpha d}\right).\end{equation}

To bound the terms on the right-hand side, note that if $\| \phi_i X\|_{\T} \leq 2 \sqrt{\alpha d}$ then $$\frac{1}{d} \sum_{j=1}^d \| \phi_i X_j \|_{\T}^2 \leq 4 \alpha\,.$$
By averaging, there is a set $S(X,i) \subset [d]$ with $|S(X,i)| \geq d/2$ for which $\| \phi_i X_j \|_\T \leq  4\sqrt{\alpha}$ for all $j \in S(X,i)$.
Union bounding over all sets $S \subseteq [d]$ and using the independence of the coordinates $X_j$ we have
\begin{equation} \label{eq:lcd-bd-2}\Pr_X(D_\alpha( r_n \cdot X ) \leq K ) \leq 2^d \sum_{i \in I} \, \prod_{j=1}^{d/2}\, \PP_{X_j}\left( \|\phi_i X_j\|_{\T} \leq 4\sqrt{\alpha} \right) .\end{equation} 
We now claim that 
\begin{equation}\label{eq:Coord-bnd} \PP_{X_j}\left( \|\phi_i X_j\|_{\T} \leq 4 \sqrt{\alpha} \right) \leq  32 \sqrt{\alpha}.\end{equation} For this,
 note that if $\|\phi_i X_j\|_{\T} \leq 4 \sqrt{\alpha}$, then $|\phi_i X_j-p|\leq 4 \sqrt{\alpha}$, where $p \in \Z$ satisfies 
$|p|\leq |\phi_i X_j| + 1 \leq \phi_i \k N  +1 =: T_i$.  And so
\begin{align*} \PP_{X_j}(\|\phi_i X_j\|_{\T} &\leq 4\sqrt{\alpha}) \leq 
\sum_{p=-T_i}^{T_i} \PP_{X_j}(|X_j-p\phi_i^{-1}|\leq 4\sqrt{\alpha} \phi_i^{-1} ) \leq  \frac{(2 T_i + 1)(8 \alpha^{1/2} \phi_i^{-1} +1)}{2(\kappa - 1)N}. \end{align*}
where we have used that $X_j$ is uniform on $[-\k N, -N]\cup [N,\k N]$ and the lower bound $\k N \phi_i \geq 1/2$ from \eqref{eq:phi-i-properties} along with the assumption $\k \geq 2$. Also note that $8 \alpha^{1/2} \phi_i^{-1} \geq 1$ since $\phi \leq r_n K \leq d^{-1/2}K$, allowing us to conclude \eqref{eq:Coord-bnd}.

Now, plugging \eqref{eq:Coord-bnd} into \eqref{eq:lcd-bd-2} and bounding $|I| \leq (2KN/\alpha + 1) \leq 3^d$ completes the proof of Lemma~\ref{lem:lcd-rare}.	
\end{proof}

\subsection{Anti-concentration for linear projections of random vectors}\label{subsec:AX-anticoncentration}

In this subsection we prove the following anti-concentration result for random variables $HX$, where $H$ is a \emph{fixed} matrix and $X$ is a random vector with 
independent entries. One small remark regarding notation: $H$ as stated in Lemma~\ref{lem:LwO-for-AX} will actually be $H^T$ in Section~\ref{sec:invertrandom}.

\begin{lemma}\label{lem:LwO-for-AX}
Let $N \in  \N$, $n,d,k \in \N$ be such that $n-d \geq 2d > 2k$, $H$ be a $2d \times (n-d)$ matrix with $\s_{2d-k}(H)\geq c_0\sqrt{n}/16$ and $B_1,\ldots, B_{n-d}\subset \Z$ with $|B_i|\geq N$. 
If $X$ is taken uniformly at random from $\cB:=B_1\times \ldots \times B_{n-d}$, then
\[ \Pr_X(\|HX\|_2\leq n)\leq \left(\frac{Cn}{dc_0 N}\right)^{2d-k},\]
where $C>0$ is an absolute constant. 
\end{lemma}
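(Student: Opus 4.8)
The plan is to reduce the claim to a small-ball estimate for a coordinate-free projection of $X$, and then to control that projection by conditioning on a cleverly chosen block of coordinates and counting lattice points in an ellipsoid.

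\emph{Step 1: reduction to a projection.} Set $m:=2d-k$. Writing a singular value decomposition $H=\sum_{j}\s_j(H)\,u_jw_j^{T}$, let $P$ be the orthogonal projection of $\R^{n-d}$ onto $\mathrm{span}\{w_1,\dots,w_m\}$. Since $\s_{2d-k}(H)\ge c_0\sqrt n/16$ we have $\|Hx\|_2\ge (c_0\sqrt n/16)\|Px\|_2$ for every $x$, so the event $\{\|HX\|_2\le n\}$ is contained in $\{\|PX\|_2\le \rho\}$ with $\rho:=16\sqrt n/c_0$. It therefore suffices to show $\Pr_X(\|PX\|_2\le \rho)\le (Cn/(dc_0N))^{m}$.

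\emph{Step 2: pass to an $m$-dimensional random vector.} Write $P=UU^{T}$ with $U$ an $(n-d)\times m$ matrix with orthonormal columns, so that $\|Px\|_2=\|U^{T}x\|_2$ and $U^{T}X=\sum_{i=1}^{n-d}X_i r_i$, where $r_i\in\R^m$ is the $i$-th row of $U$. By the Cauchy--Binet formula $\sum_{|\sigma|=m}\det(U_\sigma)^2=\det(U^{T}U)=1$, where $U_\sigma$ denotes the $m\times m$ submatrix of $U$ on the rows $\sigma\subseteq[n-d]$; since there are $\binom{n-d}{m}\le (e(n-d)/m)^{m}$ terms (and $m=2d-k\le 2d\le n-d$), we may fix $\sigma$ with $|\det U_\sigma|\ge (m/(e(n-d)))^{m/2}$. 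Conditioning on the coordinates $(X_i)_{i\notin\sigma}$, the event $\{\|PX\|_2\le\rho\}$ becomes $\{(U_\sigma)^{T}X_\sigma\in B_m(-z,\rho)\}$ for a fixed vector $z$, i.e.\ $\{X_\sigma\in w_0+E\}$ where $E:=\big((U_\sigma)^{T}\big)^{-1}B_m(0,\rho)$ is an ellipsoid of volume $\omega_m\rho^m/|\det U_\sigma|$ (here $\omega_m:=\vol(B_m(0,1))$).

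\emph{Step 3: lattice-point count and bookkeeping.} Every singular value of $U_\sigma$ is at most $\|U\|=1$, so every semi-axis of $E$ is at least $\rho$; since $\rho=16\sqrt n/c_0\ge\sqrt m/2$ (using $m\le 2d\le n$), the convex symmetric set $E$ contains $B_m(0,\sqrt m/2)\supseteq[-\tfrac12,\tfrac12]^m$, whence $|(w_0+E)\cap\Z^m|\le \vol\big((w_0+E)+[-\tfrac12,\tfrac12]^m\big)\le \vol(2E)=2^m\vol(E)$. As $X_\sigma$ is uniform on $\prod_{i\in\sigma}B_i\subseteq\Z^m$ with $\prod_{i\in\sigma}|B_i|\ge N^m$, this gives $\Pr(\|PX\|_2\le\rho\mid (X_i)_{i\notin\sigma})\le 2^m\omega_m\rho^m/(|\det U_\sigma|\,N^m)$ uniformly, hence unconditionally. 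Inserting $\omega_m\le(2\pi e/m)^{m/2}$, $|\det U_\sigma|^{-1}\le(en/m)^{m/2}$ and $\rho=16\sqrt n/c_0$, the product $\omega_m\,\rho^m\,(en/m)^{m/2}$ collapses to $(C'n/m)^m$, and since $m=2d-k>d$ this yields the required $(Cn/(dc_0N))^{2d-k}$.

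The substantive points are all in Step~2--3: one should not try to control a single linear functional, nor use a crude bound on the operator norm of $(U_\sigma)^{-1}$ (which can be exponentially large), but instead condition away the complement of a coordinate block on which $U$ has a determinant that is large \emph{relative to the dimension count $n-d$} rather than to the box side $N$. It is this $N$-free determinant lower bound from Cauchy--Binet, together with the observation that the surviving ellipsoid is ``fat'' ($\rho\gg\sqrt m$) so its lattice-point count equals its volume up to a factor $2^m$, that makes the two factors of $\sqrt n$ (from $\rho^m$ and from $|\det U_\sigma|^{-1}$) multiply to the $n^m$ appearing in the target bound — and I expect verifying this balancing of exponents to be the only delicate part.
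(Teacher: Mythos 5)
Your proof is correct, and while Step~1 (projecting onto the top $2d-k$ singular directions and using $\|HX\|_2\ge\s_{2d-k}(H)\|PX\|_2$) coincides exactly with the paper's reduction, the way you handle the resulting small-ball estimate for $\|PX\|_2$ is genuinely different. The paper at this point simply invokes the small-ball theorem of Rudelson and Vershynin for projections of vectors with independent coordinates of bounded L\'evy concentration (Theorem~\ref{thm:RV-randv}, quoted from \cite{rudelson2015small}), whereas you prove the needed bound from scratch: Cauchy--Binet gives a coordinate block $\sigma$ of size $m=2d-k$ with $|\det U_\sigma|\ge\binom{n-d}{m}^{-1/2}$, conditioning on the remaining coordinates turns the event into a lattice-point count in an ellipsoid of volume $\omega_m\rho^m/|\det U_\sigma|$, and the ellipsoid is fat enough ($\rho\gg\sqrt m$) that the count is at most $2^m$ times the volume. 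I checked the bookkeeping: the two half-powers of $n$ from $\rho^m$ and from $|\det U_\sigma|^{-1}\le(en/m)^{m/2}$ combine with $\omega_m\le(2\pi e/m)^{m/2}$ to give $(C'n/(c_0m))^m$, and $m=2d-k>d$ (from the hypothesis $d>k$) converts this to the stated $(Cn/(dc_0N))^{2d-k}$. What your route buys is a self-contained, elementary proof that exploits the specific lattice structure $B_i\subset\Z$; what it gives up is the generality of the Rudelson--Vershynin statement, which only needs $\cL(X_i,1/2)\le N^{-1}$ rather than integrality --- but for this lemma, where the $B_i$ are integer sets by construction, nothing is lost.
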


We derive this from the following anti-concentration result of Rudelson and Vershynin. This is essentially Corollary 1.4 along with Remark 2.3 in their paper \cite{rudelson2015small}, but we have restated their result slightly to better suit our context.

\begin{theorem}\label{thm:RV-randv}
Let $N \in  \N$ and let $n,d,k \in \N$ be such that $n-d \geq 2d > k$. Let $P$ be an orthogonal projection of $\R^{n-d}$ onto a $(2d-k)$-dimensional subspace and let $X = (X_1,\ldots,X_{n-d})$ 
be a random vector with independent entries for which 
\[ \cL\big( X_i, 1/2 \big)\leq N^{-1}, \] 
for all $i \in [n-d]$. Then, for all $K\geq 1$,
\[ \max_{y \in \R^{n-d}}\, \Pr_X\big( \|PX-y\|_2\leq K\sqrt{2d-k} \big) \leq \left(\frac{CK}{N}\right)^{2d-k},\]
where $C >0$ is a absolute constant.
\end{theorem}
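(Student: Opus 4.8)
This statement is essentially Corollary~1.4 together with Remark~2.3 of \cite{rudelson2015small}, so my primary plan is to deduce it from there. First I would recall the precise form of that result: for a random vector $X$ with independent coordinates obeying a uniform bound on $\cL(X_i,\cdot)$ and for a fixed matrix $A$, it controls $\cL(AX,\,t\|A\|_{\HS})$ in terms of the spectral profile of $A$. To apply it here I take $A=P$ with $m:=2d-k$. Since $P$ is an orthogonal projection onto an $m$-dimensional subspace, $\|P\|_{\HS}^2=\tr(P)=m$ and all of its nonzero singular values equal $1$, i.e.\ we are exactly in the ``maximally spread'' spectral case in which their estimate is strongest. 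Remark~2.3 of \cite{rudelson2015small} allows one to run their argument with the Lévy-concentration hypothesis replaced by the present hypothesis $\cL(X_i,1/2)\le N^{-1}$ (changing only absolute constants), and then matching $\|P\|_{\HS}=\sqrt m=\sqrt{2d-k}$ with the $\sqrt{2d-k}$ in the conclusion, rescaling their scale parameter to $K$, and reducing to $y\in\operatorname{range}(P)$ (the orthogonal complement of $P$ only increases $\|PX-y\|_2$) yields exactly the claimed bound; the supremum over $y$ is built into their statement. The only work is bookkeeping, plus checking that the numerical hypotheses $n-d\ge 2d>k$ force $m\ge1$ and $m<n-d$.

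For a self-contained proof along the lines of \cite{rudelson2015small} I would proceed in three steps. Step one is a Fourier-smoothing reduction in the spirit of Lemma~\ref{lem:esseen}: writing $E:=\operatorname{range}(P)$ and applying Fact~\ref{fact:inversion} with a Gaussian of variance $(K^2m)^{-1}$ supported on $E$, one obtains $\max_y\PP(\|PX-y\|_2\le K\sqrt m)\lesssim \E_\Gamma|\vp_X(\Gamma)| = \E_\Gamma\prod_{j=1}^{n-d}|\vp_{X_j}((P\Gamma)_j)|$, using independence of the $X_j$ and $\langle PX,\Gamma\rangle=\langle X,\Gamma\rangle$ for $\Gamma\in E$. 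Step two is a single-coordinate estimate: symmetrizing $X_j$ and using a Fejér-kernel inequality, the hypothesis $\cL(X_j,1/2)\le N^{-1}$ gives $\int_{-\pi}^{\pi}|\vp_{X_j}(s)|^2\,ds\lesssim N^{-1}$, so each $|\vp_{X_j}|$ is small in an $L^2$ sense, equivalently $|\vp_{X_j}(s)|\le\exp(-c(1-|\vp_{X_j}(s)|^2))$ with $1-|\vp_{X_j}(s)|^2$ large for ``most'' $s$.

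Step three is the heart of the argument, and the step I expect to be the main obstacle: extracting the full $m$-th power from $\E_\Gamma\prod_j|\vp_{X_j}((P\Gamma)_j)|$, since a naive Jensen/AM--GM bound loses the correct dependence on $m$. The natural route given the tools in this paper is to observe that the product is non-negligible only when $P\Gamma$ lies in a level set (the distances of the coordinates of $P\Gamma$ to $\Z$ are collectively small), and then to bound the Gaussian measure of that level set inside $E$ using that $P$ has a completely flat spectrum, via the type of volumetric Brunn--Minkowski estimate developed in Section~\ref{subsec:BM} and Section~\ref{subsec:main-geometry} (e.g.\ through Theorem~\ref{eq:BorellThm} and Lemma~\ref{lem:GaussBM}); the dimension $m$ of $E$ produces the $m$-th power while the $L^2$-smallness of the $|\vp_{X_j}|$ produces the factor $(C/N)^m$, and reinserting the factor $(CK)^m$ from the smoothing scale (and using $K\ge1$) finishes the bound. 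Making this decoupling quantitatively sharp, so that the resulting $C$ is genuinely absolute, is delicate, which is why I would ultimately prefer to quote \cite{rudelson2015small}.
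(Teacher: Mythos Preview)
Your primary plan is exactly what the paper does: it does not prove this theorem at all, but simply cites it as ``essentially Corollary~1.4 along with Remark~2.3 in their paper \cite{rudelson2015small}, but we have restated their result slightly to better suit our context.'' Your additional self-contained sketch goes beyond what the paper provides, but the deduction-from-\cite{rudelson2015small} approach is precisely the paper's own.
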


\vspace{3mm}

\noindent We can now deduce Lemma~\ref{lem:LwO-for-AX}.

\vspace{3mm}

\begin{proof}[Proof of Lemma~\ref{lem:LwO-for-AX}]
Since $H^TH$ is a symmetric $(n-d)\times (n-d)$ matrix with $\rk(H) \leq 2d$, by the spectral theorem we have $ H^TH = \sum_{i=1}^{2d} \s_i(H)^2 v_iv_i^T$, where $v_1,\ldots,v_{2d} \in \R^{n-d}$ are orthonormal. Define the orthogonal projection 
$P :=  \sum_{i=1}^{2d-k} v_iv_i^T$. 
Then we have 
\[\|HX\|_2^2 = \la X, H^TH X \ra = \sum_{j=1}^{2d} \sigma_j(H)^2\langle X,v_j\rangle^2\geq \sigma_{2d-k}(H)^2\sum_{j=1}^{2d-k}\langle X,v_j\rangle^2
\geq 2^{-8}c_0^2 n\|PX\|_2^2.\] Therefore 
\begin{equation}\label{eq:PboundH}
\PP_X(\|HX\|_2\leq n)\leq \Pr_X(\|PX\|_2\leq 16c_0^{-1}\sqrt{n}).
\end{equation} We now apply Theorem~\ref{thm:RV-randv} to the orthogonal projection $P$ with $K=16c_0^{-1}\sqrt{n/(2d-k)}$ to see
\begin{equation}\label{eq:RVappP} \Pr_X(\|PX\|_2\leq K\sqrt{2d - k}) \leq \left(\frac{Cn}{dc_0 N}\right)^{2d-k}, \end{equation}
which together with \eqref{eq:PboundH} completes the proof of Lemma~\ref{lem:LwO-for-AX}.\end{proof}

\subsection{Proof of Theorem~\ref{thm:invertrandom} }\label{sec:invertrandom}
We take a moment to prepare the ground for the proof of Theorem~\ref{thm:invertrandom}. We express our random matrix $M,$ as in the statement of Theorem~\ref{thm:invertrandom}, as 
\[ M =  
\begin{bmatrix}
{\bf 0 }_{[d]\times [d]} & H^T_1 \\
H_1 & { \bf 0}_{[n-d] \times [n-d]},  
\end{bmatrix} \] 
Where $H_1$ is a $(n-d) \times d$ random matrix with iid $1/4$-lazy entries in $\{-1,0,1\}$. We shall also let $H_2$ be an independent copy of $H_1$ and define $H$ to be the $ (n-d) \times 2d $ matrix
\[ H := \begin{bmatrix}
H_1 & H_2 \end{bmatrix} .\]
For a vector $X \in \R^n$, we define the event $\cA_1 = \cA_1(X)$ by
\[ \cA_1 := \left\lbrace H :  \|H_1 X_{[d]}\|_2\leq n \text{ and } \|H_{2} X_{[d]}\|_2\leq n \right\rbrace \]
and let $\cA_2 = \cA_2(X)$ be the event
\[ \cA_2 := \left\lbrace H : \|H^T X_{[d+1,n]}\|_2\leq 2n \right\rbrace.  \]  

We now note a simple inequality linking $H$, $\cA_1$ and $\cA_2$ with the event $\{ \|MX\|_2 \leq n \}$.

\begin{fact}\label{fact:2ndMoment} For $X \in \R^n$, let $\cA_1 =\cA_1(X)$, $\cA_2 = \cA_2(X)$ be as above. We have
\[ \left( \P_M(\|M X \|_2 \leq n) \right)^2 \leq \PP_{H}(\cA_1 \cap \cA_2) . \]
\end{fact}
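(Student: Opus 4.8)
The plan is to unfold the block structure of $M$, reduce the statement to one purely about the rectangular matrix $H_1$, and then ``tensorize'' by bringing in the independent copy $H_2$ and squaring.

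First I would split $X = (X_{[d]}, X_{[d+1,n]})$ with $X_{[d]} \in \R^d$ and $X_{[d+1,n]} \in \R^{n-d}$, and compute directly that
\[ MX = \begin{bmatrix} H_1^T X_{[d+1,n]} \\ H_1 X_{[d]} \end{bmatrix}, \]
so that $\|MX\|_2^2 = \|H_1 X_{[d]}\|_2^2 + \|H_1^T X_{[d+1,n]}\|_2^2$. In particular the event $\{\|MX\|_2 \leq n\}$ is contained in $\{\|H_1 X_{[d]}\|_2 \leq n\} \cap \{\|H_1^T X_{[d+1,n]}\|_2 \leq n\}$, and hence, writing $p := \P_{H_1}\big(\|H_1 X_{[d]}\|_2 \leq n \text{ and } \|H_1^T X_{[d+1,n]}\|_2 \leq n\big)$, we get the first bound $\P_M(\|MX\|_2 \leq n) \leq p$.

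Next I would use that $H = [H_1,H_2]$ with $H_2$ an independent copy of $H_1$. The event $\{\|H_1 X_{[d]}\|_2 \leq n\} \cap \{\|H_1^T X_{[d+1,n]}\|_2 \leq n\}$ depends only on $H_1$, the analogous event with $H_2$ in place of $H_1$ depends only on $H_2$, both have probability $p$, and the blocks $H_1, H_2$ are independent; hence the probability of their intersection is exactly $p^2$. It then remains to observe that this intersection is contained in $\cA_1 \cap \cA_2$: containment in $\cA_1$ is immediate since $\cA_1 = \{\|H_1 X_{[d]}\|_2 \leq n\} \cap \{\|H_2 X_{[d]}\|_2 \leq n\}$, and for $\cA_2$ one uses that $H^T X_{[d+1,n]}$ stacks $H_1^T X_{[d+1,n]}$ on top of $H_2^T X_{[d+1,n]}$, so on the intersection $\|H^T X_{[d+1,n]}\|_2^2 = \|H_1^T X_{[d+1,n]}\|_2^2 + \|H_2^T X_{[d+1,n]}\|_2^2 \leq 2n^2 \leq (2n)^2$, which is precisely $\cA_2$. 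Chaining the inequalities yields $(\P_M(\|MX\|_2 \leq n))^2 \leq p^2 \leq \P_H(\cA_1 \cap \cA_2)$.

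There is no genuine obstacle here — this is a routine second-moment/tensorization computation — and the only points needing a moment's care are getting the block decompositions of $MX$ and of $H^T X_{[d+1,n]}$ correct, and checking that the factor $2$ built into the definition of $\cA_2$ comfortably absorbs the $\sqrt{2}$ produced by combining the two copies.
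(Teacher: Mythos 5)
Your proof is correct and follows essentially the same route as the paper's: the paper expands $\left(\P_M(\|MX\|_2\leq n)\right)^2$ as a joint probability over two independent copies $M,M'$ of $M$ (with blocks $H_1,H_2$) and observes that the joint event is contained in $\cA_1\cap\cA_2$, which is exactly your tensorization step with the same block decomposition and the same $\sqrt{2}\,n\leq 2n$ absorption. The only cosmetic difference is that you first bound $\P_M(\|MX\|_2\leq n)$ by the probability $p$ of an $H_1$-measurable event and then square, rather than squaring first.
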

\begin{proof} Let $M'$ be an independent copy of $M$.  Expand $\1( \| MX\|_2 \leq n)$ as a sum of indicators, apply $\EE_M$ and square to see
\[ \left( \P_M(\|M X \|_2 \leq n) \right)^2  = \sum_{M,M'} \PP(M')\PP(M)\1(\|MX\|_2,\|M'X\|_2 \leq n), \]
which is at most 
\[  \sum_{H_1,H_2} \PP(H_1)\PP(H_2)\1\big( \|H_1X_{[d]}\|_2 \leq n, \|H_2X_{[d]}\|_2 \leq n \text{ and } \|H^T X_{[d+1,n]}\|_2 \leq 2n \big) ,\] 
which is exactly $\P_H(\cA_1 \cap \cA_2)$. 
\end{proof}

\vspace{3mm}

We shall also need a ``robust'' notion of the rank of the matrix $H$: Define $\cE_k$ to be the event 
\[ \cE_k := \left\lbrace H : \s_{2d-k}(H)\geq c_0\sqrt{n}/16 \text{ and } \s_{2d-k+1}(H)<  c_0\sqrt{n}/16 \right\rbrace \]
and note that always exactly one of the events $\cE_0,\ldots,\cE_{2d}$ holds.

We now set 
\begin{equation} \label{eq:defalpha} \alpha:= 2^{13}L^{-8n/d} ,\end{equation}
and, given a box $\cB$, we define the set of \emph{typical} vectors $T(\cB) \subseteq \cB$ to be 
\begin{equation} T = T(\cB) := \left\lbrace X \in \cB :  D_{\alpha}(c_02^{-4} n^{-1/2} X_{[d]}) > 16 \right\rbrace. \end{equation}
Now set $K := 16$ and note that Lemma~\ref{lem:lcd-rare} implies that if $X$ is chosen uniformly from $\cB$ and $n  \geq L^{64/c_0^2}\geq 2^8/\alpha$ we have 
\begin{align}\label{eq:Tbd}
\Pr_X(X\not \in T)=\Pr_X\big( D_{\alpha}(c_0 2^{-4} n^{-1/2}  X_{[d]} ) \leq 16 \big)\leq \left(2^{33}L^{-8n/d}\right)^{d/4}\leq \left(\frac{2}{L}\right)^{2n}.
\end{align}

\begin{proof}[Proof of Lemma~\ref{thm:invertrandom}]
Let $M$, $H_1,H_2$, $H$, $\cA_1,\cA_2$, $\cE_k$, $\alpha$ and $T := T(\cB)$ be as above. We denote 
\[ \cE := \Big\lbrace X \in \cB : \PP_M(\|MX\|_2\leq n)  \geq (L/N)^n  \Big\rbrace \] and write
\[ \Pr_X( \cE )  \leq \PP_X( \cE  \cap \{ X \in T \} ) + \PP_X( X \not\in T).  \] 
Now define 
\[ f(X) := \PP_M(\| MX\|_2 \leq n)\1( X \in T ) \] 
and apply~\eqref{eq:Tbd}, the bound on $\PP_X(X \not\in T)$, to obtain
\begin{equation}\label{eq:ProbE-bound}  \Pr_X( \cE ) \leq \PP_X\left( f(X) \geq (L/N)^n\right)  + (2/L)^{2n} \leq (N/L)^{2n}\EE_X\, f(X)^2 + (2/L)^{2n}, \end{equation}
where the last inequality follows from Markov's inequality.  So to prove Lemma~\ref{thm:invertrandom}, it is enough to 
prove $\EE_X\, f(X)^2 \leq 2(R/N)^{2n}$.

From Fact~\ref{fact:2ndMoment} we may write
\begin{equation} \label{eq:PMexpress}
 \P_M(\|M X \|_2 \leq n)^2  \leq \PP_H(\cA_1 \cap \cA_2) = \sum_{k=0}^d \PP_H( \cA_2 | \cA_1 \cap \cE_k)\PP_H(\cA_1 \cap \cE_k) \end{equation}
and so 
 \begin{equation}\label{eq:fsquare} f(X)^2 \leq \sum_{k=0}^d \PP_H( \cA_2 | \cA_1 \cap \cE_k)\PP_H(\cA_1 \cap \cE_k)\1( X \in  T) . \end{equation}
We now look to apply Lemma~\ref{lem:rankH} to obtain upper bounds for the quantities $\PP_H(\cA_1 \cap \cE_k)$, when $X \in T$. For this, note that $d\leq c_0^2 n$, $N\leq \exp(L^{-8n/d}d)\leq \exp(2^{-10}\alpha n)$ and set $R_0 := 2^{39}c_0^{-3}$ (This is the ``$R$'' in Theorem~\ref{lem:rankH}). Also note that,
by the definition of a $(N,\kappa,d)$-box and the fact that $d\geq \frac{1}{4}c_0^2 n$, we have that $\|X_{[d]}\|_2 \geq d^{1/2}N \geq c_02^{-10}\sqrt{n}N$. Now set 
$\alpha':=2^{-10}\alpha$ to see that for $X \in T$ and $0\leq k \leq \alpha' d$,
\[ \P_H(\cA_1 \cap \cE_k ) \leq \exp(-c_0 n k/4)\left(\frac{R_0}{N} \right)^{2n - 2d}\, . \]
Moreover by Theorem~\ref{lem:rankH},
\[ \sum_{k \geq \alpha' d} \PP_H(\cA_1 \cap \cE_k) \leq \PP_H\big( \{ \sigma_{2d-\alpha' d}(H) \leq c_0\sqrt{n}/16 \} \cap \cA_1  \big) \leq \exp(-c_0 \alpha' dn/4).\]
Thus, for all $X \in \cB$, we have 
\begin{equation}\label{eq:sing-sq-uncond}
f(X)^2 \leq  \sum_{k = 0}^{\alpha' d} \P_H(\cA_2 \,|\, \cA_1 \cap \cE_k)\exp(-c_0 n k/4)\left(\frac{R_0}{N}\right)^{2n-2d} + \exp(-c_0 \alpha' dn/4)\,.
\end{equation}
We now consider the quantities $g_k(X) := \P_H(\cA_2 \,|\,\cA_1 \cap \cE_k)$ appearing in \eqref{eq:sing-sq-uncond}. Indeed, 
$$\E_X[ g_k(X) ] = \E_X \E_H\big[ \cA_2 \,|\,\cA_1 \cap \cE_k \big] = \E_{X_{[d]}}\, \E_H\left[ \E_{X_{[d+1,n]}} \1[\cA_2] \,\big\vert\, \cA_1 \cap \cE_k \right]\,.$$
We now consider a fixed $H\in \cA_1 \cap \cE_k$ for $k \leq \alpha'd$. Each such $H$ has $\sigma_{2d-k}(H) \geq c_0 \sqrt{n}/16$ and thus we may apply Lemma~\ref{lem:LwO-for-AX} to see that 
$$\E_{X_{[d+1,n]}}\, \1[\cA_2] = \P_{X_{[d+1,n]}}( \|H^T X_{[d+1,n]} \|_2 \leq n ) \leq \left(\frac{C'n}{c_0 d N}\right)^{2d - k} \leq \left(\frac{4C'}{c_0^3 N}\right)^{2d - k},  $$ 
for an absolute constant $C'>0$, using that $d\geq \frac{1}{4}c_0^2 n$.
And so for each $0\leq k \leq \alpha' d$, taking $R := \max\{ 8C' c_0^{-3}, 2R_0\} $, we have
\begin{equation} \label{eq:gk-bnd} \E_X[ g_k(X) ] \leq \left(\frac{R}{2N}\right)^{2d - k}. \end{equation}
We apply $\EE_X$ to \eqref{eq:sing-sq-uncond} and then use \eqref{eq:gk-bnd} to obtain
\[ \EE_X f(X)^2 \leq \left(\frac{R}{2N}\right)^{2n} \sum_{k=0}^{\alpha' d} \left(\frac{2N}{R}\right)^k \exp(-c_0nk/4)  +\exp(-c_0 \alpha' dn/4). \]
Using that $N \leq \exp(c_0 n/4)$ and $N\leq \exp(c_0L^{-8n/d} d)= \exp(c_0\alpha' d/8)$ gives 
\begin{equation}\label{Ef2-bnd} \EE_X\, f(X)^2 \leq   2 \left(\frac{R}{2N}\right)^{2n}. \end{equation}
Combining \eqref{Ef2-bnd} with \eqref{eq:ProbE-bound} completes the proof of Lemma~\ref{thm:invertrandom}.
\end{proof}

\subsection{Proof of Theorem~\ref{thm:netThm}}

The main work of this section is now complete with the proof of Lemma~\ref{thm:invertrandom}. We now just need to go from $X$ in a ``box'' to $X$ in 
a ``sphere'' $\L_{\eps}$.  To accomplish this step, we simply cover the sphere with boxes. Recall that 
\[ \cI'([d])  := \left\lbrace v \in \R^{n} :  \k_0 n^{-1/2} \leq |v_i| \leq  \k_1 n^{-1/2} \text{ for all } i\in [d]   \right\rbrace, \] 
\[ \L_{\eps} := B_n(0,2) \cap \big( 4\eps n^{-1/2} \cdot \Z^n\big) \cap \cI'([d]), \]
and that $0 < \k_0 < 1 < \k_1$ are absolute constants defined in Section~\ref{sec:Definitions}.

\begin{lemma}\label{lem:covZBall} For all $\eps\in[0,1]$, $\k \geq \max\{\k_1/\k_0,2^8 \kappa_0^{-4} \}$, there exists a family $\cF $ of $(N,\k,d)$-boxes with $|\cF| \leq \k^n$ so that 
\begin{equation}\label{eq:covZBall} \L_{\eps} \subseteq  \bigcup_{\cB \in \cF} (4\eps n^{-1/2}) \cdot \cB\, , \end{equation}
where $N =  \k_{0}/(4\eps)$.
\end{lemma}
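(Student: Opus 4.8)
The plan is to cover $\L_\eps$ by rescaled boxes, where the rescaling is exactly the lattice spacing $4\eps n^{-1/2}$, so that points of $\L_\eps$ correspond to integer points inside the boxes. After dividing by $4\eps n^{-1/2}$, the set $\L_\eps$ lands inside $\Z^n$, and the constraints cutting it out become: $\|X\|_2 \le 2/(4\eps n^{-1/2}) = n^{1/2}/(2\eps)$, and, for $i\in[d]$, $\k_0 n^{-1/2} \le |v_i| \le \k_1 n^{-1/2}$ translates to $\k_0/(4\eps) \le |X_i| \le \k_1/(4\eps)$, i.e. $N \le |X_i| \le (\k_1/\k_0) N$ with $N = \k_0/(4\eps)$. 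For coordinates $i > d$ there is only the global norm bound. So the rescaled set sits inside the $\ell^2$-ball of radius $n^{1/2}/(2\eps)$ intersected with $\prod_{i\le d}\big([-(\k_1/\k_0)N,-N]\cup[N,(\k_1/\k_0)N]\big) \times \Z^{n-d}$.

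First I would tile the relevant region of $\Z^n$ by translates of a base box. For the first $d$ coordinates I would just take $B_i = [-\k N,-N]\cup[N,\k N]$ (a single box suffices there since $\k \ge \k_1/\k_0$ covers the whole annulus $N \le |X_i| \le (\k_1/\k_0)N$). For the remaining $n-d$ coordinates, since $|X_i| \le \|X\|_2 \le n^{1/2}/(2\eps) = 2N n^{1/2}/\k_0 \le \k N$ provided $\k \ge 2 n^{1/2}/\k_0$ — wait, this is too large; instead I would chop $[-\k N n^{1/2}, \k N n^{1/2}]$ or rather the interval $[-n^{1/2}/(2\eps), n^{1/2}/(2\eps)]$ into consecutive intervals each of length between $N$ and $\k N$. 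The number of such intervals needed in each of the $n-d$ coordinates is at most $\lceil (n^{1/2}/(2\eps))/N\rceil \cdot 2 = \lceil n^{1/2}/\k_0 \rceil \cdot 2$-ish; I need the product over $n-d$ coordinates of these counts to be at most $\k^n$, which forces $\k$ to beat roughly $2 n^{1/2}/\k_0$ per coordinate — but that is not absolute. The correct move (and the reason the hypothesis only asks $\k \ge 2^8\k_0^{-4}$) is subtler: I should not cover the whole ball coordinatewise but rather use that most of the $\ell^2$ mass forces the coordinates to be small \emph{on average}, not individually. So instead I tile $\Z^{n-d}$ by boxes whose side lengths in the last $n-d$ coordinates are $N$ (or in $[N,\k N]$), and I only keep those translated boxes that actually meet the ball $B_n(0, n^{1/2}/(2\eps))$; the number of lattice cubes of side $\approx N$ meeting a ball of radius $n^{1/2}/(2\eps) = 2Nn^{1/2}/\k_0$ in $\R^{n-d}$ is at most $(C n^{1/2}/(\k_0 ) )^{n-d} \cdot$ (volume ratio), which is of the form $(C'/\k_0)^{n}$ times a $\mathrm{poly}$ factor — and here is where $\k \ge 2^8\k_0^{-4}$ does the work, ensuring $(C'/\k_0)^{\,n-d}\le \k^{n}$ after also using the room in the first $d$ coordinates. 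I would make this precise by a standard volumetric estimate: the number of translates of $[0,N)^{n-d}$ meeting $B_{n-d}(0,\Lambda)$ is at most $\vol(B_{n-d}(0,\Lambda+N\sqrt{n-d}))/N^{n-d}$, and with $\Lambda = n^{1/2}/(2\eps)$ this is bounded by $(C\sqrt{n}/\k_0)^{n-d}$, giving the claimed $|\cF| \le \k^n$ once $\k$ is large enough in terms of $\k_0$.

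Next I would need to check that each box $\cB \in \cF$ I keep is genuinely a $(N,\k,d)$-box in the sense of the definition: $|B_i|\ge N$ for all $i$ (true by construction: the annulus pieces have size $\ge (\k-1)N \ge N$ for $i \le d$ since $\k\ge 2$, and the side-$N$ cubes have $|B_i|\ge N$ for $i>d$), $B_i = [-\k N,-N]\cup[N,\k N]$ for $i\in[d]$ (true by the single-box choice there, using $\k\ge\k_1/\k_0$ so the whole annulus $[\k_0/(4\eps),\k_1/(4\eps)]$ is contained), and $|\cB|\le(\k N)^n$ (true since each $|B_i|\le \k N$: for $i\le d$ the annulus has size $2(\k-1)N\le 2\k N$, hmm, this is $\le 2\k N$ not $\le \k N$ — I would fix this by noting the annulus $[-\k N,-N]\cup[N,\k N]$ has exactly $2(\k-1)N+2$ integer points, and replacing $\k$ by $2\k$ throughout or absorbing the factor $2^d \le 2^n$ into the slack between $(C'/\k_0)^{n-d}$ and $\k^n$; cleaner is to observe $|\cB| = \prod|B_i| \le (\k N)^d \cdot (\k N)^{n-d}$ holds as long as each factor is $\le \k N$, and for $i\le d$ one has $2(\k-1)N + 2 \le \k N$ precisely when $\k N \ge 2$, which is our standing assumption).

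Finally, the containment \eqref{eq:covZBall}: given $v \in \L_\eps$, set $X := (4\eps n^{-1/2})^{-1} v \in \Z^n$; then $\|X\|_2 \le n^{1/2}/(2\eps)$, so $X$ lies in the ball $B_n(0,n^{1/2}/(2\eps))$ that we covered, hence in some box $\cB\in\cF$; and the first $d$ coordinates of $X$ lie in $[\k_0/(4\eps),\k_1/(4\eps)]$ in absolute value which is inside $[N,\k N]$, so $X\in\cB$ for a box of the required form. Thus $v = (4\eps n^{-1/2}) X \in (4\eps n^{-1/2})\cdot\cB$, proving \eqref{eq:covZBall}. The main obstacle is the bookkeeping in the volumetric count: making sure the number of boxes needed to cover the ball in the last $n-d$ coordinates is genuinely at most $\k^n$, which requires being careful that the ball has radius only $O(\sqrt n/\k_0)$ times the box side $N$ (this uses $\|v\|_2\le 2$, i.e. the factor $2$ in $B_n(0,2)$, crucially) and then choosing $\k$ polynomially large in $\k_0^{-1}$, as the hypothesis $\k\ge 2^8\k_0^{-4}$ permits with room to spare for the $2^n$-type losses above.
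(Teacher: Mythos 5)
Your covering strategy is sound and genuinely different from the paper's. You tile the last $n-d$ coordinates by translates of a cube of side $\approx N$ and count how many translates meet the ball $B_{n-d}(0,2N\sqrt n/\k_0)$ by a volume comparison. The paper instead keeps every box centred at the origin: it sets $B_j=I_{\ell_j}$ where $I_\ell=[-2^{\ell}N,2^{\ell}N]\setminus[-2^{\ell-1}N,2^{\ell-1}N]$ is a dyadic annulus, notes that $\|X\|_2\le 2N\sqrt n/\k_0$ forces the shape vector to satisfy $\sum_{j:\ell_j>0}4^{\ell_j}\le 8n/\k_0^2$, and bounds the number of admissible tuples $(\ell_{d+1},\dots,\ell_n)$ by a product of binomial coefficients, $\prod_{t\ge 0}\binom{n}{\le 8n/(\k_0^2 4^t)}\le(\k_0/4)^{-4n}<\k^n$. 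The paper's route avoids ball-volume asymptotics entirely; yours is more geometric. Both yield $|\cF|\le\k^n$ with $\k$ absolute.

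Two local points in your write-up need repair, though neither is fatal. First, your final volumetric bound is stated as $(C\sqrt n/\k_0)^{n-d}$, which would require $\k\gtrsim\sqrt n$ and is \emph{not} $\le\k^n$ for absolute $\k$. The correct count is $\vol(B_{n-d}(0,R))/N^{n-d}$ with $R\le 3N\sqrt n/\k_0$, and since $\vol(B_m(0,R))\le (CR/\sqrt m)^m$ with $m=n-d\ge n/2$, the $\sqrt n$ cancels against $\sqrt m$ and the count is $(C'/\k_0)^{n-d}\le\k^n$. You state this correctly earlier in the same sentence ("of the form $(C'/\k_0)^n$") and then contradict it; the $\Gamma(m/2+1)$ in the ball volume must actually be used. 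Second, your "cleaner" fix for $|\cB|\le(\k N)^n$ is false: $2(\k-1)N+2\le\k N$ is equivalent to $\k\le 2-2/N$, which never holds for $\k\ge 2$. Your first instinct was right: bound $\prod_{i\le d}|B_i|\le(4\k N)^d$ and $\prod_{i>d}|B_i|\le(2N)^{n-d}$, so $|\cB|\le 4^n\k^d N^n\le(\k N)^n$ provided $\k^{n-d}\ge 4^n$, which holds since $d\le c_0^2 n$ and $\k\ge 2^8$. (The paper absorbs the analogous losses in exactly this aggregate way, using $\k^{n-d}\ge(16/\k_0^2)^n$.) A last cosmetic point: a half-open interval of length $N$ may contain only $\lfloor N\rfloor<N$ integers, so take your cubes of side $2N$, say, to guarantee $|B_i|\ge N$.
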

\begin{proof}
For $\ell\geq 1$ define the interval of integers $I_\ell:=  \left[-2^{\ell}N, 2^{\ell}N \right]  \backslash \left[-2^{\ell-1}N, 2^{\ell-1}N \right]$
and $I_0 :=[-N, N]$. Also take $J := [-\k N, \k N] \backslash [-N, N]$. For $(\ell_{d+1},\ldots, \ell_n)\in \Z_{\geq 0}^n$ we define the box 
$B(\ell_{d+1},\ldots, \ell_n):= J^d \times \prod_{j=d+1}^n I_{\ell_j}\, $ and the family of boxes
\[ \cF:= \left\lbrace  B(\ell_{d+1},\ldots, \ell_n) : \sum_{j: \ell_j>0} 2^{2\ell_j}\leq 8n/\k_0^2 \right\rbrace.  \] 

We claim that $\cF$ is the desired family. For this, we first show the inclusion at \eqref{eq:covZBall}. Let $v \in \L_{\eps}$. Since $v \in 4\eps n^{-1/2} \Z^n$,
$X := v n^{1/2}/(4\eps) \in \Z^n$. For $i \in [d+1,n]$, define $\ell_i$ so that $X_i \in I(\ell_i)$. We claim 
$X \in B(\ell_{d+1},\ldots, \ell_n)$. For this, observe that $X_i \in J$ for $i \in [d]$: since $v \in \cI'([d])$, we have $\k_{0} \leq |v_i|n^{1/2} \leq \k_{1}$, for $i \in [d]$. So $ \k_{0}/(4\eps) \leq |X_i| \leq \k_{1}/(4\eps)$, for $i \in [d]$. Thus $X_i \in J$ since $N = \k_{0}/(4\eps)$ and $\k \geq \k_{1}/\k_{0}$.
Thus $v \in B(\ell_{d+1},\ldots, \ell_n) $. We now observe that $B(\ell_{d+1},\ldots, \ell_n) \in \cF$, since 
\[ \sum_{j: \ell_j>0}2^{2(\ell_j-1)}N^2 \leq \sum_{j=1}^n X_j^2 \leq n/(4\eps)^2\left( \sum_i v_i^2\right) \leq 4nN^2/\k_0^2 .\]
Thus we have \eqref{eq:covZBall}.

We now show $|\cF|\leq \kappa^n$. For this we only need to count the number of sequences $(\ell_{d+1},\ldots, \ell_n)$ of non-negative integers for which $\sum_{\ell_i > 0} 4^{\ell_i} \leq 8n/\k_0^2 $. For each $t\geq 0$ are at most $8n/(4^t \k_0^2) $ values of $i \in [d+1,n]$ for which $\ell_i = t$ and there are at most
 $\binom{n}{\leq 8n/(4^t\k_0^2)}$ choices for these values of $i$. Hence, there are at most 
\[ \prod_{t\geq 0} \binom{n}{\leq 8n/(\k_0^2 4^t)} \leq (\k_0/4)^{-4n} < \k^n\] such tuples.

It only remains to show an upper bound on the size of $B(\ell_{d+1},\ldots,\ell_n) \in \cF$. We have
\[ 
|B(\ell_{d+1}, \ldots, \ell_n)|\leq N^n \k^d 2^{n+\sum_j \ell_j} \leq \k^d(16/\k_0^2)^n N^n \leq (\k N)^n\]
where the second inequality holds due to the fact $\prod_j 2^{\ell_j}\leq \left(\frac{1}{n}\sum_j 2^{2\ell_j}\right)^n\leq (8/\k_0^2)^n$ and the last inequality holds due to the choice of $\k$.\end{proof}

\vspace{3mm}

We may now use our covering Lemma~\ref{lem:covZBall} to apply Theorem~\ref{thm:invertrandom} to deduce Theorem~\ref{thm:netThm},
the main result of this section.

\begin{proof}[Proof of Theorem~\ref{thm:netThm}]
Apply Lemma~\ref{lem:covZBall} with $\kappa = \max\{\k_1/\k_0,2^8 \kappa_0^{-4} \}$ and use the fact that $\cN_{\eps} \subseteq \L_{\eps}$ to write 
\[ \cN_{\eps} \subseteq \bigcup_{\cB \in \cF} \left(  (4\eps n^{-1/2}) \cdot  \cB \right) \cap \cN_{\eps}   \] 
and so
\[ |\cN_{\eps}| \leq \sum_{\cB \in \cF} | (4\eps n^{-1/2} \cdot \mathcal B ) \cap \mathcal N_{\eps}| 
\leq |\cF| \cdot \max_{\cB \in \cF}\, | (4\eps n^{-1/2} \cdot \mathcal B ) \cap \mathcal N_{\eps}|. \] 
By rescaling by $\sqrt{n}/(4\eps)$ and applying Lemma~\ref{thm:invertrandom}, we have 
\[ | (4\eps n^{-1/2} \cdot \mathcal B ) \cap \mathcal N_{\eps}| 
 \leq \left|\Big\lbrace X \in \cB : \PP_M(\|MX\|_2\leq n) \geq (L\eps)^n \Big\rbrace \right| \leq  \left(\frac{R}{L} \right)^{2n} |\cB|. \]
Here the application of Lemma~\ref{thm:invertrandom} is justified as $0 < c_0 \leq 2^{-24}$, $c_0^2 n/2 \leq d \leq c_0^2 n$; $\k \geq 2$; we have\ $\log 1/\eps \leq n/L^{32/c_0^2}$ and therefore
\[ \log N = \log \k_0/(4\eps) \leq n/L^{32/c_0^2} \leq c_0L^{-8n/d}d,\] as specified in Lemma~\ref{thm:invertrandom}, since $\k_0<1$, $d\geq L^{-1/c_0^2}n$, $c_0\geq L^{-1/c_0^2}$ and $8n/d\leq 16/c_0^2$. So, using that $|\cF| \leq \k^{n}$ and $|\cB| \leq (\k N)^n$ for each $\cB \in \cF$, we have
\[ |\cN_{\eps}| \leq \k^{n} \left(\frac{R}{L} \right)^{2n} |\mathcal B| 
\leq \k^{n}\left(\frac{R}{L} \right)^{2n} (\k N)^n \leq \left(\frac{C}{c_0^6L^2\eps}\right)^{n},\] where $C=\kappa^2 R^2c_0^{6}$, thus completing the proof of Theorem~\ref{thm:netThm}.
\end{proof}

\section{Nets for structured vectors: approximating with the net}\label{sec:approx-w-net}
While we have spent considerable energy up to this point showing that $\cN_{\eps}$ is small, we have so far not shown that it is in fact a \emph{net}. 
We now show just this, by showing that vectors in $\Sigma_{\eps}$ are approximated by elements of $\cN_{\eps}$. As we will see, this is considerably easier and is taken care of in Lemma~\ref{thmnet}, which, in a similar spirit to Lemma~\ref{lem:basis-net}, is based on randomized rounding. For this, we recall that we defined
\begin{align}\label{eq:Sigmaepsdef} \Sigma_\eps =\left\{v\in \cI([d]) : \cT_L(v)\in [\eps,2\eps] \right\}\subset \mathbb S^{n-1}\, ,\end{align}
where $\cT_L(v)= \sup\{t\in[0,1]: \PP(\|Mv\|_2\leq t\sqrt{n}) \geq (4Lt)^n\},$ and $d =  c_0^2 n < 2^{-32} n$. Also recall the definition of our net
\[ \cN_{\eps} = \left\lbrace  v \in \L_{\eps} :   \PP(\|Mv\|_2\leq 4\eps\sqrt{n}) \geq (L\eps)^n \text{ and }  \cL_{A,op}(v,\eps\sqrt{n}) \leq (2^8 L\eps)^n \right\rbrace. \] 
We also make the basic observation that if $\cT_L(v)=s$, then 
\[
(2sL)^n\leq\PP(\|Mv\|_2\leq s\sqrt{n}) \leq (8sL)^n\, .
\]

Until now, we have almost entirely been working with the matrix $M$. The following lemma allows us to make a comparison between $M$ and our central object of study: $A$, a uniform $n\times n$ symmetric matrix with entries in $\{-1,1\}$. The proof of the lemma is based on a comparison of Fourier transforms and is deferred to Appendix~\ref{sec:replacement}. This is similar to the replacement step in the work of Kahn Koml\'{o}s and Szemer\'{e}di \cite{KKS} and subsequent works \cite{TV-JAMS,BVW}. However here,
we only need to ``break even'', whereas they are looking for a substantial gain at this step.

\begin{lemma}\label{lem:replacement}
	For $v \in \R^n$ and $t \geq \cT_L(v)$ we have
	
\[ \cL(Av,t\sqrt{n}) \leq (50 L t)^n \,. \] 
\end{lemma}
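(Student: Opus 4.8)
The plan is to compare the characteristic functions of $Av$ and $Mv$ and thereby transfer the anti-concentration bound encoded in $\cT_L(v)$ from $M$ to $A$. The random symmetric matrix $A$ has entries $\pm 1$, while $M$ is obtained from the $\mu$-lazy distribution by zeroing out a large block; the key observation is that the Fourier transform of a $\pm 1$ random variable is $\cos(2\pi x)$, whereas that of a $\mu$-lazy variable is $1-\mu+\mu\cos(2\pi x)$, and the latter \emph{dominates} a suitable power of the former (or, more precisely, $|\cos(2\pi x)| \le (1-\mu+\mu\cos^2(2\pi x))^{1/2}$ type inequalities let one bound one in terms of the other, up to the block of zeros). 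So the first step is to write, for a fixed $w \in \R^n$,
\[ \PP(\|Av - w\|_2 \le t\sqrt{n}) \le e^{\pi t^2 n/2}\,\EE_A \exp\!\left(-\frac{\pi\|Av-w\|_2^2}{2}\right), \]
via Markov as in the proof of Lemma~\ref{lem:esseen}, and then apply Fourier inversion (Fact~\ref{fact:inversion}) to express the right-hand side as $\EE_g[e^{-2\pi i\langle w,g\rangle}\varphi_{Av}(g)]$.

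**Next I would** compute $\varphi_{Av}(\theta) = \EE \exp(2\pi i \langle Av, \theta\rangle)$ explicitly. Writing out $\langle Av,\theta\rangle = \sum_{i \le j} A_{ij}(v_i\theta_j + v_j\theta_i)$ (with the diagonal handled separately, or noting $A$ is symmetric with $\pm1$ diagonal), the independence of the entries on and above the diagonal gives a product $\prod_{i<j}\cos(2\pi(v_i\theta_j+v_j\theta_i)) \cdot \prod_i \cos(2\pi v_i\theta_i)$. The analogous computation for $M$ yields a product over the off-block pairs of $(1-\mu+\mu\cos(2\pi(\cdots)))$. Using the elementary pointwise bound $|\cos(2\pi x)| \le 1 - \mu + \mu\cos(4\pi x)$... actually more cleanly one uses $\cos^2(2\pi x) = (1+\cos(4\pi x))/2$ so that with $\mu = 1/4$ one has $1-\mu+\mu\cos(2\pi y)$ comparisons; the point is to bound $|\varphi_{Av}(\theta)|$ in terms of $\varphi_{Mv}(\theta')$ for a related argument $\theta'$ (perhaps $\theta$ rescaled by $2$, matching the ``$\cos$ vs $\cos^2$'' loss), and then translate an $L^1$-type bound on $\varphi_{Mv}$ — which is exactly what $\PP(\|Mv\|_2 \le s\sqrt n) \ge (2sL)^n$ controls through the reverse direction (Lemma~\ref{lem:revEsseen} / Fact~\ref{fact:inversion}) — into the desired bound on $\EE_g|\varphi_{Av}(g)|$. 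Throughout, I take $t \ge \cT_L(v)$, so by definition (and the observation just before the lemma) $\PP(\|Mv\|_2 \le t\sqrt n) \le (8Lt)^n$, which is the input that produces the $(50Lt)^n$ on the output side, the constant $50$ absorbing the $e^{\pi/2}$ Markov factor, the factors of $2$ from the $\cos/\cos^2$ comparison, and the covering/rescaling losses.

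**The main obstacle** I anticipate is handling the discrepancy between the supports: $M$ has an entire $d\times d$ zero block and an entire $(n-d)\times(n-d)$ zero block, so $\varphi_{Mv}$ is a product over strictly fewer pairs than $\varphi_{Av}$. One must argue that the ``extra'' factors in $\varphi_{Av}$ — those coming from the two zeroed blocks — only help (they have modulus $\le 1$), so that $|\varphi_{Av}(\theta)| \le$ (product over the $H_1$-block pairs of $|\cos(\cdots)|)$, and then compare \emph{that} restricted product to $\varphi_{Mv}$. This requires that the restricted product still captures enough anti-concentration, which is precisely why $d$ is taken to be a small constant fraction of $n$: the off-block contribution involves $\approx d(n-d) \approx cn^2$ pairs, plenty to drive the Gaussian integral down. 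A secondary technical point is the rescaling of the Gaussian argument (the $\cos^2$ trick costs a factor of $2$ inside the cosine, i.e. replacing $g$ by $2g$), which changes the covariance of the Gaussian and hence contributes a bounded multiplicative constant per dimension — manageable but needs to be tracked carefully to land inside the clean constant $50$. Since the excerpt defers this to Appendix~\ref{sec:replacement}, I would present the above as the skeleton and relegate the bookkeeping of constants to that appendix.
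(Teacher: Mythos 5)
Your proposal follows essentially the same route as the paper's proof in Appendix~\ref{sec:replacement}: Markov with a Gaussian test function, Fourier inversion, the pointwise domination of the characteristic function of $Av$ by that of $Mv$ at argument $2\xi$ (via $|\cos(t)| \le \tfrac{3}{4} + \tfrac{1}{4}\cos(2t)$, with $|\cos(t)|\le 1$ absorbing the factors from the zeroed blocks), and then inverting back to $\E_M \exp(-2\pi\|Mv\|_2^2/t^2)$, which the tail-integral consequence of $t\ge \cT_L(v)$ bounds by $(9Lt)^n$. One small correction to your first display: the Gaussian test function must be taken at scale $t$, i.e.\ $\exp(-\pi\|Av-w\|_2^2/(2t^2))$ rather than $\exp(-\pi\|Av-w\|_2^2/2)$, so that the Markov factor is $e^{\pi n/2}$ and the factor of $t$ actually survives into the final bound $(50Lt)^n$.
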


\vspace{3mm}

\noindent We now prove Lemma~\ref{thmnet} which tells us that $\cN_{\eps}$ is a net for $\Sigma_{\eps}$.

\begin{lemma}\label{thmnet} Let $\eps\in (0,\k_0/8)$, $d \leq n/32$. 
If $v \in \Sigma_{\eps}$ then there is $u \in \cN_{\eps}$ with $\|u-v\|_{\infty} \leq 4\eps n^{-1/2}$.
\end{lemma}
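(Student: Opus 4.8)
The plan is to produce $u$ by randomized rounding of $v$ to the lattice $4\eps n^{-1/2}\cdot\Z^n$, then verify the two defining conditions of $\cN_\eps$ survive the rounding. First I would construct a random vector $u$ with independent coordinates by rounding each $v_i$ to one of the two nearest points of $4\eps n^{-1/2}\cdot\Z$, with probabilities chosen so that $\E u = v$; this keeps $\|u-v\|_\infty \le 4\eps n^{-1/2}$ deterministically. Since $\eps < \k_0/8$ and $v\in\cI([d])$ (so $(\k_0+\k_0/2)n^{-1/2}\le|v_i|\le(\k_1-\k_0/2)n^{-1/2}$ on $D=[d]$), the $4\eps n^{-1/2}$-perturbation keeps $\k_0 n^{-1/2}\le|u_i|\le\k_1 n^{-1/2}$ on $[d]$, so $u\in\cI'([d])$; and $\|u\|_2 \le \|v\|_2 + \|u-v\|_2 \le 1 + 4\eps\sqrt{n}\cdot n^{-1/2} = 1+4\eps \le 2$, so $u\in\L_\eps$. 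Thus $u\in\L_\eps$ always, and it remains to show that with positive probability $u$ also satisfies $\PP_M(\|Mu\|_2\le 4\eps\sqrt{n})\ge(L\eps)^n$ and $\cL_{A,op}(u,\eps\sqrt{n})\le(2^8L\eps)^n$.

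For the first (core) condition I would argue as follows. Since $v\in\Sigma_\eps$, we have $\cT_L(v)\ge\eps$, hence $\PP_M(\|Mv\|_2\le\eps\sqrt{n})\ge(2L\eps)^n$ by the observation $(2sL)^n\le\PP(\|Mv\|_2\le s\sqrt n)$ with $s=\cT_L(v)\ge\eps$ and monotonicity. Now condition on $M$ and on $v$, and take expectation over the rounding: for each realization of $M$ with $\|Mv\|_2\le\eps\sqrt{n}$, I want to show $\E_u[\1(\|Mu\|_2\le 4\eps\sqrt n)]$ is not too small, or more cleanly, bound $\E_u\|M(u-v)\|_2^2$. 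Writing $u-v$ as a sum of independent mean-zero coordinates each of size $\le 4\eps n^{-1/2}$, we get $\E_u\|M(u-v)\|_2^2 = \sum_{i} \Var(u_i)\,\|M e_i\|_2^2 \le (4\eps n^{-1/2})^2\|M\|_{HS}^2$, and since $M$ has at most $2d(n-d)\le 2n^2$ nonzero entries each in $\{-1,1\}$, $\|M\|_{HS}^2\le 2n^2$, giving $\E_u\|M(u-v)\|_2^2 \le 32\eps^2 n$. By Markov, $\PP_u(\|M(u-v)\|_2 > 8\eps\sqrt n) $ is bounded, but this is only an average statement over the rounding; to turn it into a statement holding for a single $u$ simultaneously with the $A$-condition, I would instead interchange expectations: $\E_u\big[\PP_M(\|Mu\|_2\le 9\eps\sqrt n)\big] \ge \PP_M(\|Mv\|_2\le\eps\sqrt n)\cdot\big(1 - \PP_u(\|M(u-v)\|_2>8\eps\sqrt n \mid M)\big)$ suitably handled — the cleanest route is to note that for a deterministic matrix $B$ with $\|Bv\|_2\le\eps\sqrt n$, the set of roundings $u$ with $\|Bu\|_2\le 4\eps\sqrt n$ has probability at least, say, $1/2$ provided $\|B\|_{HS}\le 2n/\sqrt{\text{something}}$ — but since we need this for $M$ \emph{random} and the bad event for $\|M\|_{HS}$ is itself rare, I would fix the scale constants in the definition ($4\eps$ vs $\eps$ inside $\cN_\eps$) precisely to absorb this factor, concluding $\E_u\PP_M(\|Mu\|_2\le 4\eps\sqrt n) \ge \tfrac12(2L\eps)^n \ge (L\eps)^n$. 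Hence with probability $\ge$ a constant over the rounding, $u$ satisfies the first condition of $\cN_\eps$.

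For the second condition, I would use Lemma~\ref{lem:replacement}: since $v\in\Sigma_\eps$ means $\cT_L(v)\le 2\eps$, we get $\cL(Av, 2\eps\sqrt n)\le(100L\eps)^n$ (taking $t=2\eps\ge\cT_L(v)$), and intersecting with $\{\|A\|\le 4\sqrt n\}$ only decreases this, so $\cL_{A,op}(v,2\eps\sqrt n)\le(100L\eps)^n$. For the rounded $u$, on the event $\|A\|\le 4\sqrt n$ we have $\|A(u-v)\|_2\le 4\sqrt n\cdot\|u-v\|_2\le 4\sqrt n\cdot 4\eps = 16\eps\sqrt n$, so $\cL_{A,op}(u, \eps\sqrt n)$ — after adjusting the scale constant, or more precisely I would observe $\{\|Au-w\|_2\le\eps\sqrt n\}\cap\{\|A\|\le 4\sqrt n\}$ is contained in $\{\|Av - w\|_2\le 17\eps\sqrt n\}\cap\{\|A\|\le 4\sqrt n\}$ — is at most $\cL_{A,op}(v, 17\eps\sqrt n)$, which by a covering argument (Fact~\ref{fact:regularityofL}-style, covering a ball of radius $17\eps\sqrt n$ by $O(1)^n$ balls of radius $2\eps\sqrt n$) is at most $C^n(100L\eps)^n$; choosing the constant $2^8$ in the definition large enough makes this $\le(2^8L\eps)^n$. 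This is a \emph{deterministic} consequence of $\|u-v\|_\infty\le 4\eps n^{-1/2}$, so it holds for every rounding $u$. The main obstacle is the bookkeeping of the scale constants in step two: one must check that the fixed constants ($4\eps$, $2^8$, and the $50$ from Lemma~\ref{lem:replacement}) are mutually compatible so that both conditions can be met by the \emph{same} $u$ — since the first holds only with constant probability and the second holds always, a union bound gives a valid $u$, but the constants must be pinned down carefully (this is exactly why the paper built slack into the definitions of $\cI'$, $\L_\eps$, and $\cN_\eps$).
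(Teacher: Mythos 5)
Your overall strategy is the same as the paper's: randomized rounding of $v$ onto the lattice $4\eps n^{-1/2}\cdot\Z^n$ with $\E u=v$ (so that $u\in\L_\eps$ deterministically), a deterministic verification of the upper bound on $\cL_{A,op}(u,\eps\sqrt n)$ via Lemma~\ref{lem:replacement}, and a first-moment argument exchanging $\E_u$ and $\PP_M$ for the lower bound on $\PP_M(\|Mu\|_2\le 4\eps\sqrt n)$.

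There is, however, a genuine gap at the quantitative heart of the first condition. Your bound $\E_u\|M(u-v)\|_2^2\le(4\eps n^{-1/2})^2\|M\|_{\HS}^2\le 32\eps^2n$, obtained from the crude estimate $\|M\|_{\HS}^2\le 2n^2$, only lets you conclude $\|M(u-v)\|_2\le 8\eps\sqrt n$ with probability $1/2$, hence $\|Mu\|_2\le 9\eps\sqrt n$ --- but membership in $\cN_\eps$ demands $4\eps\sqrt n$, and you cannot ``fix the scale constants in the definition,'' since $\cN_\eps$ is fixed before this lemma is invoked. The missing ingredient is precisely the hypothesis $d\le n/32$, which you never use: $M$ vanishes outside the two off-diagonal blocks, so it has at most $2d(n-d)\le 2dn$ nonzero entries, giving $\E_u\|M(u-v)\|_2^2\le 16\eps^2n^{-1}\cdot 2dn=32\eps^2d\le\eps^2n$; Markov at radius $2\eps\sqrt n$ then gives failure probability at most $1/4$, and combined with $\PP_M(\|Mv\|_2\le 2\eps\sqrt n)\ge(2L\eps)^n$ (from $\cT_L(v)\ge\eps$) this yields $\E_u\,\PP_M(\|Mu\|_2\le4\eps\sqrt n)\ge\tfrac34(2L\eps)^n\ge(L\eps)^n$, which is how the paper closes the argument. (Your aside that the ``bad event for $\|M\|_{\HS}$ is itself rare'' is off the mark: $\|M\|_{\HS}$ is deterministically at most $\sqrt{2d(n-d)}$.) A secondary, smaller issue: for the second condition your triangle inequality gives radius $17\eps\sqrt n$, and Lemma~\ref{lem:replacement} applied at $t=17\eps$ yields $(850L\eps)^n$, which exceeds $(2^8L\eps)^n$; the covering argument you propose only inflates the bound further. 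You are right that the constants are delicate here (the paper records the radius $5\eps\sqrt n$ at the corresponding step), but this is constant-chasing rather than a structural flaw; the unused hypothesis $d\le n/32$ is the substantive gap.
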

\begin{proof}
Given $v \in \Sigma_{\eps}$, we define a random variable $r = (r_1,\ldots,r_n)$ where the $r_i$ are independent, $\EE\, r_i = 0 $, $|r_i| \leq  4\eps n^{-1/2}$
and such that $v - r\in 4 \eps n^{-1/2} \Z^n$, for all $r$. We then define the random variable $u := v - r$. We will show that with positive probability there is a choice of $u\in \cN_{\eps}$.

Note that $\|r\|_{\infty} = \|u - v\|_{\infty} \leq 4\eps n^{-1/2}$ for all $u$. Also, $u \in \cI'([d])$ for all $u$, since $v \in \cI([d])$ and 
$\|u-v\|_{\infty} \leq 4\eps/\sqrt{n} \leq \k_0/(2\sqrt{n})$. 
So, from the definition of $\cN_{\eps}$, we need only show that there exists such a $u$ satisfying 
\begin{equation}\label{eq:lem-net-goal} \PP(\|Mu\|_2\leq 4\eps\sqrt{n}) \geq (L\eps)^n   \text{ and }   \cL_{A,op}(u,\eps \sqrt{n}) \leq (2^8 L\eps)^n. \end{equation}
We first show that \emph{all} $u$ satisfy the upper bound at \eqref{eq:lem-net-goal}. To see this,  write 
 $\cE = \{\|A\|\leq 4\sqrt{n} \}$ and let $w(u) \in \R^n$, be such that  
\begin{align*}
 \cL_{A,op}(u,\eps \sqrt{n}) &= \PP\left( \|Av - Ar - w(u)\| \leq \eps \sqrt{n} \text{ and }  \cE \right) \\
  &\leq \PP\left( \|Av - w(u)\| \leq 5\eps \sqrt{n} \text{ and } \cE \right) \\
  &\leq \cL_{A,op}(v,5\eps\sqrt{n} )  \leq \cL(Av, 5\eps\sqrt{n}). \end{align*}
  Since $v \in \Sigma_{\eps}$, Lemma~\ref{lem:replacement} bounds 
  \begin{align} \cL(Av, 5\eps\sqrt{n})\leq ( 2^{8} L \eps)^n\,.\end{align}
   
We now show that 
\begin{equation} \label{eq:lemnetEgoal} \EE_u\,  \PP_M(\|Mu\|_2\leq 4\eps\sqrt{n}) \geq (1/2)\PP_M(\|Mv\|_2\leq 2\eps\sqrt{n}) \geq (1/2)(2\eps L)^n  \, , \end{equation}
where the last inequality holds by the fact $v \in \Sigma_{\eps}$. From \eqref{eq:lemnetEgoal}, it follows that there exists $u \in \L_{\eps}$ satisfying \eqref{eq:lem-net-goal}.

So to prove the first inequality in \eqref{eq:lem-net-goal}, we define the event $\cE := \{ M : \|Mv\|_2 \leq 2\eps \sqrt{n} \}$. For all $u$, we have  
\[ \PP_M(\|Mu\|_2\leq 4\eps\sqrt{n})  = \PP_M( \|Mv - Mr\|_2 \leq  4\eps\sqrt{n}) \geq \PP_M( \|Mr\|_2 \leq 2\eps \sqrt{n} \text{ and } \cE ); \]
Thus
\begin{align*} \PP_M(\|Mu\|_2\leq 4\eps\sqrt{n}) &\geq \PP_M( \|Mr\|_2 \leq 2\eps \sqrt{n} \, \big\vert \cE ) \PP( \cE ) \\
&\geq  \left(1 - \PP_M( \|Mr\|_2 > 2\eps \sqrt{n}\, \big\vert \cE )\right)\PP_M(\|Mv\|_2\leq 2\eps\sqrt{n}) \,. \end{align*}
Taking expectations with respect to $u$ gives,
\begin{equation}\label{eq:netLemEE} \EE_{u}\PP_M(\|Mu\|_2\leq 4\eps\sqrt{n}) \geq \big(1 - \EE_u \PP_M( \|Mr\|_2 > 2\eps \sqrt{n}\, \big\vert \cE ) \big)\PP_M(\|Mv\|_2\leq 2\eps\sqrt{n}) \end{equation}
and exchanging the expectations reveals that it is enough to show 
 \[ \EE_M\big[ \PP_r( \|Mr\|_2 > 2\eps \sqrt{n})\, \big\vert\, \cE \big] \leq 1/2. \]
 We will show that $\PP_r( \|Mr\|_2 > 2\eps \sqrt{n}) \leq 1/4$ for all $M \in \cE$, by Markov's inequality. For this, fix a $n\times n$ matrix $M$ with entries $|M_{i,j}|\leq 1$ and $M_{i,j}=0$, if $(i,j)\in [d+1,n]\times [d+1,n]$, and note that 
\[ \EE_r\, \|Mr\|_2^2 = \sum_{i,j} \EE \left( M_{i,j}r_i \right)^2 = \sum_{i} \EE\, r_i^2 \sum_{j} M_{i,j}^2 \leq 32\eps^2 d\leq \eps^2 n, \]
where, for the second equality, we have used that the $r_i$ are mutually independent and $\EE\, r_i = 0$, for the third inequality, we used $\|r\|_\infty\leq 4\eps/\sqrt{n}$ and for the final inequality we used $d\leq n/32$. Thus by Markov, we have 
\begin{equation} \label{eq:NetlemMarkov} \PP_{r}(  \|Mr\|_2 \geq 2\eps\sqrt{n}) \leq  (2\eps \sqrt{n})^{-2} \EE_r\, \|Mr\|_2^2 \leq 1/4 . \end{equation}
Putting \eqref{eq:NetlemMarkov} together with \eqref{eq:netLemEE} proves \eqref{eq:lemnetEgoal}, completing the proof of \eqref{eq:lem-net-goal}.\end{proof}

\section{Proof of Theorem~\ref{thm:main}}\label{sec:ProofOthm}

In this section we put together our results to prove Theorem~\ref{thm:main}. But before we get to this, we note a few reductions afforded by previous work.
Let us define
\begin{equation}\label{eq:defq_n} q_n(\g) :=\max_{w\in \R^n}\, \Pr_A(\exists v\in \R^n\setminus \{0\}:Av=w,~\rho(v)\geq \gamma),\end{equation}
where 
\[ \rho(v) = \max_{w \in \R} \PP\left(  \sum_{i=1}^n \eps_i v_i = w \right)
\] and $\eps_1,\ldots,\eps_n \in \{-1,1 \}$ are i.i.d.\ and uniform. One slightly irritating aspect of the definition \eqref{eq:defq_n} is that the existential 
quantifies over \emph{all non-zero} $v \in \R^n$, rather than all $v \in \S^{n-1}$, as we have been working with. So, as we will shortly see, we will need to approximate this extra dimension of freedom with a net. 

These small issues aside, we will use the following inequality, which effectively allows us to remove very unstructured vectors from consideration.
\begin{lemma}\label{lem:unstructured}
Let $A$ be a random $n \times n$ symmetric $\{-1,1\}$-matrix. For all $\g>0$ we have \[\Pr(\det(A)=0)\leq 16n \sum_{m=n}^{2n-2}\left(\gamma^{1/8}+\frac{q_{m-1}(\gamma)}{\gamma}\right)\]
\end{lemma}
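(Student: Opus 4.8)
The plan is to reduce the event $\{\det(A)=0\}$ to the existence of a nonzero vector $v$ in the kernel of some large principal minor of $A$, and then to split according to whether $v$ is "unstructured" (meaning $\rho(v) < \gamma$) or "structured" (meaning $\rho(v)\geq\gamma$). First I would observe that if $\det(A) = 0$ then $A$ has a nontrivial kernel; restricting to a maximal nonsingular principal submatrix, there is some $m$ with $n \leq m \leq 2n-2$ (the precise range coming from a rank argument on symmetric matrices — if $A_{[m]\times[m]}$ is the top-left principal submatrix, one can arrange that $A_{[m-1]\times[m-1]}$ is the relevant block) and a nonzero $v\in\R^{m-1}$ (or $\R^m$) with $A_{[m-1]\times[m-1]}v$ lying in a controlled set, or $=0$. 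The cleanest route is: condition on the first $m-1$ rows/columns, write the last relevant row as $R$, and note that singularity forces a linear relation $\langle R, v\rangle = $ (something determined by the already-exposed entries), where $v$ spans the kernel of the exposed symmetric block. This is the standard "expose all but one row" trick for symmetric matrices, and it is where the sum $\sum_{m=n}^{2n-2}$ and the factor $16n$ (accounting for the choice of which row/column to expose last, and a union over a constant number of cases) come from.

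Next, for the structured part I would use the quantity $q_{m-1}(\gamma)$ directly: if the kernel vector $v$ of the exposed $(m-1)\times(m-1)$ symmetric block satisfies $\rho(v)\geq\gamma$, then after exposing the last row $R$ (which has i.i.d.\ $\{-1,1\}$ entries, independent of the block), the probability that $\langle R,v\rangle$ hits the required target value $w$ is at most $\rho(v) \le$ — no, we want the \emph{reverse}: we want to bound $\Pr(\exists v : A_{[m-1]}v = w,\ \rho(v)\geq\gamma)$ by $q_{m-1}(\gamma)$, which is essentially its definition, and then divide by $\gamma$ to account for the fact that the final exposed row must produce a specific inner product, an event of probability at least $\gamma$ when $\rho(v)\geq\gamma$ is violated — so Bayes/conditioning gives the $q_{m-1}(\gamma)/\gamma$ term. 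For the unstructured part, if $\rho(v) < \gamma$, then by the definition of $\rho$ the last exposed row satisfies $\Pr_R(\langle R,v\rangle = w) \leq \rho(v) < \gamma$; combined with the Erdős–Littlewood–Offord-type input (or just directly), summing over the at most $2n$ rows and using that the number of choices is polynomial, one gets a contribution of order $\gamma^{1/8}$ — the exponent $1/8$ is exactly the Costello–Tao–Vu bound~\eqref{eq:CostelloTaoVuBnd} for non-structured vectors, so in fact this term should be imported wholesale from their work (or from Vershynin~\cite{vershynin-invertibility}) rather than re-derived.

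The main obstacle I expect is bookkeeping the reduction from "$A$ singular" to "an $(m-1)\times(m-1)$ principal block has a structured/unstructured kernel vector with a prescribed image $w$": one must handle the dependence between the block and the exposed row in the symmetric setting (they share no entries if one exposes an entire row \emph{and} the symmetric column, but then the diagonal entry is shared — a minor nuisance), correctly set up the target $w$ as a function of exposed randomness, and verify that the rank drop is at least $1$ and at most $n-1$ so that $m$ ranges in $[n,2n-2]$. Once that scaffolding is in place, the two probabilistic estimates are immediate: the structured term is the definition of $q_{m-1}$ up to the conditioning factor $1/\gamma$, and the unstructured term is a citation to prior work giving the $\gamma^{1/8}$ bound. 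I would therefore spend most of the write-up carefully stating the row-exposure reduction and then invoke Lemma-level facts from \cite{costello-tao-vu, vershynin-invertibility} for the unstructured estimate.
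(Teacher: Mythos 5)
Your reduction via rank of principal minors and the split into $\rho(v)\ge\gamma$ versus $\rho(v)<\gamma$ is the right starting point, and your treatment covers the case the paper handles in Lemma~\ref{lem:step-down}: when the rank drops upon deleting the last row and column, the exposed block $A_{m-1}$ has a kernel vector $v$ determined by $A_{m-1}$ alone, the final row is independent of $v$, and one gets $q_{m-1}(\gamma)+\gamma$ exactly as you describe. But there is a genuine gap: you have missed the other case, $\rk(A_m)=\rk(A_{m-1})=m-1$, in which the exposed block is \emph{nonsingular} and has no kernel vector at all. There, singularity of $A_m$ is equivalent to the quadratic condition $\langle A_{m-1}^{-1}X, X\rangle = A_{11}$ in the final row $X$, and the ``direction'' $A_{m-1}^{-1}X$ is correlated with $X$ itself, so no simple ``expose the last row against a fixed kernel vector'' argument applies. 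The paper handles this (Lemma~\ref{lem:rank-t}) by the Costello--Tao--Vu-style decoupling: split the coordinates into $I\cup J$ with $|I|=t$, pass to an independent copy $X'$, pay a fourth root from two applications of Cauchy--Schwarz, union-bound over the $3^{|I|}$ possible targets $w\in W(I)$, and separately control the event $X_I=X_I'$ of probability $2^{-t}$. This yields $3^t q_{m-1}(\gamma)+(2^t\gamma+2^{-t})^{1/4}$, and optimizing $t=\lfloor\log_4(1/\gamma)\rfloor$ produces both the $\gamma^{1/8}$ and the $q_{m-1}(\gamma)/\gamma$.

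Two specific corrections to your heuristics. First, the exponent $1/8$ is not something you can ``import wholesale'' from the Costello--Tao--Vu bound $n^{-1/8+o(1)}$: that is a bound in the variable $n$, not in $\gamma$, and the coincidence of exponents reflects only that both arise from the same decoupling scheme ($1/8 = \tfrac12\cdot\tfrac14$: the $1/4$ from decoupling, the $1/2$ from balancing $2^t\gamma$ against $2^{-t}$). There is no citable statement of the form $\Pr(\cdot)\le\gamma^{1/8}$; it must be derived. Second, the factor $1/\gamma$ on the structured term does not come from a Bayes/conditioning step on an event of probability $\ge\gamma$; it comes from the union bound $3^t\le 4^t\le 1/\gamma$ over the target set $W(I)$ in the decoupled quadratic case. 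Without the quadratic case your plan as written would only prove a bound of the form $\sum_m(q_{m-1}(\gamma)+\gamma)$, which is not the statement of the lemma and, more importantly, does not address the event that actually dominates the singularity probability of a symmetric matrix.
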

We record the details of this lemma in Appendix~\ref{sec:unstructured}, although an almost identical lemma can be found in \cite{CMMM}, which collected elements
from \cite{nguyen-singularity,ferber-jain, costello-tao-vu}.

\subsection{Non-flat vectors}\label{subsec:nonflatvectors} Here we note a lemma due to Vershynin \cite{vershynin-invertibility} which tells us that it is enough 
for us to consider vectors $v \in \cI$. For this, we reiterate the important notion of  \emph{compressible vectors}, introduced by Rudelson and Vershynin \cite{RV}.
Say a vector in $\S^{n-1}$ is $(\delta,\rho)$-compressible if it has distance $\leq\rho$ from a vector with support $\leq \delta n$. Let $\Comp(\delta,\rho)$ denote 
the set of such compressible vectors. In~\cite[Proposition 4.2]{vershynin-invertibility}, Vershynin provides the following lemma which allows us to disregard all compressible vectors. 

\begin{lemma}\label{lem:compressible}
 There exist $\delta, \rho, c\in(0,1)$ so that for all $n \in \N$,
\[ \max_{w\in\mathbb{R}^n}\, \PP_A\left( \bigcup_{v \in \S^{n-1} \setminus \Comp(\delta,\rho)} \left\lbrace \|Av-w\|_2 \leq c \sqrt{n}\right\rbrace \right) \leq 2e^{-c n}, \] where $A$ is a random $n\times n$ symmetric $\{-1,1\}$-matrix.
\end{lemma}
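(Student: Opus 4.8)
\textbf{Proof proposal for Lemma~\ref{lem:compressible}.}

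The plan is to reduce to the case where the vector $v$ is \emph{sparse} (exactly supported on few coordinates) and then union bound over the supports, and for each fixed support control the smallest singular value of the corresponding rectangular submatrix of $A$ using independence in a large block. First I would fix a net: for $\delta, \rho$ to be chosen, take a $\rho/2$-net $\mathcal{N}$ of the set of unit vectors supported on some fixed coordinate set of size $\leq \delta n$; by the standard volumetric bound $|\mathcal{N}| \leq (C/\rho)^{\delta n}$, and summing over the $\binom{n}{\leq \delta n} \leq (e/\delta)^{\delta n}$ choices of support, the total net has size at most $(C/(\delta\rho))^{\delta n}$. Any $v \in \Comp(\delta,\rho)$ lies within $2\rho$ of some net point $u$, so $\|Au - w\|_2 \leq \|Av - w\|_2 + 2\rho\|A\|$. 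On the event $\|A\| \leq 4\sqrt n$ (which fails with probability $\leq e^{-cn}$ by a standard operator-norm bound, e.g.\ Lemma~\ref{lem:op-concentration}), this is $\leq (c + 8\rho)\sqrt n \leq 2c\sqrt n$ provided $\rho \leq c/8$. So it suffices to bound $\max_w \PP(\exists u \in \mathcal{N}: \|Au - w\|_2 \leq 2c\sqrt n)$.

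For a \emph{fixed} unit vector $u$ supported on a set $S$ with $|S| \leq \delta n$, I would look only at the rows of $A$ indexed by $[n]\setminus S$, which has size $\geq (1-\delta)n$. Crucially, the entries of $A$ in the block $([n]\setminus S)\times S$ are i.i.d.\ uniform $\pm1$ and are \emph{independent of everything in the symmetric part} (the $S\times S$ block and the symmetric duplication only involve rows/columns touching $S$). Thus $(Au)_i$ for $i \notin S$ is a sum $\sum_{j\in S} A_{ij} u_j$ of independent signed terms; since $\|u_S\|_2 = 1$, a one-dimensional anti-concentration estimate (Erd\H{o}s--Littlewood--Offord, or simply the fact that a random $\pm1$ combination of a unit vector has bounded density at any point: $\PP(|(Au)_i - b| \leq \lambda) \leq C\lambda + e^{-c'}$ for suitable constants, which one can get from Paley--Zygmund or a direct argument) gives $\PP_{A}(|(Au)_i - w_i| \leq 1/2) \leq 1 - \beta$ for an absolute $\beta > 0$ as long as $u$ is genuinely spread, which it is when the \emph{support has size at least $2$}; one handles the exactly-$1$-sparse case separately (there $u = \pm e_j$ and $Au = \pm(\text{column }j)$ has at most one zero coordinate with overwhelming probability). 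Since these $\geq (1-\delta)n$ events are independent across $i \in [n]\setminus S$, $\PP_A(\|Au - w\|_2 \leq 2c\sqrt n)$ is at most $\PP(\text{at most } 16c^2 n/(1/4) \text{ of the coordinates } (Au)_i \text{ exceed } 1/2 \text{ in deviation})$, which by a Chernoff bound on the binomial $\mathrm{Bin}((1-\delta)n, \beta)$ is at most $e^{-c'' n}$ once $c$ is small enough relative to $\beta$.

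Finally I would combine: choosing $\delta$ small enough that the net size $(C/(\delta\rho))^{\delta n} = e^{O(\delta \log(1/\delta\rho)) n}$ is dominated by the single-vector bound $e^{-c'' n}$, the union bound over $\mathcal{N}$ gives $\max_w \PP_A(\exists u \in \mathcal{N} : \|Au - w\|_2 \leq 2c\sqrt n) \leq e^{-c'''n}$, and adding back the $e^{-cn}$ from the operator-norm event yields the claimed $2e^{-cn}$ after renaming constants. I expect the main obstacle to be the bookkeeping around independence: one must be careful that restricting to rows outside $S$ really does expose a clean i.i.d.\ block of $A$ unspoiled by the symmetry constraint, and that the one-dimensional anti-concentration constant $\beta$ is genuinely absolute and survives the (separately handled) degenerate near-sparse cases; everything else is a routine net-plus-Chernoff argument. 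Since the lemma is quoted verbatim from \cite{vershynin-invertibility}, it would also be legitimate here simply to cite that reference, but the above is the self-contained route.
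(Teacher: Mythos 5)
The paper offers no proof of this lemma at all: it is imported verbatim from \cite[Proposition 4.2]{vershynin-invertibility}, as you note at the end, so the only question is whether your self-contained route is sound, and in substance it is. Your argument --- approximate by a net of unit vectors supported on at most $\delta n$ coordinates, of total size $(C/(\delta\rho))^{\delta n}$; observe that for $i\notin S$ the coordinates $(Au)_i=\sum_{j\in S}A_{ij}u_j$ involve pairwise disjoint collections of entries from the off-diagonal block $([n]\setminus S)\times S$, whose entries correspond to distinct unordered pairs and are therefore genuinely i.i.d.\ despite the symmetry; apply one-dimensional anti-concentration to each coordinate and tensorize with a Chernoff bound; then beat the net size by taking $\delta$ small --- is the standard proof, and the constants can be fixed in a non-circular order ($\beta$ absolute, then $c$, then $\rho=c/12$, then $\delta$). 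This is also morally the same reduction (to an i.i.d.\ rectangular block of the symmetric matrix) that underlies the cited proposition, so you are not taking a genuinely different route from the source, only writing it out.

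Two points need attention. First, as printed the lemma takes the union over $v\in\S^{n-1}\setminus\Comp(\delta,\rho)$, i.e.\ over \emph{incompressible} vectors, and that literal statement is false: the least-singular direction of $A$ is typically incompressible and satisfies $\|Av\|_2\ll\sqrt{n}$. The intended set is $\Comp(\delta,\rho)$, which is what the application in Lemma~\ref{lem:invertOnIc} requires via the inclusion $\S^{n-1}\setminus\cI\subseteq\Comp(\delta,\rho)$, and it is this corrected statement that you actually prove; you should say so explicitly rather than silently switching sets. Second, your intermediate claim $\PP(|(Au)_i-b|\leq\lambda)\leq C\lambda+e^{-c'}$ is not true for sparse $u$ (for $u=e_j$ and $b=1$ the left-hand side equals $1/2$ for every $\lambda\geq 0$), but the consequence you use --- $\PP\left(|\langle u,\eps\rangle-b|\leq 1/2\right)\leq 1-\beta$ for an absolute $\beta>0$ and \emph{every} unit vector $u$ --- is correct: writing $X=\langle u,\eps\rangle$, Paley--Zygmund with $\E X^2=1$ and $\E X^4\leq 3$ gives $\PP(X\geq 1/\sqrt{2})\geq 1/24$ and likewise for $X\leq -1/\sqrt{2}$ by symmetry, and an interval of length $1<\sqrt{2}$ must miss at least one of these two events. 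In particular the $1$-sparse case needs no separate treatment and the hedge about $u$ being ``genuinely spread'' can be dropped. With those repairs the argument is complete.
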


\noindent The following lemma of Rudelson and Vershynin \cite[Lemma 3.4]{RV} tells us that incompressible vectors are ``flat''
for a constant proportion of coordinates. 

\begin{lemma}\label{lem:spread} For $\delta,\rho \in (0,1)$, let $v\in \Inc(\delta,\rho)$. Then
\[ (\rho/2) n^{-1/2} \leq |v_i| \leq \delta^{-1/2} n^{-1/2} \]
for at least $\rho^2\delta n/2$ values of $i\in[n]$.
\end{lemma}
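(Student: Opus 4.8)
The plan is to prove this by a short counting argument, exactly in the spirit of Rudelson--Vershynin. Let $v \in \Inc(\delta,\rho)$, so $\|v\|_2 = 1$ and, by definition, $\|v - u\|_2 > \rho$ for every $u \in \R^n$ with $|\mathrm{supp}(u)| \leq \delta n$. Partition $[n]$ into the \emph{small} coordinates $\sigma_- := \{ i : |v_i| < (\rho/2) n^{-1/2} \}$, the \emph{large} coordinates $\sigma_+ := \{ i : |v_i| > \delta^{-1/2} n^{-1/2} \}$, and the \emph{spread} coordinates $S := [n] \setminus (\sigma_- \cup \sigma_+)$, which are precisely those satisfying the conclusion $(\rho/2) n^{-1/2} \leq |v_i| \leq \delta^{-1/2} n^{-1/2}$. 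The goal is to show $|S| \geq \rho^2 \delta n / 2$.

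First I would bound the number of large coordinates: since each $i \in \sigma_+$ contributes $v_i^2 > \delta^{-1} n^{-1}$ to the sum $\sum_i v_i^2 = 1$, we get $|\sigma_+| < \delta n$. Now suppose for contradiction that $|S| < \rho^2 \delta n / 2$, and let $v''$ be obtained from $v$ by zeroing out every coordinate in $\sigma_- \cup S$. Then $\mathrm{supp}(v'') \subseteq \sigma_+$, so $|\mathrm{supp}(v'')| < \delta n$ and incompressibility forces $\|v - v''\|_2 > \rho$. On the other hand, $\|v - v''\|_2^2 = \sum_{i \in \sigma_-} v_i^2 + \sum_{i \in S} v_i^2$, and I would estimate the two sums separately: the first is at most $|\sigma_-| \cdot (\rho/2)^2 n^{-1} \leq \rho^2/4$ using $|\sigma_-| \leq n$, while the second is at most $|S| \cdot \delta^{-1} n^{-1} < (\rho^2 \delta n / 2)(\delta^{-1} n^{-1}) = \rho^2/2$, using that every spread coordinate satisfies $v_i^2 \leq \delta^{-1} n^{-1}$. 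Hence $\|v - v''\|_2^2 < 3\rho^2/4 < \rho^2$, contradicting the lower bound just obtained, and the lemma follows.

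There is no serious obstacle here --- it is genuinely a one-line pigeonhole estimate. The only thing to be careful about is the bookkeeping of strict versus non-strict inequalities in the definitions of $\sigma_-$, $\sigma_+$, and $S$, so that these three sets genuinely partition $[n]$, that the complement of $S$ is exactly the set of coordinates violating the displayed bound, and that the final chain of inequalities comes out strict. The crude slack in $3\rho^2/4 < \rho^2$ is more than enough and keeps the argument transparent; no sharpening is needed for the stated constant $\rho^2 \delta n / 2$.
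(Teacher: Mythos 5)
Your proof is correct, and it is essentially the standard Rudelson--Vershynin argument for \cite[Lemma 3.4]{RV}, which the paper simply cites without reproducing a proof. The bookkeeping (the bound $|\sigma_+| < \delta n$, the zeroing-out construction, and the estimate $\|v-v''\|_2^2 < \rho^2/4 + \rho^2/2 < \rho^2$ contradicting incompressibility) is all in order.
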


\noindent Now recall that we defined 
\[ \cI(D) = \left\lbrace v \in \S^{n-1} :  (\k_0 + \k_0/2)n^{-1/2} \leq |v_i| \leq  (\k_1 -\k_0/2) n^{-1/2} \text{ for all } i\in D   \right\rbrace \]
and $\cI = \bigcup_{D \subseteq [n], |D| = d } \cI(D)$.  Here we fix $\k_0 = \rho/3$ and $\k_1 = \delta^{-1/2}+\rho/6$, where $\delta,\rho$ are as in Lemma~\ref{lem:compressible}. We also fix $c_0 = \min\{ 2^{-24}, \rho \delta^{1/2}/2 \}$.

The following lemma is what we will apply in the proof of Theorem~\ref{thm:main}.

\begin{lemma}\label{lem:invertOnIc} For $n \in \N$, let $d < c_0^2 n$. Then 
\[ \max_{w\in\mathbb{R}^n}\PP_A\left( \bigcup_{v \in \S^{n-1} \setminus \cI} \left\lbrace Av\in \{t\cdot w\}_{t>0},~ \|A\|\leq 4\sqrt{n}\right\rbrace \right) \leq 16 c^{-1}e^{-c n}. \]\end{lemma}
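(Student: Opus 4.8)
The plan is to show that $\S^{n-1}\setminus\cI$ consists entirely of compressible vectors, and then to cover the (bounded) relevant piece of the ray $\{t\cdot w\}_{t>0}$ by a few balls, so that Lemma~\ref{lem:compressible} can be applied to each.

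First I would unwind the constants: with $\k_0 = \rho/3$ and $\k_1 = \delta^{-1/2}+\rho/6$ one has $\k_0+\k_0/2 = \rho/2$ and $\k_1-\k_0/2 = \delta^{-1/2}$, so
\[ \cI(D) = \left\{ v\in\S^{n-1} : (\rho/2)n^{-1/2}\leq |v_i|\leq \delta^{-1/2}n^{-1/2} \text{ for all } i\in D\right\}. \]
Now for $v\in\Inc(\delta,\rho)$, Lemma~\ref{lem:spread} provides at least $\rho^2\delta n/2$ coordinates $i$ with $(\rho/2)n^{-1/2}\leq |v_i|\leq \delta^{-1/2}n^{-1/2}$; since $c_0\leq \rho\delta^{1/2}/2$ we have $d< c_0^2 n\leq \rho^2\delta n/4 < \rho^2\delta n/2$, so we may select a subset $D$ of exactly $d$ of these coordinates and conclude $v\in\cI(D)\subseteq\cI$. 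Hence $\Inc(\delta,\rho)\subseteq\cI$, and taking complements in $\S^{n-1}$ gives $\S^{n-1}\setminus\cI\subseteq\Comp(\delta,\rho)$.

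Next, fix $w\in\R^n$. On the event in question there are a unit vector $v$ and $t>0$ with $Av = tw$ and $\|A\|\leq 4\sqrt n$, so $\|tw\|_2=\|Av\|_2\leq 4\sqrt n$; thus $Av = tw$ lies on the segment $\{s\,w/\|w\|_2 : 0\leq s\leq 4\sqrt n\}$ (read as $\{0\}$ when $w=0$). This segment has length at most $4\sqrt n$ and so is covered by $m\leq \lceil 4/c\rceil+1\leq 6c^{-1}$ balls $B_n(u_1,c\sqrt n),\dots,B_n(u_m,c\sqrt n)$, where $c$ is the constant of Lemma~\ref{lem:compressible}. Combining this with the inclusion of the previous paragraph, the event in the statement is contained in $\bigcup_{j=1}^m\{\exists\, v\in\Comp(\delta,\rho):\|Av-u_j\|_2\leq c\sqrt n\}$, and so by the union bound together with Lemma~\ref{lem:compressible} its probability is at most $m\cdot 2e^{-cn}\leq 16c^{-1}e^{-cn}$, uniformly over $w$.

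I do not expect a genuine obstacle here: this is one of the easy reductions of the paper. The two points that need (mild) care are (i) checking that the flatness window supplied by Lemma~\ref{lem:spread} coincides exactly with the defining window of $\cI(D)$ — which is the reason the constants $\k_0,\k_1,c_0$ were pinned down as in Section~\ref{sec:constants} — and (ii) bookkeeping the $O(c^{-1})$ factor lost in covering the truncated ray. The use of the hypothesis $\|A\|\leq 4\sqrt n$ is precisely to make this truncation possible.
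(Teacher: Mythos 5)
Your proof is correct and follows essentially the same route as the paper's: Lemma~\ref{lem:spread} gives $\S^{n-1}\setminus\cI\subseteq\Comp(\delta,\rho)$, the truncated ray is covered by $O(c^{-1})$ balls of radius $c\sqrt{n}$, and a union bound with Lemma~\ref{lem:compressible} finishes. The extra care you take in matching the flatness window and the coordinate count is exactly the intended (implicit) content of the paper's one-line first step.
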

\begin{proof}  
Apply Lemma~\ref{lem:spread} along with the definitions of $\k_1,\k_2$ and $\cI$ to see $\S^{n-1}\setminus \cI \subseteq \Comp(\delta,\rho)$. Now take a $c\sqrt{n}$-net $\cX$ for $\{t\cdot w\}_{0<t\leq 4\sqrt{n}}$ of size $8c^{-1}$. 
Then \[\left\lbrace A : Av\in \{t\cdot w\}_{t>0},~ \|A\|\leq 4\sqrt{n}\right\rbrace \subset \bigcup_{w'\in \cX}\left\lbrace A :  \|Av-w'\|_2 \leq c \sqrt{n}\right\rbrace .\] Union bounding over $\cX$ and applying Lemma~\ref{lem:compressible} completes the lemma.\end{proof}

\subsection{Proof of Theorem~\ref{thm:main}}

As we noted in Section~\ref{sec:Definitions}, matrices $A$ with $\|A\|\geq 4\sqrt{n}$ will be a slight nuisance for us. The following concentration inequality for the operator norm of a random matrix will allow us to remove all such matrices $A$ from consideration. 

\begin{lemma}\label{lem:op-concentration}  Let $A$ be uniformly drawn from all $n\times n$ symmetric matrices with entries in $\{-1, 1\}$. Then for $n$ sufficiently large,
	\[ \PP\left(\| A \| \geq 4\sqrt{n}\right)\leq 4e^{-n/32}.\]\end{lemma}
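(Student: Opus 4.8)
The plan is to prove Lemma~\ref{lem:op-concentration} by combining the standard $\eps$-net bound on the operator norm with a union bound over a net of the sphere, using Hoeffding's inequality for each fixed pair of net points. First I would fix a $(1/4)$-net $\cX$ of the sphere $\S^{n-1}$ of size $|\cX| \leq 9^n$, and recall the elementary fact that $\|A\| \leq 2 \max_{x,y \in \cX} |\langle Ax, y\rangle|$; this reduces bounding $\|A\|$ to controlling the bilinear forms $\langle Ax, y\rangle = \sum_{i,j} A_{i,j} x_i y_j$ simultaneously over all $x,y \in \cX$. The point of passing to the symmetrized bilinear form is that we then only need anti-concentration of a \emph{linear} form in the independent entries $\{A_{i,j} : i \leq j\}$ of $A$.

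Next I would fix $x, y \in \S^{n-1}$ and write $\langle Ax, y \rangle = \sum_{i < j} A_{i,j}(x_i y_j + x_j y_i) + \sum_i A_{i,i} x_i y_i$. Since the entries $A_{i,j}$ for $i \leq j$ are independent $\{-1,1\}$ variables, this is a sum of independent mean-zero bounded random variables, and Hoeffding's inequality gives
\[ \PP\big( |\langle Ax, y\rangle| \geq s \big) \leq 2 \exp\!\left( - \frac{s^2}{2\sum_{i<j}(x_iy_j+x_jy_i)^2 + 2\sum_i x_i^2 y_i^2} \right). \]
The denominator is at most $2\sum_{i,j} x_i^2 y_j^2 \cdot (\text{const}) = O(\|x\|_2^2 \|y\|_2^2) = O(1)$ — more precisely one checks $\sum_{i<j}(x_iy_j+x_jy_i)^2 + \sum_i x_i^2y_i^2 \leq 2\|x\|_2^2\|y\|_2^2 = 2$ — so with $s = (3/2)\sqrt{n}$ (chosen so that $2s = 3\sqrt n < 4\sqrt n$ after accounting for the factor $2$ from the net reduction, with room to spare) we get $\PP(|\langle Ax,y\rangle| \geq (3/2)\sqrt n) \leq 2 e^{-c' n}$ for an explicit $c'$ comfortably larger than $2\log 9$.

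Then I would union bound over the at most $9^{2n}$ pairs $(x,y) \in \cX \times \cX$:
\[ \PP\big( \|A\| \geq 4\sqrt n \big) \leq \PP\big( \max_{x,y \in \cX} |\langle Ax,y\rangle| \geq 2\sqrt n \big) \leq 9^{2n} \cdot 2 e^{-c' n} \leq 4 e^{-n/32}, \]
provided $c' - 2\log 9 \geq 1/32$, which holds with the constants above for $n$ large; a slightly more careful choice of the net radius and of $s$ makes the arithmetic clean. The only mild obstacle is bookkeeping the constants so that the final exponent is exactly $-n/32$ (or better) while keeping the net-reduction factor and the $9^{2n}$ term under control; this is entirely routine, and one could alternatively cite a standard reference (e.g.\ \cite{tao-vu-book} or \cite{vershynin-invertibility}) for the operator-norm concentration of symmetric Rademacher matrices, since nothing here is specific to the rest of the paper. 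I would present the self-contained net argument for completeness.
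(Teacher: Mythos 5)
Your approach has a genuine quantitative gap: the crude $\eps$-net plus Hoeffding argument cannot reach the constant $4$ in $4\sqrt{n}$, and the arithmetic you wave at is simply wrong. With $\sum_{i<j}(x_iy_j+x_jy_i)^2+\sum_i x_i^2y_i^2\le 2$, Hoeffding gives $\PP(|\langle Ax,y\rangle|\ge s)\le 2e^{-s^2/4}$, so at $s=(3/2)\sqrt{n}$ the exponent is $c'=9/16\approx 0.56$, which is far \emph{smaller} than $2\log 9\approx 4.39$, not ``comfortably larger''. To survive the union bound over $9^{2n}$ pairs you need $s\ge 2\sqrt{2\log 9}\,\sqrt{n}\approx 4.2\sqrt{n}$, which after the factor $2$ from the net reduction only yields $\|A\|\le 8.4\sqrt{n}$. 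No choice of net radius repairs this: with an $\eps$-net one needs $4(1-2\eps)^2>2\log(1+2/\eps)$ to conclude $\|A\|\le 4\sqrt n$, and this inequality fails for every $\eps\in(0,1/2)$ (the same computation kills the one-net quadratic-form variant $\|A\|\le(1-2\eps)^{-1}\max_x|\langle Ax,x\rangle|$). This is why Theorem~\ref{thm:Versh-norm} in the appendix, which is exactly the net argument you propose to cite, only gives $\|A\|\lesssim 16\sqrt n$.

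The underlying issue is that $4\sqrt n$ is within a factor $2$ of the true value $(2+o(1))\sqrt n$ of $\|A\|$, which is below the resolution of the naive net method. The paper therefore proves the lemma differently: it cites Bai--Yin \cite{bai1988necessary} to locate the median of $\|A\|$ at $(2+o(1))\sqrt{n}$, and then a Talagrand-type concentration inequality for the operator norm (Meckes \cite{meckes2004concentration}) to get an $e^{-cn}$ tail at distance $2\sqrt n$ above the median. To fix your write-up you would need to replace the net argument by such a ``sharp first-order location plus concentration'' argument (or a trace/moment method \`a la F\"uredi--Koml\'os), rather than tuning constants in the union bound.
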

This follows from a classical result of Bai and Yin~\cite{bai1988necessary} (see also~\cite[Theorem 2.3.23]{tao2012topics}) which implies that the median of $\|A\|$ is equal to $(2+o(1))\sqrt{n}$, combined with a concentration inequality due to Meckes~\cite[Theorem 2]{meckes2004concentration}.  A version of Lemma~\ref{lem:op-concentration} without explicit constants, is well-known and straightforward, though we have included a version with explicit constants for concreteness.

We will also need the following, rather weak, relationship between the threshold $\cT_{L}$, defined in terms of the matrix $M$, and $\rho(v)$, the ``one-dimensional'' concentration function of $v$.  For this we define one more bit of (standard) notation
\[ \rho_{\eps}(v) := \max_{b \in \R^n} \PP\left( \sum_i v_i\eps_i \in (b-\eps,b+\eps) \right).\]

\begin{lemma}\label{lem:RhoVtau} Let $v \in \S^{n-1}$ and $\eps = \cT_L(v)$.  Then 
	$\rho_\eps(v)^{4} \leq 2^{12} L \eps $.
\end{lemma}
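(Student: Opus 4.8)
The plan is to read off the lemma from the definition of $\cT_L$ by converting anti-concentration of the one-dimensional walk $\langle v,\tau\rangle$ into anti-concentration of $\|Mv\|_2$. Write $\eps:=\cT_L(v)$ and $p:=\rho_\eps(v)$. We may assume $\eps<1$, since otherwise $2^{12}L\eps\geq 2^{12}L>1\geq p^4$. Suppose for contradiction that $p^4>2^{12}L\eps$ and set $t:=p^4/(2^{12}L)\in(0,1)$, so that $t>\eps=\cT_L(v)=\sup\{s\in[0,1]:\PP_M(\|Mv\|_2\leq s\sqrt n)\geq(4Ls)^n\}$. Then $t$ is not in that set, i.e. $\PP_M(\|Mv\|_2\leq t\sqrt n)<(4Lt)^n=(p^4/2^{10})^n$, so it suffices to contradict this by showing $\PP_M(\|Mv\|_2\leq C t\sqrt n)\geq(c\,p)^{4n}$ for suitable absolute constants $c,C$ (adjusting $t$ by a constant factor, which is harmless): indeed $(cp)^{4n}\geq(p^4/2^{10})^n$ once $c\geq 2^{-5/2}$.

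To produce such a lower bound I would first note that since $t>\eps$ and $\rho_s(v)$ is nondecreasing in $s$, we have $\rho_t(v)\geq\rho_\eps(v)=p$, and since deleting coordinates only increases anti-concentration, $\rho_t(v_{[d]})\geq p$ and $\rho_t(v_{[d+1,n]})\geq p$. Now decompose $\|Mv\|_2^2=\|H_1v_{[d]}\|_2^2+\|H_1^Tv_{[d+1,n]}\|_2^2$; the $n-d$ coordinates of $H_1v_{[d]}$ are i.i.d.\ copies of $\langle\tau,v_{[d]}\rangle$ with $\tau\sim\cQ(d,1/4)$, and the $d$ coordinates of $H_1^Tv_{[d+1,n]}$ are i.i.d.\ copies of $\langle\tau',v_{[d+1,n]}\rangle$ with $\tau'\sim\cQ(n-d,1/4)$. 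For a single block, requiring every coordinate to be at most $Ct$ forces the Euclidean norm of that block to be at most $Ct\sqrt n$, so by independence $\PP_{H_1}(\|H_1v_{[d]}\|_2\leq Ct\sqrt n)\geq \cL(\langle\tau,v_{[d]}\rangle,Ct)^{\,n-d}$ and similarly for the other block. One then compares the $\mu$-lazy walk with the uniform $\pm1$ walk — writing $\langle\tau,v_{[d]}\rangle=\langle\eps_{|S},(v_{[d]})_{|S}\rangle$ for $S=\mathrm{supp}(\tau)$, and using the Esseen-type inequalities of Lemmas~\ref{lem:esseen} and \ref{lem:revEsseen} to relate the level sets $S_W(\cdot)$ to $\rho$ — to get $\cL(\langle\tau,v_{[d]}\rangle,Ct)\geq c\,\rho_t(v_{[d]})\geq c\,p$ and $\cL(\langle\tau',v_{[d+1,n]}\rangle,Ct)\geq c\,\rho_t(v_{[d+1,n]})\geq c\,p$.

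The main obstacle is then combining the two blocks, which are both functions of the same matrix $H_1$: the events $\{\|H_1v_{[d]}\|_2\leq Ct\sqrt n\}$ and $\{\|H_1^Tv_{[d+1,n]}\|_2\leq Ct\sqrt n\}$ are genuinely correlated, and moreover the second one is an atypically-small-norm event (since $\|H_1^Tv_{[d+1,n]}\|_2$ is typically of order $\sqrt d\asymp c_0\sqrt n$), so the decoupling needs care. I would carry this out on the Fourier side: express $\PP_M(\|Mv\|_2\leq Ct\sqrt n)$ via Fourier inversion against $\varphi_{Mv}$ (which factorizes over the entries of $H_1$), reduce to a Gaussian measure of an auxiliary level set, and apply a Gaussian Brunn–Minkowski step in the spirit of Lemma~\ref{lem:GaussBM} ($\gamma(A-A)\geq\gamma(A)^4$) to pass from the "doubled" set back to the original — this is precisely where the fourth power in $\rho_\eps(v)^4$ is lost. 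Tensorizing over the $n-d$ and $d$ i.i.d.\ coordinates and tracking constants then yields $\PP_M(\|Mv\|_2\leq Ct\sqrt n)\geq(c\,p)^{4n}$ with $c\geq 2^{-5/2}$, contradicting the bound above and completing the proof. (If one prefers to avoid the Brunn–Minkowski step, the regime where $p$, hence $t$, is extremely small can be treated separately by the crude bound $\PP_M(\|Mv\|_2\leq t\sqrt n)\geq\PP_M(Mv=0)$, which is large enough — being controlled from below by the point concentrations of $\langle\tau,v_{[d]}\rangle$ and $\langle\tau',v_{[d+1,n]}\rangle$ — to force $\cT_L(v)\gtrsim p^4/(2^{12}L)$ directly.)
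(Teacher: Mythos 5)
Your high-level framing is sound: the lemma amounts to a lower bound on the concentration of $Mv$ at scale roughly $\rho_\eps(v)^4/L$, played off against the definition of $\cT_L$. The gap is in the mechanism you propose for that lower bound. The two events $\{\|H_1v_{[d]}\|_2\le Ct\sqrt n\}$ and $\{\|H_1^Tv_{[d+1,n]}\|_2\le Ct\sqrt n\}$ are functions of the \emph{same} matrix $H_1$, and you need a \emph{lower} bound on the probability of their intersection. The tool you invoke, Lemma~\ref{lem:GaussBM} ($\gamma(A-A)\ge\gamma(A)^4$), points the wrong way for this: it yields $\gamma(A)\le\gamma(A-A)^{1/4}$, an \emph{upper} bound on the measure of a set in terms of its difference set (which is exactly how it is used in Lemma~\ref{lem:geo-compare}), and no inequality of this type converts a lower bound on the ``doubled'' object into a lower bound on the original. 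The second-moment decoupling of Fact~\ref{fact:2ndMoment} likewise only upper-bounds joint probabilities. So the step ``tensorize and track constants to get $\PP_M(\|Mv\|_2\le Ct\sqrt n)\ge(cp)^{4n}$'' is unsupported. The fallback via $\PP(Mv=0)$ fails for the same reason and worse: for generic $v$ the event $Mv=0$ forces $H_1=0$, so $\PP(Mv=0)=(3/4)^{d(n-d)}=e^{-\Theta(n^2)}$, far below the required $(4Lt)^n$ when, say, $\rho_\eps(v)$ is only polynomially small. (A secondary issue: your single-block bound needs concentration of $\la\tau,v_{[d]}\ra$ \emph{at zero}, not at the $\cL$-maximizing shift; this is fixable by symmetrization but is not free.)

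The paper's proof never lower-bounds any probability for $M$, which is how it avoids the correlation problem entirely. It bounds $\rho_\eps(v)^n\le\max_w\PP(\|Bv-w\|_2\le\eps\sqrt n)$ for the \emph{fully independent} Rademacher matrix $B$ (where the coordinates genuinely factor), applies Markov and Fourier inversion to get the integral $\int e^{-\pi\|\xi\|_2^2}\prod_{j,k}\cos(2\pi\eps^{-1}v_j\xi_k)\,d\xi$, and then uses H\"older with exponent $4$ together with the pointwise inequality $(\cos a\cos b)^4\le\tfrac34+\tfrac14\cos(2(a+b))$ to dominate the integrand by the characteristic function of $Mv$. That H\"older step, not Brunn--Minkowski, is the true source of the fourth power; it gives $\rho_\eps(v)^{4n}\le C^n\,\E\exp(-\pi\|Mv\|_2^2/\eps^2)$, and Fact~\ref{fact:expForm} (a direct consequence of $\eps=\cT_L(v)$) bounds the right-hand side by $(9L\eps)^n$. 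If you want to keep your contrapositive framing, you still need this characteristic-function comparison between the independent model and $M$; the block decomposition with independence heuristics cannot substitute for it.
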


We postpone the proof of this lemma to Appendix~\ref{sec:replacement} and move on to the proof of Theorem~\ref{thm:main}.

\begin{proof}[Proof of Theorem~\ref{thm:main}]
It is not hard to see that $\PP( \det(A) = 0  ) < 1$ for all $n$, and therefore it is enough to prove Theorem~\ref{thm:main} for all sufficiently large $n$.

Now, as in Section~\ref{sec:Definitions}, we set $\g = e^{-cn}$, where we now define, $c := L^{-32/c_0^2}/8 $, $L := \max\{ 2^{26}C_1, 16/\k_0\}$,
where $C_1 = C/c_0^6$ is the constant appearing in Theorem~\ref{thm:netThm}. We also let $c_0 >0$ be as defined above and $d := \lceil c_0^2n/2\rceil$.

 From Lemma~\ref{lem:unstructured} we have
\[\Pr(\det(A)=0)\leq 16n \sum_{m=n}^{2n-2}\left(\gamma^{1/8}+\frac{q_{m-1}(\g)}{\g}\right)\]
and so it is enough to bound $q_{n}(\g)$ for all large $n$.  Let $\Sigma = \{ v \in \S^{n-1} : \rho(v) \geq \g \}$, as defined in Section~\ref{sec:Definitions}, and note that 
\[\{A: \exists v\in \R^n, ~Av=w,~\rho(v)\geq \gamma\}\subset \{A: \exists v\in \Sigma, ~Av\in\{t\cdot w\}_{t>0}\}.\] 
Since $d = \lceil c_0^2n/2 \rceil$, by Lemma~\ref{lem:invertOnIc} and Lemma~\ref{lem:op-concentration}, we have 
\begin{equation}\label{eq:qn} q_n(\g) \leq \max_{w\in \R^n}\Pr_A\left( \{ \exists v\in \cI \cap \Sigma :~Av\in\{t\cdot w\}_{t>0} \} \cap \{ \|A\|\leq 4\sqrt{n} \}\right)\,  + 32c^{-1}e^{-cn} \end{equation}
and so it is enough to show the first term on the right-hand-side is $\leq 2^{-n}$. Using that $\cI = \bigcup_{D} \cI(D),$ we have the first term of \eqref{eq:qn} is  
\begin{align}
&\leq 2^n \max_{D \in [n]^{(d)}}\, \max_{w\in \R^n}\, \Pr_A\left( \{ \exists v\in \cI(D) \cap \Sigma :~Av\in\{t\cdot w\}_{t>0} \} \cap \{ \|A\|\leq 4\sqrt{n} \}\right) \\
 \label{eq:I([d])bnd}  &= 2^n \max_{w\in \R^n}\, \Pr_A\left( \{ \exists v\in \cI([d]) \cap \Sigma :~Av\in\{t\cdot w\}_{t>0} \} \cap \{ \|A\|\leq 4\sqrt{n} \}\right), \end{align}
where the last line holds by symmetry of the coordinates.  Thus it is enough to show that the maximum at \eqref{eq:I([d])bnd} is at most $4^{-n}$.

Now, for $v \in \Sigma$ we have $\rho(v)\geq \g$ and so, by Lemma~\ref{lem:RhoVtau}, we have that 
\[ \g^4 \leq \rho(v)^4 \leq \rho_{\cT_L(v)}(v)^4 \leq 2^{12}L \cT_L(v). \]
Define $ \eta := \g^4 /(2^{12}L) \leq \cT_L(v)$. Also note that by definition, $\cT_L(v) \leq 1/L \leq \k_0/8$.

Now, recalling definition~\eqref{eq:Sigmaepsdef} of $\Sigma_{\eps} = \Sigma_{\eps}([d])$ from Section~\ref{sec:Definitions}, we may write
\[\cI([d]) \cap \Sigma \subseteq \bigcup_{i=1}^n \left\{v\in \cI : \cT_L(v)\in [2^{j-1}\eta,2^{j}\eta] \right\}\, = \bigcup_{j=0}^{ \log_2 (\kappa_0/16\eta)} \Sigma_{2^j\eta}\,  \]
and so by the union bound, it is enough to show
\begin{equation*}
\max_{w\in \R^n}\Pr_A\left(\{ \exists v\in \Sigma_{\eps}  :~Av\in \{t\cdot w\}_{t>0} \} \cap \{ \|A\|\leq 4\sqrt{n} \}\right) \leq 8^{-n},\end{equation*}
for all $\eps \in [\eta,\k_0/16]$. Fix an $\eps \sqrt{n}$-net $\cX$ for $\{t\cdot w\}_{0<t\leq 4\sqrt{n}}$ of size $8/\eps\leq 2^n$ to get \[\{A:~ Av\in \{t\cdot w\}_{t>0},~\|A\|\leq 4\sqrt{n}\}\subset \bigcup_{w'\in \cX}\{A:~\|Av-w'\|_{2}\leq \eps \sqrt{n},~\|A\|\leq 4\sqrt{n}\}.\] So by taking the union bound over $\cX$ it is enough to prove that
\begin{equation}\label{eq:bndSEps} 
Q_{\eps} := \max_{w\in \R^n}\Pr_A\left(\{ \exists v\in \Sigma_{\eps}  :~\|Av-w\|_2\leq \eps \sqrt{n} \} \cap \{ \|A\|\leq 4\sqrt{n} \}\right) \leq 2^{-4n}.\end{equation}

Let $w\in \R^n$ be such that the maximum at \eqref{eq:bndSEps} is attained. Now, since $\eps < \k_0/8$ for $v \in \Sigma_{\eps}$, we apply Lemma~\ref{thmnet}, 
to find a $u \in \cN_{\eps} = \cN_{\eps}([d])$ so that $\|v - u\|_2 \leq 4\eps$. So if $\| A \|\leq 4\sqrt{n}$ and $\|Av - w\|\leq \eps \sqrt{n}$, we see that 
\[ \|Au -w\|_2\leq \|Av -w\|_2 + \|A(v-u)\|_2 \leq \|Av -w\|_2 + \|A\|\|(v-u)\|_2 \leq 32\eps\sqrt{n}\]
and thus
\[ \{ A : \exists v\in \Sigma_{\eps}  :~\|Av-w\|\leq \eps \sqrt{n} \} \cap \{ \|A\|\leq 4\sqrt{n} \} \subseteq \{ A : \exists u \in \cN_{\eps} : \| Au-w\|\leq  32\eps\sqrt{n} , \| A \| \leq 4\sqrt{n}\}.   \] 
So, by union bounding over our net $\cN_{\eps}$, we see that 
\[ Q_{\eps} \leq \PP_A\left(\exists u \in \cN_{\eps} : \|Au-w\|\leq  32\eps\sqrt{n} \text{ and } \|A\| \leq 4\sqrt{n} \right) 
\leq \sum_{u \in \cN_{\eps}} \cL_{A,op}\left(u, 32\eps \sqrt{n} \right). \]
Now note that if $u \in \cN_{\eps}$, then $\cL_{A,op}(u,\eps\sqrt{n}) \leq (2^8 L \eps)^n$ and so by Fact~\ref{fact:regularityofL} we have that $\cL_{A,op}\left(u, 32\eps \sqrt{n} \right) \leq (2^{16} L\eps)^n$. As a result, 
\[  Q_{\eps}  \leq |\cN_{\eps}|(2^{16} L\eps)^n \leq \left(\frac{C}{L^2\eps}\right)^n(2^{16} L\eps)^n \leq 2^{-4n}. \]
where the second to last inequality follows from our Theorem~\ref{thm:netThm} and the last inequality holds for our choice of $L = \max\{ 2^{26}C_1, 16/\k_0\}$. To see that the application 
of Theorem~\ref{thm:netThm} is valid, note that 
\[ \log 1/\eps \leq \log 1/\eta = \log 2^{12} L/\g^4 \leq nL^{-32/c_0^2}/2 + \log 2^{12}L \leq nL^{-32/c_0^2}, \]
where the last inequality hold for all sufficiently large $n$. This completes the proof.\end{proof}

\section*{Acknowledgments}
We thank Rob Morris for many helpful comments on the presentation of this paper. We also thank Vishesh Jain, Natasha Morrison, Ashwin Sah,
Mehtaab Sawhney and Van Vu for helpful remarks on the first preprint.

\appendix
\section{The Proofs of two Esseen-type lemmas}\label{sec:FourierPrep}

In this section we prove our two Esseen-type lemmas, Lemma~\ref{lem:esseen} and Lemma~\ref{lem:revEsseen}, for random variables of the form $W^T\tau$, where $\tau$ is a $\mu$-lazy random vector in 
$\{-1, 0,1\}^{2d}$ and $W$ is a (fixed) $2d \times \ell$ matrix for some $\ell\in \N$. Recall that for a vector $u\in \R^{\ell}$, we let $\|u\|_{\T}$ denote the Euclidean distance from $u$ to the integer lattice $\Z^{\ell}$. 

\vspace{3mm}

\subsection{Basics of Fourier representation}

As above, we let $\tau$ be a $\mu$-lazy random vector in $\{-1,0,1\}^{2d}$ and let $W$ be a $2d \times \ell $ matrix. Recall the characteristic function $\vp_X$ of 
a vector valued random variable $X$ is defined as  
\[\vp_X(\theta) = \E \exp(2\pi i \langle X, \theta\rangle),  \]
and so we may express characteristic function of $W^T\tau$ as
\[ \vp(\theta) = \E \exp(2\pi i \langle \tau, W\theta\rangle) =\prod_{j=1}^{2d}\big((1-\mu) + \mu \cos(2\pi(W\theta)_j)\big)\, .\]
We note the elementary fact that for $\mu\in [0,1/4]$ we have
\begin{equation}\label{eq:cos-approx}  \mu \| x \|_{\T}^2 \leq - \log \left(1 - \mu + \mu \cos(2\pi x) \right) \leq 32\mu \|x \|_{\T}^2\, , \end{equation}	
from which we deduce
\begin{equation} \label{eq:phiBnds} \exp \left(- 32\mu \left\| W\theta\right\|_{\T}^2   \right) \leq 
\vp(\t) \leq  \exp \left(-\mu \left\| W\theta\right\|_{\T}^2  \right).
\end{equation}

We now note a standard fact regarding Fourier inversion (see \cite{tao-vu-book} p.290).

\begin{fact}[Fourier inversion]\label{fact:inversion}
	Let $X$ be a random vector in $\R^\ell$, then for $w\in\R^\ell$ we have 
$$\E \exp\left(- \frac{\pi\|X-w\|_2^2}{2}\right) =\int_{\R^\ell} e^{-\pi\|\theta\|_2^2 }\cdot e^{-2\pi i\langle w, \theta\rangle} \vp_X(\theta)\,d\theta\,. $$
In particular, letting $g \sim \cN(0, (2\pi)^{-1} I_{\ell})$, we have
\[
\E \exp\left(- \frac{\pi\|X-w\|_2^2}{2}\right) = \E_g(e^{-2\pi i\langle w, g\rangle}\varphi_X(g))\, .
\]\end{fact}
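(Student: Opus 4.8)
The plan is to prove this by the standard ``Gaussian smoothing'' computation: write the characteristic function as an expectation, interchange it with the $\theta$-integral, and then evaluate the resulting Gaussian integral by completing the square. Concretely, I would start from the right-hand side and substitute $\vp_X(\theta) = \E\exp(2\pi i\langle X,\theta\rangle)$, so that
\[
\int_{\R^\ell} e^{-\pi\|\theta\|_2^2}\, e^{-2\pi i\langle w,\theta\rangle}\vp_X(\theta)\,d\theta
= \int_{\R^\ell} e^{-\pi\|\theta\|_2^2}\, \E\big[e^{2\pi i\langle X-w,\theta\rangle}\big]\,d\theta .
\]
The interchange of $\E$ and $\int_{\R^\ell}$ is justified by Fubini's theorem: the integrand is bounded in absolute value by $e^{-\pi\|\theta\|_2^2}$, which is integrable over $\R^\ell$ (with total mass $1$) uniformly over the realisations of $X$.

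After swapping, the inner integral is, for each fixed value $y = X-w$, the Fourier transform of the Gaussian $e^{-\pi\|\theta\|_2^2}$ evaluated at $y$; completing the square (equivalently, a shift of contour $\theta\mapsto\theta - i y$, valid by analyticity together with Gaussian decay) turns it into a Gaussian in $y$, and hence the whole expression into $\E\exp\!\big(-\pi\|X-w\|_2^2/2\big)$. This identity is entirely standard; see \cite{tao-vu-book} p.~290.

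For the ``in particular'' clause, I would simply note that $e^{-\pi\|\theta\|_2^2}$ is precisely the Lebesgue density of $g\sim\cN(0,(2\pi)^{-1}I_\ell)$, so that $\int_{\R^\ell} e^{-\pi\|\theta\|_2^2} F(\theta)\,d\theta = \E_g\, F(g)$ for any integrable $F$; applying this with $F(\theta) = e^{-2\pi i\langle w,\theta\rangle}\vp_X(\theta)$ converts the first display into the second.

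There is really no serious obstacle here: the only points requiring (routine) care are the justification of Fubini's theorem via the domination above and the bookkeeping of the numerical constants in the Fourier transform of a Gaussian. Since the statement is quoted with a reference, one could alternatively just cite \cite{tao-vu-book}; I would nonetheless include the one-line Fubini argument for completeness.
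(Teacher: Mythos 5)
Your overall strategy is the standard Gaussian-smoothing computation, and since the paper offers no proof of this fact beyond the citation to \cite{tao-vu-book}, there is no alternative argument to compare against. Your Fubini justification is fine: the integrand is dominated by $e^{-\pi\|\theta\|_2^2}$ uniformly over the realisations of $X$, and the ``in particular'' clause is handled correctly, since $e^{-\pi\|\theta\|_2^2}$ is indeed the Lebesgue density of $\cN(0,(2\pi)^{-1}I_\ell)$.

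However, the one step you defer to ``routine bookkeeping of numerical constants'' is precisely where the argument does not land where you claim. With the present normalisation the Gaussian $e^{-\pi\|\theta\|_2^2}$ is self-dual: for fixed $y\in\R^\ell$, completing the square via $-\pi\|\theta\|_2^2+2\pi i\langle y,\theta\rangle=-\pi\|\theta-iy\|_2^2-\pi\|y\|_2^2$ gives
\[
\int_{\R^\ell}e^{-\pi\|\theta\|_2^2}\,e^{2\pi i\langle y,\theta\rangle}\,d\theta=e^{-\pi\|y\|_2^2},
\]
so after swapping $\E$ and $\int$ your computation yields
\[
\int_{\R^\ell}e^{-\pi\|\theta\|_2^2}\,e^{-2\pi i\langle w,\theta\rangle}\,\vp_X(\theta)\,d\theta=\E\exp\bigl(-\pi\|X-w\|_2^2\bigr),
\]
\emph{without} the factor $\tfrac12$ in the exponent. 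The discrepancy is real, not just a matter of convention: take $\ell=1$, $X\equiv 0$ and $w=a\neq 0$; then the right-hand side of the stated identity equals $e^{-\pi a^2}$ while the stated left-hand side equals $e^{-\pi a^2/2}$. So your proof, carried out carefully, establishes the corrected identity with $\E\exp(-\pi\|X-w\|_2^2)$ on the left; the $/2$ in the statement of Fact~\ref{fact:inversion} is a typo, and you should either prove the corrected version or flag the mismatch rather than assert that the square-completion produces the stated left-hand side. (The slip is harmless downstream: in the proofs of Lemma~\ref{lem:esseen}, Lemma~\ref{lem:revEsseen} and in Appendix~\ref{sec:replacement} the fact is only used inside Markov-type estimates, where correcting it amounts to replacing factors such as $e^{\pi\beta^2\ell/2}$ by $e^{\pi\beta^2\ell}$ and adjusting absolute constants.)
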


\vspace{3mm}

\subsection{Proof of Lemma~\ref{lem:esseen} and Lemma~\ref{lem:revEsseen}} 

Recall that for $\ell\in\N$, $\g_\ell$ denotes the $\ell$ dimensional Gaussian measure defined by $\g_\ell(S) = \PP(g \in S)$, where $g \sim \cN(0, (2\pi)^{-1} I_{\ell})$. We begin with the proof of Lemma~\ref{lem:esseen}.

\begin{proof}[Proof of Lemma~\ref{lem:esseen}] Let $w\in \R^\ell$. We apply Markov's inequality to obtain
\[ \Pr_\tau\big( \|W^T\tau-w\|_2\leq \beta\sqrt{\ell}\big) \leq \exp\left(\frac{\pi}{2} \beta^2 \ell \right) \E_\tau \exp\left(- \frac{\pi \|W^T\cdot \tau - w\|_2^2}{2}\right)\, . \] 
As above, let $\vp$ be the characteristic function of $W^T\tau$. We apply Fact~\ref{fact:inversion} and \eqref{eq:phiBnds} to obtain
\[ \Ex_{\tau} \exp\left(-\frac{ \pi\|W^T\cdot \tau - w\|_2^2}{2}\right) = \EE_{g}[e^{-2\pi i\langle w, g\rangle}\vp(g)] \leq \E_{g}[\exp(-\nu\| W g\|_{\T}^2)].  \]
The right-hand-side of the above may be rewritten as
\[ \int_{0}^{1} \PP_{g}(\exp(-\nu\| W g\|_{\T}^2)\geq t)\, dt = \nu\int_{0}^{\infty} \PP_{g}(\| W g\|_{\T}^2\leq u) e^{-\nu u}\, du = \nu\int_{0}^{\infty} \gamma_{\ell}(S_W(u)) e^{-\nu u}\, du , \]
where for the first equality we made the change of variable $t= e^{-\nu u}$.

Choosing $m$ to maximize  $\gamma_{\ell}(S_W(u)) e^{-\nu u/2}$ (as a function of $u$), we may bound
\[
\nu\int_{0}^{\infty} \gamma_{\ell}(S_W(u)) e^{-\nu u} du \leq \nu \gamma_{\ell}(S_W(m))e^{-\nu m/2} \int_{0}^{\infty}e^{-\nu u/2}du 
= 2\gamma_{\ell}(S_W(m))e^{-\nu m/2}\, .
\]
Putting everything together we obtain
\[
\Pr_\tau(\|W^T\tau-w\|_2\leq 2\beta\sqrt{\ell}) \leq 2e^{ \pi\beta^2 \ell/2 } e^{-\nu m/2} \gamma_{\ell}(S_W(m))\, .
\]
\end{proof}

The proof of Lemma~\ref{lem:revEsseen} proceeds in much the same way.

\begin{proof}[Proof of Lemma~\ref{lem:revEsseen}]
Let us set $X = \|W^T\cdot \tau\|_2$ and write 
\[ \E_X e^{-\pi X^2/2}  = \EE_X\, \1( X\leq \beta\sqrt{\ell} )e^{-\pi X^2/2} 
+ \EE_X\,\1\big(  X \geq \beta\sqrt{\ell} \big) e^{-\pi X^2/2} \leq \PP_X(X\leq \beta\sqrt{\ell} ) + e^{-\pi \beta^2\ell/2}\,   \]
and therefore, using that $\exp(-\pi \beta^2\ell/2)\leq \exp(-\beta^2\ell)$,
\[  \E_\tau \exp\left(\frac{-\pi \|W^T\cdot \tau\|_2^2}{2}\right) \leq \Pr_\tau(\|W^T\cdot \tau\|_2\leq \beta\sqrt{\ell}) + e^{-\beta^2\ell}.  \]  
As before, we let $\vp$ be the characteristic function of $W^T\tau$, and let $g$ be a standard $\ell$-dimensional Gaussian random variable with standard deviation $(2\pi)^{-1/2}$. By Fact~\ref{fact:inversion} and \eqref{eq:phiBnds} we obtain
\[\Ex_\tau \exp\left(-\frac{ \pi\|W^T\cdot \tau\|_2^2}{2}\right) = \EE_g[\varphi(g)] \geq \E_{g}[\exp(-32\mu\| W g\|_{\T}^2)]. \] 
Similar to the proof of Lemma~\ref{lem:esseen}, we write
\[ \EE_g[\exp(-32\mu \| W g\|_{\T}^2)] = 32\mu\int_{0}^{\infty} \g_{\ell}(S_W(u)) e^{-32\mu u} du \geq 32\mu\g_{\ell}(S_W(t))\int_t^{\infty} e^{-32 \mu u}\, du,\]
where we have used that $\g_{\ell}(S_W(b)) \geq \g_{\ell}(S_W(a))$ for all $b \geq a$. This completes the proof of Lemma~\ref{lem:revEsseen}.\end{proof}

\section{Relating $A$ to the zeroed out matrix $M$.}\label{sec:replacement}

In this section we prove Lemma \ref{lem:replacement} and Lemma~\ref{lem:RhoVtau}. To prove these results, we compare Fourier transforms 
(that is the \emph{characteristic functions}) of the random variables $Mv$ and $Av$, for fixed $v$.  We first record the characteristic functions 
of these random variables.  For $\xi \in \R^n$ we have
$$\psi_{v}(\xi) := \E\, e^{2\pi i\langle Av, \xi\rangle} = \left(\prod_{k = 1}^n \cos(2\pi v_k \xi_k) \right)\cdot \left(\prod_{j < k} \big(2\pi (\xi_j v_k + \xi_k v_j)\big) \right) $$ 

and
$$ \chi_{v}(\xi) := \E\, e^{2\pi i \langle Mv, \xi \rangle} = \prod_{j = 1}^d \prod_{k = d+1}^n \left(\frac{3}{4} + \frac{1}{4}\cos\big(2\pi (\xi_j v_k + \xi_k v_j)\big) \right)\,. $$
Our comparison is based on two main points. First we have that $\chi_v(\xi)\geq 0$. Second, we have
\begin{equation}\label{eq:fourier-comparison}
\psi_v(\xi) \leq \chi_v(2\xi)\,,
\end{equation}
which follows from $|\cos(t)| \leq \frac{3}{4} + \frac{1}{4} \cos(2 t)$ and $|\cos(t)|\leq 1$.

\begin{fact}\label{fact:expForm} For $v \in \R^n$, and $t \geq \cT_L(v)$, we have
\[  \E \exp(-\pi \|Mv\|_2^2 / t^2)  \leq  (9 Lt )^n .\]
\end{fact}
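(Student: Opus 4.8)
The plan is to integrate the tail bound built into the definition of $\cT_L(v)$ against the Gaussian weight $e^{-\pi\|Mv\|_2^2/t^2}$. Write $X:=\|Mv\|_2$ and $F(s):=\PP(X\le s\sqrt n)$ for $s\ge 0$ (we may of course assume $t>0$). The first thing I would record is the elementary consequence of $\cT_L(v)=\sup\{s\in[0,1]:F(s)\ge(4Ls)^n\}$, namely that
\[
F(s)\le(4Ls)^n\qquad\text{for every }s\ge\cT_L(v).
\]
Indeed, for $\cT_L(v)<s\le1$ this is immediate since $s$ lies above the supremum; for $s=\cT_L(v)$ it follows by right-continuity of the distribution function $s\mapsto F(s)$, letting $s\downarrow\cT_L(v)$ through the previous range; and for $s>1$ it holds because $F(s)\le 1\le (4L)^n\le(4Ls)^n$, using $L\ge 2$. (Alternatively one may simply invoke the observation $F(\cT_L(v))\le(8L\,\cT_L(v))^n$ from Section~\ref{sec:approx-w-net}, at the cost of replacing $4$ by $8$ below.)

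Next I would decompose $[0,\infty)$ into the intervals $[\,jt\sqrt n,(j+1)t\sqrt n\,)$ for $j\ge 0$, bounding $e^{-\pi X^2/t^2}$ by $1$ on the first interval and by $e^{-\pi j^2 n}$ on the $j$-th one (since $X\ge jt\sqrt n$ there). This gives
\[
\E\,e^{-\pi X^2/t^2}\ \le\ F(t)+\sum_{j\ge1}e^{-\pi j^2 n}\,F\big((j+1)t\big).
\]
Because $t\ge\cT_L(v)$, every argument $(j+1)t$ with $j\ge0$ is also $\ge\cT_L(v)$, so the bound from the first paragraph applies and yields $F((j+1)t)\le(4L(j+1)t)^n$. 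Hence
\[
\E\,e^{-\pi X^2/t^2}\ \le\ (4Lt)^n\Big(1+\sum_{j\ge1}(j+1)^n e^{-\pi j^2 n}\Big).
\]

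Finally I would bound the bracketed sum by an absolute constant strictly less than $2$: since $\pi>1$ and $n\ge 1$, the $j$-th term is at most $(j+1)e^{-\pi j^2}$, and $\sum_{j\ge1}(j+1)e^{-\pi j^2}$ is well under $1$. Therefore $\E\,e^{-\pi\|Mv\|_2^2/t^2}\le 2(4Lt)^n\le(9Lt)^n$, the last inequality because $2\cdot4^n\le 9^n$ for all $n\ge1$. The only step needing any genuine care is the borderline case $s=\cT_L(v)$ of the first paragraph, where one must use right-continuity of $F$ to pass from the strict inequality valid strictly above the threshold to the (non-strict) inequality at the threshold itself; everything else is routine.
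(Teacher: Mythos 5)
Your proof is correct and follows essentially the same route as the paper: both isolate the head term $\P(\|Mv\|_2\le t\sqrt n)$ and control the Gaussian-weighted tail using the bound $\P(\|Mv\|_2\le s\sqrt n)\le (CLs)^n$ valid for $s\ge \cT_L(v)$, the only difference being that you discretize the tail into intervals of length $t\sqrt n$ and sum, where the paper integrates. Your careful handling of the borderline case $s=\cT_L(v)$ via right-continuity (or the factor-$8$ observation) is exactly the point the paper glosses over by writing $(8Ls)^n$ instead of $(4Ls)^n$.
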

\begin{proof}
Now $\E \exp(-\pi \|Mv\|_2^2 / t^2)$ is at most \begin{equation} \label{eq:Mv-split}
 \P(\|M v\|_2 \leq t \sqrt{n}) + \sqrt{n} \int_{t}^\infty \exp\left(- \frac{s^2 n }{t^2}\right)\P(\|M v \|_2 \leq s \sqrt{n})\,ds\,.
\end{equation}
 and since $t \geq \cT_L(v)$, we have $\P(\|Mv \|_2 \leq s\sqrt{n}) \leq (8Ls)^n$ for all $s\geq t$, and so we may bound 
\[ \sqrt{n}\int_{t}^\infty \exp\left(- \frac{s^2 n }{t^2}\right)\P(\|M v \|_2 \leq s \sqrt{n})\,ds 
\leq \sqrt{n}(8Lt)^n \int_t^\infty  \exp\left(- \frac{s^2 n }{t^2}\right)(s/t)^n \,ds\, . \]
Changing variables $u=s/t$, the right hand side is equal to
\[  t^{-1} \sqrt{n}(8Lt)^n \int_1^\infty \exp(-u^2n) u^n \,du \nonumber \leq t^{-1}\sqrt{n}(8Lt)^n \int_1^\infty \exp(-u^2/2)\,du \leq (9 Lt )^n, \]
as desired.
\end{proof}

\vspace{3mm}

\begin{proof}[Proof of Lemma \ref{lem:replacement}]
Apply Markov's inequality to bound \begin{equation}\label{eq:Av-markov}
\P(\|A v - w\|_2 \leq t \sqrt{n}) \leq  \exp(\pi n/2) \E\, \exp\left(- \pi\| A v - w\|_2^2 / 2t^2\right)\,.
\end{equation}
Using the Fourier inversion formula in Fact~\ref{fact:inversion} we write
\begin{align}\label{eq:Av-FI}
\E_A\, \exp\left(- \pi \| A v - w\|_2^2 / 2t^2\right) 
&= \int_{\R^n} e^{-\pi \| \xi \|_2^2} \cdot e^{-2\pi it^{-1}\langle w, \xi\rangle}  \psi_v(t^{-1}\xi)\,d\xi\,. \end{align}
Rescaling, applying \eqref{eq:fourier-comparison} and non-negativity of $\chi_v$ yields that the RHS of \eqref{eq:Av-FI} is at most
\[ \int_{\R^n} e^{-\pi \| \xi \|_2^2 } \chi_v(2t^{-1}\xi)\,d\xi \leq \E_M \exp(-2\pi \|Mv\|_2^2 / t^2).  \]
Now use Fact~\ref{fact:expForm} along with the assumption $t \geq \cT_L(v)$ to obtain
\[  \E_M \exp(-2\pi \|Mv\|_2^2 / t^2)\leq  (9 Lt )^n, \]
as desired.\end{proof}

\vspace{4mm}

\noindent We prove Lemma \ref{lem:RhoVtau} in a similar manner. Recall $\rho_{\eps}(v) = \max_{b \in \R^n} \PP\left( \sum_i v_i\eps_i \in (b-\eps,b+\eps) \right)$.
\begin{proof}[Proof of Lemma \ref{lem:RhoVtau}]
Set $\eps = \cT_L(v)$ and let $B$ be a $n \times n$ matrix uniformly drawn from all matrices with entries in $\{\pm 1\}$ and apply Markov's inequality to bound \begin{equation}\label{eq:rho-eps-Markov}
\rho_\eps(v)^n \leq \max_{w \in \R^n} \P(\|B v - w\|_2 \leq \eps \sqrt{n}) \leq \max_{w \in \R^n} \exp(\pi n/2) \E \exp\left(-\pi \|Bv - w\|_2^2 / 2\eps^2 \right)\,.
\end{equation}
Apply Fact \ref{fact:inversion} to write 
\begin{align} \label{eq:B-moment} \E \exp\left(- \pi\| B v - w\|_2^2 / 2\eps^2 \right) 
&=  \int_{\R^n} e^{-\pi \| \xi\|_2^2}\cdot e^{-2\pi i\eps^{-1}\langle w, \xi\rangle}   \prod_{1\leq j,k \leq n} \cos(2\pi\eps^{-1}v_j \xi_k) \, d\xi \end{align}
and use H\"older's inequality to bound the RHS of \eqref{eq:B-moment}
\begin{equation}\label{eq:replace-Holder} \leq \left(\int_{\R^n} e^{-2\pi\| \xi\|_2^2 / 3} \,d\xi\right)^{3/4} \left(\int_{\R^n}e^{-2\pi \| \xi\|_2^2} \prod_{1\leq j, k \leq n} \cos(2\pi \eps^{-1}v_j \xi_k)^4 \, d\xi\right)^{1/4}\,. \end{equation}
Now use $\int_{\R^n} e^{-2\pi \| \xi\|_2^2 / 3} \,d\xi = \left(\frac{3}{2}\right)^{n/2}$ and $(\cos(a)\cos(b))^4 \leq \frac{3}{4} + \frac{1}{4}\cos(2(a+b))$, to see \eqref{eq:replace-Holder} is
\begin{equation} \label{eq:moreequations} 
\leq \left(\frac{3}{2}\right)^{3n/8}\left( 2^{-n/2}\int_{\R^n} e^{-\pi\|\xi\|_2^2} \chi_v(\sqrt{2}\eps^{-1}\xi)\,d\xi \right)^{1/4}
\leq \left(\frac{27}{128}\right)^{n/8} \left( \E \exp\left(- \pi \| M v\|_2^2 /\eps^2\right) \right)^{1/4}. \end{equation}
Taken together, lines \eqref{eq:rho-eps-Markov}, \eqref{eq:B-moment}, \eqref{eq:replace-Holder}, \eqref{eq:moreequations} tell us that 
\begin{equation} \label{eq:rho-eps-UB-M}
	\rho_\eps(v)^n \leq (3/2)^{3n/8} (\exp(\pi/2)/\sqrt{2})^n \left( \E \exp\left(- \pi\| M v\|_2^2 /\eps^2\right) \right)^{1/4}\,. 
	\end{equation}
Now apply Fact~\ref{fact:expForm} to bound $\E \exp\left(- \pi\| Mv \|_2^2/\eps^2 \right) \leq (9 L \eps)^n$ and so 
$$\rho_\eps(v)^n \leq (2^{12} L \eps)^{n/4}\,.$$	
\end{proof}

\section{Dealing with unstructured vectors}\label{sec:unstructured}
In this short section we will prove Lemma \ref{lem:unstructured}. This lemma is very similar to Lemma 2.1 in \cite{CMMM}, which is essentially the same but over $\Z_p$. Their lemma, in turn, uses ideas from previous sources, mainly \cite{costello-tao-vu} but also \cite{nguyen-singularity,ferber-jain}. No originality is claimed on our part, for the contents of this appendix. Here we let $A_m$ denote a $m\times m$ symmetric random matrix with entries in $\{-1,1\}$, coupled so that 
$A_{m-1}$ is $A_m$ with the first row and columns removed.

We define the quantity
\[q_n(\gamma)=\max_{w\in \R^n}\Pr(\exists v\in \R^n\setminus \{0\}:~A_nv=w,~\rho(v)\geq \gamma)\,\]
and bound the singularity probability of $A_n$ in terms of $q_n(\g)$. 
\begin{lemma}
	For $\g>0$, we have 
	\[\Pr(\det(A_n)=0)\leq 16n \sum_{m=n}^{2n-2}\left(\gamma^{1/8}+\frac{q_{m-1}(\gamma)}{\gamma}\right).\]
\end{lemma}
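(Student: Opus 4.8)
The plan is to condition on the location of the ``first singular minor'' when peeling rows and columns off $A_n$, a standard device going back to \cite{costello-tao-vu}. If $\det(A_n)=0$, then as we look at the nested sequence of principal minors $A_n, A_{n-1}, \ldots$ (in the coupling described), we may locate the largest index $m$ such that $A_m$ is singular; here we must also treat the possibility that many consecutive minors are singular, which is why the sum ranges over $m$ from $n$ up to $2n-2$ (after re-indexing, as in \cite{CMMM}). More precisely, I would first reduce to bounding, for each $m$, the probability that $A_m$ is singular but $A_{m-1}$ (or some nearby minor) has full rank; this is where the factor $16n$ and the sum over a window of size $n$ come from — it accounts for the choices of which row/column to remove and a union bound over the window.

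The heart of the argument is the following dichotomy for a singular matrix $A_m$ with $A_{m-1}$ nonsingular. Write $A_m = \begin{bmatrix} a & X^T \\ X & A_{m-1} \end{bmatrix}$ where $X \in \{-1,1\}^{m-1}$ is the first column (minus its diagonal entry) and $a \in \{-1,1\}$. Since $A_{m-1}$ is invertible, singularity of $A_m$ is equivalent to a scalar equation of the form $a = X^T A_{m-1}^{-1} X$, i.e. $X$ must land in the zero set of a quadratic form determined by $A_{m-1}$. Alternatively — and this is the cleaner route — since $A_m$ is singular there is a nonzero $v$ with $A_m v = 0$; let $v'$ be the vector $v$ with its first coordinate deleted, so that $A_{m-1} v' = -(v)_1 X$ relates the kernel vector to $X$. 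Now split on whether the relevant kernel vector $v$ (of $A_m$) has $\rho(v) \geq \gamma$ or $\rho(v) < \gamma$.

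If $\rho(v) \geq \gamma$, then we are exactly in the situation controlled by $q_{m-1}(\gamma)$: reading the bottom $m-1$ rows of $A_m v = 0$ gives $A_{m-1} v'' = w$ for an appropriate (random but $A_{m-1}$-measurable) vector $w$ and a nonzero $v''$ with $\rho(v'') \geq \gamma$ — here one uses that $\rho$ of a vector is comparable to $\rho$ of the vector with one coordinate removed, up to the factor $2$, which is absorbed — so this contributes at most (something like) $q_{m-1}(\gamma)/\gamma$ after accounting for the conditioning on the freshly-exposed row. The division by $\gamma$ comes from the standard trick: we expose the new row $\eps = (\eps_i)$ last, and $\Pr_\eps(\langle \eps, v \rangle = \text{target}) \leq \rho(v)$ is used in the \emph{reverse} direction — if the event that forces singularity has probability at least, say, $\gamma$ over the last row given the rest, then we can detect it; otherwise its contribution is small; carrying this out gives the $q_{m-1}(\gamma)/\gamma$ term. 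If instead $\rho(v) < \gamma$, then the kernel vector is ``unstructured'', and we invoke a Littlewood–Offord type bound: exposing the last row $\eps$, the probability that $\langle \eps, v\rangle = 0$ (a necessary condition) is at most $\rho(v) < \gamma$; iterating/boosting this over a logarithmic number of independent-ish exposures, or more simply quoting the argument from \cite{CMMM, nguyen-singularity} that converts a single $\rho(v) < \gamma$ bound into a $\gamma^{1/8}$ probability bound via tensorization across several rows, yields the $\gamma^{1/8}$ term.

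The main obstacle I expect is the bookkeeping in the reduction from $\Pr(\det A_n = 0)$ to the window sum $\sum_{m=n}^{2n-2}$ of per-minor bounds: one must carefully set up the coupling of the $A_m$, handle the event that a long run of consecutive minors is singular (so that ``the first nonsingular minor below $m$'' is well-defined and within the window), and track the combinatorial factors ($16n$) from the choices involved. The probabilistic core — the dichotomy on $\rho(v)$ and the two resulting bounds $\gamma^{1/8}$ and $q_{m-1}(\gamma)/\gamma$ — is essentially routine given the cited literature, and indeed the statement explicitly says it mirrors \cite[Lemma 2.1]{CMMM}; so I would largely follow that proof, substituting $\R$ for $\Z_p$ throughout and checking that no step used the finiteness of the field in an essential way (the only place $\Z_p$ is genuinely convenient is in counting, and over $\R$ one uses $\rho_\eps$ and nets instead, but for \emph{this} lemma the exact concentration function $\rho$ suffices). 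I would present the proof as a sequence of three steps: (i) the peeling/coupling reduction; (ii) the structured case via $q_{m-1}(\gamma)$; (iii) the unstructured case via tensorized Littlewood–Offord, then combine.
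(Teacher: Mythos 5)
Your overall plan --- transplant \cite[Lemma 2.1]{CMMM} from $\Z_p$ to $\R$ --- is exactly what the paper does, and your step (i) matches the paper's reduction to a window sum over consecutive minors (Lemma~\ref{lem:decrease-rank}, quoted from \cite{CMMM} after taking $p\gg n^n$ so that ranks over $\Z_p$ and $\R$ agree). Note, however, that the paper's case split is on the rank of $A_{m-1}$, not on $\rho$ of a kernel vector of $A_m$: the case $\rk(A_{m-1})=m-2$ is handled by the kernel-vector map you describe (Lemma~\ref{lem:step-down}, contributing $q_{m-1}(\gamma)+\gamma$), while the main case is $\rk(A_m)=\rk(A_{m-1})=m-1$, which is genuinely quadratic.

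The gap is in your treatment of that main case. When $A_{m-1}$ is invertible and $A_m$ is singular, the kernel vector $v$ of $A_m$ is determined by $A_{m-1}$ \emph{and by the freshly exposed column $X$} (indeed $v'=-v_1A_{m-1}^{-1}X$), so the step ``exposing the last row $\eps$, the probability that $\langle\eps,v\rangle=0$ is at most $\rho(v)<\gamma$'' is circular: $v$ is not independent of the row you are exposing. This is exactly the difficulty that quadratic Littlewood--Offord theory exists to handle. The paper (Lemma~\ref{lem:rank-t}) decouples the quadratic condition $a=X^TA_{m-1}^{-1}X$ by splitting coordinates into $I\cup J$ and passing to the event $\langle A_{m-1}^{-1}(X-X')_I,(X-X')_J\rangle=0$ for an independent copy $X'$, at the cost of a fourth-root in the probability; the vector $x=A_{m-1}^{-1}(X-X')_I$ is then independent of $(X_J,X_J')$, so one may apply $\rho(x_J)\le 2^{|I|}\rho(x)\le 2^{|I|}\gamma$ in the unstructured case, while the structured case ($A_{m-1}\notin U_\gamma^{(I)}$) is controlled by a union bound over the $3^{|I|}$ possible targets in $W(I)$. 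Taking $|I|=t\approx\log_4(1/\gamma)$ gives $3^tq_{m-1}(\gamma)+(2^t\gamma+2^{-t})^{1/4}\le q_{m-1}(\gamma)/\gamma+(3\gamma^{1/2})^{1/4}$. Thus the exponent $1/8$ arises from the decoupling fourth root balanced against the $2^{\pm t}$ factors, and the $1/\gamma$ from the $3^t$ union bound over targets --- not from ``tensorization across several rows'' nor from a detection argument over the last row. Your write-up would need to incorporate this decoupling step (or cite it precisely, as the paper does via \cite[Lemma A.9]{CMMM}) to be complete.
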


We prove Lemma~\ref{lem:unstructured} with aid of three further lemmas. The first is essentially \cite[Lemma A.1]{CMMM} (see also \cite[Section 2]{nguyen-singularity}).
\begin{lemma}\label{lem:decrease-rank} We have that 
	\begin{equation}\label{eq:decrease-rank} \Pr(\det(A_n)=0)\leq 4n\sum_{m=n}^{2n-2}\Pr(\rk (A_m)=m-1~\cap~ \rk(A_{m-1})\in \{m-1,m-2\}).\end{equation}
\end{lemma}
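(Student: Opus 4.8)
The statement to prove is Lemma~\ref{lem:decrease-rank}: that the singularity of $A_n$ can be witnessed, up to a polynomial factor, by a rank-drop event among the nested principal minors $A_m$ for $n \le m \le 2n-2$. The strategy is the standard ``rank never jumps down by two'' argument, originating in work of Costello--Tao--Vu and Nguyen, exploited in \cite{CMMM}. I would first observe that if $A_n$ is singular, then $\rk(A_n) \le n-1$, whereas $\rk(A_{2n-2}) \ge \rk(A_{2n-2}) $ is typically large; the point is to track how the rank evolves as we \emph{add} rows and columns one at a time going from $A_n$ up to $A_{2n-2}$. Here one should be slightly careful about the direction: we have the coupling where $A_{m-1}$ is obtained from $A_m$ by deleting the first row and column, so going from $A_{m-1}$ to $A_m$ \emph{adds} a symmetric border. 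Adding a symmetric row-and-column to a symmetric matrix can change the rank by $-1$, $0$, $+1$, or $+2$; the key deterministic fact is that the rank is monotone non-decreasing up to the choices, so over the range $m = n, \ldots, 2n-2$ (that is, $n-2$ steps) the rank of $A_{2n-2}$ exceeds $\rk(A_n)$ unless something degenerate happens.

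\textbf{Key steps.} First I would set up the deterministic combinatorial lemma: for a symmetric matrix $A_m$ with symmetric border producing $A_{m+1}$ from $A_m$ (in the appropriate nested sense), one has $\rk(A_{m+1}) \ge \rk(A_m)$, and moreover $\rk(A_{m+1}) \in \{\rk(A_m), \rk(A_m)+1, \rk(A_m)+2\}$ when one such border step can add at most $2$. Actually the cleaner route, matching \cite[Lemma A.1]{CMMM}, is: if $\det(A_n) = 0$ then there is some $m \in [n, 2n-2]$ at which, going from $A_m$ to $A_{m+1}$ (adding the border), the rank fails to increase, \emph{and} $A_m$ itself has corank exactly $1$ at the relevant moment — equivalently, one pins down the \emph{last} index $m$ in the chain at which $\rk(A_m) = m-1$ while $\rk(A_{m-1})$ is either $m-1$ or $m-2$. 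The pigeonhole here is: along $\rk(A_n) \le n-1 < 2n-2 - 1 = \rk$ of a typical $A_{2n-2}$ (but since we want a deterministic statement, the correct phrasing is that \emph{if} $A_{2n-2}$ has full rank $2n-2$ then, since $\rk(A_n)\le n-1$ and rank increases by at most... hmm, by at most $2$ per step over $n-2$ steps, we'd get $\rk(A_{2n-2}) \le (n-1) + 2(n-2)$, which is not tight). So the actual argument must instead be: condition on $A_n$ singular, then \emph{either} $\rk(A_{m})$ stabilizes at $m-1$ for a run of indices — giving the corank-$1$ plus corank-$1$ case $\rk(A_m) = \rk(A_{m-1}) = m-1$ — \emph{or} it drops, giving $\rk(A_m) = m-1, \rk(A_{m-1}) = m-2$. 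Union-bounding over the $\le n-1$ possible choices of $m$ and the factor accounting for which row/column was removed (the ``$4n$'') yields \eqref{eq:decrease-rank}.

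\textbf{Main obstacle.} The genuine content is the deterministic linear-algebra bookkeeping: showing that $\{\det(A_n) = 0\}$ forces the existence of an $m \in [n, 2n-2]$ with $\rk(A_m) = m-1$ and $\rk(A_{m-1}) \in \{m-1, m-2\}$. The clean way is: let $m^\star$ be the largest $m \in [n-1, 2n-2]$ with $\rk(A_m) \le m-1$ (such $m$ exists since $\rk(A_{n-1}) \le n-1$, using $\det A_n = 0 \Rightarrow \rk A_n \le n-1 \Rightarrow \rk A_{n-1} \le n-1$); if $m^\star = 2n-2$ take $m = m^\star$, otherwise $\rk(A_{m^\star+1}) = m^\star+1$, forcing (by the ``rank jumps by at most $2$, and an odd-corank symmetric matrix cannot jump by $2$'' parity fact, or just by direct rank-of-bordered-matrix inequalities) that $\rk(A_{m^\star})$ is $m^\star$ or $m^\star - 1$; combined with $\rk(A_{m^\star}) \le m^\star - 1$ this pins $\rk(A_{m^\star}) = m^\star - 1$, and then $\rk(A_{m^\star - 1}) \in \{m^\star - 1, m^\star - 2\}$ by one more application of the bordered-rank inequality $|\rk(A_m) - \rk(A_{m-1})| \le 2$ together with $\rk(A_{m-1}) \le \rk(A_m)$. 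I expect this parity/bordered-rank step to be the only place requiring care; everything else (the union bound over $m$, the $4n$ from the removed row/column and the two cases) is routine, and I would simply cite \cite{CMMM, nguyen-singularity, costello-tao-vu} for the details rather than reproduce them, since no originality is claimed here.
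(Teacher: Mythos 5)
There is a genuine gap, and it sits at the heart of your argument. You assert that the ``genuine content is the deterministic linear-algebra bookkeeping: showing that $\{\det(A_n)=0\}$ forces the existence of an $m\in[n,2n-2]$ with $\rk(A_m)=m-1$ and $\rk(A_{m-1})\in\{m-1,m-2\}$.'' This deterministic inclusion is false. Nothing in linear algebra prevents the corank $c_m:=m-\rk(A_m)$ from staying $\geq 2$ throughout the whole window: for instance, if every $A_m$ is the all-ones matrix then $\rk(A_m)=1$ for all $m$, so $A_n$ is singular but $\rk(A_m)=m-1$ never holds. Your attempted repair also fails at two concrete points. First, in the case $m^\star=2n-2$ you only know $\rk(A_{m^\star})\leq m^\star-1$, not $=m^\star-1$. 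Second, the bordered-rank inequality gives $\rk(A_{m-1})\geq\rk(A_m)-2$, so from $\rk(A_{m^\star})=m^\star-1$ you can only conclude $\rk(A_{m^\star-1})\in\{m^\star-1,m^\star-2,m^\star-3\}$; the case $\rk(A_{m^\star-1})=m^\star-3$ (corank $2$) is exactly the case the event in \eqref{eq:decrease-rank} excludes, and there is no ``parity fact'' for symmetric matrices over $\R$ (or over $\F_p$, $p$ odd) that rules it out. So the event $\{\det(A_n)=0\}$ is \emph{not} contained in the union of the events appearing on the right-hand side, and no pigeonhole argument can close this.

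The correct argument is necessarily probabilistic, which is why the statement carries the prefactor $4n$ rather than just $n$. One writes $A_m=\begin{pmatrix} a & X^T\\ X & A_{m-1}\end{pmatrix}$ and uses that $c_m=c_{m-1}-1$ unless $X\in\mathrm{Im}(A_{m-1})$, an event of probability at most $2^{-c_{m-1}}$ by an Odlyzko-type bound (a uniform $\{\pm1\}$-vector lies in a fixed subspace of codimension $k$ with probability at most $2^{-k}$). Thus the corank, whenever it is at least $2$, strictly decreases with probability at least $3/4$ at each bordering step; this is what controls the ``bad'' trajectories (corank staying $\geq 2$ throughout, or only ever entering corank $1$ from corank $2$) and produces the constant in front of the sum. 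For what it's worth, the paper itself does not reprove any of this: its proof of Lemma~\ref{lem:decrease-rank} is a one-line citation of \cite[Lemma A.1]{CMMM} (stated over $\Z_p$) together with the observation that taking $p\gg n^n$ leaves all ranks unchanged, so the statement transfers to $\R$. Citing \cite{CMMM} as you propose is therefore acceptable, but the mechanism you describe for the cited lemma is not the right one, and if you were to carry out your sketch it would fail at the step identified above.
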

\begin{proof}This lemma appears as \cite[Lemma A.1]{CMMM} but over $\Z_p$. If we apply that lemma with $p \gg n^n$ then all ranks are unchanged when viewed mod $p$ and thus the lemma holds over $\R$.\end{proof}

We deal with the sum \eqref{eq:decrease-rank}, in two different cases: 
$\rk(A_{m-2}) = \rk(A_{m-1}) = m-1$ and $\rk(A_{m-1}) = m-2$, $\rk(A_{m}) = m-1$. We deal with this latter case first.

\begin{lemma}\label{lem:step-down} We have 
	\[\Pr(\rk (A_n)=n-1~\cap~ \rk(A_{n-1})=n-2)\leq q_{n-1}(\gamma)+\gamma . \]
\end{lemma}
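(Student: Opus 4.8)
The plan is to condition on the bottom-right $(n-1)\times(n-1)$ block $A_{n-1}$ and exploit that it is independent of the first row and column of $A_n$. Write $A_n$ in block form as $A_n = \begin{bmatrix} a & x^T \\ x & A_{n-1}\end{bmatrix}$, where $a = (A_n)_{1,1}\in\{-1,1\}$ and $x\in\{-1,1\}^{n-1}$ is the first column of $A_n$ with the top entry removed; since $x$ and $A_{n-1}$ involve disjoint entries of $A_n$, they are independent. On the event $\{\rk(A_{n-1}) = n-2\}$ the kernel of $A_{n-1}$ is one-dimensional, so we may fix (measurably in $A_{n-1}$) a nonzero spanning vector $v = v(A_{n-1})$; note that both $\rho(v)$ and the event $\{x^Tv = 0\}$ are unchanged under rescaling $v$, hence well defined.

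The key linear-algebra step is the claim that if $\rk(A_{n-1}) = n-2$ and $x^Tv \ne 0$, then $A_n$ is invertible. I would prove this directly: if $A_n(t,u)^T = 0$ with $t\in\R$ and $u\in\R^{n-1}$, then the lower block gives $A_{n-1}u = -tx$; pairing with $v$ and using symmetry of $A_{n-1}$ yields $0 = v^TA_{n-1}u = -t\,(x^Tv)$, so $t = 0$; then $u\in\ker(A_{n-1}) = \mathrm{span}(v)$, and the top equation $x^Tu = 0$ forces $u = 0$. Hence
\[ \{\rk(A_n) = n-1\}\cap\{\rk(A_{n-1}) = n-2\}\ \subseteq\ \{\rk(A_{n-1}) = n-2\}\cap\{x^Tv = 0\}, \]
so it suffices to bound the probability of the right-hand event.

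To do this I would split the event $\{\rk(A_{n-1}) = n-2\}$ according to whether $\rho(v(A_{n-1}))\geq\gamma$ or $\rho(v(A_{n-1})) < \gamma$. On the first piece, $v(A_{n-1})$ is a nonzero vector with $A_{n-1}v = 0$ and $\rho(v)\geq\gamma$, so (taking $w = 0$ in the definition of $q_{n-1}$) its probability is at most $q_{n-1}(\gamma)$. On the second piece, I condition on $A_{n-1}$: since $x$ is independent of $A_{n-1}$ and uniform on $\{-1,1\}^{n-1}$, we have $\Pr(x^Tv = 0\mid A_{n-1})\leq\rho(v) < \gamma$ there, so this piece contributes at most $\gamma$ overall. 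Adding the two bounds gives $q_{n-1}(\gamma) + \gamma$.

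The only non-routine point is the linear-algebra claim, together with checking that the reduction is airtight: that $\ker(A_{n-1})$ really is one-dimensional on the relevant event (so $v(A_{n-1})$, and hence $\rho(v(A_{n-1}))$, is well defined) and that the first column $x$ is independent of $A_{n-1}$. Everything else is a short conditioning argument.
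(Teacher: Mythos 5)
Your proposal is correct and follows essentially the same route as the paper: reduce to the event $\{\rk(A_{n-1})=n-2\}\cap\{\langle x, v\rangle=0\}$ where $v$ spans $\ker(A_{n-1})$, then split on whether $\rho(v)\geq\gamma$ (bounded by $q_{n-1}(\gamma)$ with $w=0$) or $\rho(v)<\gamma$ (bounded by $\gamma$ using independence of $x$ from $A_{n-1}$). The only cosmetic difference is that you prove the linear-algebra step in the contrapositive direction ($x^Tv\neq 0$ implies $A_n$ invertible), whereas the paper argues that $X\notin\mathrm{Im}(A_{n-1})$ would force $\rk(A_n)=\rk(A_{n-1})+2=n$; these are the same fact.
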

\begin{proof}
We define a map 
\[ \vp  : \{ A_n : \rk (A_n)=n-1~\cap~ \rk(A_{n-1})=n-2 \} \rightarrow \R^{n-1}, \]
 so that $A = A_n$ satisfies $A_{[n] \times [2,n]} \vp(A) = 0 $. For this, let $A_n$ be such that $\rk(A_n) = n-1$ and $\rk(A_{n-1}) = n-2$
and let $X$ be the first row of $A_n$ with the first entry removed.  Notice that $X\in \mathrm{Im}(A_{n-1})$, otherwise it is not hard to see that
$\rk(A_n)=\rk(A_{n-1})+2=n$, using that $A_n$ is symmetric.  Thus there is $a\in \R^{n-1}\setminus \{0\}$ such that $A_{n-1}\cdot a=0$ and $\langle a, X \rangle=0$; further, since $\rk(A_{n-1}) = n-2$, this vector $a$ is unique up to scalar multiples. Thus define $\vp(A_n) := a$.
	
With $\vp$ in hand, we now bound
$$\Pr(\rk (A_n)=n-1~\cap~ \rk(A_{n-1})=n-2)\leq \Pr(\rk(A_{n-1}) = n-2, \langle X, \varphi(A_{n-1})\rangle =0 )\,.$$
Considering the cases of $\rho(\varphi(A_{n-1})) \geq \gamma$ and $ \rho(\varphi(A_{n-1})) < \gamma$ separately allows us to write 
$$ \Pr(\rk(A_{n-1}) = n-2, \langle X, \varphi(A_{n-1})\rangle =0 ) \leq q_{n-1}(\gamma) + \gamma\, ,$$ as desired. \end{proof}

\vspace{3mm}

We now treat the case when $\rk(A_n) = \rk(A_{n-1}) =n-1$. For this lemma, we adopt notation different from what we have been using in the body of the paper. 
If $v \in \R^{m}$ and $S \subset [m]$ we let $v_S \in \R^{m}$ be the vector $v_S = ( v_k \1(k \in S) )_{k \in [m]}$. 

\begin{lemma}\label{lem:rank-t}
For $t\in [n-2]$ we have \[\Pr(\rk (A_n)=n-1~\cap~ \rk(A_{n-1})=n-1)\leq 3^{t} q_{n-1}(\gamma)+(2^t \gamma + 2^{-t})^{1/4}\,.\]
\end{lemma}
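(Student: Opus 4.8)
The plan is to run the same ``first-row exposure'' argument as in Lemma~\ref{lem:step-down}, but now the subtlety is that $\rk(A_{n-1}) = n-1$, so $A_{n-1}$ is invertible and the only vector witnessing the drop $\rk(A_n) = n-1$ must be constructed differently. First I would condition on $A_{n-1}$ and assume $\rk(A_{n-1}) = n-1$; write $A_n$ in block form with first row/column consisting of the diagonal entry $a_{11}$ and the vector $X \in \{-1,1\}^{n-1}$. Since $A_{n-1}$ is invertible, one has $\rk(A_n) = n-1$ (rather than $n$) precisely when the Schur complement vanishes, i.e. $a_{11} = \langle X, A_{n-1}^{-1} X\rangle$; equivalently $A_n$ has a kernel vector of the form $(1, -A_{n-1}^{-1}X)$ (up to the diagonal constraint). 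So, letting $v = v(A_{n-1}) := A_{n-1}^{-1} X$, the event in question is contained in $\{ a_{11} = \langle X, v\rangle \}$, and since $a_{11} \in \{-1,1\}$ is independent of everything else, this contributes a factor $\le \rho(v)$-type bound --- but here the relevant randomness is in $X$ itself, which appears both in $X$ and inside $v = A_{n-1}^{-1}X$, so one must be more careful. The cleaner route, matching the $3^t$ and $2^t$ factors in the statement, is to split $X = X_S + X_{S^c}$ over a coordinate set $S$ of size $t$: expose $X_{S^c}$ first, and use the remaining $t$ bits $X_S$ (and $a_{11}$) for anti-concentration.

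Concretely, I would fix $t$ and union-bound over the $\binom{n-1}{\le t} \le 2^{n-1}$-many... no: rather, I expect one does \emph{not} union bound over $S$ but instead argues that \emph{for every} realization of $A_{n-1}$ (invertible) and of $X_{S^c}$ there is a ``bad'' affine condition on $(X_S, a_{11}) \in \{-1,1\}^{t+1}$, and then splits into two cases according to whether the relevant vector is structured ($\rho \ge \gamma$) or not. The factor $3^t$ should come from a union bound over the $\le 3^t$ choices of sign pattern needed to identify which linear functional of the exposed variables one is testing (or over subsets $S$ with $|S| \le t$ together with the contribution of $X_S$), while the $(2^t\gamma + 2^{-t})^{1/4}$ term arises exactly as in Hal\'asz/Rudelson--Vershynin-style fourth-moment arguments: if $\rho_{\text{restricted}} < 2^t\gamma$ one wins directly, and the $2^{-t}$ is the anti-concentration one always gets from $t$ free $\pm1$ bits via Erd\H{o}s (giving $O(t^{-1/2})$, comfortably $\le 2^{-t}$ after raising to a power / absorbing), with the $1/4$ power coming from a H\"older step mirroring \eqref{eq:replace-Holder} in Appendix~\ref{sec:replacement}. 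I would set up the quantity $q_{n-1}(\gamma)$ to absorb the structured case: if the witnessing vector $v$ (a column of $A_{n-1}^{-1}$ or a kernel vector of a rank-$(n-2)$ restriction) has $\rho(v) \ge \gamma$, then the event $A_{n-1} v = (\text{something determined by exposed data})$ is exactly of the form counted by $q_{n-1}(\gamma)$, up to the $3^t$ overcounting.

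The main obstacle I anticipate is the bookkeeping for the ``structured'' case: one needs to produce, from the event $\{\rk(A_n) = \rk(A_{n-1}) = n-1\}$, an honest equation of the form $A_{m-1} v = w$ with $v \neq 0$ fixed \emph{before} exposing the last row, so that $q_{m-1}(\gamma)$ applies --- but the natural candidate $v = A_{n-1}^{-1}X$ depends on $X$, which is the very row we want to use as randomness. The resolution (and I believe this is where the $3^t$ enters) is to expose $X$ except on a block of $t$ coordinates, note that the kernel-witnessing vector for the relevant $(n-1-\text{something})$-dimensional restriction is then determined, and handle the $t$ remaining coordinates plus $a_{11}$ by a direct Littlewood--Offord / H\"older computation; the $3^t$ is the price of not knowing in advance which of the $\le 3^t$ sign-restricted sub-vectors is the structured one. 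I would organize the proof as: (i) reduce to $A_{n-1}$ invertible and identify the defining equation; (ii) fix a coordinate block $S$, $|S| = t$, and split according to whether a certain restricted concentration probability exceeds $2^t\gamma$; (iii) in the ``large'' case bound by $3^t q_{n-1}(\gamma)$ via the definition of $q_{n-1}$; (iv) in the ``small'' case run the H\"older/Fourier fourth-moment estimate exactly as in Appendix~\ref{sec:replacement} to get $(2^t\gamma + 2^{-t})^{1/4}$.
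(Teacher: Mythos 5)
You have correctly identified the starting point (the Schur complement: given $A_{n-1}$ invertible, $\rk(A_n)=n-1$ forces $a_{11}=\langle X, A_{n-1}^{-1}X\rangle$) and the overall structured/unstructured dichotomy via $q_{n-1}(\gamma)$, but the core technical step is missing and your proposed substitute would not work. The paper's proof proceeds by the standard \emph{decoupling} inequality for quadratic forms: one introduces an independent copy $X'$ of the row $X$, fixes a partition $[n-1]=I\cup J$ with $|I|=t$, and bounds the probability of the event by $\E_{A_{n-1}}\,\P\big(\langle A_{n-1}^{-1}(X-X')_I,(X-X')_J\rangle=0\mid A_{n-1}\big)^{1/4}$ on the event that $A_{n-1}$ is invertible. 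This is where the exponent $1/4$ comes from (two applications of Cauchy--Schwarz hidden in the decoupling lemma, cited from Ferber--Jain / CMMM), not from a H\"older/Fourier computation as in Appendix A. Crucially, decoupling is what makes the form \emph{bilinear} in the two independent pieces $(X-X')_I$ and $(X-X')_J$, so that after conditioning on $(X-X')_I$ one has an honest linear Littlewood--Offord problem in $(X-X')_J$. Your plan --- expose $X_{S^c}$ and use the remaining $t$ bits $X_S$ for anti-concentration --- does not resolve the self-dependence you flagged: the form $\langle X, A_{n-1}^{-1}X\rangle$ is still \emph{quadratic} in $X_S$, so no linear anti-concentration estimate applies to it directly.

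The quantitative bookkeeping is also inverted relative to what actually happens. In the paper, the term $2^{-t}$ is \emph{not} an anti-concentration gain from $t$ free bits (indeed Erd\H{o}s only gives $O(t^{-1/2})$ for a linear form in $t$ Rademacher variables, which is nowhere near $2^{-t}$, so that step of your sketch is simply false); it is the probability of the degenerate event $(X-X')_I=0$, i.e.\ $X_I=X_I'$, in which case the witnessing vector $x:=A_{n-1}^{-1}(X-X')_I$ vanishes and no anti-concentration is available. The term $2^{t}\gamma$ is the anti-concentration over the \emph{large} set $J$: when $x\neq 0$ and $A_{n-1}$ lies in the good set $U_\gamma^{(I)}$ one has $\rho(x)\le\gamma$, hence $\rho(x_J)\le 2^{|I|}\rho(x)\le 2^t\gamma$. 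Finally, $3^t$ is the union bound over the $3^{|I|}$ possible values of $(X-X')_I\in\{-2,0,2\}^{I}$ needed to guarantee, via $q_{n-1}(\gamma)$, that every candidate vector $x$ with $A_{n-1}x\in\{-2,0,2\}^{I}$-valued right-hand side is unstructured --- which is how the ``fix $v$ before exposing the randomness'' issue you worried about is resolved. So while your high-level architecture (steps (i)--(iv)) is in the right spirit, the proof as proposed has a genuine gap at the linearization step, and the claimed origins of the $2^{-t}$ and the $1/4$ power are incorrect.
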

\begin{proof}
Let $I \cup J$ be a non-trivial partition of $[n-1]$ and set $\cA = \{A_{n-1}  : \det A_{n-1} \neq 0 \}$.
By a decoupling argument (e.g. line (9) of \cite{ferber-jain} or \cite[Lemma A.9]{CMMM}) we have \begin{align}
&\Pr(\rk (A_n) =\rk(A_{n-1})=n-1) \nonumber \\
&\leq \E_{A_{n-1}}\, \P\bigg(\langle A_{n-1}^{-1}(X - X')_I,(X - X')_J\rangle = 0 \, \big\vert \,A_{n-1}\bigg)^{1/4}\1( A_{n-1} \in \cA ) \label{eq:decouple}
\end{align}
where $X, X'$ are independent, and chosen uniformly at random from $\{1,-1\}^{n-1}$.  Following \cite{CMMM}, set 
\[ W(I) = \left\lbrace v \in \{-2,0,2\}^{n-1}: v_i = 0 \text{ for all }i \notin I  \right\rbrace \]
 and $$U_\g^{(I)} = \left\lbrace A_{n-1} : \rho(v) \leq \g \text{ for every }v \in \R^{n-1}\setminus \{0\} \text{ with } A_{n-1} v \in W(I)  \right\rbrace \,.$$  
	
\noindent We bound \eqref{eq:decouple} with three short claims.

\begin{claim}\label{cl:b1} $\P(A_{n-1} \notin U_{\gamma}^{(I)}) \leq 3^{|I|} q_{n-1}(\gamma)$
\end{claim}
\begin{proof}[Proof of Claim \ref{cl:b1}]
If $A_{n-1} \notin U_\gamma^{(I)}$ then there is a $v \in \R^{n-1} \setminus \{0\}$ with $\rho(v) \geq \gamma$ so that $A_{n-1} v \in W(I)$.  Union bounding over $W(I)$ completes the claim.\end{proof}

\vspace{2mm}

\noindent We now write $w = X - X'$ and for $A_{n-1} \in \cA$, write $x = A_{n-1}^{-1}w_I$, as we see in \eqref{eq:decouple}.

\begin{claim}\label{cl:b2}
For $A_{n-1} \in \cA$ we have $\P(x = 0 \,|\, A_{n-1}) = 2^{-|I|}$.\end{claim}
\begin{proof}[Proof of Claim \ref{cl:b2}] Simply note that $\P(x = 0 \,|\, A_{n-1})= \P(X_I = X_I') = 2^{-|I|}$.\end{proof}
	
\vspace{2mm}	

\noindent Finally we have:
 
\begin{claim}\label{cl:b3} If $A_{n-1} \in U_\gamma^{(I)} \cap \cA$ then 
$$\P_x(\langle x,w_J\rangle = 0 \text{ and } x \neq 0 \,|\, A_{n-1}) \leq 2^{|I|} \g\,.$$
\end{claim}
\begin{proof}[Proof of Claim \ref{cl:b3}]
Since $A_{n-1} x = w_I$ with $x \neq 0$ and $A_{n-1} \in U_{\gamma}^{(I)}$, we have $\rho(x) \leq \gamma$. Conditioned on a given $x \neq 0$ with $\rho(x) \leq \gamma$ as well as $A_{n-1}$ bound \begin{align*}
		\P_{X_J,X_J'}(\langle x ,w_J\rangle = 0 ) = \P_{X_J,X_J'}(\langle x_J,X_J\rangle = \langle x_J,X_J' \rangle ) \leq \rho(x_J) \leq 2^{|I|} \rho(x) \leq 2^{|I|} \gamma \end{align*}
where we have used \cite[Lemma 2.9]{ferber-jain} for the bound $\rho(x_J)\leq 2^{|I|} \rho(x)$.\end{proof} 

Choosing $I = [t]$ and combining the previous three claims with \eqref{eq:decouple} completes the proof of Lemma~\ref{lem:rank-t}.
\end{proof}

\begin{proof}[Proof of Lemma \ref{lem:unstructured}]
	For each $\g > 1/2$ we have $q_n(\g) \geq 2^{-n}$ and so we may assume that $\g < 1/n$ and $\g > 2^{-n}$.  Set $t = \lfloor \log_4(1/\gamma) \rfloor \in [n-2]$ and apply Lemmas \ref{lem:decrease-rank}, \ref{lem:step-down} and \ref{lem:rank-t} to show \begin{align*}\P(\det(A_n) = 0) &\leq 4n \sum_{m = n}^{2n - 2} \left(\gamma + q_{m-1}(\gamma) + (3 \gamma^{1/2})^{1/4} + \frac{q_{m-1}(\gamma)}{\gamma} \right) \\
	&\leq 16n\sum_{m = n}^{2n - 2} \left(\gamma^{1/8} + \frac{q_{m-1}(\gamma)}{\gamma}\right)\, ,
	\end{align*} thus completing the proof of Lemma~\ref{lem:unstructured}.
\end{proof}

\section{A few more details}\label{sec:details}

We check the details on Lemma~\ref{lem:HansonWright}. We do this by deriving this lemma from a classical inequality due to Talagrand \cite{talagrand}. 

\begin{theorem}[Talagrand's Inequality]\label{thm:talagrand}
	Let $F:\R^n \to \R$ be a convex $1$-Lipschitz function and $\sigma = (\sigma_1,\ldots,\sigma_n)$ where $\sigma_i$ are i.i.d. random variables with $|\sigma_i|\leq 1$.  Then for any $t \geq 0$ we have 
$$\P\left( \left| F(\sigma) - m_F \right| \geq t  \right) \leq 4 \exp\left(-t^2/16 \right)\, ,$$ where $m_F$ is the median of $F(\sigma)$. 
\end{theorem}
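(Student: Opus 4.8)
The plan is to deduce Theorem~\ref{thm:talagrand} from Talagrand's convex distance inequality; the statement is classical, so an alternative is simply to cite \cite{talagrand}, but the short derivation runs as follows. Regard the law of $\sigma=(\sigma_1,\dots,\sigma_n)$ as a product probability measure on $\Omega:=\prod_{i=1}^n[-1,1]$, and for measurable $A\subseteq\Omega$ and $x\in\Omega$ recall Talagrand's convex distance, in the form
\[ d_T(x,A) = \inf\Big\{ \|z\|_2 : z\in\mathrm{conv}\,V_x(A) \Big\},\qquad V_x(A):=\big\{ (\1[x_i\ne y_i])_{i=1}^n : y\in A \big\}\subseteq\{0,1\}^n, \]
together with Talagrand's inequality $\EE\big[\exp\big(\tfrac14 d_T(\sigma,A)^2\big)\big]\le \PP(\sigma\in A)^{-1}$, valid for products of arbitrary probability spaces, which by Markov gives $\PP(d_T(\sigma,A)\ge s)\le e^{-s^2/4}/\PP(\sigma\in A)$ for all $s\ge 0$.

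First I would record the elementary geometric comparison: for any \emph{convex} $A\subseteq[-1,1]^n$ and any $x\in[-1,1]^n$ one has $\mathrm{dist}_2(x,A)\le 2\,d_T(x,A)$, where $\mathrm{dist}_2$ is Euclidean distance. Indeed, for a convex combination $z=\sum_j\lambda_j v^{(j)}$ with $v^{(j)}\in V_x(A)$ witnessed by $y^{(j)}\in A$, the point $\bar y:=\sum_j\lambda_j y^{(j)}$ lies in $A$ by convexity, and since all coordinates lie in $[-1,1]$ we have $|(\bar y-x)_i|\le\sum_j\lambda_j|y^{(j)}_i-x_i|\le 2\sum_j\lambda_j\1[x_i\ne y^{(j)}_i]=2z_i$ coordinatewise; hence $\mathrm{dist}_2(x,A)\le\|x-\bar y\|_2\le 2\|z\|_2$, and taking the infimum over $z$ gives the bound.

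Next I would apply this with $A:=\{x\in[-1,1]^n:F(x)\le m_F\}$, which is convex (a sublevel set of the convex $F$) and has $\PP(\sigma\in A)\ge\tfrac12$. Taking $s=t/2$, the geometric comparison and the $d_T$-tail bound give $\PP(\mathrm{dist}_2(\sigma,A)\ge t)\le\PP(d_T(\sigma,A)\ge t/2)\le e^{-t^2/16}/\PP(\sigma\in A)\le 2e^{-t^2/16}$; since $F$ is $1$-Lipschitz, $F(x)\ge m_F+t$ forces $\mathrm{dist}_2(x,A)\ge t$, so $\PP(F(\sigma)\ge m_F+t)\le 2e^{-t^2/16}$. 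For the lower tail I would run the same argument with the convex set $A':=\{x\in[-1,1]^n:F(x)\le m_F-t\}$: every $x$ with $F(x)\ge m_F$ has $\mathrm{dist}_2(x,A')\ge t$ by $1$-Lipschitzness, so $\tfrac12\le\PP(F(\sigma)\ge m_F)\le\PP(\mathrm{dist}_2(\sigma,A')\ge t)\le e^{-t^2/16}/\PP(\sigma\in A')$, which rearranges to $\PP(F(\sigma)\le m_F-t)=\PP(\sigma\in A')\le 2e^{-t^2/16}$. Summing the two tail estimates yields $\PP(|F(\sigma)-m_F|\ge t)\le 4e^{-t^2/16}$. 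The edge cases (empty $A'$, or $t$ so small that $4e^{-t^2/16}\ge1$) are trivial and need no separate treatment.

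The only genuine input is Talagrand's convex distance inequality itself, and reproducing its proof — an induction on $n$ via tensorisation of the exponential moment — would be the main obstacle to a fully self-contained account; I would instead cite it from \cite{talagrand} (or a standard textbook treatment). Everything else is elementary: the $\mathrm{dist}_2\le 2d_T$ comparison, the observation that sublevel sets of a convex function are convex, and the two-sided median argument above.
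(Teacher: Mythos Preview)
Your derivation is correct and is the standard route from Talagrand's convex distance inequality to concentration about the median for convex Lipschitz functions of bounded independent variables. The paper itself does not prove Theorem~\ref{thm:talagrand}: it simply states the result as a classical inequality, cites \cite{talagrand}, and uses it as a black box to derive Lemma~\ref{lem:HansonWright}. So your sketch already goes beyond what the paper provides; if anything, you could shorten it to a bare citation as the paper does, or keep the derivation as a helpful self-contained appendix.
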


From this, we derive the following consequence, which appears in \cite{RV-HW} without explicit constants.

\begin{lemma}
For $d \in \N$, $\nu \in (0,1)$, let $\delta \in (0,\sqrt{\nu}/16)$, let
$\sigma \sim \cQ(2d, \nu)$, and let $W$ be a $2d \times k$ matrix satisfying $\|W\|_{\HS}\geq \sqrt{k}/2$ and $\|W\|\leq 2$.  Then
\begin{align} \label{eq:Psigma}
\Pr(\|W^T\sigma\|_2\leq \delta \sqrt{k})\leq 4\exp(-2^{-12}\nu k)
\end{align}
\end{lemma}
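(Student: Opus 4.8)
The plan is to derive \eqref{eq:Psigma} from Talagrand's inequality (Theorem~\ref{thm:talagrand}) applied to the convex Lipschitz function $F(\sigma) = \|W^T\sigma\|_2$. First I would check that $F$ is convex (immediate, as a composition of the linear map $\sigma \mapsto W^T\sigma$ with the norm) and $\|W\|$-Lipschitz; since $\|W\| \leq 2$, the function $F/2$ is $1$-Lipschitz, so Talagrand gives $\P(|F(\sigma) - m_F| \geq s) \leq 4\exp(-s^2/64)$ for the median $m_F$ of $F$.

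The key step is then to show that the median $m_F$ is at least $2\delta\sqrt{k}$ (in fact we want a lower bound of order $\sqrt{\nu k}$), so that the event $\{F(\sigma) \leq \delta\sqrt{k}\}$ forces $|F(\sigma) - m_F| \geq m_F/2 \gtrsim \sqrt{\nu k}$ and the tail bound kicks in. To lower-bound $m_F$, I would control the first and second moments of $F(\sigma)^2 = \|W^T\sigma\|_2^2 = \sum_{i,j,j'} W_{i,j}W_{i,j'}\sigma_j\sigma_{j'}$. For the mean: $\E\,\|W^T\sigma\|_2^2 = \nu \|W\|_{\HS}^2 \geq \nu k/4$ using $\E\,\sigma_j^2 = \nu$, independence, and the hypothesis $\|W\|_{\HS} \geq \sqrt{k}/2$. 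For the variance of $\|W^T\sigma\|_2^2$, a direct expansion (using that the $\sigma_j$ are independent mean-zero with $\E\sigma_j^2 = \nu \leq 1$ and $\E\sigma_j^4 = \nu$) gives a bound of the form $O(\nu \|W\|_{\HS}^2 \|W^TW\| ) = O(\nu \|W\|^2 \|W\|_{\HS}^2) = O(\nu k)$, since $\|W^TW\| = \|W\|^2 \leq 4$; thus $\Var(\|W^T\sigma\|_2^2) = O(\nu k) = O(\E\|W^T\sigma\|_2^2)$ because $\|W\|_{\HS}^2 \geq k/4$. By Paley--Zygmund (or just Chebyshev), this forces $\P(\|W^T\sigma\|_2^2 \geq c\nu k) \geq c'$ for absolute constants, whence the median satisfies $m_F \geq \sqrt{c\nu k/2}$, say, once the constants are tracked; in particular $m_F \geq 8\delta\sqrt{k}$ since $\delta < \sqrt{\nu}/16$.

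Finally, combining: on the event $\{\|W^T\sigma\|_2 \leq \delta\sqrt{k}\}$ we have $F(\sigma) \leq \delta\sqrt{k} \leq m_F/8$, so $|F(\sigma) - m_F| \geq (7/8)m_F \geq (7/8)\sqrt{c\nu k/2}$, and Talagrand's bound gives $\P(\|W^T\sigma\|_2 \leq \delta\sqrt{k}) \leq 4\exp(-c''\nu k)$ for an absolute constant $c''$; choosing constants carefully yields the exponent $2^{-12}\nu k$ claimed in \eqref{eq:Psigma}.

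The main obstacle I expect is the bookkeeping of absolute constants so that the final exponent comes out as exactly $2^{-12}\nu k$ and the median lower bound beats $8\delta\sqrt{k}$ given only $\delta < \sqrt{\nu}/16$; the variance computation for $\|W^T\sigma\|_2^2$ requires expanding a fourth-moment sum and repeatedly invoking $\|W\| \leq 2$ and $\|W\|_{\HS} \geq \sqrt{k}/2$, and one must be a little careful that the Paley--Zygmund constant, the factor lost in passing from a moment bound to the median, and the factor $64$ in the Talagrand exponent all combine to leave room for the stated constant. None of the individual steps is deep, but the constant-chasing is where the real work lies.
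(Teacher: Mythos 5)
Your proposal is correct and follows essentially the same route as the paper's Appendix~C argument: apply Talagrand's inequality to the convex Lipschitz function $\sigma\mapsto\|W^T\sigma\|_2$ (normalized by $\|W\|$), and lower-bound the median via a Chebyshev/second-moment argument using $\E\|W^T\sigma\|_2^2=\nu\|W\|_{\HS}^2\geq\nu k/4$ and a variance bound of order $\nu k$. The only point worth making explicit in a write-up is the reduction to $k>2^{12}/\nu$ (the bound is trivial otherwise), which is what guarantees the Chebyshev failure probability is below $1/2$ so that the median bound is legitimate.
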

\begin{proof}
	Note \eqref{eq:Psigma} is trivial if $k \leq 2^{12}/\nu$, so we assume $k > 2^{12}/\nu$.
	Define 
	\[ F(x) :=\|W\|^{-1}\|W^T x\|_2 \] and note that $F$ is convex and $1$-Lipschitz. Theorem~\ref{thm:talagrand} immediately tells
	us that $F$ is concentrated about the median $m_F$ and so we only need to estimate $m_F$. For this, let us write down
	\[ m:= \EE\, \|W^T \sigma\|_2^2 =\sum_{i,j}W_{ij}^2 \Ex\, \sigma^2 =\nu \|W\|_{\HS}^2\] and 
	\[m_2:= \EE\, \|W^T \sigma\|_2^4-(\EE\, \|W^T \sigma\|_2^2)^2 = \sum_{i,j}W_{ij}^2\big( \EE\, \sigma_i^4 -(\EE\, \sigma_i^2)^2\big)
	\leq  \nu \|W\|_{\HS}^2.\]
	So for $t>0$ we have, by Markov,
	\[\Pr(\|W^T \sigma\|_2^2\leq m-t)\leq t^{-2}\Ex\, \left( \|W^T \sigma\|_2^2-m \right)^2  = t^{-2}m_2 \leq  t^{-2}\nu \|W\|_{\HS}^2 .\]
	So set $t = \nu\|W\|_{\HS}^2/2$ to get 
	 \[\Pr(\|W^T \sigma\|_2^2\leq \nu\|W\|_{\HS}^2/2)\leq 4 (\nu \|W\|_{\HS}^2)^{-1}<1/2\, ,\] since $\|W\|_{\HS}^2\geq k/4>8\nu^{-1}$. It follows that 
	 \begin{align}\label{eq:mfbd}
	  m_F\geq \|W\|^{-1}\sqrt{\nu/2}\|W\|_{\HS}\, .
	  \end{align}
	  Now we may apply Talagrand's Inequality \ref{thm:talagrand} with $t=m_F-\delta \sqrt{k}\|W\|^{-1}$
	   to obtain 
\[\P\left(\|W^T \sigma\|_2 \leq \delta\sqrt{k} \right) \leq 4 \exp\left(-t^2/16\right)\, .\]
To complete the proof we note that 
 \[t=m_F-\delta \sqrt{k} \|W\|^{-1}\geq \|W\|^{-1}(\sqrt{\nu/2}\|W\|_{\HS}-\delta\sqrt{k})\geq \sqrt{\nu k}/16\, ,\ \]
	 where we have used~\eqref{eq:mfbd}, $\|W\|\leq 2$, $\|W\|_{\HS}\geq \sqrt{k}/2$ and $\delta\leq \sqrt{\nu}/8$.
\end{proof}

\vspace{3mm}

The following integral appears in the proof of Lemma~\ref{lem:tensor}.

\begin{fact}\label{fact:infamous-int} For $k\geq 0$, we have the integral inequality
\[ \int_{\sqrt{k+1}}^{\infty} \left(1+\frac{ 2 u }{\sqrt{k+1} }\right)^{k+2}u e^{-2u^2} \, du \leq 2 . \]
\end{fact}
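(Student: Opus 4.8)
The plan is to reduce the integral to a single exponential integral via a linear change of variables together with two elementary pointwise inequalities. First I would substitute $u = \sqrt{k+1}\,v$ (so $du = \sqrt{k+1}\,dv$ and the lower limit $u=\sqrt{k+1}$ becomes $v=1$), which rewrites the quantity as
\[ (k+1)\int_1^\infty (1+2v)^{k+2}\, v\, e^{-2(k+1)v^2}\,dv. \]
The point of this normalization is that the Gaussian weight is now $e^{-2(k+1)v^2}$, matching the exponent $k+2$ in the polynomial factor, and the domain of integration is the fixed interval $[1,\infty)$.

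Next I would bound $1+2v \le 3v$, which holds for all $v\ge 1$ since $3v-(1+2v)=v-1\ge 0$; this replaces $(1+2v)^{k+2}v$ by $3^{k+2}v^{k+3}$. The remaining task is to control $\int_1^\infty v^{k+3}e^{-2(k+1)v^2}\,dv$, and here I would feed in the two standard inequalities $\ln v \le v-1$ and $v^2 \ge 2(v-1)+1$, both valid for $v\ge 1$ (the latter is $(v-1)^2\ge 0$). The first gives $v^{k+3}\le e^{(k+3)(v-1)}$ and the second gives $e^{-2(k+1)v^2}\le e^{-2(k+1)}e^{-4(k+1)(v-1)}$. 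Multiplying these, the integrand is at most $e^{-2(k+1)}e^{-(3k+1)(v-1)}$, and since $3k+1\ge 1>0$ this integrates over $v\in[1,\infty)$ to at most $e^{-2(k+1)}/(3k+1)$.

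Collecting the factors, the original integral is bounded by $\frac{k+1}{3k+1}\,3^{k+2}e^{-2(k+1)}$. Since $\frac{k+1}{3k+1}\le 1$ for all $k\ge 0$, and $3^{k+2}e^{-2(k+1)} = 9e^{-2}\,(3e^{-2})^{k}\le 9e^{-2}$ because $3e^{-2}<1$, the whole quantity is at most $9e^{-2}<2$, which is exactly the claimed bound. There is no genuine obstacle in this argument; the only point requiring a little care is to choose the pointwise majorants so that the exponent of $v$ stays linear (e.g. $1+2v\le 3v$ rather than a quadratic bound), so that $\ln v\le v-1$ closes the estimate and no $\Gamma$-function computation is needed.
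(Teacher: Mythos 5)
Your argument is correct; every step checks out. After the substitution $u=\sqrt{k+1}\,v$ the integral is indeed $(k+1)\int_1^\infty (1+2v)^{k+2}ve^{-2(k+1)v^2}\,dv$, the three pointwise bounds ($1+2v\le 3v$, $v^{k+3}\le e^{(k+3)(v-1)}$, $v^2\ge 2(v-1)+1$) are all valid on $[1,\infty)$, the exponent $(k+3)-4(k+1)=-(3k+1)$ is negative, and the final quantity $\tfrac{k+1}{3k+1}\,3^{k+2}e^{-2(k+1)}\le 9e^{-2}<2$ is right.

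Your route differs from the paper's. The paper keeps the variable $u$, peels off one factor of $\bigl(1+\tfrac{2u}{\sqrt{k+1}}\bigr)$, and absorbs the remaining $(k+1)$st power into the Gaussian weight via the inequality $1+x\le e^{x^2/3}$ (valid for $x\ge 2$, which holds on the domain of integration since $u\ge\sqrt{k+1}$); this reduces the problem to the single $k$-independent integral $\int_1^\infty u(1+2u)e^{-2u^2/3}\,du\le 2$, whose evaluation is the one remaining (small) numerical check. You instead normalize the domain to $[1,\infty)$ and linearize both the polynomial factor and the Gaussian exponent around $v=1$, which yields a completely closed-form bound $9e^{-2}\approx 1.22$ with no residual integral to estimate. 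Both arguments exploit the same essential feature — the lower limit $\sqrt{k+1}$ grows with $k$, so the polynomial growth is dominated by the Gaussian decay uniformly in $k$ — but yours is slightly more self-contained and even shows the integral tends to $0$ geometrically in $k$, which the paper's reduction discards.
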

\begin{proof}
Using the inequality $1+x \leq e^{x^2/3}$ for $x\geq 2$, we have
\[
 \int_{\sqrt{k+1}}^{\infty} \left(1+\frac{ 2 u }{\sqrt{k+1} }\right)^{k+2}u e^{-2u^2} \, du 
 \leq
 \int_{\sqrt{k+1}}^{\infty} \left(1+\frac{ 2 u }{\sqrt{k+1} }\right) ue^{-2u^2/3}\, .
\]
Since $k\geq0$, the right hand side is at most 
\[
\int_{1}^{\infty} u\left(1+ 2 u\right) e^{-2u^2/3}\leq 2\, .
\]
\end{proof}

\section{Proof of Lemma~\ref{lem:basis-net}}\label{sec:randomrounding}
We will require the following standard concentration result for the norm of a random matrix with independent, mean-zero, sub-Gaussian entries (see, e.g. \cite[Theorem 4.4.5]{vershynin2018high}). 

\begin{theorem}\label{thm:Versh-norm}
Let $B$ be an $m\times n$ random matrix with independent, mean zero entries with $\max_{i,j} |B_{ij}|\leq K$. Then for $t>0$ we have
\[
\PP(\|B\|\geq 8K(\sqrt{m}+\sqrt{n}+t))\leq 2\exp(-t^2)\, .
\]
\end{theorem}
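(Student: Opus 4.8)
The plan is to establish Theorem~\ref{thm:Versh-norm} by the classical $\varepsilon$-net plus Hoeffding argument. Write $\|B\| = \sup_{x \in \S^{n-1},\, y \in \S^{m-1}} \langle Bx, y\rangle$ and fix $1/4$-nets $\cN \subseteq \S^{n-1}$ and $\cM \subseteq \S^{m-1}$ with $|\cN| \leq 9^n$ and $|\cM| \leq 9^m$, which exist by the standard volumetric bound $(1+2/\varepsilon)^n$. A routine approximation step gives $\|B\| \leq 2\max_{x_0 \in \cN,\, y_0 \in \cM} \langle Bx_0, y_0\rangle$: for unit $x,y$ pick $x_0 \in \cN$, $y_0 \in \cM$ within $1/4$ and decompose $\langle Bx, y\rangle = \langle Bx_0, y_0\rangle + \langle B(x-x_0), y\rangle + \langle Bx_0, y-y_0\rangle$, the last two terms being at most $\|B\|/2$ in absolute value; taking the supremum over $x,y$ and rearranging yields the bound.

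Next I would fix $x_0 \in \cN$ and $y_0 \in \cM$ and control $\langle Bx_0, y_0\rangle = \sum_{i,j} B_{ij}(x_0)_j (y_0)_i$, a sum of independent mean-zero random variables with $|B_{ij}(x_0)_j(y_0)_i| \leq K|(x_0)_j|\,|(y_0)_i|$ and $\sum_{i,j} K^2 (x_0)_j^2 (y_0)_i^2 = K^2 \|x_0\|_2^2\|y_0\|_2^2 = K^2$. A standard moment generating function computation (Hoeffding's lemma) gives $\PP(\langle Bx_0, y_0\rangle \geq s) \leq \exp(-s^2/(2K^2))$ for all $s > 0$. A union bound over the at most $9^{n+m}$ pairs of net points, combined with the approximation inequality above, then gives $\PP(\|B\| \geq 2s) \leq 9^{n+m}\exp(-s^2/(2K^2))$.

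Finally, substitute $s := 4K(\sqrt{m}+\sqrt{n}+t)$, so that $2s = 8K(\sqrt m + \sqrt n + t)$ matches the statement. Using $(\sqrt m + \sqrt n + t)^2 \geq m + n + t^2$ one gets $s^2/(2K^2) = 8(\sqrt m + \sqrt n + t)^2 \geq 8(m+n) + 8t^2$, while $9^{n+m} \leq \exp(3(n+m))$; hence $9^{n+m}\exp(-s^2/(2K^2)) \leq \exp(-5(m+n) - 8t^2) \leq \exp(-t^2) \leq 2\exp(-t^2)$, which is the claimed bound (with room to spare). There is no genuine obstacle here: the only point requiring a little care is the bookkeeping of absolute constants, so that the net cardinality $9^{n+m}$ is comfortably dominated by the sub-Gaussian factor — the generous constant $8$ in front of $K(\sqrt m + \sqrt n + t)$ is precisely what makes this go through cleanly.
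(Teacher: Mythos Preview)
Your proof is correct and is essentially the standard $\varepsilon$-net plus Hoeffding argument. The paper does not actually prove this theorem: it simply cites \cite[Theorem~4.4.5]{vershynin2018high} and remarks that the explicit constant $8$ can be extracted from the proof there; that proof proceeds exactly along the lines you wrote, so your argument is the same approach, just carried out explicitly with the constants tracked.
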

We remark that the explicit constant $8$ does not appear in the statement of \cite[Theorem 4.4.5]{vershynin2018high}, but can easily be extracted from the proof.

We also note the basic bound on the number of integer points in a ball, 
\[ |B_{r\sqrt{n} }(0) \cap \Z^n| \leq |B_{(r+1)\sqrt{n}}(0)| \leq (4r)^{n},\]
where the first inequality holds by the fact that the boxes 
\[ p+\left[-\tfrac{1}{2},\tfrac{1}{2}\right]^{2kd} \text{ where } p \in \Z^{2kd}\cap B_{16\sqrt{kd}/\delta}(0)\]
are pairwise disjoint, have volume $1$ and are contained in $B_{(r+1)\sqrt{n}}(0)$. 

\begin{proof}[Proof of Lemma~\ref{lem:basis-net}]
We claim that the following net works for Lemma~\ref{lem:basis-net}. Define 
\[\cN:=\left\{W\in \R^{[2d]\times [k]}:~ W_{i,j}\in \frac{\delta}{8\sqrt{d}}\cdot\Z \text{ and } \|W\|_{\HS} \leq 2\sqrt{k} \right\}. \]
To bound $|\cN|$, note that $(8\sqrt{d}/\delta) \cdot \cN \subseteq B_{16\sqrt{kd}/\delta}(0) \cap \Z^{2kd}.$ Thus
\[ |\cN| \leq |B_{16\sqrt{kd}/\delta}(0) \cap \Z^{2kd}| \leq (2^6/\delta)^{2dk}.\]

Now let $U\in \cU_{2d,k}$ and $A$ be a $r \times 2d$ matrix. We show that there exists a $W \in \cN$ satisfying properties
\eqref{it:rr-mat-1}, \eqref{it:rr-mat-2} and \eqref{it:rr-mat-3}. We find such a $W$ by randomly rounding $W$ onto our net $\cN$ and showing that the
properties each hold with probability $> 3/4$.

For this, let $U'=\tfrac{8\sqrt{d}}{\delta}U$ and let $W'$ be a $2d \times k$ random matrix with independent entries where $W'_{i,j} \in \{\lfloor U'_{i,j} \rfloor, \lceil U'_{i,j} \rceil \}$ and $\E W'_{i,j} = U'_{i,j}$
and let $W=\tfrac{\delta}{8\sqrt{d}}W'$. We remark that for \emph{all} possible $W$, we have 
\[\|W\|_{\HS}\leq \|U\|_{\HS}+\delta\sqrt{k}\leq 2\sqrt{k}, \] and so we always have $W\in \cN$. 

For Property~\eqref{it:rr-mat-1}, we write 
\[\EE_{W}\, \|A(W-U)\|_{\HS}^2
=\sum_{i,j}\EE\, \langle A^T e_j, (W-U) e_i\rangle^2 =\sum_{i,j,\ell}A_{j\ell}^2\Ex\,(W_{\ell i}-U_{\ell i})^2 \leq \frac{\delta^2 \|A\|_{\HS}^2 k}{64d},\] using that
 $\Ex\, (W_{\ell i}-U_{\ell i})^2 \leq \frac{\delta^2}{64d}$ in the last inequality. Thus by Markov's inequality 
\begin{equation}\label{eq:RR-HS-norm}
\PP_{W}\left(\|A(W-U)\|_{\HS}\leq \frac{\delta \|A\|_{\HS}\sqrt{k}}{\sqrt{2d}} \right) \geq 3/4.
\end{equation}
For Property~\eqref{it:rr-mat-3}, we repeat the same argument, replacing $A$ with a $2d\times 2d$ identity matrix. Thus Property~\eqref{it:rr-mat-3} also holds with probability $>3/4$ in the choice of $W$.

For property \eqref{it:rr-mat-2}, note that $W-U$ is a random matrix with independent random entries satisfying $\EE_{W}\, (W-U)_{ij} = 0 $  and 
$ |(W - U )_{ij}| \leq \frac{\delta}{8\sqrt{d}}$ for all $i,j$.  We may therefore apply Theorem~\ref{thm:Versh-norm} with $B=W-U$ and $K=\frac{\delta}{8\sqrt{d}}$ to get  
\begin{equation}\label{eq:RR-op-norm}
\PP_{W}\left(\|W-U\|\leq 8\delta \right) \geq 3/4 \,. \end{equation} 
Thus $W$ satisfies properties \eqref{it:rr-mat-1},\eqref{it:rr-mat-2},\eqref{it:rr-mat-3}, each with probability $>3/4$ and therefore there must exist a $W$ satisfying all three properties. \end{proof}

\bibliographystyle{abbrv}
\bibliography{Bib}

\end{document}